\newtheorem{theo}{Theorem}[section]
\newtheorem{cor}[theo]{Corollary}
\newtheorem{lem}[theo]{Lemma}
\newtheorem{claim}[theo]{Claim}
\newtheorem{prop}[theo]{Proposition}
\newtheorem{defn}[theo]{Definition}
\newtheorem{rmk}[theo]{Remark}
\newtheorem{ex}[theo]{Example}
\newtheorem{conj}{Conjecture}
\newtheorem{prob}{Problem}
\newcommand{\Z}{\mathbb{Z}}
\newcommand{\N}{\mathbb{N}}
\newcommand{\NZ}{\mathbb{M}}
\newcommand{\s}{\sigma}
\newcommand{\A}{\mathbf{A}}
\newcommand{\G}{\mathcal{G}}
\newcommand{\NN}{\mathcal{N}^f}
\newcommand{\x}{\mathbf{x}}
\newcommand{\y}{\mathbf{y}}
\newcommand{\z}{\mathbf{z}}
\newcommand{\bfN}{\mathbf{N}}
\newcommand{\bfM}{\mathbf{M}}
\title{Some notes on the classification of shift spaces: Shifts of Finite Type; Sofic Shifts; and Finitely Defined Shifts.}
\author{
\small{Marcelo Sobottka}\\
\footnotesize{UFSC -- Department of Mathematics}\\
\footnotesize{88040-900 Florian\'{o}polis - SC, Brazil}\\
\footnotesize{\texttt{marcelo.sobottka@ufsc.br}}

}
\date{}
\begin{document}

\maketitle

\begin{abstract} The aim of this article is to find appropriate definitions for shifts of finite type and sofic shifts in a general context of symbolic dynamics. We start showing that the classical definitions of shifts of finite type and sofic shifts, as they are given in the context of  finite-alphabet shift spaces on the one-dimensional monoid $\N$ or $\Z$ with the usual sum, do not fit for shift spaces over infinite alphabet or on other monoids. Therefore, by examining the core features in the classical definitions of shifts of finite type and sofic shifts, we propose general definitions that can be used in any context. The alternative definition given for shifts of finite type inspires the definition of a new class of shift spaces which intersects with the class of sofic shifts and includes shifts of finite type. This new class is named finitely defined shifts, and the non-finite-type shifts in it are named shifts of variable length. For the specific case of infinite-alphabet shifts on the lattice $\N$ or $\Z$ with the usual sum, shifts of variable length can be interpreted as the topological version of variable length Markov chains.
\end{abstract}

{\bf Keywords:} Symbolic Dynamics, Formal Languages, Cellular Automata.


\section{Introduction}\label{sec:Introduction}

For shift spaces over finite alphabets and on one-dimensional lattices ($\N$ or $\Z$) we have a plenty of useful classifications based on the properties of the shift's language (see \cite{BealBlockeletDima2013,BealBlockeletDima2014,HamachiKrieger2020,Kamabe2008,Krieger2017,
LindMarcus,Williams1973}). Also finite-alphabet shifts on the multi-dimensional lattice $\Z^d$ can be easily classified using basically the same criteria used in one-dimensional lattices. However, when considering shift spaces over infinite alphabets or on general monoids, the problem of classify them (in a way consistent with that used for finite-alphabet shifts on $\N$ or $\Z$) becomes more complex. In fact, as we will see here, even when considering the usual lattices $\N$ and $\Z$, while for finite-alphabet shift spaces we have topology, metric and language being strongly linked, in the context of infinite alphabets the relationship between them becomes more intricate. As example of this, we will see that topological conjugacies are not sufficient to keep features of the metric or of the language,  but it is necessary to use uniform topological conjugacies (with finite-to-one local rules, in some cases) to assure that features of the metric or of the language are preserved.

For instance, in the classical framework, shifts of finite type are defined as the class of shift spaces where just a finite number of patterns are not allowed to appear in the language. Although such a class of shift spaces provides very rich examples of dynamical behaviors whenever the alphabet is finite, for infinite-alphabet shifts to have only a finite number of patterns forbidden in the language is not an interesting restriction, since we have infinitely many symbols and most of the shift will behave as a full shift over an infinite alphabet. More problematic is the fact that, according to this definition, a classical shift of finite type over a finite alphabet is not a shift of finite type if it is thought to be contained in a full shift over infinite symbols. 

Following, if the lattice is $\N$ or $\Z$, we can define the class of edge shift spaces as the class of shift spaces that can be obtained from the (bi)infinite walks on some directed graph whose set of edges is the alphabet. Such a class can be defined as for finite-alphabet shifts as for infinite-alphabet ones  (in the later case, by considering graphs where the set of edges - and possibly also the set of vertices - is infinite). However, while in the finite-alphabet case, edges shifts conform a sub-class of shifts of finite type, in the infinite-alphabet case it is not in general a shift of finite type (if considering the classical definition of shifts of finite type). 

The differences between finite-alphabet shifts and infinite-alphabet shifts on the lattice $\N$ or $\Z$ grow worse when one tries to extend for infinite-alphabet shifts the definition of sofic shifts as it is given for finite-alphabet shifts. When the alphabet is finite one can define sofic shifts as the class of shift spaces that are obtained from any of the following two equivalent ways: From  the (bi)infinite walks on a finite labeled directed graph (Definition 3.1.3 in \cite{LindMarcus}); or as the image of a shift of finite type through a {\em sliding block code} (Theorem 3.2.1 in \cite{LindMarcus}). However,  as we shall prove in Theorem \ref{theo:motivation}, any  infinite-alphabet shift space on the lattice $\N$ can be generated from the infinite walks on some infinite labeled directed graph.

The problem of classifying shift spaces according to the features of their languages and/or topological behavior becomes still harder when one considers shift spaces on monoids other than $\N$ or $\Z$. For example, in such general context, even if the alphabet is finite, one cannot use graphs to define shift spaces with features analogous to those of sofic shift spaces on $\N$ or $\Z$.

A special attention must be paid to continuous shift-commuting maps between shift spaces. Such maps are the fundamental ones when studying dynamical systems on shift spaces or coding theory, and as we will see in the following sections, all the classifications used for finite-alphabet shift spaces on the lattices $\N$ or $\Z$ can be translated as the existence of specific types of sliding block codes.
We remark, that while the Curtis-Hedlund-Lyndon theorem ensures that continuous shift-commuting map between shift spaces over a finite alphabet are {\em sliding block codes} (see \cite{Helund1969}), when the alphabet is infinite they correspond to a larger class of maps called {\em generalized sliding block codes} (see \cite{SobottkaGoncalves2017}).

In this work, we shall propose alternative definitions for shifts of finite type and sofic shifts, that can be used in any general context, and that coincide with the classical ones in the context of finite-alphabet shifts on the lattices $\N$ and $\Z$. Furthermore, we shall present the class of {\em finitely defined shifts}, which is a natural extension of the class of shifts of finite type, and which contains a novel class of shift spaces that can only occur in the context of infinite alphabets. The shift spaces in this novel class are called {\em shifts of variable length} and can be interpreted as the topological versions of the variable length Markov chains \cite{BuhlmannWyner1999}.

The paper is organized as follows: In Section \ref{sec:general_shift_spaces} we present the basic definitions and results for shift spaces over any alphabet and on any monoid, while in Section \ref{sec:SBCs_and_GSBCs} we recall the definitions and basic properties of sliding block codes and generalized sliding block codes; In Section \ref{sec:HBCs_and_HBSs} we generalize the definition of higher block codes, which form a special class of generalized sliding block codes, 
and present some conditions under which shift spaces can be recoded as higher block shifts; In sections \ref{sec:SFTs} and \ref{sec:sofic} we give our general definitions for shifts of finite type and sofic shifts, and prove some basic results for them; In Section \ref{sec:graphs} we present some relationships between sofic shifts on the classical lattices $\N$ or $\Z$ and directed labeled graphs;  In Section \ref{sec:FDSs_and_SVLs} we introduce the new class of finitely defined shifts, which is composed by shifts of finite type and the so named shifts of variable length; In Section \ref{sec:relationships} we present the relationship between the classes that were defined in the previous sections; And in Section \ref{sec:final_discussion} we present some open problems and conjectures. At the end of the paper we present two appendixes where we propose alternative definitions for the concepts of memory for local rules of sliding block codes and for shift spaces, that are based on the metric of the monoid.

\section{Shift spaces over general alphabets and on general monoids}\label{sec:general_shift_spaces}

Let $\NZ$ be a {\bf monoid}, that is, a non-empty set with an  associative binary operation and an identity element. Let $1$ denote its identity, and for $g,h\in\NZ$ let $gh$ denotes the operation of $g$ with $h$. We will say that $\NZ$ is a {\bf metric monoid} if there exists a metric $d$ on $\NZ$ which is invariant under any left product, that is, for all $f,g,h\in\NZ$ we have $d(fg,fh)=d(g,h)$. A metric monoid $(\NZ,d)$ will be said to be {\bf conservative} if the cardinality of its closed balls is only function of their radii.

Given a nonempty set $A$ (an {\bf alphabet}) we will consider the {\bf full shift} $A^\NZ$ as the set of all configurations over $A$ indexed by $\NZ$. Given $N\subset\NZ$ and $\x=(x_i)_{i\in \NZ}\in A^\NZ$ we will denote the restriction of $\x$ to the indexes $N$ as $\x_N:=(x_i)_{i\in N}\in A^N$.

On $A$ we consider the discrete topology and on $A^\NZ$ we consider the respective prodiscrete topology. Given a finite set of indexes $D\subset \NZ$ and $(a_i)_{i\in D}\in A^{D}$ we define the cylinder
\begin{equation}\big[(a_i)_{i\in D}\big]_{A^\NZ}:=\{\x\in A^\NZ:\ x_i=a_i,\ \forall i\in D\}.\end{equation}
If $D=\{g\}$ we may simply denote $[b_g]_{A^\NZ}$ to the cylinder which fixes the symbol $b$ in the entry $g$. We remark that the collection of all cylinders forms a basis for the topology in $A^\NZ$. In the following proposition we recall some basic properties of full shift spaces which will be implicitly used along this article. 

\begin{prop}\label{prop:basic_properties} Let $A$ be an alphabet such that $\sharp A\geq 2$ and $A^\NZ$ be the respective full shift. It follows that:
\begin{enumerate}

\item\label{prop:basic_properties_Hausdorff} $A^\NZ$ is Hausdorff and totally disconnected;

\item\label{prop:basic_properties_compact} $A^\NZ$ is compact if and only if $A$ is finite;

\item\label{prop:basic_properties_firstcountable} $A^\NZ$ is first countable (and metrizable) if and only if $\NZ$ is countable;

\item\label{prop:basic_properties_secondcountable} $A^\NZ$ is second countable if and only if $\NZ$ and $A$ are countable.

\end{enumerate}
\end{prop}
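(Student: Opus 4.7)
The plan is to dispatch items (i)--(iv) separately using standard product-topology arguments, with the bulk of the work concentrated in the converse of (iii), which requires a diagonal argument exploiting the fact that every basic open set fixes only finitely many coordinates. For (i), the discrete space $A$ is Hausdorff and totally disconnected since every singleton is clopen, and both properties are preserved under arbitrary topological products; alternatively, given $\x\neq\y$ one picks a coordinate $g$ where they differ and notes that the cylinder $[x_g]_{A^\NZ}$ is a clopen separator. For (ii), the implication ``$A$ finite $\Rightarrow$ $A^\NZ$ compact'' is Tychonoff's theorem applied to a finite discrete space; conversely, each projection $\pi_g\colon A^\NZ\to A$ is continuous and surjective, so compactness of $A^\NZ$ would force compactness of the discrete space $A$, hence finiteness.

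For (iii), if $\NZ$ is countable I would enumerate it as $\{g_n\}_{n\in\N}$ and set $d(\x,\y):=\sum_{n\geq 1}2^{-n}\mathds{1}_{\{x_{g_n}\neq y_{g_n}\}}$, which is a metric generating the prodiscrete topology and therefore yields first countability (and metrizability). The harder direction, and the main obstacle of the whole proposition, is showing that $\NZ$ uncountable prevents first countability: I would assume for contradiction that some $\x$ admits a countable neighborhood base $\{U_n\}_{n\in\N}$, refine each $U_n$ to contain a basic cylinder fixing coordinates on a finite set $D_n\subset\NZ$, and note that $D:=\bigcup_n D_n$ is countable. Since $\NZ\setminus D$ is nonempty I can pick $g\in\NZ\setminus D$, and using $\sharp A\geq 2$ construct $\y$ agreeing with $\x$ off $g$ and differing at $g$; then $\y$ lies in every $U_n$, yet the cylinder $[x_g]_{A^\NZ}$ is an open neighborhood of $\x$ excluding $\y$, contradicting the base property.

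For (iv), the forward implication follows because when $\NZ$ and $A$ are both countable, the collection of cylinders indexed by pairs (finite subset $D\subset\NZ$, element of $A^D$) is a countable union of countable sets and forms a base for the prodiscrete topology. Conversely, second countability always implies first countability, so (iii) forces $\NZ$ to be countable; and if $A$ were uncountable then for any fixed $g\in\NZ$ the family $\{[a]_g\}_{a\in A}$ would be an uncountable collection of pairwise disjoint nonempty open sets, incompatible with the existence of a countable base (any base must contain a distinct basic element inside each member of such a disjoint family). Hence both $\NZ$ and $A$ must be countable, completing the proof.
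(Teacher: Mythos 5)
Your proof is correct and follows essentially the same route as the paper's: clopen cylinders for (i), Tychonoff plus the finiteness of $A$ via the cover $\{[a_g]_{A^\NZ}\}_{a\in A}$ (you use projections, the paper a finite subcover, which is the same observation) for (ii), an enumeration of $\NZ$ and a compatible metric for (iii), and counting cylinders for (iv). The only notable difference is that your converse of (iii) — the diagonal argument picking a coordinate $g$ outside the countable union $\bigcup_n D_n$ and flipping it — is spelled out in full where the paper only sketches it in one sentence, and this is the correct way to make that step rigorous.
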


\begin{proof} \phantom\\

\begin{enumerate}

\item Since cylinders are the basic open sets of $A^\NZ$, to check that $A^\NZ$ is totally disconnected we only need to prove that cylinders are closed sets. In fact, given $Z:=[(a_i)_{i\in D}]_{A^\NZ}$ and $\x\in A^\NZ\setminus Z$ there exists $k\in D$ such that $x_k\neq  a_k$, and thus $U:=[x_k]_{A^\NZ}\subset A^\NZ\setminus Z$ is an open neighborhood of $\x$.

 To check that $A^\NZ$ is Hausdorff, note that given $\x,\y\in A^\NZ$, $\x\neq\y$, there exists $g\in\NZ$ such that $x_g\neq y_g$. Hence $\x$ and $\y$ are separated by the cylinders $[x_g]_{A^\NZ}$ and  $[y_g]_{A^\NZ}$. 

\item If $A$ is finite, then it is compact for the discrete topology, and, from the Tychonoff Theorem, $A^\NZ$ will also be compact.
Conversely, if $A^\NZ$ is compact, then the open cover $\{[a_1]_{A^\NZ}:\ a\in A$\} admits a finite subcover, which implies that there are only finitely many symbols in $A$. 

\item If $\NZ$ is countable, then we can take an enumeration of it, say $\NZ=\{g_{\ell}\}_{\ell\geq 1}$ and for all $\x=(x_i)_{i\in\NZ}$ we have that the family of cylinders $\{[(x_{g_\ell})_{1\leq \ell\leq k}]_{A^\NZ}:\ k\geq 1\}$ is countable basis for the neighborhood of $\x$, that is, $A^\NZ$ is first countable (and the it is a sequential space). To check the converse, observe that if $\NZ$ is not countable, and since cylinders are the basic open sets of the topology and each cylinder is defined using only a finite number of coordinates, then a countable family of cylinders will cover all the coordinates of a sequence. 

We notice that  $\A^\NZ$ be first countable means that we can take any enumeration $\{g_\ell\}_{\ell\in\N}$ of $\NZ$ and to define the metric on $A^\NZ$ given for all $\x,\y\in A^\NZ$ by $d(\x,\y):=0$ if $\x=\y$, and $d(\x,\y):=2^{\min\{\ell:\ x_{g_\ell}\neq y_{g_\ell} \}}$ if $\x\neq\y$. It can be easily checked that such metrics generate the topology of cylinders. 

\item If $A$ and $\NZ$ are countable, then  the family of all cylinders of $A^\NZ$ will be countable. Conversely, if $A$ or $\NZ$ is not countable, then, since the topology contains an uncountable number of disjoint cylinders, it does not exist a countable basis for the topology. 
 
\end{enumerate}

\end{proof}

For each $g\in\NZ$ we define the $g$-shift map $\s^g:A^\NZ\to A^\NZ$ by $\s^g\big((x_i)_{i\in\NZ}\big)=(x_{gi})_{i\in\NZ}$ for all $(x_i)_{i\in\NZ}\in A^\NZ$. We remark that $\s^g$ is uniformly continuous\footnote{Even when $A^\NZ$ is not metrizable, it is uniformizable, that is, there exists a prodiscrete uniform
structure on $A^\NZ$ - see Section 1.9 and Appendix B in  \cite{Ceccherini-Silberstein--Coornaert}.} for all $g\in\NZ$  \cite[Proposition 1.2.2]{Ceccherini-Silberstein--Coornaert}.

Given a set $S\subset A^\NZ$ and $g\in \NZ$ we shall denote the translation of $S$ to $g$ as $\s^{g^{-1}}(S):=(\s^g)^{-1}(S)$, that is, the set such that $\s^g(\s^{g^{-1}}(S))=S$. We remark that here the notation $\s^{g^{-1}}(S)$ is used only for the sake of notation of the inverse image of $S$ by $\s^g$, and it is possible that $\s^{g^{-1}}(S)$ does not exist (that is, $\s^{g^{-1}}(S)=\emptyset$). However, if $\NZ$ is a group, then $g^{-1}$ will denote the inverse of $g$, and for any nonempty set $S$ we have $\s^{g^{-1}}(S)\neq\emptyset$ corresponding to the $g^{-1}$-shift of $S$. In particular, if $S$ is a cylinder (say $S=\big[(a_i)_{i\in D}\big]_{A^\NZ}$) and $\s^{g^{-1}}(S)$ exists, then 
\begin{equation}\label{eq:g-translation}\s^{g^{-1}}(S)=\big[(b_j)_{j\in gD}\big]_{A^\NZ},\quad\text{ where } b_j=a_i \text{ if } j=gi.\end{equation}

For each $k\in\N^*$, define $\mathcal{N}_A^k:=\displaystyle \bigcup_{\tiny\begin{array}{c}N\subset \NZ\\ \#N=k\end{array}}A^N$. Let $\NN_{A^\NZ}:=\displaystyle \bigcup_{\tiny k\in\N^*}\mathcal{N}_A^k= \bigcup_{\tiny\begin{array}{c}N\subset \NZ\\ \#N<\infty\end{array}}A^N$, and given a set of {\bf forbidden patterns} $ F\subset\NN_{A^\NZ}$, define the set

\begin{equation}\label{eq:X_F} X_F:=\{\x\in A^\NZ:\ \forall N\subset\NZ, \forall g\in\NZ,\ \left(\s^g(\x)\right)_{N}=(x_{gi})_{i\in N}\notin F\}.\end{equation}

Note that, since $F$ is a set of finite patterns, then in the definition of $X_F$ above it is sufficient to consider only $N\subset\NZ$ being a finite set  or even to restrict the choice of $N$ to those indexes used in some pattern of $F$. Given a set $F\subset\NN_{A^\NZ}$ we will denote as $M_F$ the set of all indexes of $\NZ$ that are used in some pattern of $F$, that is,
\begin{equation}\label{eq:M_F_defn}M_F:=\{g\in\NZ:\  \exists N\subset \NZ\text{ s.t. } g\in N\text{ and } A^N\cap F\neq\emptyset \}.\end{equation}

We remark that, though \eqref{eq:X_F} is the classical definition, one needs to take in mind that  
it implies that $X_F$ is strongly dependent as on the geometrical structure of $\NZ$ as on its algebraic structure.  In this sense, for the classical cases of monoids $\N^d$ or $\Z^d$, the definition given in \eqref{eq:X_F} can be intuitively thought as though to decide if $\x\in X_F$ one can look along $\x$ and check whether or not some pattern of $F$ appears there. However, what one really is doing is looking for the finite patterns that appear in each $\s^g(\x)$ (which in the topology of cylinders means to know the family of all the open sets into where each $\s^g(\x)$ lies). In the specific case of $\N^d$ and $\Z^d$ we have any $\s^g(\x)$ being easily identifiable inside $\x$ which implies that we can think this procedure as moving a fixed-width window along $\x$. But, in the general case this approach does not make sense. Example \ref{ex:shift_spaces_SFT} captures the previous discussion.\\

\begin{ex}\label{ex:shift_spaces_SFT} Consider the monoid $\N$ with the usual sum, and the monoid $\N^*$ with the usual product. Define on $\N$ the metric $d_\N(g,h):=|g-h|$, and define on $\N^*$ the metric $d_{\N^*}(g,h):=|\ln(g)-\ln(h)|$. Note that each of these metrics turns the respective monoid in a metric monoid. Furthermore, we can use each metric to define total orders on the monoids as follows: for $a,b\in\N$ we say $a\le b$ if and only if $d_\N(0,a)\le d_\N(0,b)$; and for $a,b\in\N^*$ we say $a\le b$ if and only if $d_{\N^*}(1,a)\le d_{\N^*}(1,b)$. Hence, we  have that $\N$ and $\N^*$ are geometrically represented as lattices where the nearest neighbors of $x_g$ are $x_{g-1}$ and  $x_{g+1}$.

\begin{figure}[H]
\centering
\includegraphics[width=.8\linewidth=1.0]{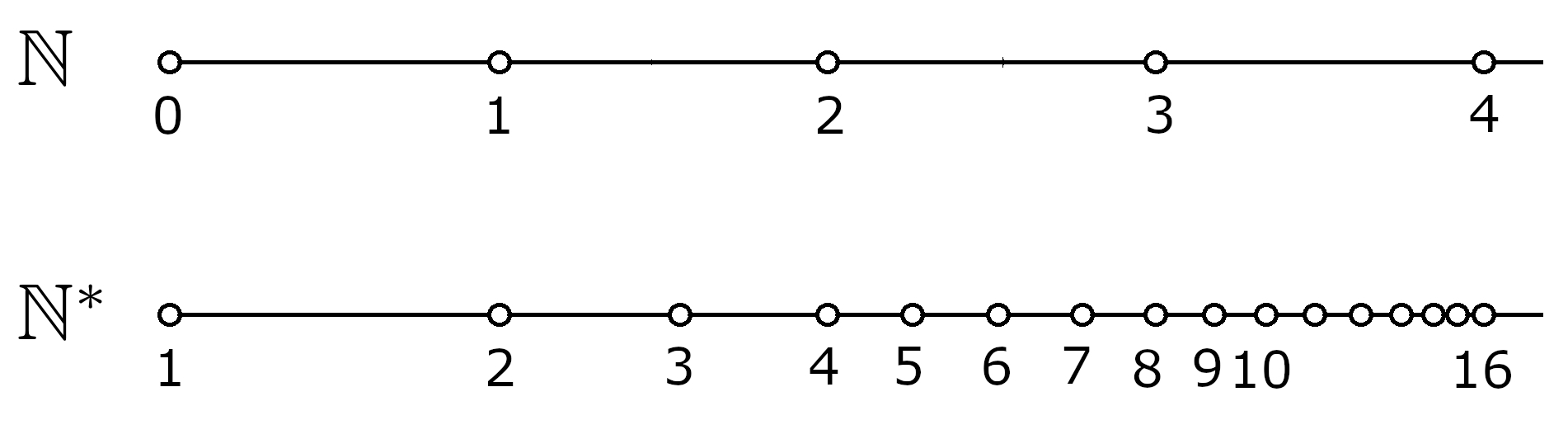}
\caption{At the top the geometric representation of the lattice $\N$ with the usual sum, where all the elements are uniformly spaced. At the bottom the geometric representation of the lattice $\N^*$ with the usual product, where the elements of the form $2^ng$ are uniformly spaced.}\label{Fig_Ex_Lattices}
\end{figure}

Now, let $A:=\{a,b\}$ and in $A^\N$ consider the set of forbidden words $F:=\{(x_0x_1): x_0\neq x_1\}$, and in $A^{\N^*}$ consider the set of forbidden words $F^*:=\{(x_1x_2): x_1\neq x_2\}$.
Note that in both cases was forbidden the words $(ab)$ and $(ba)$ on the positions indexed by the identity element and its nearest neighbor, that is, $M_F=\{0,1\}$ and $M_{F^*}=\{1,2\}$.

Let $\Lambda:=X_F$ and $\Lambda^*:=X_{F^*}$ and observe that while $\Lambda$ consists just of  two constant sequences, $(x_0x_1x_2x_3...)=(aaaa...)$ and  $(y_0y_1y_2y_3...)=(bbbb...)$, the shift $\Lambda^*$ contains infinitely many sequences. In fact, $\x\in A^{\N^*}$ belongs to $\Lambda^*$ if and only if for all $g\in A^{\N^*}$ we have $\big(\s^g(\x)\big)_{M_{F^*}}=(x_{g1},x_{g2})\notin F$. In other words,  $\x\in \Lambda^*$ if and only if for all $g\in \N^*$ it follows that $x_{g2^n}=x_g$ for all $n\geq 0$. Hence we get that are allowed in $\Lambda^*$ sequences where $x_g\neq x_h$ for any $g,h\in\N^*$ such that $g\neq  h2^n$ for all $n\in\Z$. 

We notice that this difference between $\Lambda$ and $\Lambda^*$ is due to the fact that while $(\N,d_\N)$ is conservative,  $(\N^*,d_{\N^*})$ is not.

\end{ex}

\begin{defn}\label{defn:shift_space} We say that a set $\Lambda\subset A^\NZ$ is a {\em shift space} if  $\Lambda=X_F$ for some  $ F\subset\NN_{A^\NZ}$ .
\end{defn}

\begin{rmk}\label{rmk:shift_space-closed_invariant}
Equivalently we have that $\Lambda\subset A^\NZ$ is a shift space if and only if it is closed with respect to the topology of $A^\NZ$ and invariant under any $g$-shift map (the proof is the same as that given in \cite[exercises 1.21 and 1.23]{Ceccherini-Silberstein--Coornaert}). 
\end{rmk}

We consider, on a given shift space $\Lambda$, the topology induced from $A^\NZ$ having as a basis the sets $$\big[(a_i)_{i\in D}\big]_\Lambda:=\big[(a_i)_{i\in D}\big]_{A^\NZ}\cap\Lambda.$$

For each nonempty finite $N\subset \NZ$ we define $W_N(\Lambda)$ as the set of all patterns of $A^N$ that appear in some element of $\Lambda$, that is, 

$$W_N(\Lambda):=\{(w_i)_{i\in N}\in A^N:\ \exists \ \x\in\Lambda \text{ s.t. } x_i=w_i\ \forall i\in N\}.$$

We notice that, with this notation, the set $W_{\{1\}}(\Lambda)$ contains all the symbols of $A$ that appear in some sequence of $\Lambda$.\\

The {\bf language} of $\Lambda$ will be defined as the set $W(\Lambda)$ of all possible finite patterns in elements of $\Lambda$:

$$W(\Lambda):=\displaystyle \bigcup_{\tiny\begin{array}{c}N\subset \NZ\\ \#N<\infty\end{array}} W_N(\Lambda).$$

Note that $W_N(A^\NZ)=A^N$ and $W(A^\NZ)=\NN_{A^\NZ}$.\\

We notice that we are using the name language in a more general way than it is used when the lattice is $\Z$ with usual sum. In that case, besides an empty word to be included in the language, the patterns are taken modulo translations. In fact, whenever $\NZ$ is a group, it is true that two sets of forbidden patterns that are equal modulo translation will generate the same shift space, and that if $N,M\subset\NZ$ are one a translation of the other, then  $W_N(\Lambda)=W_M(\Lambda)$ modulo translation.

 \begin{defn}\label{defn:complete_F}  We will say that a set of forbidden patterns $F\subset\NN_{A^\NZ}$ is {\bf complete} if and only if for all $w\notin W(X_F)$, there exists $u\in F$ which is a subpattern of $w$. 
 \end{defn}
 
Note that $F$ to be complete is equivalent to say that if $w\in \NN_{A^\NZ}$ does not contain any subpattern which belongs to $F$, then $w\in W(X_F)$. For example, if we consider $A=\{0,1\}$ and $\Z$ with the usual sum, and take $F:=\{(x_0x_1x_2)\in \NN_{A^\Z}:\ x_0=x_2=1,\ x_1\in A\}$, then it follows that $F$ is not complete since the pattern $(x_0x_2)$ with $x_0=x_2=1$ does not belong to the language of $X_F$ in spite of it does not contain any subpattern which belongs to $F$ (we recall that in our context we are allowed to take patterns which are not defined on consecutive indexes, which is not done in the classical context). 
On the other hand, given any shift space $\Lambda\subset A^\NZ$, we can always take $F:=\NN_{A^\NZ}\setminus W(\Lambda)$ which is complete and such that $X_F=\Lambda$. In particular, it is easy to check that given $F\in \NN_{A^\NZ}$ with $M_F$ finite, we can always find $\hat F\in \NN_{A^\NZ}$  complete with $M_{\hat F}$ finite, and such that $X_{\hat F}=X_F$.\\

The following result is a version of Proposition \ref{prop:basic_properties}, stated for general shift spaces. Its proof is left to the reader. 

\begin{prop}\label{prop:basic_properties_general} Let  $\Lambda\subset A^\NZ$ be a shift space with  $\sharp W_{\{1\}}(\Lambda)\geq 2$. It follows that:
\begin{enumerate}

\item\label{prop:general_basic_properties_Hausdorff} $\Lambda$ is Hausdorff and totally disconnected;

\item\label{prop:general_basic_properties_compact} $\Lambda$ is compact if and only if $W_{\{1\}}(\Lambda)$ is finite;

\item\label{prop:general_basic_properties_firstcountable} If $\NZ$ is countable then $\Lambda$ is first countable (and metrizable);

\item\label{prop:general_basic_properties_secondcountable} If $\NZ$ and $W_{\{1\}}(\Lambda)$  are countable, then $\Lambda$ is second countable.

\item\label{prop:general_basic_properties_secondcountable2} If $\Lambda$ is second countable, then $W_{\{1\}}(\Lambda)$ is countable.

\end{enumerate}
\end{prop}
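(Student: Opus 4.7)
The plan is to deduce each assertion from Proposition \ref{prop:basic_properties} together with the fact (Remark \ref{rmk:shift_space-closed_invariant}) that $\Lambda$ is closed in $A^\NZ$ and invariant under every shift map $\s^g$. Before tackling the individual items I would first establish the auxiliary containment $W_{\{g\}}(\Lambda) \subseteq W_{\{1\}}(\Lambda)$ for every $g \in \NZ$, which follows immediately from shift-invariance: if $\x \in \Lambda$ realizes $x_g = a$, then $\s^g(\x) \in \Lambda$ realizes $a$ at the identity coordinate. Consequently $\Lambda \subseteq W_{\{1\}}(\Lambda)^\NZ$, reducing several of the arguments from the ambient full shift $A^\NZ$ to the potentially smaller full shift $W_{\{1\}}(\Lambda)^\NZ$.

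For parts (i) and (iii) I would just invoke the fact that Hausdorffness, total disconnectedness, first countability, and metrizability are all properties inherited by subspaces, so these descend from Proposition \ref{prop:basic_properties}(i) and (iii) applied to $A^\NZ$. For part (ii), the reverse direction uses that $W_{\{1\}}(\Lambda)^\NZ$ is compact by Proposition \ref{prop:basic_properties}(ii) when $W_{\{1\}}(\Lambda)$ is finite, and that $\Lambda$, being closed in $A^\NZ$, is closed in this smaller compact space. The forward direction extracts a finite subcover from the pairwise disjoint nonempty open cover $\{[a]_\Lambda : a \in W_{\{1\}}(\Lambda)\}$ of $\Lambda$, which forces $W_{\{1\}}(\Lambda)$ to be finite.

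For part (iv) I would build a countable basis explicitly: the cylinders $[(a_i)_{i \in D}]_\Lambda$ with $D \subset \NZ$ finite and $(a_i) \in W_D(\Lambda)$ form a basis, and by the auxiliary containment $W_D(\Lambda) \subseteq W_{\{1\}}(\Lambda)^D$. Countability of $\NZ$ yields countably many finite $D$, and countability of $W_{\{1\}}(\Lambda)$ yields countably many patterns on each $D$, so the whole family is countable. For part (v), the same pairwise disjoint family $\{[a]_\Lambda : a \in W_{\{1\}}(\Lambda)\}$ must be at most countable in any second countable space, since each of its members must contain a distinct basis element; hence $W_{\{1\}}(\Lambda)$ is countable.

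Overall the proof is a routine adaptation of the full-shift arguments, and I do not expect any serious obstacle. The only genuinely new ingredient beyond Proposition \ref{prop:basic_properties} is the preliminary containment $W_{\{g\}}(\Lambda) \subseteq W_{\{1\}}(\Lambda)$, which is why I would isolate it at the start; it is what legitimately allows one to restrict attention to the full shift over the actually-occurring alphabet $W_{\{1\}}(\Lambda)$.
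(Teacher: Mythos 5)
Your proof is correct, and since the paper explicitly leaves this proposition to the reader as "a version of Proposition \ref{prop:basic_properties}," your strategy of reducing to the full-shift case via closedness and shift-invariance is exactly the intended one. The one genuinely non-formal ingredient you add, the containment $W_{\{g\}}(\Lambda)\subseteq W_{\{1\}}(\Lambda)$ obtained from invariance under $\s^g$, is correctly identified and correctly proved, and all five items then go through as you describe.
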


\qed

Note that, for general shift spaces, we cannot assure that $\NZ$ to be uncountable implies in $\Lambda$ to be non-first countable or non-second countable (for example, if $\Lambda$  contains only a countable number of constant sequences, it will be first countable and second in spite of the cardinality of $\NZ$).

\section{Sliding block codes and generalized sliding block codes}\label{sec:SBCs_and_GSBCs}

Sliding block codes play a main roll in the study topological invariants of shift spaces. 

\begin{defn}\label{SBC}
Let $A$ and $B$ be two alphabets and let  $\Lambda\subset A^\NZ$ be a shift space. A map $\Phi:\Lambda \to B^\NZ$ is a {\bf sliding block code (SBC)} if there exist a finite set $M\subset\NZ$ and a map (local rule) $\phi:W_M(\Lambda)\to B$ such that for all $\x\in\Lambda$ and $g\in\NZ$ we have $(\Phi(\x))_g=\phi\big(\s^g(\x)_M\big)=\phi\big((x_{gi})_{i\in M}\big)$.
\end{defn}


We remark that in the classical case of $\NZ$ being the lattice $\N^d$ or $\Z^d$ with the usual sum we have that  if $M\subset\NZ$ consists of $k$ nearest neighbors of the identity, then for any $g\in\NZ$ the neighborhood $gM$ of $g$ will consist of $k$ nearest neighbors of $g$. However, when considering the general case of $\NZ$ being any monoid it could not hold. For example if $\NZ$ is the monoid $\N^*$ with the usual product, then  taking the $k$ nearest neighbors of the identity, $M:=\{1,2,3,...,k\}$, for any $g\neq 1$ we will have $gM=\{g1,g2,g3,...,gk\}$ which are not the $k$ nearest neighbors of $g$. This discussion is captured in Example \ref{Ex:SBC_on_N*} below. 

\begin{ex}\label{Ex:SBC_on_N*}
Let $A:=\{0,1\}$, and consider the two following monoids: $\N$ with the usual sum; and $\N^*$ with the usual product. Let $M:=\{0,1\}\subset\N$ and $M^*:=\{1,2\}\subset\N^*$. 

Consider the sliding block codes $\Phi:A^\N\to A^\N$ and $\Phi^*:A^{\N^*}\to A^{\N^*}$, whose local rules are $\phi:W_M(A^\N)\to A$ given by $\phi(x_0,x_1):=x_0+x_1 {\tiny\text{ (mod 2)}}$, and 
$\phi^*:W_{M^*}(A^{\N^*})\to A$ given by $\phi^*(x_1,x_2):=x_1+x_2 {\tiny\text{ (mod 2)}}$.

Note that, the local rules of $\phi$ and $\phi^*$ are both defined on the positions indexed by the identity element of the correspondent monoid and their nearest neighbor. However, for any $g\in\N$ and $\x\in A^\N$ we have $$\left(\Phi(\x)\right)_g=
\phi(\x_{g+\{0,1\}})=\phi(\x_{\{g,g+1\}})=x_g+x_{g+1} {\tiny\text{ (mod 2)}},$$
while 
for any $g\in\N^*$ and $\x\in A^{\N^*}$ we have $$\left(\Phi^*(\x)\right)_g=
\phi^*(\x_{g\{1,2\}})=\phi^*(\x_{\{g,2g\}})=x_g+x_{2g} {\tiny\text{ (mod 2)}}.$$

\end{ex}

\begin{figure}[H]
\centering
\includegraphics[width=.9\linewidth=1.0]{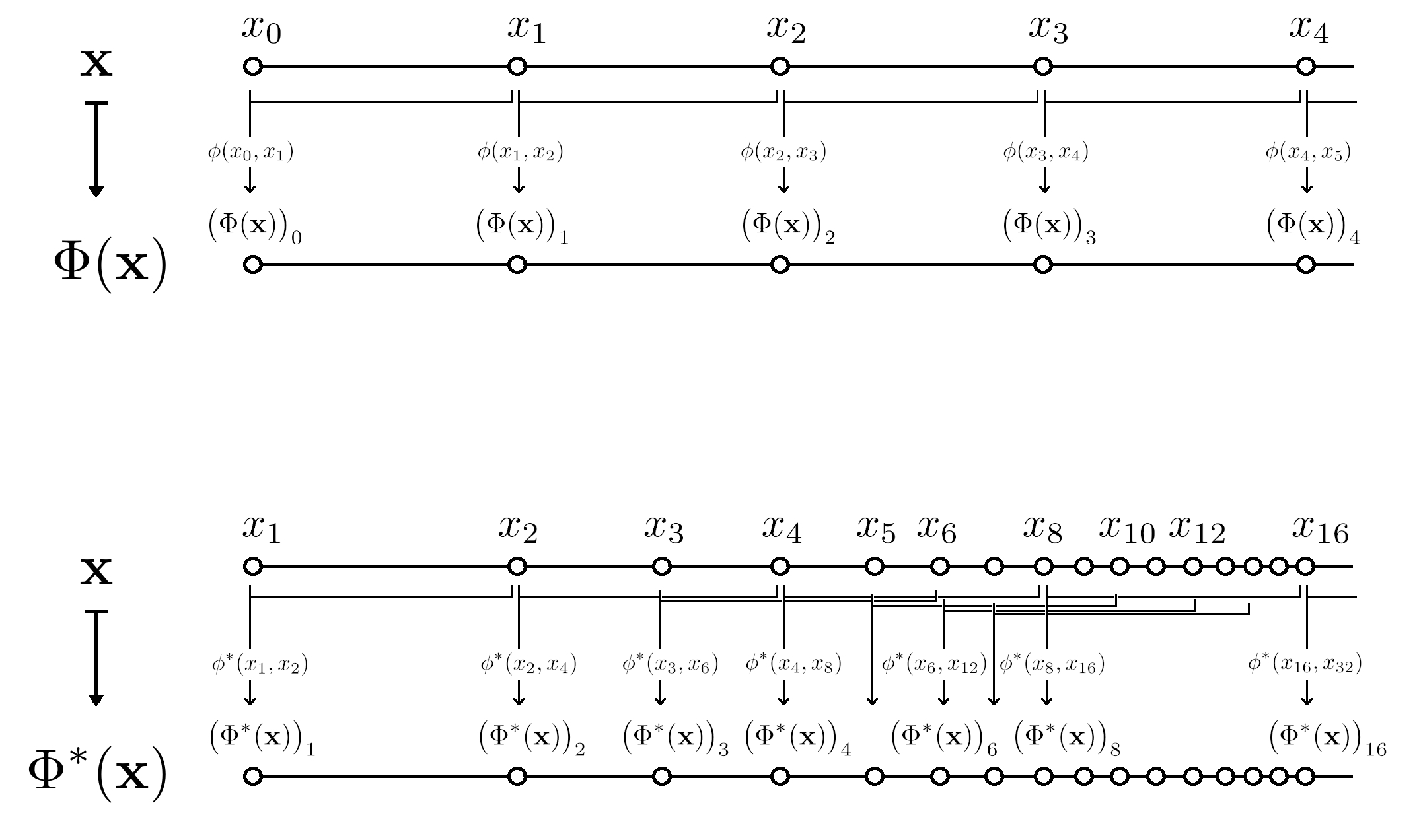}
\caption{At the top the sliding block code $\Phi$: Its local rule $\phi$ acts on the nearest neighbors $x_i$ and $x_{i+1}$. At the bottom the sliding block code $\Phi^*$: Its local rule $\phi^*$ acts on  $x_i$ and $x_{2i}$. Note that in terms of the metrics on $\N$ and $\N^*$,  both local rules, $\phi$ and $\phi^*$, act on constant-width windows.}\label{Fig_Ex_SBC}
\end{figure}

A generalization of sliding block codes was proposed in \cite{Romero_et_Al2006} for maps between shift spaces on the lattice $\Z^d$, and in \cite{SobottkaGoncalves2017}  it was formulated for maps between shifts on general monoids. Roughly speaking, $\Phi$ is said a generalized sliding block code\footnote{In \cite{Romero_et_Al2006} this class of maps was not named, although it was used the expression ``generalized local rules''. In \cite{SobottkaGoncalves2017} this class was designated as ``generalized sliding block codes'' while in \cite{Campos_et_Al2021} this class of maps is referred as ``extended sliding block codes'' (ESBC).} if for any $\x\in\Lambda$ and $g\in\NZ$ there exists a finite $N\subset\NZ$ (which is function of configuration $\x$ around $x_g$) such that  $(\Phi(\x))_g$ is function of the pattern $\x_{gN}$. Next we present the rigorous definition of generalized sliding block codes as given in \cite{SobottkaGoncalves2017}.

\begin{defn}\label{GSBC}
Let $A$ and $B$ be two alphabets and let  $\Lambda\subset A^\NZ$ be a shift space. A map $\Phi:\Lambda \to B^\NZ$ is a {\bf generalized sliding block code (GSBC)} if there exists $\{C_b\}_{b\in B}$ a partition of $\Lambda$ where each nonempty $C_b$ is a union of cylinders of $\Lambda$, such that
\begin{equation}\label{eq:LR_block_code}\bigl(\Phi(\x)\bigr)_g=\sum_{b\in B}b\mathbf{1}_{C_b}\circ\sigma^g(\x),\quad \forall \ \x\in\Lambda,\ \forall \ g\in\NZ, \end{equation} where  $\mathbf{1}_{C_b}$ is the
characteristic function of the set $C_b$ and $\sum$ stands for the symbolic sum.
\end{defn}

 
 \begin{rmk}\label{rmk:finitely_defined_set} The sets $C_b$ that are used in the definition of GSBC are called {\bf finitely defined sets}.  Roughly, a set $\mathcal{S}$ is a finitely defined set, if for all $\x\in A^\NZ$ there exists a finite $D\subset\NZ$ such that one only needs to know $\x_D$ to decide whether $\x$ belongs to $\mathcal{S}$ or not. Finitely defined sets were originally introduced in \cite{GSS,GSS1} and developed in \cite{ZS2020,GS2019} where they were used to study continuous shift-commuting maps between alternative types of symbolic dynamical systems where it were considered other topologies than the prodiscrete one. In the context of  the prodiscrete topology, a set $\mathcal{S}\subset A^\NZ$ is a finitely defined set if and only if it is a clopen set. 
\end{rmk}

\begin{rmk}\label{rmk:form_for_shift_Commuting_maps}
Note that for each $b\in B$ we have $C_b=\Phi^{-1}([b_1]_{B^\NZ})$ where $[b_1]_{B^\NZ}$ is the cylinder of $B^\NZ$ which fixes the symbol $b$ at the position $1$ (the identity of $\NZ$). Furthermore, any shift-commuting map $\Phi$ can be written in the form \eqref{eq:LR_block_code} (however, if $\Phi$ is not continuous, then some set $C_b$ will not be open). 
\end{rmk}

\begin{rmk}\label{rmk:finite_coordinates} A map $\Phi$ is an SBC if and only if it satisfies \eqref{eq:LR_block_code} and each nonempty $C_b$ can be written as the union of cylinders whose coordinates belong to a same finite set $M\subset\NZ$. In particular, whenever the alphabet $A$ is finite, we have that any GSBC is an SBC (due to the compactness of the shift space). 
\end{rmk}

By recalling that $A^\NZ$ can be endowed with the prodiscrete uniform
structure \cite[Section 1.9]{Ceccherini-Silberstein--Coornaert}, we can also define SBCs and GSBCs through the following theorems which are generalizations of the classical Curtis-Hedlund-Lyndon Theorem.

\begin{theo}[Theorem 1.9.1 in \cite{Ceccherini-Silberstein--Coornaert}] \label{theo:hedlund_classic} A map $\Phi:\Lambda\subset A^\NZ \to B^\NZ$ is a {\bf sliding block code} if, and only if, it is {\bf uniformly continuous} and {\bf commutes with all $g$-shift maps}.
\end{theo}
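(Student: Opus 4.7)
The plan is to work directly with the prodiscrete uniformity on $A^\NZ$ and $B^\NZ$, whose canonical base of entourages is $U_E := \set{(\x,\y) \in A^\NZ\times A^\NZ : x_i = y_i \text{ for every } i \in E}$, indexed by finite $E \subset \NZ$. In this language, $\Phi$ is uniformly continuous precisely when for each finite $D \subset \NZ$ there exists a finite $E \subset \NZ$ such that $\x_E = \y_E$ forces $(\Phi(\x))_D = (\Phi(\y))_D$. I would prove the two implications separately.

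For the forward direction, assume $\Phi$ is an SBC with memory $M$ and local rule $\phi : W_M(\Lambda) \to B$. Shift-commutation is a one-line calculation from the definition: for $g,h\in\NZ$, $(\Phi(\s^g(\x)))_h = \phi((x_{ghi})_{i\in M}) = (\Phi(\x))_{gh} = (\s^g(\Phi(\x)))_h$. For uniform continuity, given any finite $D$, set $E := \set{dm:d\in D,\ m\in M}$, which is a finite subset of $\NZ$; if $\x_E = \y_E$ then $\x_{dM} = \y_{dM}$ for each $d \in D$, so applying the local rule coordinate-wise yields $(\Phi(\x))_D = (\Phi(\y))_D$, as required.

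For the converse, apply uniform continuity to the single-coordinate entourage $U_{\{1\}}$ on $B^\NZ$: there is a finite $M \subset \NZ$ such that $\x_M = \y_M$ forces $(\Phi(\x))_1 = (\Phi(\y))_1$. This is exactly the statement that $(\Phi(\x))_1$ depends only on the pattern $\x_M \in W_M(\Lambda)$, so I can unambiguously define a local rule $\phi : W_M(\Lambda) \to B$ by $\phi(\x_M) := (\Phi(\x))_1$. Shift-commutation then propagates this to every position: for any $g \in \NZ$,
$(\Phi(\x))_g = (\s^g(\Phi(\x)))_1 = (\Phi(\s^g(\x)))_1 = \phi\bigl((\s^g(\x))_M\bigr) = \phi\bigl((x_{gi})_{i\in M}\bigr)$,
which is exactly the SBC form with memory $M$ and local rule $\phi$.

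The delicate step is the converse, specifically the extraction of a single finite $M$ that controls the value at the identity coordinate uniformly over all $\x \in \Lambda$. Under plain continuity one would obtain, for each individual $\x$, only a neighborhood sufficient to determine $(\Phi(\x))_1$, with that neighborhood potentially depending on $\x$; when $A$ is infinite there is no compactness of $\Lambda$ available to upgrade such a pointwise choice to a single global $M$. The uniform continuity hypothesis is precisely what sidesteps this issue, and it is the only place where strengthening ordinary continuity to its uniform variant is genuinely used — which is also why the classical Curtis-Hedlund-Lyndon statement must be reinforced in this direction in the general-alphabet setting.
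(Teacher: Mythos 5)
Your proof is correct and is essentially the standard argument for the generalized Curtis--Hedlund--Lyndon theorem: the paper itself gives no proof here, deferring entirely to Theorem 1.9.1 of the cited reference, whose proof proceeds exactly as you do (uniform continuity of an SBC via the finite window $DM$, and conversely extraction of a finite memory set from the entourage $U_{\{1\}}$ followed by propagation via shift-commutation). Your closing remark correctly identifies why uniformity, rather than mere continuity, is the essential hypothesis in the infinite-alphabet setting.
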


\qed

\begin{theo}[Theorem 5 in \cite{SobottkaGoncalves2017}\footnote{A first version of this result, concerning shift spaces on the lattice $\Z^d$, was given in \cite[Theorem 4]{Romero_et_Al2006}.}]\label{theo:hedlund_general} A map $\Phi:\Lambda\subset A^\NZ \to B^\NZ$ is a {\bf generalized sliding block code} if, and only if, it is {\bf continuous} and {\bf commutes with all $g$-shift maps}.
\end{theo}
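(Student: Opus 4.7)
The plan is to handle the two directions of the biconditional separately; the forward direction is a direct computation, while the backward direction proceeds by extracting the partition from the preimages of the one-coordinate cylinders of $B^\NZ$.

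For the ``only if'' direction, suppose $\Phi$ satisfies \eqref{eq:LR_block_code} with $\{C_b\}_{b\in B}$ a partition of $\Lambda$ into unions of cylinders. Shift-commutation is immediate by substitution: $(\Phi(\sigma^h\x))_g = \sum_{b\in B} b\,\mathbf{1}_{C_b}(\sigma^g\sigma^h\x)$ coincides with $(\Phi(\x))_{hg} = (\sigma^h\Phi(\x))_g$, since $\sigma^g\sigma^h = \sigma^{hg}$ on $A^\NZ$. For continuity it suffices, using the basis of cylinders of $B^\NZ$, to show that $\Phi^{-1}\bigl([b_i]_{B^\NZ}\bigr)$ is open for every $b\in B$ and $i\in\NZ$, where $[b_i]_{B^\NZ}$ denotes the cylinder fixing $b$ at coordinate $i$. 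Using \eqref{eq:LR_block_code} at $g=i$, this preimage equals $(\sigma^i)^{-1}(C_b)$, which is open by continuity of $\sigma^i$ together with the hypothesis that $C_b$ is open.

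For the ``if'' direction, assume $\Phi$ is continuous and shift-commuting, and define $C_b := \Phi^{-1}\bigl([b_1]_{B^\NZ}\bigr)$ for each $b\in B$. Since $\{[b_1]_{B^\NZ}\}_{b\in B}$ partitions $B^\NZ$, the family $\{C_b\}_{b\in B}$ partitions $\Lambda$. Each $[b_1]_{B^\NZ}$ is clopen by Proposition \ref{prop:basic_properties}, so continuity of $\Phi$ makes each $C_b$ a clopen subset of $\Lambda$; since the cylinders of $\Lambda$ form a basis for its topology, every open subset of $\Lambda$ is a union of cylinders, and in particular so is each $C_b$. To recover \eqref{eq:LR_block_code}, shift-commutation yields
\[ (\Phi(\x))_g \;=\; (\sigma^g\Phi(\x))_1 \;=\; (\Phi(\sigma^g\x))_1, \]
and by construction this last coordinate is the unique $b\in B$ such that $\sigma^g\x\in C_b$, which is exactly $\sum_{b\in B} b\,\mathbf{1}_{C_b}(\sigma^g\x)$.

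The main obstacle is really bookkeeping: one must verify that each $\sigma^g$ restricts to $\Lambda$ (using the invariance part of Remark \ref{rmk:shift_space-closed_invariant}), track the order of composition when $\NZ$ is non-commutative, and note that clopen subsets of a space whose topology is generated by a cylinder basis are automatically unions of cylinders. Beyond that, no genuinely new idea is required: the argument is the classical Curtis--Hedlund--Lyndon proof with the compactness hypothesis dropped, which is precisely why plain continuity (rather than uniform continuity, as in Theorem \ref{theo:hedlund_classic}) is the correct hypothesis.
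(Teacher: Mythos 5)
Your proof is correct and is exactly the standard Curtis--Hedlund--Lyndon-style argument: the paper itself gives no proof of Theorem \ref{theo:hedlund_general} (it cites Theorem 5 of \cite{SobottkaGoncalves2017}), and the argument there proceeds the same way, reading off openness of each $C_b=\Phi^{-1}\bigl([b_1]_{B^\NZ}\bigr)$ from continuity and recovering \eqref{eq:LR_block_code} via $(\Phi(\x))_g=(\Phi(\sigma^g\x))_1$. Your bookkeeping remarks (shift-invariance of $\Lambda$, the identity $\sigma^g\circ\sigma^h=\sigma^{hg}$, and openness being equivalent to being a union of cylinders) are precisely the points that need checking, and they all go through.
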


\qed

\begin{rmk} The characterizations given in theorems \ref{theo:hedlund_classic} and \ref{theo:hedlund_general}, provide a direct way to check that the classes of SBCs and GSBCs are invariant under compositions of their respective elements.
\end{rmk}

\begin{rmk} Note that a $\bar g$-shift map $\s^{\bar g}$ is not necessarily a sliding block code, since it could not commute with the other $g$-shift maps. In fact, $\s^{\bar g}$ will be a sliding block code on $\Lambda$ if, and only if, for all $\x\in\Lambda$ and $i\in\NZ$ we have $x_{\bar{g}i}=x_{i\bar{g}}$ (this condition holds trivially if $\bar g$ belongs to the center of $\NZ$, that is, if $\bar{g}i=i\bar{g}$ for all $i\in\NZ$). In terms of the expression \eqref{eq:LR_block_code}, when $\s^{\bar g}$ is an SBC, then for each $b\in B(=A)$ we have $C_b=[b_{\bar g}]_{\Lambda}$.

\end{rmk}

Next, we define some subclasses of GSBCs and SBCs which will play fundamental roles in the definition of sofic shifts.\\

\begin{defn}\label{defn:locally-finite-to-one_SBC}
We say that a generalized sliding block code $\Phi:\Lambda\to B^\NZ$ is {\bf locally finite-to-one} if each $C_b=\Phi^{-1}([b_1]_{B^\NZ})$ can be written as the union of a finite number of cylinders. When a locally finite-to-one generalized sliding block code is such that there exists $k\in\N$ such that each $C_b$ is the union of at most $k$ cylinders, we will say that it is {\bf locally bounded finite-to-one} with {\bf order $k$}.
\end{defn}

In the particular case of $\Phi$ being a sliding block code between infinite-alphabet shift spaces, to be  locally finite-to-one does not implies that  it can be written with a local rule $\phi:W_M(\Lambda)\to B$ which is finite-to-one. The next example shows this fact:

\begin{ex}

Let $\Phi:\N^\N \to (\N\times\N)^\N$ be the map given for all $\x\in \N$ and $i\in\N$ by $(\Phi(\x))_i:=(x_i,x_ix_{i+1}+x_i)$. Observe that $\Phi$ is an invertible sliding block code which can be written in the form \eqref{eq:LR_block_code} with the following finitely defined sets: $C_{(0,0)}$ composed only by the cylinder that fixes $0$ in the coordinate $0$; $C_{(a,b)}=\emptyset$, if $a=0$ and $b\neq 0$ or if $a\neq 0$ and $(b-a)/a\notin\N$; and $C_{(a,b)}$ composed only by the cylinder that fixes $a$ in the coordinate $0$ and $(b-a)/a$ in the coordinate $1$, if $a\neq 0$ and $(b-a)/a\in\N$. Hence $\Phi$ is locally bounded finite-to-one, however the local rule $\phi:\N^2\to\N^2$ is not finite-to-one (since $\phi(0,b)=(0,0)$ for all $b\in\N$).

\end{ex}

The next results give some sufficient conditions for $\Phi(\Lambda)$ to be a shift space.

\begin{theo}\label{theo: image_of_shifts-intersection} Suppose $\NZ$ is countable and $\Lambda\subset A^\NZ$ is such that for all nested family of nonempty cylinders $\{W_\ell\}_{\ell\geq 1}$  it follows that $\bigcap_{i\geq 1}W_i\neq\emptyset$. If $\Phi:\Lambda\to B^\NZ$ is a locally finite-to-one generalized sliding block code, then $\Phi(\Lambda)$ is a shift space.

\end{theo}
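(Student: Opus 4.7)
By Remark~\ref{rmk:shift_space-closed_invariant}, it suffices to show that $\Phi(\Lambda)$ is closed in $B^\NZ$ and invariant under every $g$-shift map. Shift invariance is immediate: by Theorem~\ref{theo:hedlund_general} the map $\Phi$ commutes with every $\sigma^g$, and $\Lambda$ itself is shift-invariant. So the substantive point is closedness. Since $\NZ$ is countable, Proposition~\ref{prop:basic_properties}\eqref{prop:basic_properties_firstcountable} gives that $B^\NZ$ is metrizable, so it is enough to check sequential closedness: given $\y^{(n)}\to\y$ in $B^\NZ$ with each $\y^{(n)}\in\Phi(\Lambda)$, we must produce $\x\in\Lambda$ with $\Phi(\x)=\y$.

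The key structural observation I would establish first is that for every $g\in\NZ$ and every $b\in B$, the set $\Phi^{-1}\C{[b_g]_{B^\NZ}}$ is a \emph{finite} union of cylinders of $\Lambda$. Indeed, $\Phi^{-1}\C{[b_g]_{B^\NZ}}=\s^{g^{-1}}(C_b)\cap\Lambda$, and by the locally finite-to-one hypothesis $C_b$ is a finite union of cylinders of $\Lambda$, say $C_b=\bigcup_{j=1}^{k_b}[u^{(j)}_b]_\Lambda$. By formula~\eqref{eq:g-translation}, $\s^{g^{-1}}\C{[u^{(j)}_b]_\Lambda}$ is either empty (when the translated constraints are inconsistent, which can happen if $\NZ$ is not left-cancellative) or again a cylinder of $\Lambda$. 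In either case the preimage is a finite union of cylinders of $\Lambda$.

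With that in hand, I would choose preimages $\x^{(n)}\in\Lambda$ of the $\y^{(n)}$, enumerate $\NZ=\{g_k\}_{k\geq 1}$, and run a pigeonhole/diagonal construction. Fix $k\geq 1$; for all $n$ large enough, $y^{(n)}_{g_j}=y_{g_j}$ for $j=1,\dots,k$, so each such $\x^{(n)}$ lies in the finite union $\bigcap_{j\leq k}\Phi^{-1}\C{[y_{g_j}]_{g_j}}\cap\Lambda$ of cylinders of $\Lambda$. Intersecting with the cylinder chosen at the previous stage still yields a finite union of cylinders, and by pigeonhole one of them contains $\x^{(n)}$ for infinitely many $n$; call it $W_k$ and pass to that subsequence. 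This produces a nested sequence $W_1\supset W_2\supset\cdots$ of nonempty cylinders of $\Lambda$ such that every $\x\in W_k$ satisfies $\C{\Phi(\x)}_{g_j}=y_{g_j}$ for $j\leq k$. By the hypothesis on $\Lambda$, $\bigcap_{k\geq 1}W_k\neq\emptyset$, and any $\x$ in this intersection satisfies $\Phi(\x)=\y$, which is what we wanted.

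I expect the main delicate points to be (a) verifying that preimages of cylinders behave well even when $\NZ$ is merely a monoid (handled by formula~\eqref{eq:g-translation} together with the empty-set contingency above), and (b) carrying out the pigeonhole/diagonalization cleanly so that the passage to subsequences at each stage does not destroy the nested cylinder structure.
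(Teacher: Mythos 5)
Your proposal is correct and follows essentially the same route as the paper's proof: reduce to closedness, use the locally finite-to-one hypothesis (together with \eqref{eq:g-translation} and the possible emptiness of translated cylinders) to write each $\Phi^{-1}\big([y_{g_j}]_{B^\NZ}\big)$ as a finite union of cylinders, extract a nested sequence of nonempty cylinders by a finitely-branching selection, and invoke the hypothesis on $\Lambda$ to get a point mapping to $\y$. The only (cosmetic) difference is that you realize the selection step by pigeonhole on actual preimage points $\x^{(n)}$ with passage to subsequences, whereas the paper selects one cylinder $W^\ell_{j(\ell)}$ from each decomposition so that all partial intersections remain nonempty; your version makes that K\"onig-type step slightly more explicit.
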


\begin{proof} Since $\Phi$ is a GSBC, then $\s(\Phi(\Lambda))=\Phi(\s(\Lambda))\subset\Phi(\Lambda)$. Hence, to conclude that $\Phi(\Lambda)$ is a shift space, we only need to prove that it is closed in $B^\NZ$.\\

Let $(\y^i)_{i\geq 1}$ be a sequence in $\Phi(\Lambda)$ which converges to some $\z=(z_g)_{g\in \NZ}\in B^\NZ$. Let us show that $\z\in \Phi(\Lambda)$.

Consider $\{g_\ell\}_{\ell\geq 1}$  an enumeration of $\NZ$, and for each $k\geq 1$ define $$\mathcal{Z}_k:=\big[(z_{g_\ell})_{1\leq\ell\leq k}\big]_{B^\NZ},$$
which is the cylinder that fixes the symbol $z_{g_\ell}$ in the position $g_\ell$ for $1\leq \ell\leq k$.

Observe that $\{\mathcal{Z}_k\}_{k\geq 1}$ is family of nested nonempty cylinders such that $\bigcap_{k\geq 1}\mathcal{Z}_k=\{\z\}$ and, since $\y^i$ converges to $\z$, for each $k\geq 1$ there exists $m\geq 1$ such that $\y^m\in \mathcal{Z}_k$, which implies that for all $k\geq 1$ we have  
\begin{equation}\label{eq:U_ell} \begin{array}{lcl}\emptyset&\neq& \Phi^{-1}(\mathcal{Z}_k)\\\\
&=&\Phi^{-1}\left(\big[(z_{g_\ell})_{1\leq\ell\leq k}\big]_{B^\NZ}\right)\\\\
&=&\Phi^{-1}\left(\bigcap_{1\leq\ell\leq k}
\big[z_{g_\ell}\big]_{B^\NZ}\right)\\\\
&=&\bigcap_{1\leq\ell\leq k}
\Phi^{-1}\big([z_{g_\ell}]_{B^\NZ}\big)=\bigcap_{1\leq\ell\leq k}U_\ell,\end{array}\end{equation}
where $U_\ell:=\Phi^{-1}\big([z_{g_\ell}]_{B^\NZ}\big)$. Furthermore, by denoting $C_{b^{\ell}}:=\Phi^{-1}([{b^{\ell}_1}]_{B^\NZ})$ with $b^{\ell}:=z_{g_\ell}$, it follows that $U_\ell=\s^{g_\ell^{-1}}(C_{b^{\ell}})$ where $\s^{g_\ell^{-1}}(C_{b^{\ell}})$ denotes the translation of $C_{b^\ell}$ to $g_\ell$. Since $\Phi$ is locally finite-to-one then each $C_{b^\ell}$ can be written as a union of finitely many pairwise-disjoint cylinders of $\Lambda$ and, since each $U_\ell\neq\emptyset$, then from \eqref{eq:g-translation} we get that each $U_\ell$ is also the union of finitely many pairwise-disjoint cylinders, say \begin{equation}\label{eq:U_ell-finite}U_\ell=\bigcup_{1\leq i\leq N(\ell)}W^\ell_i.\end{equation} Since \eqref{eq:U_ell} holds for all $k\geq 1$ and \eqref{eq:U_ell-finite} holds for all $\ell\geq 1$, it follows that for each $\ell$ there exists $1\leq j(\ell)\leq  N(\ell)$ such that \begin{equation}\label{eq:U_ell-intersect}\emptyset\neq \bigcap_{1\leq\ell\leq k} W^\ell_{j(\ell)}\subset \bigcap_{1\leq\ell\leq k}U_\ell= \Phi^{-1}(\mathcal{Z}_k),\qquad \forall k\geq 1.\end{equation}
Since $\left\{\cap_{1\leq\ell\leq k} W^\ell_{j(\ell)}\right\}_{k\geq 1}$ is a nested family of nonempty cylinders, by hypothesis on $\Lambda$ it follows that $\bigcap_{\ell\geq 1} W^\ell_{j(\ell)}\neq\emptyset$\sloppy. Thus,
\begin{equation}\label{eq:nonempty_intersect}\emptyset\neq\bigcap_{\ell\geq 1} W^\ell_{j(\ell)}\subset  \bigcap_{\ell\geq 1}U_\ell=\bigcap_{k\geq 1}\bigcap_{1\leq\ell\leq k}U_\ell=\bigcap_{k\geq 1}\Phi^{-1}(\mathcal{Z}_k)=\Phi^{-1}(\z),\end{equation}
and then $\z\in\Phi(\Lambda)$.

\end{proof}

Note that  $\bigcap_{\ell\geq 1} W^\ell_{j(\ell)}$ in \eqref{eq:nonempty_intersect} is a closed set of $\Lambda$ which is fixing infinitely many (but not necessarily all) coordinates. In Theorem \ref{theo: image_of_shifts} we shall assume some conditions that imply $\bigcap_{\ell\geq 1} W^\ell_{j(\ell)}$ fixes all the coordinates of $\NZ$.

\begin{cor}\label{cor:image_Phi-fullshift} Suppose $\NZ$ is countable and  $\Phi:A^\NZ\to B^\NZ$ is a locally finite-to-one generalized sliding block code, then $\Phi(A^\NZ)$ is a shift space.
\end{cor}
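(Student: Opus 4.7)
The plan is to apply Theorem \ref{theo: image_of_shifts-intersection} with $\Lambda = A^\NZ$. The hypotheses that $\NZ$ is countable and that $\Phi$ is a locally finite-to-one generalized sliding block code already match those of Theorem \ref{theo: image_of_shifts-intersection}, so the only point to check is the intersection property: every nested family $\{W_\ell\}_{\ell \geq 1}$ of nonempty cylinders in $A^\NZ$ satisfies $\bigcap_{\ell \geq 1} W_\ell \neq \emptyset$.

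To verify this, I would write each $W_\ell = \big[(a_i^\ell)_{i \in D_\ell}\big]_{A^\NZ}$ with $D_\ell \subset \NZ$ finite, and pass to the minimal such defining set (which is unique whenever $\sharp A \geq 2$; the case $\sharp A \leq 1$ is trivial, since $A^\NZ$ is then empty or a singleton). Using $W_{\ell+1} \subset W_\ell$, choosing any $\x \in W_{\ell+1}$, and exploiting the freedom to modify coordinates outside $D_{\ell+1}$, one sees that if some $i \in D_\ell$ were not in $D_{\ell+1}$ then a modification of $\x$ at $i$ would still lie in $W_{\ell+1}$ but escape $W_\ell$, a contradiction. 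Hence $D_\ell \subseteq D_{\ell+1}$, and comparing values along $D_\ell$ gives the compatibility $a_i^{\ell+1} = a_i^\ell$ for every $i \in D_\ell$. Setting $D := \bigcup_{\ell \geq 1} D_\ell$, defining $a_i$ on $D$ as the common value, and choosing $a_i \in A$ arbitrarily for $i \in \NZ \setminus D$, the configuration $\x := (a_i)_{i \in \NZ}$ lies in every $W_\ell$. The intersection property is thereby established, Theorem \ref{theo: image_of_shifts-intersection} applies, and $\Phi(A^\NZ)$ is a shift space.

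I do not anticipate a genuine obstacle here: the full shift $A^\NZ$ is \emph{structure-free}, in the sense that any consistent system of finitely-supported constraints can be simultaneously realized, which is precisely the hypothesis that Theorem \ref{theo: image_of_shifts-intersection} demands of $\Lambda$. The only minor subtlety is the passage to minimal defining sets, which makes the nestedness at the level of the index sets $\{D_\ell\}_{\ell \geq 1}$ and the coherence of the prescribed values transparent; everything else reduces to quoting the previous theorem.
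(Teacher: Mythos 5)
Your proposal is correct and follows exactly the paper's route: the paper's own proof simply observes that the full shift satisfies the nested-cylinder intersection property and invokes Theorem \ref{theo: image_of_shifts-intersection}. You merely spell out the (straightforward) verification of that property, which the paper leaves implicit as ``direct''.
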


\begin{proof} It is direct from the fact that the full shift $A^\NZ$ satisfies the property stated in Theorem \ref{theo: image_of_shifts-intersection}.

\end{proof}

We recall that if some $U_\ell$ in \eqref{eq:U_ell} could not be written as the union of finitely many cylinders (that is, if $\Phi$ were not locally finite-to-one), then we could not assure the existence of cylinders $W^\ell_{j(\ell)}$ satisfying \eqref{eq:U_ell-intersect}. Such fact is encapsulated by Example 1.1. in \cite{Campos_et_Al2021} where it is given a counterexample of Corollary \ref{cor:image_Phi-fullshift} when $\Phi$ fails in being locally finite-to-one. On the other hand, for a shift space $\Lambda$ where there exists a nested family of nonempty cylinders whose intersection of all of its members is empty, it is possible that in spite of some generalized sliding block code $\Phi$ to be locally finite-to-one, we have $\Phi(\Lambda)$ being not closed (due to the fact that the non-emptyness in \eqref{eq:nonempty_intersect} could fail - see example  below).

\begin{ex}\label{ex:Phi(Lambda)-nonshift}
Consider the lattice $\N$ with usual sum, and $\Lambda\subset (\N^*)^\N$ being the smallest shift space which, for $n\geq 1$, contains configurations $\y^n\in(\N^*)^\N$ in the form $y_0^n=n$, $y_i^n=i$ for all $i=1,...,n-1$ and $y_j^n=n$ for all $j\geq n$. 

Let $\Phi:\Lambda\to B^\NZ$ be the classical shift map, that is, $\Phi\big((y_i)_{i\in\N}\big):=\s\big((y_i)_{i\in\N}\big)=(y_{i+1})_{i\in\N}$. 
Observe that the sequence $\z=(z_i)_{i\in\N}$ with $z_i=i+1$ belongs to $\Lambda$, but it does not belong neither to $\Phi^{-1}(\Lambda)$ nor to $\Phi(\Lambda)$. However, $\Phi(\y^n)\to\z$ as $n\to\infty$, which means that $\Phi(\Lambda)$ is not closed.

In particular, observe that taking $\mathcal{Z}_k:=[(z_\ell)_{0\leq \ell\leq k}]_{B^\NZ}$, for all $k\geq 0$, it follows that $\Phi^{-1}(\mathcal{Z}_k)=[(z_{\ell-1})_{1\leq \ell\leq k+1}]_{\Lambda}$\sloppy, which, from the definition of $\Lambda$, contains only the points $\y^n$ with $n\geq k+1$. Hence, $\bigcap_{k\geq 1}\Phi^{-1}(\mathcal{Z}_k)=\emptyset$.

\end{ex}

Note that to assure that $\Phi(\Lambda)$ is closed we do not need all nested family of nonempty cylinders $\Lambda$ having nonempty intersection, but only that such property works for any family $\{W^\ell_{j(\ell)}\}_{\ell\geq 1}$ of cylinders of $\Lambda$ which are defined from \eqref{eq:U_ell} and \eqref{eq:U_ell-finite}. Thus, the closedness of $\Phi(\Lambda)$ depends on the relationship between the local rule of $\Phi$ and the structure of $\Lambda$. In \cite{Campos_et_Al2021} it was presented sufficient conditions as on $\Lambda$ as on the local rule of $\Phi:\Lambda\subset A^\Z\to \Phi(\Lambda)\subset B^\Z$, for  $\Phi(\Lambda)$ to be a shift space \cite[Theorems 3.1 and 3.2]{Campos_et_Al2021} and for $\Phi^{-1}$ (when $\Phi$ is invertible) to be a generalized sliding block code too \cite[Theorem 3.3]{Campos_et_Al2021}. Theorem below presents some conditions which allow to get some results like those in \cite{Campos_et_Al2021}.

\begin{theo}\label{theo: image_of_shifts} Let $\Lambda\subset A^\NZ$ be a shift space and $\Phi:\Lambda\to B^\NZ$ be a locally finite-to-one generalized sliding block code, and 
for each $b\in B$ let $C_b:=\Phi^{-1}([b_1]_{B^\NZ})$.  If at least one of the following conditions holds:
\begin{enumerate}
\item[(A1)] $W_{\{1\}}(\Lambda)$ is finite;

\item[(A2)] $\NZ$ is countable and each $C_b$ can be written as the union of cylinders defined on the coordinate $1$;

\item[(A3)] $\NZ$ is a countable group  and exists $n\in\NZ$ such that each $C_b$ can be written as the union of cylinders defined on the coordinate $n$;

\end{enumerate}
then it follows that:

\begin{enumerate}
\item $\Phi$ is a closed map;

\item $\Phi(\Lambda)$ is a shift space;

\item If $\Phi$ is injective, then $\Phi^{-1}:\Phi(\Lambda)\to\Lambda$ is also a generalized sliding block code (but not necessarily locally finite-to-one).
\end{enumerate}

\end{theo}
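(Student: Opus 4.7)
The plan is to establish item (1) first and then derive (2) and (3) from it. To show that $\Phi$ is closed, fix a closed $K \subseteq \Lambda$, pick a sequence $(\y^i)_{i \geq 1}$ in $\Phi(K)$ converging to some $\z \in B^{\NZ}$, and choose preimages $\x^i \in K$ with $\Phi(\x^i) = \y^i$; it suffices to produce $\x \in K$ with $\Phi(\x) = \z$. Under (A1), the finiteness of $W_{\{1\}}(\Lambda)$ forces $\Lambda$ to be compact by Proposition \ref{prop:basic_properties_general}(\ref{prop:general_basic_properties_compact}), so $K$ is compact, $\Phi(K)$ is compact in the Hausdorff space $B^{\NZ}$, and hence closed.

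Under (A2) or (A3), the key idea is a coordinate-wise diagonal extraction exploiting the local-finiteness hypothesis. Under (A2), since each $C_b$ is a union of cylinders on coordinate $1$ and $\Phi$ is locally finite-to-one, one in fact has $C_b = \bigcup_{a \in A_b}[a_1]_{\Lambda}$ with $A_b$ finite, so $\Phi$ acts as a letter-by-letter code $(\Phi(\x))_g = \phi(x_g)$ with finite fibers. Prodiscrete convergence $\y^i \to \z$ then says that for every $g \in \NZ$, eventually $x^i_g$ lies in the finite set $\phi^{-1}(z_g)$; fixing an enumeration $\{g_\ell\}_{\ell \geq 1}$ of the countable monoid $\NZ$, a standard diagonal argument produces a subsequence $(\x^{i_k})$ along which $x^{i_k}_{g_\ell}$ is eventually constant in $k$ for each $\ell$. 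The limit $\x$ belongs to $K$ because $K$ is closed in $\Lambda$, and satisfies $\Phi(\x) = \z$ by construction. Under (A3) the same argument applies after observing that $(\Phi(\x))_g = \phi(x_{gn})$ and that right multiplication by $n$ is a bijection on the group $\NZ$, so $\{g_\ell n\}_{\ell \geq 1}$ is still an enumeration of $\NZ$.

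Items (2) and (3) then follow easily. Taking $K = \Lambda$ in item (1) shows that $\Phi(\Lambda)$ is closed; combined with the shift-invariance $\sigma^g(\Phi(\Lambda)) = \Phi(\sigma^g(\Lambda)) \subseteq \Phi(\Lambda)$ and Remark \ref{rmk:shift_space-closed_invariant}, this gives item (2). For item (3), if $\Phi$ is injective it is a continuous closed bijection $\Lambda \to \Phi(\Lambda)$, hence a homeomorphism, so $\Phi^{-1}$ is continuous and trivially commutes with every shift map; Theorem \ref{theo:hedlund_general} then upgrades this to $\Phi^{-1}$ being a GSBC. The main obstacle is precisely the coordinate-wise diagonal extraction under (A2)/(A3): one must combine the very restricted shape of the sets $C_b$ (cylinders on a single coordinate) with local finiteness to pin down each coordinate of $\x^i$ inside a finite set, and then use countability of $\NZ$---together with the bijectivity of right multiplication by $n$ in the group case---to guarantee that the chosen coordinates exhaust $\NZ$. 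Without these structural assumptions, Example \ref{ex:Phi(Lambda)-nonshift} already shows that $\Phi(\Lambda)$ can fail to be closed.
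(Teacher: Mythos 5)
Your proof is correct, and it rests on the same two pillars as the paper's: compactness under (A1), and, under (A2)/(A3), the fact that each coordinate of any preimage of $\z$ is confined to a finite subset of $A$, combined with countability of $\NZ$. The packaging of the (A2)/(A3) case differs, though. The paper reruns the nested-cylinder machinery of Theorem \ref{theo: image_of_shifts-intersection}: it writes $S\cap\Phi^{-1}(\mathcal{Z}_k)$ as an intersection of sets $U_\ell$, each a finite union of cylinders, extracts a coherent choice $j(\ell)$ by a K\"onig-type argument, and then observes that under (A2)/(A3) every coordinate of $\NZ$ is eventually fixed, so the resulting points converge. You instead reduce $\Phi$ at the outset to a one-block code $(\Phi(\x))_g=\phi(x_{gn})$ with finite fibres and run a direct diagonal subsequence extraction; this is the same compactness principle but more self-contained, and it makes the role of the hypotheses (surjectivity of $g\mapsto gn$ on the group under (A3), so that every coordinate of $\NZ$ gets pinned down) more transparent. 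One caveat, which you share with the paper: the claim that $C_b=\bigcup_{a\in A_b}[a_n]_\Lambda$ with $A_b$ finite requires reading (A2)/(A3) as asserting that the \emph{finitely many} cylinders witnessing local finite-to-oneness are each defined on the single coordinate $1$ (resp.\ $n$); knowing separately that $C_b$ is some finite union of cylinders and also some possibly infinite union of coordinate-$n$ cylinders would not by itself force the fibre $A_b$ to be finite. Since the paper's own proof makes the identical inference (``each of these cylinders can be taken defined on the coordinate 1''), this is evidently the intended reading. Item (1) under (A1) and items (2) and (3) are handled exactly as in the paper.
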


\begin{proof}\phantom\\

\begin{enumerate}

\item If $W_{\{1\}}(\Lambda)$ is finite, then $\Lambda$ is compact. Since $B^\NZ$ is Hausdorff and $\Phi$ is continuous, it follows that $\Phi$ is a closed map (due to the Closed Map Lemma).\\

Now, suppose (A2) or (A3) holds, and let $S\subset \Lambda$ be a closed set. To check that $\Phi(S)$ is closed we will proceed as in Theorem \ref{theo: image_of_shifts-intersection}. Let $(\y^i)_{i\geq 1}$ be a sequence in $\Phi(S)$ converging to $\z=(z_g)_{g\in \NZ}\in B^\NZ$. Let us check that $\z\in \Phi(S)$.
Let $\{g_\ell\}_{\ell\geq 1}$ be  an enumeration of $\NZ$, and for each $k\geq 1$ let $\mathcal{Z}_k:=\big[(z_{g_\ell})_{1\leq\ell\leq k}\big]_{B^\NZ}$. Then, following as in \eqref{eq:U_ell} and \eqref{eq:U_ell-finite}, we have

\begin{equation}\label{eq:U_ell_2}\begin{array}{lcl}\emptyset&\neq& S\cap \Phi^{-1}(\mathcal{Z}_k)
=S\cap\bigcap_{1\leq\ell\leq k}
\Phi^{-1}\big([z_{g_\ell}]_{B^\NZ}\big)\\\\
&=&S\cap\bigcap_{1\leq\ell\leq k}U_\ell
=S\cap\bigcap_{1\leq\ell\leq k}\s^{g_\ell^{-1}}(C_{b^{\ell}})=S\cap\bigcap_{1\leq\ell\leq k}\bigcup_{1\leq i\leq N(\ell)}W^\ell_i,\end{array}\end{equation}

where $b^{\ell}:=z_{g_\ell}$ and each $W^\ell_i$ is a cylinder of $\Lambda$. 

Recall that each $C_{b^{\ell}}$ can be written as the union of $N(\ell)$ cylinders. Thus,  
 if (A2) holds, then each of these cylinders can be taken defined on the coordinate 1, and so, each cylinder $W^\ell_i$ composing $\s^{g_\ell^{-1}}(C_{b^{\ell}})$ is defined on the coordinate $g_\ell$. On the other hand, if (A3) holds, then there exists $n\in \NZ$ which is a coordinate used in each of the $N(\ell)$ cylinders that compose $C_{b^{\ell}}$. Therefore, since $\NZ$ is a group, given any $h\in\NZ$, there exists $g_\ell$ such that $g_\ell n=h$, and so, each $W^\ell_i$ composing $\s^{g_\ell^{-1}}(C_{b^{\ell}})$ is defined on the coordinate $h$. Hence, we have that as (A2) as (A3) implies that there exists $K$ such that any coordinate of $\NZ$ will be eventually fixed in $\Phi^{-1}(\mathcal{Z}_k)$ for all $k\geq K$. Now, using the same argument used to obtain \eqref{eq:U_ell-intersect}, we get that for each $\ell$ there exists $1\leq j(\ell)\leq N(\ell)$ such that
$$\emptyset\neq S\cap\bigcap_{1\leq\ell\leq k} W^\ell_{j(\ell)}\subset S\cap\bigcap_{1\leq\ell\leq k}U_\ell= S\cap\Phi^{-1}(\mathcal{Z}_k),\qquad \forall k\geq 1.$$
 
Hence, we can take a sequence $(\x^k)_{k\geq 1}\in S$ where $\x^k\in S\cap\bigcap_{1\leq\ell\leq k} W^\ell_{j(\ell)}$. It follows that $(\x^k)_{k\geq 1}$ converges to some $\x$ in $A^\NZ$, and since $S$ is closed we conclude that $\x\in S$. Finally, from the continuity of $\Phi$, we have $\Phi(\x)=\lim_{k\to\infty}\Phi(\x^k)=\z$.

\item  From the previous item, under (A1), (A2) or (A3) we have that $\Phi$ is a closed map, and then $\Phi(\Lambda)$ is closed in $B^\NZ$. On the other hand, $\Phi$ commutes with any $g$-shift map, and then $\Phi(\Lambda)$ is invariant under any $g$-shift map.

\item Suppose $\Phi:\Lambda\to\Phi(\Lambda)$  is invertible. It is direct that $\Phi^{-1}$ commutes with any $g$-shift map. On the other hand, if (A1), (A2) or (A3) holds, then $\Phi$ is also an open map (as consequence of it to be an invertible closed map), which means that $\Phi^{-1}$ is continuous (in particular, if (A1) holds, then $W_{\{1\}}(\Lambda)$ and $W_{\{1\}}(\Phi(\Lambda))$ are finite and $\Phi^{-1}$ is necessarily a locally bounded finite-to-one sliding block code).

\end{enumerate}

\end{proof}

We remark that theorems \ref{theo: image_of_shifts-intersection} and \ref{theo: image_of_shifts} do not exhaust all possible conditions under which $\Phi(\Lambda)$ is a shift space. In Section \ref{sec:HBCs_and_HBSs} we will study a particular class of generalized sliding block codes for which more general conditions can be obtained.\\

Note that the classes of locally finite-to-one maps and locally bounded finite-to-one maps are both closed for compositions. Furthermore, while in the infinite alphabet case the class of locally bounded finite-to-one SBCs, the class of locally finite-to-one SBCs, the class of SBCs, and the class of GSBCs do not coincide (see Figure \ref{Fig_GSBC-SBC}), when the alphabet is finite all four classes collapses in the single class of locally bounded finite-to-one SBCs.

\begin{figure}[H]\
\centering
\includegraphics[width=0.8\linewidth=1.0]{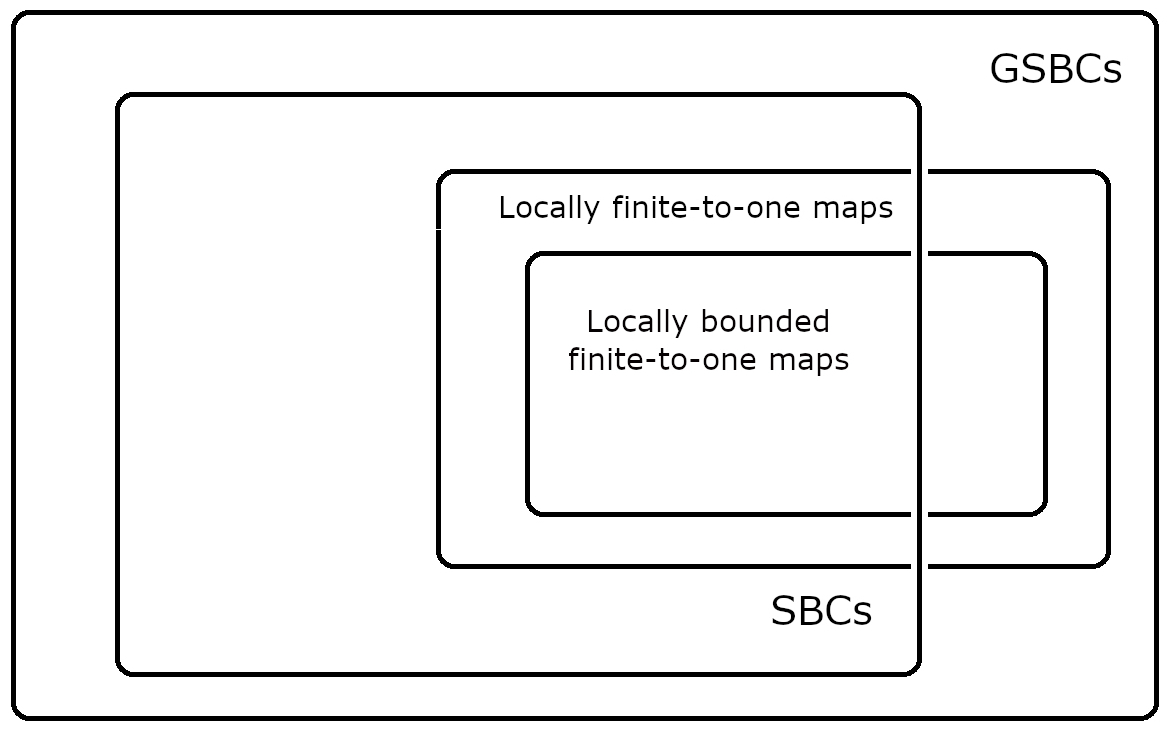}
\caption{The relationship between classes of continuous shift-commuting maps.}\label{Fig_GSBC-SBC}
\end{figure}

We recall that a topological dynamical system is a pair $(X,T)$ where $X$ is a topological space and $T:X\to X$ is a continuous map. In the context of shift spaces, we have that $\Lambda\subset A^\NZ$ with all the actions $\s^g$ conform a topological dynamical system, and generalized sliding block codes play the role of topological conjugacies, topological factors and topological extensions between the shift spaces.

\begin{defn}\label{defn:conjugacy}
We say that two shift spaces $\Lambda\subset A^\NZ$ and $\Gamma\subset B^\NZ$ are {\bf topologically conjugate} (or simply {\bf conjugate}) if and only if there exists an invertible generalized sliding block code $\Phi:\Lambda\to\Gamma$ such that $\Phi^{-1}$ is also a generalized sliding block code (in such a case, we say $\Phi$ is a {\bf (topological) conjugacy} between $\Lambda$ and $\Gamma$).

We say that the shift space $\Gamma$ is a {\bf topological factor} (or simply a {\bf factor}) of $\Lambda$ if and only if there exists an onto generalized sliding block code $\Phi:\Lambda\to\Gamma$ (in such a case, we say $\Phi$ is a {\bf (topological) factor map} from $\Lambda$ to $\Gamma$). When $\Gamma$ is a factor of $\Lambda$ we can equivalently to say that $\Lambda$ is a {\bf topological extension} (or simply an {\bf extension}) of $\Gamma$.

Whenever the map $\Phi$ (together its inverse, when it is the case) is a sliding block code, we will say that the above objects are {\bf uniform/uniformly} (e.g., $\Phi:\Lambda\to\Gamma$ is a uniform conjugacy and $\Lambda$ and $\Gamma$ are uniformly conjugate).
\end{defn}

\section{Higher block codes and higher block shifts}\label{sec:HBCs_and_HBSs}

We recall here the definition of the {\bf $N^{th}$-higher block shift}, which is given in \cite[Definition 1.4.1]{LindMarcus} for finite-alphabet shifts on the lattice $\N$ or $\Z$ with the usual sum:  For $N\in \N^*$, the {\bf $N^{th}$-higher block code} is the sliding block code $\Phi^{[N]}:A^\NZ \to (A^N)^\NZ$ given by \begin{equation*}\label{eq:Nth_higher_block_code}\Phi^{[N]}\big((x_i)_{i\in\NZ}\big)=(x_i...x_{N-1+i})_{i\in\NZ};\end{equation*} and 
 given a shift space $\Lambda\subset A^\NZ$, the {\bf $N^{th}$-higher block presentation} of $\Lambda$ is $\Lambda^{[N]}\subset (A^N)^\NZ$ given by
\begin{equation}\label{eq:Nth_higher_block_presentation}\Lambda^{[N]}:=\Phi^{[N]}(\Lambda)=\{(w_{i}...w_{N-1+i})_{i\in\NZ}:\ (w_i)_{i\in\NZ}\in\Lambda\},\end{equation}
which is also referred as the {\bf $N^{th}$-higher block shift} of $\Lambda$.

For shift spaces over infinite alphabets or on other lattices we need a definition which encapsulates all ways one can encode sets of coordinates as the symbols of a new shift space:

\begin{defn}\label{defn:N-higher_block_shift} Let  $\Lambda\subset A^\NZ$ be any shift space and $\bfN$ be a partition of $\Lambda$ by cylinders. Let $\bfM_\bfN:=\{M\subset\NZ:\ \exists a_i\in A\ s.\ t.\ [(a_i)_{i\in M}]_{\Lambda}\in \bfN\}$, that is, $M\in \bfM_\bfN$ if and only if it is a set of $\NZ$ which contains exactly all the coordinates of some cylinder of $\bfN$.

Given $\x\in \Lambda$, let $Z_\x\in \bfN$ be the cylinder which contains $\x$, and $M_\x\in\bfM_\bfN$ be the set off all coordinates used in the definition of $Z_\x$. Let $$A^{[\bfN]}:=\{(a_i)_{i\in M}\in\NN_{A^\NZ}:\ [(a_i)_{i\in M}]_{\Lambda}\in\bfN\},$$ 
and define the {\bf $\bfN$-higher block code} as the map $\Phi^{[\bfN]}:\Lambda \to (A^{[\bfN]})^\NZ$ given by 
$$\Phi^{[\bfN]}(\x):=\left(\x_{gM_{\s^g(\x)}}\right)_{g\in\NZ}\qquad(where\ \x_{gM_{\s^g(\x)}}\equiv(x_{gi})_{i\in M_{\s^g(\x)}}\in A^{M_{\s^g(\x)}},\ for\ all\ g\in\NZ).$$

Hence, the {\bf $\bfN$-higher block presentation} of $\Lambda$ is
 $\Lambda^{[\bfN]}\subset (A^{[\bfN]})^\NZ$  given by $$\Lambda^{[\bfN]}:=\Phi^{[\bfN]}(\Lambda)$$
 If  the $\bfN$-higher block presentation of $\Lambda$ is a shift space, then it is said to be the {\bf $\bfN$-higher block shift} of $\Lambda$.
 
\end{defn}

Note that the $N^{th}$-higher block code given in \cite[Definition 1.4.1]{LindMarcus} for shift spaces on the lattices $\N$ and $\Z$ corresponds to a $\bfN$-higher block code where $\bfN$ is the family of all cylinders on the coordinates $\{0, 1, ..., N-1\}$. In fact, in this case we have $\bfM_\bfN=\big\{\{0,1,...,N-1\}\big\}$ which implies $M_\x=\{0,1,...,N-1\}$ for all $\x\in A^\NZ$, $A^{[\bfN]}=A^{\{0,1,...,N-1\}}$, and then $\Phi^{[\bfN]}(\x)=(\x_{g+M_{\s^g(\x)}})_{g\in\NZ}=(x_gx_{g+1}...x_{g+N-1})_{g\in\NZ}$.

The next result characterizes the higher block codes as generalized sliding block codes and gives some sufficient conditions under which a shift and its higher block presentation are uniformly conjugated  (generalizing \cite[Example 1.5.5]{LindMarcus} and \cite[Example 1.5.10]{LindMarcus} stated for classical shifts).

\begin{rmk}\label{rmk:higher_block_presentation} For any $\bfN$ partition of $\Lambda$ by cylinders, it follows that  $\Phi^{[\bfN]}:\Lambda \to (A^{[\bfN]})^\NZ$ is a locally bounded finite-to-one GSBC with order 1. In fact, just observe that  $\Phi^{[\bfN]}$  can be written in the form of \eqref{eq:LR_block_code} with $C_b=[(\beta_i)_{i\in M}]_\Lambda$ for each $b:=(\beta_i)_{i\in M}\in A^{[\bfN]}$. 

Furthermore, $\bfM_\bfN$ being finite means that $L=\bigcup_{M\in\bfM_\bfN}M$ is finite and, since any cylinder of $\bfN$ is defined on coordinates contained in $L$, from Remark \ref{rmk:finite_coordinates} we get that $\bfM_\bfN$ being finite is equivalent to $\Phi^{[\bfN]}$ be an SBC.
\end{rmk}

Given a point $\mathbf{b}=(b_g)_{g\in\NZ}\in(A^{[\bfN]})^\NZ$,  recall that for each $g\in\NZ$ we have $b_g=(\beta^g_i)_{i\in M^g}\in A^{[\bfN]}$ for some $M^g\in\bfM_\bfN$. We will say that $\mathbf{b}\in(A^{[\bfN]})^\NZ$ holds the {\bf overlapping condition} if, and only if, it is such that
 \begin{equation}\label{eq:overlapping_condition}  g,h\in\NZ,\  m\in M^g, n\in M^h, \text{ such that } gm=hn\qquad\Longrightarrow\qquad\beta^g_{m}=\beta^{h}_{n}.\end{equation}

We notice that whenever  $1\in\bigcap_{M\in\bfM_\bfN}M$, the overlapping condition is equivalent to have for all $g\in\NZ$ and $m\in M^g$ that $\beta^g_m=\beta^{gm}_1$ (the proof is left to the reader).

\begin{lem}\label{lem:overlapping_condition} Let $\bfN$ be a partition of $\Lambda\subset A^\NZ$ by cylinders. If $\mathbf{b}\in(A^{[\bfN]})^\NZ$ belongs to $\Lambda^{[\bfN]}$, then it verifies the overlapping condition.
\end{lem}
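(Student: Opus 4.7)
The plan is to unfold the definition of $\Phi^{[\bfN]}$ and observe that the overlapping condition is essentially built in: both $\beta^g_m$ and $\beta^h_n$ are two ways of reading the same coordinate of the underlying point $\x \in \Lambda$.

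First I would assume $\mathbf{b} \in \Lambda^{[\bfN]}$, which by Definition \ref{defn:N-higher_block_shift} means $\mathbf{b} = \Phi^{[\bfN]}(\x)$ for some $\x \in \Lambda$. Unpacking the definition of the $\bfN$-higher block code, for each $g \in \NZ$ we have
\begin{equation*}
b_g \;=\; \x_{gM_{\s^g(\x)}} \;=\; (x_{gi})_{i \in M_{\s^g(\x)}},
\end{equation*}
so in the notation $b_g = (\beta^g_i)_{i \in M^g}$ of the statement one reads off $M^g = M_{\s^g(\x)}$ and $\beta^g_i = x_{gi}$ for every $i \in M^g$.

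Next I would take any $g, h \in \NZ$ and any $m \in M^g$, $n \in M^h$ satisfying $gm = hn$. Applying the identification from the previous step to both indices gives
\begin{equation*}
\beta^g_m \;=\; x_{gm} \;=\; x_{hn} \;=\; \beta^h_n,
\end{equation*}
which is exactly \eqref{eq:overlapping_condition}. Since $g, h, m, n$ were arbitrary subject to the compatibility $gm = hn$, this shows that $\mathbf{b}$ verifies the overlapping condition.

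There is no real obstacle here: the statement is a direct unfolding of definitions, with the key observation being that the entries of $\Phi^{[\bfN]}(\x)$ are coordinates of a single sequence $\x$, and so any two entries pointing to the same index of $\NZ$ must agree. The only thing worth double-checking is that the interpretation $\beta^g_i = x_{gi}$ is used consistently with the convention in Definition \ref{defn:N-higher_block_shift}, since the same symbol of $A$ may be recorded in many different $b_g$'s; once this is set up, no further work is needed.
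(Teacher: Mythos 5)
Your proof is correct and follows essentially the same route as the paper's: both arguments write $\mathbf{b}=\Phi^{[\bfN]}(\x)$ for some $\x\in\Lambda$, identify $\beta^g_i$ with the coordinate $x_{gi}$ of the single underlying sequence, and conclude that $gm=hn$ forces $\beta^g_m=x_{gm}=x_{hn}=\beta^h_n$. No gaps; the observation you flag about using the identification $\beta^g_i=x_{gi}$ consistently is precisely the content of the paper's argument as well.
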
 

\begin{proof}
Let $\mathbf{b}=(b_g)_{g\in\NZ}=\big((\beta^g_i)_{i\in M^g}\big)_{g\in\NZ} \in(A^{[\bfN]})^\NZ$.
We have that 
$\mathbf{b}\in \Lambda^{[\bfN]}=\Phi^{[\bfN]}(\Lambda)$ if and only if there exists a sequence
$\x:=(\gamma_g)_{g\in \NZ}\in \Lambda$ such that $\Phi^{[\bfN]}(\x)=\mathbf{b}$, that is,  
$$\Phi^{[\bfN]}\big((\gamma_g)_{g\in \NZ})=\big((\gamma_{gi})_{i \in M_{\s^g(\x)}}\big)_{g\in \NZ}
=\big((\beta^g_i)_{i\in  M^g}\big)_{g\in\NZ}.$$

Thus, from the last equality above, we get that if $\mathbf{b}\in \Lambda^{[N]}$, then for all $g\in \NZ$ and $i\in M_{\s^g(\x)}=M^g$ we have $\gamma_{gi}=\beta^g_i$. Hence, for all $h\in \NZ$ and $j\in M^h$ such that $gi=hj$, it follows that $\beta^g_i=\gamma_{gi}=\gamma_{hj}=\beta^h_j$.

\end{proof}

\begin{cor}\label{cor:full_shift-non_overlapping_condition} Let $\bfN$ be a partition of $A^\NZ$ by cylinders. We have that  $\mathbf{b}\in(A^{[\bfN]})^\NZ$ belongs to $(A^\NZ)^{[\bfN]}$ if, and only if, it verifies the overlapping condition.
\end{cor}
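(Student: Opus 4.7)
The plan is to view this corollary as a case of Lemma \ref{lem:overlapping_condition} together with an explicit construction going in the reverse direction. The necessity ($\mathbf{b}\in(A^\NZ)^{[\bfN]}$ implies the overlapping condition) is immediate by applying Lemma \ref{lem:overlapping_condition} to the shift space $\Lambda=A^\NZ$. So the entire content is in proving sufficiency.

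For sufficiency, suppose $\mathbf{b}=\bigl((\beta^g_i)_{i\in M^g}\bigr)_{g\in\NZ}\in(A^{[\bfN]})^\NZ$ satisfies the overlapping condition \eqref{eq:overlapping_condition}. I want to build some $\x=(\gamma_h)_{h\in\NZ}\in A^\NZ$ with $\Phi^{[\bfN]}(\x)=\mathbf{b}$. The natural recipe is: for every $h\in\NZ$ expressible as $h=gi$ with $g\in\NZ$ and $i\in M^g$, pick any such representation and set $\gamma_h:=\beta^g_i$; and for every $h$ not of that form, set $\gamma_h$ to be any fixed element of $A$ (nonempty). The overlapping condition is exactly what is needed to guarantee that the first clause is well-defined, since two representations $h=gi=g'i'$ with $i\in M^g$ and $i'\in M^{g'}$ force $\beta^g_i=\beta^{g'}_{i'}$.

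It remains to verify that $\Phi^{[\bfN]}(\x)=\mathbf{b}$. By construction, for every $g\in\NZ$ and $i\in M^g$ one has $x_{gi}=\gamma_{gi}=\beta^g_i$, hence $\sigma^g(\x)\in[b_g]_{A^\NZ}$, where $[b_g]_{A^\NZ}=[(\beta^g_i)_{i\in M^g}]_{A^\NZ}$. Because $b_g\in A^{[\bfN]}$, this cylinder lies in the partition $\bfN$, and since $\bfN$ partitions $A^\NZ$ it is exactly the unique member of $\bfN$ containing $\sigma^g(\x)$. Therefore $Z_{\sigma^g(\x)}=[b_g]_{A^\NZ}$ and $M_{\sigma^g(\x)}=M^g$, so the $g$-th coordinate of $\Phi^{[\bfN]}(\x)$ is
\[
(x_{gi})_{i\in M_{\sigma^g(\x)}}=(x_{gi})_{i\in M^g}=(\beta^g_i)_{i\in M^g}=b_g.
\]

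The only real hurdle to watch for is the well-definedness of $\gamma_h$ on the ``overlap set'' $\bigcup_{g}gM^g$; everything else is just chasing the definition of $\Phi^{[\bfN]}$. Coordinates $h\notin\bigcup_g gM^g$ are genuinely free, but this causes no problem because position $g$ of $\Phi^{[\bfN]}(\x)$ only ever probes coordinates in $gM^g$, so the arbitrary choices there are invisible to the output.
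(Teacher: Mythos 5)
Your proposal is correct and follows essentially the same route as the paper: necessity via Lemma \ref{lem:overlapping_condition}, and sufficiency by defining $\gamma_h:=\beta^g_i$ whenever $h=gi$ (well-defined precisely by the overlapping condition), filling the remaining coordinates arbitrarily, and then checking that $\s^g(\x)$ lands in the partition cylinder $[(\beta^g_i)_{i\in M^g}]_{A^\NZ}$ so that $\big(\Phi^{[\bfN]}(\x)\big)_g=b_g$. Your explicit remark that uniqueness of the member of $\bfN$ containing $\s^g(\x)$ forces $M_{\s^g(\x)}=M^g$ is a small but welcome clarification of a step the paper leaves implicit.
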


\begin{proof}
We only need to check that in the case of the full shift, the overlapping condition is a sufficient condition. Let  $\mathbf{b}=(b_g)_{g\in\NZ}=\big((\beta^g_i)_{i\in M^g})\big)_{g\in\NZ}\in(A^{[\bfN]})^\NZ$ a point satisfying the overlapping condition and define $\x=(\gamma_h)_{h\in\NZ}\in A^\NZ$ where $\gamma_h=\beta^g_i$ if $h=gi$, and $\gamma_h$ is any symbol of $A$ if $h\neq gi$ for all $g\in\NZ$ and $i\in M^g$. Since $\mathbf{b}$ satisfies the overlapping condition, it follows that $\x$ is well defined. Now, to check that $\Phi^{[\bfN]}(\x)=\mathbf{b}$ observe that for any $g\in\NZ$, taking $M^g\in\bfM_\bfN$ (the set of coordinates where $b_g$ is defined) we have $\big(\s^g(\x)\big)_{M^g}=(\gamma_{gi})_{i\in M^g}=(\beta^g_i)_{i\in M^g}$, which implies that $\s^g(\x)$ belongs to the cylinder $[(\beta^g_i)_{i\in M^g}]_{A^\NZ}$ and so $\big(\Phi^{[\bfN]}(\x)\big)_g=(\beta^g_i)_{i\in M^g}=b_g$.

\end{proof}

\begin{cor}\label{cor:full-shift_higher_block_presentation} Let $\bfN$ be any partition of $A^\NZ$ by cylinders. Then $(A^\NZ)^{[\bfN]}$ is a shift space.
\end{cor}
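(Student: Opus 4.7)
The plan is to invoke Remark \ref{rmk:shift_space-closed_invariant}, which reduces the problem to showing that $(A^\NZ)^{[\bfN]}$ is both closed in $(A^{[\bfN]})^\NZ$ and invariant under every $g$-shift map. Shift-invariance comes for free: by Remark \ref{rmk:higher_block_presentation}, the higher block code $\Phi^{[\bfN]}$ is a GSBC, so it commutes with every $g$-shift map, and hence its image $(A^\NZ)^{[\bfN]}=\Phi^{[\bfN]}(A^\NZ)$ is $g$-shift invariant. The substantive part is the closedness.

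For closedness, I would rely on Corollary \ref{cor:full_shift-non_overlapping_condition}, which identifies $(A^\NZ)^{[\bfN]}$ with the set $S\subset (A^{[\bfN]})^\NZ$ of those $\mathbf{b}=(b_g)_{g\in\NZ}$ that verify the overlapping condition \eqref{eq:overlapping_condition}. The key observation is that the overlapping condition is, symbol by symbol, a condition on only two coordinates of $\mathbf{b}$: for each quadruple $(g,h,m,n)\in\NZ^4$ with $gm=hn$, whether the corresponding instance of the condition holds or fails is completely determined by the pair $(b_g,b_h)$.

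Making this explicit, for each such quadruple let
\begin{equation*}
S_{(g,h,m,n)}:=\left\{\mathbf{b}\in (A^{[\bfN]})^\NZ:\ \text{if } m\in M^g \text{ and } n\in M^h, \text{ then } \beta^g_m=\beta^h_n\right\},
\end{equation*}
where, as in the statement, $b_g=(\beta^g_i)_{i\in M^g}$ and $b_h=(\beta^h_i)_{i\in M^h}$ with $M^g,M^h\in\bfM_\bfN$. The complement of $S_{(g,h,m,n)}$ is the union over all pairs of symbols $b^*,b^{**}\in A^{[\bfN]}$ with $m\in M(b^*)$, $n\in M(b^{**})$, and incompatible $m$-th and $n$-th entries, of the cylinders $[b^*]_g\cap [b^{**}]_h$ in $(A^{[\bfN]})^\NZ$. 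This complement is therefore open, so each $S_{(g,h,m,n)}$ is closed, and $S=\bigcap_{gm=hn} S_{(g,h,m,n)}$ is closed as an intersection of closed sets.

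Combining the two ingredients — shift invariance from $\Phi^{[\bfN]}$ being a GSBC, and closedness from the above intersection description together with Corollary \ref{cor:full_shift-non_overlapping_condition} — Remark \ref{rmk:shift_space-closed_invariant} yields that $(A^\NZ)^{[\bfN]}$ is a shift space. I do not expect any serious obstacle here; the only mild subtlety is that the sets $M^g$ vary with the point $\mathbf{b}$, but since $M^g$ is a function of the single symbol $b_g\in A^{[\bfN]}$, the clause ``$m\in M^g$'' is itself a cylindrical (finite-coordinate) condition and does not interfere with the argument.
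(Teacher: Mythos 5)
Your proof is correct and is essentially the paper's argument in dual form: the paper defines the forbidden set $P_O$ of two-symbol patterns violating the overlapping condition and identifies $X_{P_O}$ with $(A^\NZ)^{[\bfN]}$ via Corollary \ref{cor:full_shift-non_overlapping_condition}, whereas you verify directly that the overlap-satisfying set is closed (your sets $S_{(g,h,m,n)}$ are exactly the complements of the unions of cylinders determined by the patterns in $P_O$) and shift-invariant, and then invoke Remark \ref{rmk:shift_space-closed_invariant}. Both arguments rest on the same two ingredients — Corollary \ref{cor:full_shift-non_overlapping_condition} and the observation that the overlapping condition constrains only two coordinates at a time — so this counts as the same approach.
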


\begin{proof} 
If $\NZ$ is countable, then the result follows directly for Corollary \ref{cor:image_Phi-fullshift}. To prove the general case, define $P_O\subset\NN_{(A^{[\bfN]})^\NZ}$ as 
\begin{equation}\label{eq:P_O}P_O:=\{ b_gb_h \in \NN_{(A^{[\bfN]})^\NZ}: b_g=(\beta^g_i)_{i\in L}, \ b_h=(\beta^h_i)_{i\in M},\ \exists \ell\in L, m\in M\ s. t.\ g\ell=hm\ and\ \beta^g_\ell \neq \beta^h_m\},\end{equation}
that is, $P_O$ is composed by all patterns of $\mathcal{N}_{A^{[\bfN]}}^2$ for which the overlapping condition fails.
Hence, the shift space $X_{P_O}\subset (A^{[\bfN]})^\NZ$ is composed for all sequences for which the overlapping condition holds, and then, from Corollary \ref{cor:full_shift-non_overlapping_condition} we have that $X_{P_O}=(A^\NZ)^{[\bfN]}$.

\end{proof}

We remark that without assuming more properties for $\NZ$ and for $\bfN$ we cannot get any result about the cardinality of $M_{P_O}$. In fact, if $\bfN$ is such that $\bfM_\bfN$ contains infinitely many coordinates or if $\NZ$ is not right cancellative, then it is possible that $M_{P_O}$ will have infinitely many elements. In the last part of Section \ref{sec:SFTs} we examine some conditions under which $M_{P_O}$ is finite.\\

The next proposition gives sufficient conditions under which the higher block presentation of a shift space is also a shift space. For classical shifts (over finite alphabets and on the lattices $\N$ or $\Z$), such result always holds (directly from \cite[Proposition 1.4.3]{LindMarcus} or, in our framework, as consequence of the fact that classical shift spaces hold both conditions (B1) and (B3) in the proposition below). However, in general it is not true that a higher block presentation is a shift space. For instance, observe that the map $\Phi$ in Example \ref{ex:Phi(Lambda)-nonshift} corresponds to the higher block code $\Phi^{[\bfN]}$ where $\bfN$ is the collection of all cylinders defined on the second coordinate of the lattice $\N$, and so for that shift $\Lambda$ we have that $\Lambda^{[\bfN]}$ is not a shift space.

We also notice that the next proposition uses the specific structure of the generalized sliding block code $\Phi^{[\bfN]}$ to find other conditions than those given by Theorem \ref{theo: image_of_shifts}.

\begin{prop}\label{prop:higher_block_shift} Given a shift space $\Lambda\subset A^\NZ$ and  $ \bfN$ a partition of $\Lambda$ by cylinders, if at least one of the following conditions holds:
\begin{enumerate}

\item[(B1)] $W_{\{1\}}(\Lambda)$ is finite;

\item[(B2)] $\Lambda=A^\NZ$;

\item[(B3)] $\NZ$ is countable and $\Lambda$ is such that for all nested family of nonempty cylinders $\{W_\ell\}_{\ell\geq 1}$ it follows that $\bigcap_{i\geq 1}W_i\neq\emptyset$;

\item[(B4)] $\NZ$ is a group and $\bigcap_{M\in\bfM_\bfN}M\neq\emptyset$;

\item[(B5)] $1\in\bigcap_{M\in\bfM_\bfN}M$;

\end{enumerate}
then $\Lambda^{[\bfN]}$ is also a shift space.

In particular, if (B4) or (B5) holds, then $\Phi^{[\bfN]}$ restricted to its image is invertible and $\left(\Phi^{[\bfN]}\right)^{-1}$ is an SBC (but in general it is is not locally finite-to-one).

\end{prop}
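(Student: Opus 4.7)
The plan is to invoke Remark \ref{rmk:higher_block_presentation}, which identifies $\Phi^{[\bfN]}$ as a locally bounded finite-to-one GSBC of order $1$, and then dispatch each hypothesis (B1)--(B5) by matching it against one of the results already proven for images of such maps. The heart of the argument is therefore bookkeeping: recognizing, for each condition, which hypothesis of Theorem \ref{theo: image_of_shifts-intersection}, Theorem \ref{theo: image_of_shifts} or Corollary \ref{cor:full-shift_higher_block_presentation} is met.

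Concretely, (B1) is exactly (A1) of Theorem \ref{theo: image_of_shifts}; (B2) is Corollary \ref{cor:full-shift_higher_block_presentation}; and (B3) is the hypothesis of Theorem \ref{theo: image_of_shifts-intersection} applied to $\Phi^{[\bfN]}$. For (B5), every $C_b = [(\beta_i)_{i\in M}]_\Lambda \in \bfN$ has $1 \in M$, so $C_b$ is tautologically a cylinder defined on the coordinate $1$, which places us in case (A2) of Theorem \ref{theo: image_of_shifts}. For (B4), choose any $n \in \bigcap_{M \in \bfM_\bfN} M$; then every $C_b$ is a cylinder defined on the coordinate $n$, which is case (A3). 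In each situation, the corresponding theorem directly yields that $\Lambda^{[\bfN]}$ is a shift space.

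For the additional claim that $\Phi^{[\bfN]}$ is invertible on its image under (B4) or (B5) with an SBC inverse, I would argue directly. Under (B5), for any $\x\in\Lambda$ and $g\in\NZ$ the symbol $\bigl(\Phi^{[\bfN]}(\x)\bigr)_g = (\beta^g_i)_{i\in M^g}$ satisfies $\beta^g_1 = x_g$ because $1\in M^g$, which simultaneously proves injectivity and exhibits the inverse as the SBC with memory $\{1\}$ and local rule $\psi:(\beta_i)_{i\in M}\mapsto\beta_1$. Under (B4), the same strategy works with distinguished coordinate $n$ in place of $1$ and memory $\{n^{-1}\}$; the group hypothesis on $\NZ$ is used precisely to guarantee that every $h\in\NZ$ is of the form $gn$ for a unique $g=hn^{-1}$, so that $x_h$ is recovered from the $hn^{-1}$-coordinate of $\Phi^{[\bfN]}(\x)$.

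The main subtlety, and the point where I expect the write-up to require a little care, is the final parenthetical: $(\Phi^{[\bfN]})^{-1}$ is an SBC but not, in general, locally finite-to-one, because the preimage of a fixed $a\in A$ under $\psi$ is the collection of all tuples $(\beta_i)_{i\in M}\in A^{[\bfN]}$ whose distinguished entry equals $a$, and this is an infinite family of cylinders as soon as $A$ is infinite. I would illustrate this with a short concrete example (e.g.\ $A=\N$ and $\bfN$ the partition of $A^\N$ by cylinders on $\{0,1\}$) to contrast with the finite-alphabet case of \cite[Example 1.5.10]{LindMarcus}.
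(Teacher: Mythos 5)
Your treatment of (B1), (B2) and (B3) coincides with the paper's: each is dispatched by Theorem \ref{theo: image_of_shifts} (case (A1)), Corollary \ref{cor:full-shift_higher_block_presentation}, and Theorem \ref{theo: image_of_shifts-intersection}, respectively, using that $\Phi^{[\bfN]}$ is a locally bounded finite-to-one GSBC. Your argument for invertibility of $\Phi^{[\bfN]}$ under (B4)/(B5), with the inverse being the $1$-block SBC reading the distinguished coordinate $n$ of $b_{gn^{-1}}$, and your explanation of why that inverse fails to be locally finite-to-one over an infinite alphabet, also match the paper.

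There is, however, a genuine gap in your handling of (B4) and (B5): you reduce them to conditions (A3) and (A2) of Theorem \ref{theo: image_of_shifts}, but both of those conditions require $\NZ$ to be \emph{countable}, whereas (B4) only asks that $\NZ$ be a group and (B5) imposes no hypothesis on $\NZ$ at all. The proof of Theorem \ref{theo: image_of_shifts} under (A2)/(A3) runs through an enumeration $\{g_\ell\}_{\ell\geq 1}$ of $\NZ$ and a sequential compactness-style argument, so it cannot simply be quoted when $\NZ$ is uncountable. The paper is explicit about this: immediately after the proof it remarks that (B4) and (B5) are \emph{more general} than (A2) and (A3) precisely because countability is not assumed. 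To close the gap the paper argues differently under (B4)/(B5): it exhibits $\Lambda^{[\bfN]}$ as $X_{P_O\cup P_F}$ for explicit sets of forbidden patterns ($P_O$ encoding the overlapping condition and $P_F$ transporting the forbidden patterns of $\Lambda$ via the distinguished coordinate $n$), and verifies both inclusions by direct pattern-chasing, with no enumeration of $\NZ$. Your proof as written therefore only establishes (B4)/(B5) under an additional countability hypothesis; you would need either to supply the forbidden-pattern construction or some other enumeration-free argument to recover the full statement.
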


\begin{proof}then $\Lambda^{[\bfN]}$ is also a shift space.

If condition (B1), (B2) or (B3) holds, since $\Phi^{[\bfN]}$ is a locally bounded finite-to-one GSBC, the result comes directly from Theorem \ref{theo: image_of_shifts}, Corollary \ref{cor:full-shift_higher_block_presentation} or Theorem \ref{theo: image_of_shifts-intersection}, respectively.\\

Now suppose (B4) or (B5) holds. If (B4) holds, then take any $n\in\bigcap_{M\in\bfM_\bfN}M$, while if (B5) holds, then take $n=1$. 

Note that $(b_g)_{g\in \NZ}=\big((\beta^g_i)_{i\in M^g}\big)_{g\in\NZ}\in \Lambda^{[\bfN]}=\Phi^{[\bfN]}(\Lambda)$ means that there exists $\x=(\gamma_g)_{g\in \NZ}\in \Lambda$ such that for each $g\in\NZ$ we have $b_g=(\gamma_{gi})_{i\in M_{\s^g(\x)}}\in A^{[\bfN]}$ with $[(\gamma_{gi})_{i\in M_{\s^g(\x)}}]_{\Lambda}\in\bfN$. Observe that for each $g\in\NZ$ the symbol $\gamma_g$ is the symbol at coordinate $n$ in $b_{gn^{-1}}$ (where $n^{-1}$ stands for the inverse of $n$ if (B4) holds, and it stands for 1 if (B5) holds). Hence, $\Phi^{[\bfN]}$ is injective and we can define $\left(\Phi^{[N]}\right)^{-1}:\Lambda^{[\bfN]}\to\Lambda$ as the map with local rule on the neighborhood $\{n^{-1}\}$, that is, the map which takes $(b_g)_{g\in \NZ}$ to  $(\gamma_g)_{g\in \NZ}$ where $\gamma_g$ is the symbol at coordinate $n$ in $b_{gn^{-1}}$, that is, $\gamma_g=\beta^{gn^{-1}}_n$.

Let $P_O\subset\NN_{(A^{[\bfN]})^\NZ}$ be the set of forbidden patterns given in  \eqref{eq:P_O}.  Let $F\subset\NN_{A^\NZ}$ be a set of forbidden patterns such that $\Lambda=X_F$ and  define $P_F\subset\NN_{(A^{[\bfN]})^\NZ}$ as follows:

\begin{equation}\label{eq:P_F}(b_g)_{g\in K}=\big((\beta^g_i)_{i\in M^g}\big)_{g\in K}\in  P_F\qquad\Longleftrightarrow\qquad (\beta_n^{hn^{-1}})_{h\in Kn}\in F.\end{equation}

We will show that $\Lambda^{[\bfN]}= X_{P_O\cup P_F}$. First, let us check that $\Lambda^{[\bfN]}\subset X_{P_O\cup P_F}$. Given $\mathbf{b}=(b_g)_{g\in\NZ}=\big((\beta^g_i)_{i\in  M^g}\big)_{g\in\NZ}\in \Lambda^{[\bfN]}$, let $\x=(\gamma_g)_{g\in\NZ}\in\Lambda$ such that $\Phi^{[\bfN]}\big((\gamma_g)_{g\in \NZ})=\big((\gamma_{gi})_{i \in M_{\s^g(\x)}}\big)_{g\in \NZ}=\big((\beta^g_i)_{i\in  M^g}\big)_{g\in\NZ}=\mathbf{b}$\sloppy. From Lemma \ref{lem:overlapping_condition}, $\mathbf{b}$ satisfies the overlapping condition, that is, for any $g,h\in\NZ$ we have $(b_gb_h)\notin P_O$. Furthermore, since $\x\in \Lambda$, it follows that for all finite $L\subset \NZ$ and for all $g\in\NZ$ we have $\big(\s^g(\x)\big)_L=(\gamma_{gi})_{i\in L}\notin F$. Hence, if by contradiction we suppose there exists $K$ such that $(b_g)_{g\in K}\in P_F$, then $(\beta_n^{hn^{-1}})_{h\in Kn}=(\gamma_{h})_{h\in Kn}\in F$ which contradicts that $\x\in\Lambda$. Thus, $(b_g)_{g\in K}\notin P_F$ for all $K\subset\NZ$, that is,  $\mathbf{b}\in X_{P_F}$. Thus, we have $\mathbf{b}\in X_{P_O\cup P_F}$.

For the opposite inclusion, take $\mathbf{b}\in X_{P_O\cup P_F}$ and define $\x:=(\gamma_g)_{g\in\NZ}\in A^\NZ$ with $\gamma_g:=\beta^{gn^{-1}}_n$. Note that, $\x$ is defined in the same way that an inverse image of a point of $\Lambda^{[\bfN]}$ by $\Phi^{[\bfN]}$. We need to prove that $\x$ belongs to $\Lambda$ and that $\Phi^{[\bfN]}(\x)=\mathbf{b}$. In fact, since $\mathbf{b}\in X_{P_F}$ then for all finite $K\subset \NZ$ we have $(b_g)_{g\in K}=\big((\beta^g_i)_{i\in M^g}\big)_{g\in K}\notin  P_F$ and so $(\beta_n^{hn^{-1}})_{h\in Kn}\notin F$. Hence, given any $L\subset\NZ$, we can take $K=Ln^{-1}$, and then $(\gamma_h)_{h\in L}=(\beta_n^{hn^{-1}})_{h\in Kn}\notin F$. Thus, $\x\in\Lambda$. To check that $\Phi^{[\bfN]}(\x)=\mathbf{b}$ note that for all $g\in\NZ$ we have $$\big(\s^g(\x)\big)_{M^g}=(\gamma_{gi})_{i\in M^g}=(\beta^{gin^{-1}}_n)_{i\in M^g}=(\beta^g_i)_{i\in M^g},$$ where the last equality above comes from the fact that $\mathbf{b}\in X_{P_O}$  (that is, it satisfies the overlapping condition). Therefore, $\big(\Phi^{[\bfN]}(\x)\big)_g=(\beta^g_i)_{i\in M^g}=b_g$,
and we conclude that $\mathbf{b}\in\Lambda^{[\bfN]}$.

\end{proof}

We remark that conditions given in (B4) and (B5) are more general than those stated in conditions (A2) and (A3) of Theorem \ref{theo: image_of_shifts} since here we did not assume $\NZ$ being countable. Furthermore, (B4) and (B5) are sufficient conditions under which a shift and its higher block presentation are uniformly conjugated  (generalizing \cite[Example 1.5.5]{LindMarcus} and \cite[Example 1.5.10]{LindMarcus} stated for classical shifts).\\

Now, consider the general case of $\Lambda\subset A^\NZ$ being a shift space, and $\bfN$ being a partition of $\Lambda$ by cylinders, without any additional assumption on $\Lambda$, $\NZ$ or $\bfN$. Let $F\subset\NN_{A^\NZ}$, such that $X_F=\Lambda$, and define $D_F\subset\NN_{(A^{[\bfN]})^\NZ}$ as follows:

\begin{equation}\label{eq:D_F}\begin{array}{c}(b_g)_{g\in K}=\big((\beta^g_i)_{i\in M^g}\big)_{g\in K}\in  D_F\\\\ \Updownarrow\\\\ \forall g\in K,\ \exists\ \emptyset\neq N^g\subset  M^g,\ s.\ t.\ 
 (\gamma_h)_{h\in L} \in F,\\\\ where\ L:=\bigcup_{g\in K}gN^g\ and\ \gamma_h=\beta^g_n\ with\ gn=h.\end{array}\end{equation}
From its definition, one can easily check that $\Lambda^{[\bfN]}\subset X_{D_F}$, and thus  $\Lambda^{[\bfN]}\subset X_{P_O\cup D_F}$. However, the opposite inclusion is not true in general. First, if neither of conditions in Proposition \ref{prop:higher_block_shift} hold and $F$ is not complete in the sense of Definition \ref{defn:complete_F}, then it is possible that some restrictions of $F$ are not captured by $D_F$ and so  $X_{P_O\cup D_F}$ contains sequences that do not belong to $\Lambda^{[\bfN]}$ (see Example \ref{ex:F_non_complete} below). Moreover, even when $F$ is complete, it remains open whether without additional assumptions on $\Lambda$ or $\NZ$ (as those made in Proposition \ref{prop:higher_block_shift}) we have $\Lambda^{[\bfN]}= X_{P_O\cup D_F}$. To make clear what is the difficulty in proving that $\Phi^{[\bfN]}$ is onto on $ X_{P_O\cup D_F}$, let us prove the case where it does work, that is, the case (B3) of Proposition \ref{prop:higher_block_shift}.

\begin{prop}\label{prop:higher_block_shift-NZ_countable} Suppose $\NZ$ is countable, and $\Lambda\subset A^\NZ$ is a shift space such that for all nested family of nonempty cylinders $\{W_\ell\}_{\ell\geq 1}$ it follows that $\bigcap_{i\geq 1}W_i\neq\emptyset$. If $ \bfN$ is a partition of $\Lambda$ by cylinders and $F\subset\NN_{A^\NZ}$ is a complete set of forbidden patterns such that $\Lambda=X_F$, then $\Lambda^{[\bfN]}= X_{P_O\cup D_F}$, where $D_F$ is given by \eqref{eq:D_F}.
\end{prop}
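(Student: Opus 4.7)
The inclusion $\Lambda^{[\bfN]}\subset X_{P_O\cup D_F}$ is already granted by the discussion preceding the proposition (combine Lemma \ref{lem:overlapping_condition} with the elementary verification that the definition of $D_F$ is respected by any point in the image of $\Phi^{[\bfN]}$). So the plan is to prove the reverse inclusion $X_{P_O\cup D_F}\subset\Lambda^{[\bfN]}$, i.e.\ given $\mathbf{b}=\big((\beta^g_i)_{i\in M^g}\big)_{g\in\NZ}\in X_{P_O\cup D_F}$, to construct $\x\in\Lambda$ with $\Phi^{[\bfN]}(\x)=\mathbf{b}$.

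First I would use the overlapping condition (since $\mathbf{b}\in X_{P_O}$) to define a partial sequence $\gamma:L\to A$, where $L:=\bigcup_{g\in\NZ}gM^g$, by $\gamma(h):=\beta^g_n$ whenever $h=gn$ with $n\in M^g$; the overlapping condition ensures this is well defined. Next, pick an enumeration $\{g_\ell\}_{\ell\geq 1}$ of $\NZ$ (available since $\NZ$ is countable) and for each $k\geq 1$ set $L_k:=L\cap\{g_1,\dots,g_k\}$ and
\begin{equation*}
V_k:=\big[(\gamma(h))_{h\in L_k}\big]_\Lambda,
\end{equation*}
which is a nested family of cylinders of $\Lambda$ (taking $V_k=\Lambda$ when $L_k=\emptyset$).

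The crucial step is to show that each $V_k$ is nonempty. Since $F$ is complete, it suffices to prove that the finite pattern $(\gamma(h))_{h\in L_k}$ contains no subpattern belonging to $F$. Suppose for contradiction that there exists $\emptyset\neq L'\subset L_k$ with $(\gamma(h))_{h\in L'}\in F$. For each $h\in L'$ pick a representative $h=g(h)n(h)$ with $n(h)\in M^{g(h)}$, set $K:=\{g(h):h\in L'\}$, and for each $g\in K$ let $N^g:=\{n\in M^g:gn\in L'\}$. Each $N^g$ is nonempty by construction, and $\bigcup_{g\in K}gN^g=L'$; moreover the values $\gamma(h)=\beta^{g(h)}_{n(h)}$ (which are well defined by the overlapping condition) exhibit $(b_g)_{g\in K}\in D_F$, contradicting $\mathbf{b}\in X_{D_F}$. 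This is the main technical step -- the bookkeeping between the partial sequence $\gamma$ and the definition of $D_F$ is where the argument really happens.

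With each $V_k$ nonempty, the nested-intersection hypothesis on $\Lambda$ yields some $\x\in\bigcap_{k\geq 1}V_k\subset\Lambda$, which automatically satisfies $x_h=\gamma(h)$ for every $h\in L$. Finally I would check $\Phi^{[\bfN]}(\x)=\mathbf{b}$: for each $g\in\NZ$, one has $(\s^g(\x))_{M^g}=(x_{gi})_{i\in M^g}=(\beta^g_i)_{i\in M^g}$, so $\s^g(\x)\in[(\beta^g_i)_{i\in M^g}]_\Lambda$; since $b_g\in A^{[\bfN]}$ this cylinder belongs to $\bfN$, and because $\bfN$ partitions $\Lambda$ it must coincide with $Z_{\s^g(\x)}$. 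Hence $M_{\s^g(\x)}=M^g$ and $(\Phi^{[\bfN]}(\x))_g=b_g$, completing the proof.
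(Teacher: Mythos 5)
Your proof is correct and follows essentially the same route as the paper's: define the partial configuration $\gamma$ on $T=\bigcup_g gM^g$ via the overlapping condition, build nested cylinders from an enumeration, show they are nonempty using $D_F$-freeness plus completeness of $F$, and invoke the nested-intersection hypothesis. Your treatment of the nonemptiness step is in fact slightly more careful than the paper's, since you explicitly rule out every subpattern $(\gamma(h))_{h\in L'}$ lying in $F$ (as completeness requires) by exhibiting the witness $(K,\{N^g\})$ for membership in $D_F$, whereas the paper only records that the full pattern is not in $F$.
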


\begin{proof}
Since the inclusion $\Lambda^{[\bfN]}\subset X_{P_O\cup D_F}$ is direct, we only need to prove that $\Phi^{[\bfN]}$ is onto on $ X_{P_O\cup D_F}$. Let $\mathbf{b}=(b_g)_{g\in\NZ}=\big((\beta^g_i)_{i\in M^g})\big)_{g\in\NZ}\in X_{P_O\cup D_F}$. Let $T\subset \NZ$ be the set of all coordinates $h\in\NZ$ for which there exist $g\in \NZ$ and $m\in M^g$ such that $h=gm$. Since $\mathbf{b}$ satisfies the overlapping condition, then for each $h\in T$ we can define $\gamma_h:=\beta^g_m \in A$. Observe that it is possible that $\NZ\setminus T\neq\emptyset$, and so  $\gamma_\ell$ is not defined for all $\ell\in \NZ\setminus T$.
Note that, for any choice of symbols $\gamma_\ell$ for $\ell\in \NZ\setminus T$, the sequence $\x:=(\gamma_j)_{j\in\NZ}\in A^\NZ$ obtained is such that $\big(\s^g(\x)\big)_{M^g}=(\gamma_{gm})_{m\in M^g}=(\beta^g_m)_{m\in M^g}$. Hence, we can extend $\Phi^{[\bfN]}$ for such sequences $\x$ independently of whether them belong to $\Lambda$ or not, that is, we can compute $\big(\Phi^{[\bfN]}(\x)\big)_{g}=(\beta^g_m)_{m\in M^g}=b_g$. Thus, we only need to prove that it is possible to define $\gamma_\ell$ for all $\ell\in \NZ\setminus T$ in some way that the point of $(\gamma_g)_{g\in\NZ}\in A^\NZ$ obtained belongs to $\Lambda$. 
Since $\NZ$ is countable, we can take $\{h_i\}_{i\geq 1}$ any enumeration of $T$, and for $k\geq 1$ define the cylinder $Z_k:=[(\gamma_{h_i})_{1\leq i\leq k}]_\Lambda$. Since $\mathbf{b}$ does not contain any forbidden pattern of $D_F$, from the definition of $D_F$ given by \eqref{eq:D_F} and from the definition of $(\gamma_h)_{h\in T}$ we have that $(\gamma_{h_i})_{1\leq i\leq k}\notin F$. Furthermore, since $F$ is complete, then $Z_k$ is not empty. Hence,  $\{Z_k\}_{k\geq 1}$ is a nested family of nonempty cylinders of $\Lambda$ and then $\bigcap_{k\geq 1} Z_k\neq\emptyset$. Finally, taking any  $\x\in \bigcap_{k\geq 1} Z_k\subset\Lambda$ it follows that $x_h=\gamma_h$ for $h\in T$ and then $\Phi^{[\bfN]}(\x)=\mathbf{b}$.

\end{proof}

\begin{ex}\label{ex:F_non_complete}
Let $A=\{0,1\}$ and $\N^*$ with the usual product. Define $F:=\{(x_2x_5x_{10})\in\NN_{A^{\N^*}}:\ x_2=x_{10}=1,\ x_5\in A\}$ which is not complete. Let $\Lambda:=X_F$ and let $\bfN$ be the collection of all cylinders of $\Lambda$ on the coordinates 2 and 3. Hence  $A^{[\bfN]}:=\left\{\left(\begin{array}{c}\beta_2\\ \beta_3\end{array}\right): \beta_2,\beta_3\in A\right\}$, and thus $(A^{[\bfN]})^{\N^*}=\left\{\left(\begin{array}{c}\beta^i_2\\ \beta^i_3\end{array}\right)_{i\in\N^*}:\ \left(\begin{array}{c}\beta^i_2\\ \beta^i_3\end{array}\right)\in A^{[\bfN]}\ \forall i\in\N^* \right\}$\sloppy. Observe that all $g\in \N^*$ and $n\in\{2,3\}$ we never have $gn=5$, and since $5$ is a coordinate that appears in all the forbidden words of $F$, it implies that $D_F$ given in \eqref{eq:D_F} is the empty set. Thus $X_{P_O\cup D_F}=X_{P_O}=(A^{\N^*})^{[\bfN]}$. On the other hand,  
 for all  $\x=(x_i)_{i\in\N^*}\in \Lambda$ we have
$\Phi^{[\bfN]}(\x)=\left(\begin{array}{c}x_{2i}\\x_{3i}\end{array}\right)_{i\in\N^*}=\left(\begin{array}{c}\beta^i_2\\ \beta^i_3\end{array}\right)_{i\in\N^*}$, and then we always have $\left(\begin{array}{c}\beta^1_2\\ \beta^1_3\end{array}\right)$ and $\left(\begin{array}{c}\beta^5_2\\ \beta^5_3\end{array}\right)$ such that $\beta^1_2\neq 1$ or $\beta^5_2\neq 1$, and then $\Lambda^{[\bfN]}\subsetneq (A^{\N^*})^{[\bfN]}$.

\end{ex}

\section{Shifts of finite type}\label{sec:SFTs}

In the classical theory of symbolic dynamics over finite alphabets, an important role is played by shifts of finite type. These shift spaces are defined in the classical framework as the shift spaces that can be obtained by forbidding only a finite number of words in the language.  This definition falls apart when one considers infinite alphabets. In fact, even finite-alphabet shift space cannot fulfill such definition if one considers them as subspaces of some infinite-alphabet shift. For instance, according to this definition the shift space $\{0,1\}^\Z$ is never a shift of finite type if we consider it as a subset of $\N^\Z$. Although this problem can be solved by considering that to ``be a shift of finite type'' is a property relative to the universe where we are working, the important features of shifts of finite type (shadowing property, ergodicity, informational content, etc.) are not relative properties. In fact, there is not any change in the dynamical behavior of $\{0,1\}^\Z$ whether it is considered or not a subset of some infinite-alphabet shift. 

In this section we propose an alternative definition for shifts of finite type, which is independent of the context, and coincides with the classical definition when considering finite-alphabet shift spaces. The alternative definition we are going to propose, lays on the simple observations that in the context of finite-alphabet shifts, $F\subset\NN_{A^\NZ}$ to be finite is equivalent to $M_F$ given in \eqref{eq:M_F_defn} to be finite. Thus, we can define:

\begin{defn}\label{defn:SFT} A shift space $\Lambda\subset A^\NZ$ is said to be a {\bf shift of finite type (SFT)} if there exists $F\subset\NN_{A^\NZ}$, with $M_F$ finite, such that $\Lambda=X_F$. A shift of finite type $\Lambda$ is said to be an {\bf $M$-step shift} or to have a {\bf memory $M$} for some finite $M\subset \NZ$ if $M_F\subset M$.

\end{defn}

Note that $\Lambda$ is an $M$-step SFT if and only if there exists $F\subset A^M$ such that $\Lambda=X_F$.

While Definition \ref{defn:SFT} coincides with the traditional one when considering finite-alphabet shifts, it allows to consider them as subset of infinite-alphabet shifts. On the other hand, for infinite-alphabet shift spaces, this definition includes several shift spaces over infinite alphabets that behave similar to the classical SFTs over finite alphabets (e.g., countable topological Markov chains \cite{Gurevic1969,Sarig1999} and shift spaces with the shadowing property \cite{DarjiGoncalvesSobottka2020,GoodMeddaugh2020}). An SFT may also be referred as: finite-step shift, finite-order shift, shift with finite memory or shift of bounded type. Such names allude the finiteness of number of coordinates used to define the forbidden words  (see, for instance, \cite{DarjiGoncalvesSobottka2020,LindMarcus,MeddaughRaines2020}). Note that when the lattice is $\N$ or $\Z$, the forbidden words are usually defined on consecutive coordinates starting at coordinate zero and so the ``step'', ``memory'' or ``order'' of an SFT can be taken as single numbers $m\in\N$ related to the maximum quantity of consecutive coordinates needed to write any forbidden word, that is,  a shift  $\Lambda\subset A^\Z$ has step $m\in\N$ if and only if it is an $M$-step shift for $M=\{0,1,2,...,m\}$.\\

It is easy to check that the intersection of any family of shift spaces over the same alphabet will be a shift space (whose set of forbidden patterns correspond to the union of the sets of forbidden patterns of each shift space in the family). In particular, if the family is composed by $L$-step shifts, then the intersection of its members will also be an $L$-step shift . However, it is not true in general that the union of infinitely many shift spaces is a shift space. In fact, if for $k\geq 1$ we consider $\Lambda_k\subset \{0,1\}^\N$ being the shift space given by $\Lambda_k:=\{\s^n(\x): \x\text{ is the sequence such that }x_0=x_{ik}=1 \text{ and } x_j=0\ \forall i\in\N \text{ and } j\neq ik, n\in\N\}$\sloppy, then $\bigcup_{k\geq 1}\Lambda_k$ is not a shift space, since the sequence with only $0$s belongs to the closure of $\bigcup_{k\geq 1}\Lambda_k$ but does not belong to $\bigcup_{k\geq 1}\Lambda_k$. It is easy to check that a sufficient condition for a union of shift spaces being a shift space is that the alphabets of such shift spaces do not share any symbol. The following lemma gives a sufficient condition for a union of SFTs to be an SFT when considering the one-dimensional lattices $\N$ or $\Z$ with the usual sum.

\begin{lem}\label{lem:union_of_SFTs} Let $\NZ$ be $\N$ or $\Z$ with the usual sum, and $I\subseteq \N$. Let $\{A_k\}_{k\in I}$ be a disjoint family of nonempty sets. Let $\ell\in\N$ and for each $k\in I$ let $\Lambda_k\subset A_k^{\NZ}$ be an $\ell$-step SFT. Then $\Lambda:=\bigcup_{k\in I}\Lambda_k$ is an $m$-step SFT where $m=\max\{1,\ell\}$.
\end{lem}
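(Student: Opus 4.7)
The plan is to exhibit, for the union $\Lambda=\bigcup_{k\in I}\Lambda_k$, an explicit set of forbidden patterns supported on the coordinates $\{0,1,\ldots,m\}$ inside the combined alphabet $A:=\bigcup_{k\in I}A_k$. For each $k\in I$, fix a set $F_k\subset A_k^{\{0,1,\ldots,\ell\}}$ with $\Lambda_k=X_{F_k}$ (viewed inside $A_k^{\NZ}$, hence inside $A^{\NZ}$). Then define the ``mixing'' set
$$F_{\mathrm{mix}}:=\{(a_0,a_1)\in A^{\{0,1\}}\,:\,a_0\in A_j\text{ and }a_1\in A_k\text{ for some }j\neq k\},$$
and take $F:=F_{\mathrm{mix}}\cup\bigcup_{k\in I}F_k$. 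Since $F_{\mathrm{mix}}$ involves only the coordinates $\{0,1\}$ and each $F_k$ only the coordinates $\{0,1,\ldots,\ell\}$, we get $M_F\subset\{0,1,\ldots,m\}$ with $m=\max\{1,\ell\}$.

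The verification that $\Lambda=X_F$ splits into two directions. For $\Lambda\subseteq X_F$: if $\x\in\Lambda_k$ then every entry of $\x$ lies in $A_k$, so by disjointness of the family $\{A_j\}_{j\in I}$ the sequence $\x$ contains no subpattern from $F_{\mathrm{mix}}$ and no subpattern from any $F_j$ with $j\neq k$; it avoids $F_k$ because $\Lambda_k=X_{F_k}$. For the reverse inclusion, given $\x\in X_F$ I would fix any $g_0\in\NZ$ and let $k\in I$ be the unique index with $x_{g_0}\in A_k$. Since $\x$ avoids $F_{\mathrm{mix}}$, for each $g\in\NZ$ the pair $(x_g,x_{g+1})$ lies in $A_j\times A_j$ for some common $j\in I$; disjointness then forces this $j$ to be constant as we propagate the nearest-neighbor relation $g\mapsto g+1$ in both directions from $g_0$ along $\NZ$. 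Hence every entry of $\x$ lies in $A_k$, i.e. $\x\in A_k^{\NZ}$, and since $\x$ also avoids $F_k\subset F$, we conclude $\x\in X_{F_k}=\Lambda_k\subset\Lambda$.

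The argument is conceptually routine; the single nontrivial ingredient is the propagation step, which is where the hypothesis that $\NZ$ is $\N$ or $\Z$ with the usual sum is actually used (via the connectivity of $\NZ$ through the nearest-neighbor graph). This is also the reason the bound on the memory cannot be taken smaller than $1$: even in the degenerate case $\ell=0$, the sets $F_k$ alone cannot express the constraint that an entire sequence remains within one component $A_k$, so we need pairs of consecutive coordinates in $F_{\mathrm{mix}}$ to ``glue'' the local information into the required global restriction.
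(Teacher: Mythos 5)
Your proposal is correct and follows essentially the same route as the paper: the paper's proof exhibits exactly the same forbidden set $F=F_{\mathrm{mix}}\cup\bigcup_{k\in I}F_k$ and simply asserts that $\Lambda=X_F$ with $M_F\subset\{0,1,\ldots,m\}$. Your write-up merely supplies the verification (in particular the nearest-neighbor propagation argument) that the paper leaves implicit.
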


\begin{proof}
Let $A:=\bigcup_{k\in I}A_k$ and $\Lambda:=\bigcup_{k\in I} \Lambda_k\ \subset\ A^\NZ$. For each $k\in I$, let $F_k\subset A_k^{\ell+1}$ such that $\Lambda_k=X_{F_k}$ (such $F_k$ exists since $\Lambda_k$ is an $\ell$-step shift).

It follows that $\Lambda=X_F$ with $F=\{(x_0x_1):\ x_0\in A_i\text{ and } x_1\in A_j \text{ for all }i\neq j\}\cup\bigcup_{k\in I} F_k$, and then $M_F\subset\{0,1,...,m\}$ where $m=\max\{1,\ell\}$.

\end{proof}

Note that for other lattices it could not be true that even the union of finitely many $L$-steps shifts over distinct alphabets results in an $M$-step shift. For instance, if we consider the lattice $\N^*$ with the usual product, the alphabets $A_k:=\{a_k,b_k\}$ for $k=1,2$ and the $\{1,2\}$-step shift spaces $\Lambda_k:=X_{F_k}$ where $F_k:=\{(x_1,x_2):\ (x_1,x_2)=(a_k,a_k)\}$, then $\Lambda=\Lambda_1\cup\Lambda_2$ is not an $M$-step shift for any finite $M\subset\NZ$, since using the analysis made in Example \ref{ex:shift_spaces_SFT} any $F$ such that $X_F=\Lambda$ should forbid sequences such that $x_g\in A_1$ and  $x_h\in A_2$ for all co-primes $g,h\in\N^*$.\\

The next lemma gives sufficient conditions under which a shift of finite type holds the condition stated in Theorem \ref{theo: image_of_shifts-intersection}.

\begin{lem}\label{lem:SFTs_finite-tersect} Suppose $\NZ$ is countable and cancellative (that is, left and right cancellative), and let $\Lambda\subset A^\NZ$ be a shift of finite type. If $\{W_\ell\}_{\ell\geq 1}$ is a nested family of nonempty cylinders of $\Lambda$, then $\bigcap_{i\geq 1}W_i\neq\emptyset$.
\end{lem}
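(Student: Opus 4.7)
The plan is to produce a point of $\bigcap_\ell W_\ell$ by extending the partial pattern determined by the cylinders, one coordinate at a time. Each $W_\ell$ has the form $[\mathbf{a}^\ell]_\Lambda$ with $\mathbf{a}^\ell$ defined on a finite $D_\ell\subset\NZ$; replacing $W_\ell$ by $\bigcap_{k\le\ell}W_k=W_\ell$ and merging the patterns, I may assume $D_1\subset D_2\subset\cdots$ and $\mathbf{a}^{\ell+1}|_{D_\ell}=\mathbf{a}^\ell$ (the nonemptiness of $W_\ell$ forces compatibility on overlaps). Set $D:=\bigcup_\ell D_\ell$ and let $\mathbf{a}:D\to A$ be the resulting consistent partial pattern. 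By the remark following Definition~\ref{defn:complete_F}, I may further assume $F$ is complete while keeping $M_F$ finite, so that a finite pattern is realizable in $\Lambda$ if and only if it contains no sub-pattern from $F$.

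Since $\NZ$ is countable I enumerate $\NZ\setminus D=\{h_1,h_2,\ldots\}$ and build $\x\in A^\NZ$ coordinate by coordinate, starting with $\x|_D:=\mathbf{a}$. The invariant I maintain is that after $\x$ is defined on $D^n:=D\cup\{h_1,\ldots,h_n\}$, for every $\ell\ge 1$ the finite pattern $\x|_{D_\ell^n}$, with $D_\ell^n:=D_\ell\cup\{h_1,\ldots,h_n\}$, is realizable in $\Lambda$. At $n=0$ this is just the nonemptiness of $W_\ell$. At step $n+1$, let $V_\ell$ denote the set of those $a\in A$ such that extending $\x|_{D_\ell^n}$ by $a$ at $h_{n+1}$ gives a pattern realizable in $\Lambda$. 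Each $V_\ell$ is nonempty (any realizing $\y\in\Lambda$ contributes $\y_{h_{n+1}}\in V_\ell$), and $\{V_\ell\}_\ell$ is decreasing in $\ell$ because larger bases impose more constraints.

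The crux is to show $\bigcap_\ell V_\ell\ne\emptyset$; this is where cancellativity does its work. By $F$-completeness, $a\in V_\ell$ is equivalent to asking that no shifted block $gN$ with $h_{n+1}\in gN$, $N\subset M_F$, and $gN\subset D_\ell^n\cup\{h_{n+1}\}$ produces a pattern in $F$ once $a$ is inserted at $h_{n+1}$. Right cancellation gives, for each $i\in M_F$, at most one $g\in\NZ$ with $gi=h_{n+1}$, so only finitely many such $g$'s can appear; the coordinates other than $h_{n+1}$ in the corresponding blocks $gM_F$ form a finite set $H$ of size at most $|M_F|(|M_F|-1)$. Once $\ell$ is large enough that $D_\ell\supset H\cap D$, the constraints defining $V_\ell$ are frozen, so $V_\ell$ is eventually constant; call the stable value $V_\infty$, which is nonempty. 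Setting $\x_{h_{n+1}}$ to any element of $V_\infty$ preserves the invariant (for every $\ell$, since $V_\infty\subset V_\ell$ by monotonicity).

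Iterating over $n$ exhausts $\NZ\setminus D$ and produces $\x\in A^\NZ$ with $\x|_D=\mathbf{a}$. Any finite $G\subset\NZ$ lies in some $D_\ell^n$, so $\x|_G$ is realizable in $\Lambda$ and, by completeness of $F$, contains no sub-pattern from $F$; hence $\x\in\Lambda$. Since $\x|_{D_\ell}=\mathbf{a}^\ell$ we also have $\x\in W_\ell$ for every $\ell$, so $\bigcap_\ell W_\ell\ne\emptyset$. The main obstacle is precisely the stabilization of $V_\ell$: without right cancellation one could have infinitely many shifted blocks containing $h_{n+1}$, so the decreasing family $\{V_\ell\}_\ell$ could shrink indefinitely and, in the infinite-alphabet setting, have empty intersection --- cancellativity is used exactly to keep the local dependencies around each unfixed coordinate finite.
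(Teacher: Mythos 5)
Your strategy---extend the prescribed partial pattern one coordinate at a time, using right cancellativity to bound the number of translated windows $gM_F$ through each new coordinate---differs from the paper's proof, which starts from an arbitrary configuration agreeing with the data, then patches it \emph{window by window} (filling a whole neighbourhood of overlapping windows at each stage by borrowing values from a point of $\Lambda$ realizing the relevant data), and finally concludes $\x\in\Lambda$ from the purely local characterization of an SFT: $\y\in\Lambda$ iff $\y_{gM}$ lies in the language for every $g$. That route never needs a complete forbidden set. Your route does, and that is where it breaks.

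The gap is the step ``by $F$-completeness, $a\in V_\ell$ is equivalent to the absence of forbidden subpatterns on the windows $gN\subset D_\ell^n\cup\{h_{n+1}\}$ through $h_{n+1}$.'' Completeness in the sense of Definition~\ref{defn:complete_F} quantifies over patterns on \emph{arbitrary} finite subsets of $\NZ$, and for such patterns a complete $F$ with $M_F$ finite does not exist in general, so the remark you cite cannot supply what you need. Concretely, for $\Lambda=\{a^{\infty},b^{\infty}\}\subset\{a,b\}^{\Z}$ (an SFT with $M_F=\{0,1\}$), a non-constant pattern on $\{0,n\}$ is not in $W(\Lambda)$, yet its only subpatterns are singletons (which cannot be forbidden without killing $a^{\infty}$ or $b^{\infty}$) and a pattern on a translate of $\{0,n\}$; a complete $F$ would therefore need $M_F$ to meet a translate of $\{0,n\}$ for every $n$. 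Your equivalence fails precisely on the gapped domains $D_\ell^n\cup\{h_{n+1}\}$ to which you apply it: in this example, with $D_\ell=\{2,\dots,\ell+1\}$, $\mathbf{a}\equiv a$ and $h_1=0$, no window $\{g,g+1\}$ fits inside $\{0,2,\dots,\ell+1\}$, so your frozen constraint set is all of $\{a,b\}$, while the realizability-defined $V_\ell$ equals $\{a\}$; choosing $b$ at coordinate $0$, which your recipe permits, destroys the invariant and leaves no admissible symbol when coordinate $1$ is reached. Thus ``$V_\ell$ is eventually constant'' is unproved for $V_\ell$ as you defined it, and $\bigcap_\ell V_\ell\neq\emptyset$---which is essentially the lemma itself, one coordinate at a time---is left without an argument. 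To repair this you would have to either downgrade the invariant to the genuinely local one (every fully revealed window $gM_F$ carries a pattern of the language) and then, as the paper does, fill in whole clusters of overlapping windows at once using realizing points of $\Lambda$, or give an independent, non-circular proof that the realizability-defined $V_\ell$ stabilize.
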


\begin{proof} Since $\NZ$ is countable, we can identify it with the set $\N^*$.
Since $\Lambda$ is an SFT, there exists a finite set of coordinates $M\subset\NZ$, say with $\sharp M=m$, such that $\y\in A^\NZ$ belongs to $\Lambda$ if and only if $\y_{gM}\in W(\Lambda)$ for all $g\in \NZ$. 

Let $\{W_\ell\}_{\ell\geq 1}$ be a nested family of nonempty cylinders of $\Lambda$. Without loss of generality we can consider that there exists $T:=\{g_i\}_{i\geq 1}\subset \NZ$ and $(a_{g_i})_{i\geq 1}\in A^T$ such that for all $\ell\geq 1$ we have   $W_\ell=[(a_{g_i})_{i\leq \ell}]_{\Lambda}$. For each $\ell\geq1$ define $U_\ell:=[(a_{g_i})_{i\leq \ell}]_{A^\NZ}$. Since $\{U_\ell\}_{\ell\geq 1}$ is a nested family of nonempty cylinders of $A^\NZ$, then $\bigcap_{\ell\geq 1}U_\ell=\{\x\in A^\NZ: x_{g_i}=a_{g_i}\ \forall i\geq 1\}\neq\emptyset$.

Let us check that $\bigcap_{\ell\geq 1}W_\ell=\{\x\in \Lambda: x_{g_i}=a_{g_i}\ \forall i\geq 1\}\neq\emptyset$. For any $N\subset \NZ$ denote $\breve{N}:=N\cap T$, and for $g\in\NZ$ denote $N^g:=gN$ and $\breve{N}^g:=N^g\cap T$. Note that since $M$ is finite and $\NZ$ is  cancellative, for each $g$, there exists at most $K= m^2$ elements of $\NZ$, say $h_1^g,...,h_{k(g)}^g$ with $k(g)\leq K$, such that $M^{h_i^g}\cap M^g\neq\emptyset$ for all $i\leq k(g)$ (in particular we can always consider $h_1^g:=g$).

Fix any $c\in A$ and take $\x^0\in \bigcap_{\ell\geq 1}U_\ell$ such that $x^0_i=c$ for all $i\in \NZ\setminus T$. Define $\prescript{}{1}N:=\bigcup_{i\leq k(1)}M^{h_i^1}$. Observe that $\x^0_{\prescript{}{1}{\breve{N}}}=(a_j)_{j\in \prescript{}{1}{\breve{N}}}$ belongs to the language of $\Lambda$, and then there exists a choice of $(b_j)_{j\in M^1\setminus \breve{M}^1}$, such that $(\alpha_j)_{j\in \prescript{}{1}{N}}$ defined by 
$$\alpha_j:=\left\{\begin{array}{lcl}
b_j&,& j\in M^1\setminus \breve{M}^1\\\\\
x^0_j&,& j\in  \prescript{}{1}{N}\setminus(M^1\setminus \breve{M}^1)\end{array},\right.$$
belongs to the language of $\Lambda$.
Now take  $\x^1\in \bigcap_{\ell\geq 1}U_\ell$ such that
$$x^1_j:=\left\{\begin{array}{lcl}
\alpha_j&,& j\in \prescript{}{1}{N}\\\\\
x^0_j&,& j\in  \NZ\setminus\prescript{}{1}{N}\end{array}.\right.$$
Define $\prescript{}{2}N:=\bigcup_{i\leq k(2)}M^{h_i^2}$ and  it follows that 
$\x^1_{\prescript{}{2}{\breve{N}}\cup M^1}$ belongs to the language of $\Lambda$, and then there exists a choice of $(b_j)_{j\in M^2\setminus (\breve{M}^2\cup M^1) }$, such that $(\alpha_j)_{j\in \prescript{}{2}{N}}$ defined by 
$$\alpha_j:=\left\{\begin{array}{lcl}
b_j&,& j\in M^2\setminus (\breve{M}^2\cup M^1)\\\\\
x^1_j&,& j\in  \prescript{}{2}{N}\setminus(M^2\setminus (\breve{M}^2\cup M^1))\end{array},\right.$$
belongs to the language of $\Lambda$.
Hence, we define 
$$x^2_j:=\left\{\begin{array}{lcl}
\alpha_j&,& j\in \prescript{}{2}{N}\\\\\
x^1_j&,& j\in  \NZ\setminus\prescript{}{2}{N}\end{array}.\right.$$
From here we proceed recursively by defining $\x^\ell\in \bigcap_{\ell\geq 1}U_\ell$ given by
$$x^\ell_j:=\left\{\begin{array}{lcl}
\alpha_j&,& j\in \prescript{}{\ell}{N}\\\\\
x^{\ell-1}_j&,& j\in  \NZ\setminus\prescript{}{\ell}{N}\end{array},\right.$$
where $\prescript{}{\ell}N:=\bigcup_{i\leq k(\ell)}M^{h_i^\ell}$ and 
$$\alpha_j:=\left\{\begin{array}{lcl}
b_j&,& j\in M^\ell\setminus (\breve{M}^\ell\cup\bigcup_{1\leq i\leq\ell-1} M^i)\\\\\
x^1_j&,& j\in  \prescript{}{\ell}{N}\setminus(M^\ell\setminus (\breve{M}^\ell\cup\bigcup_{1\leq i\leq\ell-1} M^i))\end{array},\right.$$
 with $(b_j)_{j\in M^\ell\setminus (\breve{M}^\ell\cup\bigcup_{1\leq i\leq\ell-1} M^i)}$ chosen in a way that $(\alpha_j)_{j\in \prescript{}{\ell}{N}}$ belongs to the language of $\Lambda$.
 
 From its construction, it follows that the sequence $(\x^\ell)_{\ell\geq 0}$ converges to some $\x\in A^\NZ$. In fact, for any $i\in\NZ$ there exists some $L\geq 0$ such that $x^\ell_i$ is fixed for all $\ell\geq L$. Since $\x^\ell_T=(a_{g})_{g\in T}$ for all $\ell\geq 0$, then $\x\in \bigcap_{\ell\geq 1}U_\ell$. Furthermore, for all $g\in\NZ$ there exists $L\geq 0$ such that $\x_{gM}=\x^\ell_{gM}=\x^\ell_{M^g}\in W(\Lambda)$ for all $\ell\geq L$, which means that $\x\in\Lambda$. Hence, we conclude that $\x \in \Lambda\cap\bigcap_{\ell\geq 1}U_\ell=\bigcap_{\ell\geq 1}\Lambda\cap U_\ell=\bigcap_{\ell\geq 1}W_\ell$.

\end{proof}

Hence, as direct consequence of  Theorem  \ref{theo: image_of_shifts-intersection} and Lemma \ref{lem:SFTs_finite-tersect}, we have the following corollary.

\begin{cor}\label{cor:image_of_SFT_by_Phi} Suppose $\NZ$ is countable and cancellative, and let $\Lambda\subset A^\NZ$ be a shift of finite type and $\Phi:\Lambda\to B^\NZ$ be a locally finite-to-one generalized sliding block. Then $\Phi(\Lambda)$ is a shift space.
\end{cor}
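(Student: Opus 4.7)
The corollary is explicitly flagged as a direct consequence of Theorem \ref{theo: image_of_shifts-intersection} and Lemma \ref{lem:SFTs_finite-tersect}, so my plan is simply to verify that the hypotheses of Theorem \ref{theo: image_of_shifts-intersection} are satisfied and then invoke it. Recall that Theorem \ref{theo: image_of_shifts-intersection} requires three things: (i) $\NZ$ countable, (ii) every nested family of nonempty cylinders of $\Lambda$ has nonempty intersection, and (iii) $\Phi$ is a locally finite-to-one GSBC.

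Conditions (i) and (iii) are immediate from the hypotheses of the corollary. The only nontrivial condition is (ii), the nonempty-intersection property for nested families of nonempty cylinders of $\Lambda$. This is precisely the content of Lemma \ref{lem:SFTs_finite-tersect}, which established that countability together with left/right cancellativity of $\NZ$ and the SFT assumption on $\Lambda$ suffice to guarantee it. Since the corollary assumes $\NZ$ is countable and cancellative and $\Lambda$ is an SFT, Lemma \ref{lem:SFTs_finite-tersect} applies verbatim and yields (ii).

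Having verified the three hypotheses, I would apply Theorem \ref{theo: image_of_shifts-intersection} directly to conclude that $\Phi(\Lambda)$ is a shift space. There is no real obstacle here: the work is entirely absorbed into Lemma \ref{lem:SFTs_finite-tersect}, which is where the interplay between the finite memory of $\Lambda$ and the cancellativity of $\NZ$ is exploited to inductively extend finite compatible patterns to a global configuration in $\Lambda$. Thus the proof of the corollary itself reduces to a one-line citation of the two previous results.
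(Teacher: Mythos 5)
Your proposal is correct and matches the paper's own treatment exactly: the paper states the corollary as a direct consequence of Theorem \ref{theo: image_of_shifts-intersection} and Lemma \ref{lem:SFTs_finite-tersect}, which is precisely the reduction you carry out. Nothing further is needed.
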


\qed

Note that SFTs on the lattices $\N^d$ or $\Z^d$ with the usual sum are particular cases of the previous corollary. We also notice that this corollary assures that $\Phi(\Lambda)$ is a shift space, but does not assure that it is an SFT (in fact, $\Phi(\Lambda)$ is a weakly sofic shift - see Section \ref{sec:sofic}). For the particular case of higher block codes, we shall present in Proposition \ref{prop:higher_block_shift_SFT-2} some sufficient conditions under which the image of an SFT is an SFT.

From Definition \ref{defn:SFT}, we have that $\Lambda\subset A^\NZ$ being an SFT means that to decide if any given $\x\in A^\NZ$ belongs or not to $\Lambda$, we need just  checking for each $g\in\NZ$ if $(x_{gi})_{i\in M}$ belongs or not to $F$. Such procedure could be described as follows: For each $g\in\NZ$ one assigns the value 0 if $(x_{gi})_{i\in M}\in F$ and the value 1 if $(x_{gi})_{i\in M}\notin F$; thus we conclude that $\x\in \Lambda$ if and only if for all $g$ we have assigned the value 1. Hence, since we can always take $F$ being complete in the sense that any pattern which is not in the language contains a subpattern of $F$, the previous discussion can be written as the following theorem which provides an alternative definition for SFTs:

\begin{theo}\label{theo:SBC_and_SFT} Let $\Lambda\subset A^\NZ$  be shift space and let $\mathbf{1}\in \{0,1\}^\NZ$ be the configuration which assigns $1$ to each site of $\NZ$. Then, $\Lambda$ is a shift of finite type if and only if there exists a sliding block code $\Phi:A^\NZ\to \{0,1\}^\NZ$ such that $\Lambda=\Phi^{-1}(\mathbf{1})$.
\end{theo}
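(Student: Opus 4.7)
The proof proposal is to establish each direction by an explicit construction that makes the local rule of the sliding block code record precisely the indicator of the forbidden patterns.

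For the forward implication, assume $\Lambda$ is an SFT, so there exists $F \subset \NN_{A^\NZ}$ with $M_F$ finite and $\Lambda = X_F$. By the discussion following Definition \ref{defn:complete_F}, I may replace $F$ by a complete set of forbidden patterns with $M_{\hat F}$ still finite, and using the observation right after Definition \ref{defn:SFT} that $\Lambda$ is an $M$-step SFT iff $\Lambda = X_F$ for some $F \subset A^M$, I can take $M := M_{\hat F}$ and assume $F \subset A^M$. Now define the local rule $\phi : A^M \to \{0,1\}$ by $\phi(p) := 0$ if $p \in F$ and $\phi(p) := 1$ otherwise, and let $\Phi : A^\NZ \to \{0,1\}^\NZ$ be the induced SBC with neighborhood $M$. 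Then $(\Phi(\x))_g = 1$ for all $g \in \NZ$ iff $(x_{gi})_{i \in M} \notin F$ for all $g \in \NZ$, which by \eqref{eq:X_F} is exactly the condition $\x \in X_F = \Lambda$; hence $\Phi^{-1}(\mathbf{1}) = \Lambda$.

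For the reverse implication, suppose $\Phi : A^\NZ \to \{0,1\}^\NZ$ is an SBC with $\Lambda = \Phi^{-1}(\mathbf{1})$. By Definition \ref{SBC}, there exist a finite $M \subset \NZ$ and a local rule $\phi : A^M \to \{0,1\}$ with $(\Phi(\x))_g = \phi((x_{gi})_{i \in M})$ for all $g \in \NZ$ and $\x \in A^\NZ$. Define $F := \{p \in A^M : \phi(p) = 0\} \subset \NN_{A^\NZ}$, so $M_F \subset M$ is finite. Then $\x \in \Lambda$ iff $\phi((x_{gi})_{i \in M}) = 1$ for every $g \in \NZ$, iff $(x_{gi})_{i \in M} \notin F$ for every $g$, iff $\x \in X_F$; hence $\Lambda = X_F$ is an SFT.

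I do not expect a serious obstacle: the whole point of the statement is to repackage the combinatorial condition ``no translate of $\x$ restricted to $M$ lies in $F$'' as the topological/dynamical condition ``$\Phi$ sends $\x$ to the constant configuration $\mathbf{1}$'', and both directions are essentially unpacking the definitions. The only minor points of care are (i) shrinking $F$ so that every forbidden pattern is supported on a common finite set of coordinates (which is where completeness and the finiteness of $M_F$ are used together), and (ii) noting that $\Phi^{-1}(\mathbf{1})$ is automatically a shift space because $\mathbf{1}$ is fixed by every $g$-shift map and $\Phi$ is continuous and shift-commuting, so no separate verification that $\Lambda$ is closed and invariant is required.
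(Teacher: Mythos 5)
Your proposal is correct and follows exactly the route the paper intends: the paper's ``proof'' is just the informal discussion preceding the theorem (assign $0$ or $1$ according to whether $(x_{gi})_{i\in M}$ lies in $F$) followed by a \textsc{qed}, and your construction of $\phi$ as the indicator of the complement of $F$, together with the reverse direction taking $F:=\phi^{-1}(0)$, is the straightforward formalization of that discussion. The only cosmetic remark is that completeness of $F$ is not actually needed in the forward direction; it suffices to extend each forbidden pattern to all of its completions on the common finite support $M=M_F$, which leaves $X_F$ unchanged.
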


\qed

This alternative definition of SFTs directly prove that:

\begin{cor}\label{cor:inverse_image_of_SFT}
If $\Phi:A^\NZ\to B^\NZ$ is a sliding block code and $\Gamma\subset B^\NZ$ is an SFT, then $\Lambda:=\Phi^{-1}(\Gamma)$ is also an SFT.
\end{cor}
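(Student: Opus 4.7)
The plan is to deduce the corollary directly from the alternative characterization of shifts of finite type given in Theorem \ref{theo:SBC_and_SFT}, by exhibiting $\Lambda$ as the preimage of the constant configuration $\mathbf{1}\in\{0,1\}^\NZ$ under a single sliding block code built by composition.

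First, since $\Gamma\subset B^\NZ$ is an SFT, Theorem \ref{theo:SBC_and_SFT} furnishes a sliding block code $\Psi:B^\NZ\to\{0,1\}^\NZ$ such that $\Gamma=\Psi^{-1}(\mathbf{1})$. Composing with the given SBC $\Phi:A^\NZ\to B^\NZ$, I would consider $\Psi\circ\Phi:A^\NZ\to\{0,1\}^\NZ$. By the Curtis--Hedlund--Lyndon-type characterization (Theorem \ref{theo:hedlund_classic}), both $\Phi$ and $\Psi$ are uniformly continuous and commute with every $g$-shift; since both properties are preserved under composition (as noted in the remark following Theorem \ref{theo:hedlund_general}), $\Psi\circ\Phi$ is itself a sliding block code.

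Then the identity
\begin{equation*}
\Lambda=\Phi^{-1}(\Gamma)=\Phi^{-1}\bigl(\Psi^{-1}(\mathbf{1})\bigr)=(\Psi\circ\Phi)^{-1}(\mathbf{1})
\end{equation*}
together with Theorem \ref{theo:SBC_and_SFT} applied to the SBC $\Psi\circ\Phi$, immediately yields that $\Lambda$ is a shift of finite type in $A^\NZ$.

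There is no substantive obstacle: the only content is the closure of the class of SBCs under composition, which is already established in the paper, and the characterization of SFTs as fibers of SBCs over $\mathbf{1}$. If one preferred a more explicit approach avoiding Theorem \ref{theo:SBC_and_SFT}, the alternative would be to pick a finite $M\subset\NZ$ and a local rule $\phi:W_M(A^\NZ)\to B$ for $\Phi$, take a finite $M'\subset\NZ$ and a complete set of forbidden patterns $F'\subset B^{M'}$ defining $\Gamma$, and then verify that $F:=\{(a_i)_{i\in M\cdot M'}\in A^{M\cdot M'}:\ (\phi((a_{gi})_{i\in M}))_{g\in M'}\in F'\}$ defines $\Lambda$ with $M_F\subset M\cdot M'$ finite; but this is just an unpacking of the conceptual proof above.
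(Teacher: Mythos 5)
Your proof is correct and is essentially identical to the paper's own argument: both obtain $\Psi$ from Theorem \ref{theo:SBC_and_SFT} with $\Gamma=\Psi^{-1}(\mathbf{1})$, note that $\Psi\circ\Phi$ is again a sliding block code, and conclude via $\Lambda=(\Psi\circ\Phi)^{-1}(\mathbf{1})$. The extra explicit construction you sketch at the end is not needed and is not in the paper, but it does no harm.
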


\begin{proof} Since $\Gamma$ is an SFT, there exists a sliding block code $\Psi:B^\NZ\to \{0,1\}^\NZ$ such that $\Gamma=\Psi^{-1}(\mathbf{1})$. Hence, taking $\Psi\circ\Phi:A^\NZ\to \{0,1\}$ it follows that $\Lambda=(\Psi\circ\Phi)^{-1}(\mathbf{1})$, which means that $\Lambda$ is an SFT.

\end{proof}

Most of the results for finite-alphabet SFTs on the lattice $\N$ or $\Z$ hold for infinite-alphabet SFTs on these lattices (see for instance propositions 2.2.5 and 2.3.5 in \cite{DarjiGoncalvesSobottka2020}). In what follows, we state the general version of Theorem 2.1.10 in \cite{LindMarcus} which will be useful to understand some concepts throughout this work. The proof of this theorem is analogous to that given in  \cite{LindMarcus}.

\begin{theo}\label{theo:uniform_conjugacy-SFT} Let $\NZ$ be the lattice $\N$ or $\Z$ with usual sum, and let $A$ and $B$ be two alphabets. A shift space of $A^\NZ$ which is uniformly conjugate to an SFT of $B^\NZ$ is itself an SFT.
\end{theo}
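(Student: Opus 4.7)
The plan is to express $\Lambda$ as the intersection of two shifts of finite type, one that enforces the SFT constraint of $\Gamma$ pulled back through $\Phi$, and one that enforces the conjugacy relation locally. Let $\Phi:\Lambda\to\Gamma$ be the given uniform conjugacy; by Theorem \ref{theo:hedlund_classic}, $\Phi$ is a sliding block code with local rule $\phi:W_M(\Lambda)\to B$ for some finite $M\subset\NZ$, and its inverse $\Psi:=\Phi^{-1}$ is a sliding block code with local rule $\psi:W_N(\Gamma)\to A$ for some finite $N\subset\NZ$. I would extend $\phi$ and $\psi$ arbitrarily to maps $\tilde\phi:A^M\to B$ and $\tilde\psi:B^N\to A$, and denote by $\tilde\Phi:A^\NZ\to B^\NZ$ and $\tilde\Psi:B^\NZ\to A^\NZ$ the sliding block codes on the full shifts defined by these extensions; by construction $\tilde\Phi|_\Lambda=\Phi$ and $\tilde\Psi|_\Gamma=\Psi$.

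Next, consider the two sets
$$\Gamma':=\tilde\Phi^{-1}(\Gamma)\qquad\text{and}\qquad\Lambda':=\{\x\in A^\NZ:\tilde\Psi(\tilde\Phi(\x))=\x\}.$$
Since $\Gamma$ is an SFT, Corollary \ref{cor:inverse_image_of_SFT} yields that $\Gamma'$ is an SFT. For $\Lambda'$, observe that $\x\in\Lambda'$ if and only if for every $g\in\NZ$,
$$x_g=\tilde\psi\bigl((\tilde\phi((x_{g+i+j})_{j\in M}))_{i\in N}\bigr),$$
a condition that depends only on the coordinates of $\x$ lying in the finite set $\{g\}\cup(g+N+M)$. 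Hence $\Lambda'$ is itself an SFT, with memory contained in $\{0\}\cup(N+M)$.

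The crux of the argument is the identity $\Lambda=\Gamma'\cap\Lambda'$. The inclusion $\Lambda\subseteq\Gamma'\cap\Lambda'$ is immediate from $\tilde\Phi|_\Lambda=\Phi$, $\Phi(\Lambda)\subseteq\Gamma$, and $\Psi\circ\Phi=\mathrm{id}_\Lambda$. For the reverse inclusion, take $\x\in\Gamma'\cap\Lambda'$ and set $\y:=\tilde\Phi(\x)$; since $\x\in\Gamma'$ we have $\y\in\Gamma$, hence $\tilde\Psi(\y)=\Psi(\y)\in\Psi(\Gamma)=\Lambda$, while $\x\in\Lambda'$ gives $\tilde\Psi(\y)=\x$, so $\x\in\Lambda$. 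Since a finite intersection of SFTs is an SFT (simply take the union of two forbidden pattern sets with finite $M_F$'s, which remains finite), $\Lambda$ is an SFT.

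The main obstacle is conceptual rather than technical: the local rules $\phi$ and $\psi$ are a priori defined only on $W_M(\Lambda)$ and $W_N(\Gamma)$, not on the full pattern spaces $A^M$ and $B^N$, so the arbitrary extensions $\tilde\phi$ and $\tilde\psi$ may produce unintended behavior on sequences outside $\Lambda$ and $\Gamma$. The device of intersecting with $\Lambda'$ precisely discards any spurious $\x\notin\Lambda$ whose $\tilde\Phi$-image accidentally lies in $\Gamma$, by demanding that the composition $\tilde\Psi\circ\tilde\Phi$ reproduce $\x$ on every coordinate, which holds exactly on $\Lambda$.
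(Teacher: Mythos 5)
Your proof is correct, but it takes a genuinely different route from the paper's. The paper proves this theorem by transplanting the Lind--Marcus argument for \cite[Theorem 2.1.10]{LindMarcus}: one characterizes $M$-step shifts on $\N$ or $\Z$ by a word-gluing property of the language ($uv,vw$ allowed with $v$ long enough implies $uvw$ allowed) and then verifies that property for $\Lambda$ using the memory and anticipation of $\Phi$ and $\Phi^{-1}$; that argument is intrinsically one-dimensional. You instead realize $\Lambda$ as the intersection $\tilde\Phi^{-1}(\Gamma)\cap\{\x:\tilde\Psi(\tilde\Phi(\x))=\x\}$ of two SFTs --- the pullback of $\Gamma$, which is an SFT by Corollary \ref{cor:inverse_image_of_SFT}, and the equalizer of $\tilde\Psi\circ\tilde\Phi$ with the identity, which is cut out by forbidden patterns supported on the finite window $\{0\}\cup(N+M)$ --- and conclude since a finite union of forbidden sets with finite $M_F$ still has finite $M_F$. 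Both inclusions of the identity $\Lambda=\Gamma'\cap\Lambda'$ check out, and the equalizer does exactly the job you describe of neutralizing the arbitrariness of the extensions $\tilde\phi,\tilde\psi$. What your approach buys is potentially significant: nothing in it appears to use the additive structure of $\N$ or $\Z$. Replacing $g+i+j$ by $g(ij)$ over a general monoid, the window becomes $\{1\}\cup NM$, which is still finite, and both Corollary \ref{cor:inverse_image_of_SFT} and the closure of SFTs under finite intersections are already available in the paper for arbitrary monoids. If this holds up under careful scrutiny, your argument settles Conjecture \ref{conj:SFT-SFT} affirmatively, so it is worth writing out in the general setting rather than only for $\N$ and $\Z$.
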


\qed

As we will be showed in Lemma \ref{claim:w-sofic_SVL}, without the uniformity assumption on the conjugacy we cannot assure that a shift space conjugated to an SFT is itself an SFT. For shift spaces on the lattice $\N$, such fact can also be deduced from \cite[Prop. 2.3.5]{DarjiGoncalvesSobottka2020} where it was proved that shift space of $A^\N$ has the shadowing property if and only if it is an SFT, together \cite[Prop. 2.1.6]{DarjiGoncalvesSobottka2020} which proves that uniformity of the conjugacy is required in order to ensure that the shadowing property will be preserved.

It remains open the question whether a shift on a general lattice which is uniformly conjugated to an SFT is itself an SFT or not (see Conjecture \ref{conj:SFT-SFT} in the last section). The previous argument of shadowing property cannot be used for general case, since \cite[Prop. 2.3.5]{DarjiGoncalvesSobottka2020} just establishes the relationship between shifts with shadowing property and SFTs when the lattice is $\N$.\\

Now, to end this section let us examine the higher block presentation of shifts of finite type, presenting results with sufficient conditions under which such higher block presentations will be themselves shifts of finite type. To ensure that  higher block presentation of shift of finite is also a shift of finite type, we need some structure on the monoid, on the shift space or on the partition $\bfN$ that, besides make the higher block presentation to be a shift space, also make the set $P_O$ given in \eqref{eq:P_O} be finite or allow us to replace it for some finite set of forbidden patterns. Moreover, such structure shall also able us to translate the set of forbidden patterns that generate the original SFT into a finite set of forbidden patterns over the alphabet $A^{[\bfN]}$.

\begin{prop}\label{prop:higher_block_shift_SFT-2} Let $\Lambda\subset A^\NZ$ be a shift of finite type  and  $ \bfN$ be a partition of $\Lambda$ by cylinders with $\bfM_\bfN$ finite. If at least one of the following conditions holds:
\begin{enumerate}

\item[(C1)] $\Lambda=A^\NZ$ and $\NZ$ can be extended to a group $\mathbb{G}$ with the property that for all $g,h\in\NZ$ we have $g^{-1}h\in\NZ$ or $h^{-1}g\in\NZ$;

\item[(C2)] $\NZ$ can be extended to a countable group $\mathbb{G}$ with the property that for all $g,h\in\NZ$ we have $g^{-1}h\in\NZ$ or $h^{-1}g\in\NZ$;

\item[(C3)] $\NZ$ is a group and $\bigcap_{M\in\bfM_\bfN}M\neq\emptyset$;

\item[(C4)] $1\in\bigcap_{M\in\bfM_\bfN}M$;

\end{enumerate}
then  $\Lambda^{[\bfN]}$ is also a shift of finite type.

Conversely, if condition (C3) or condition (C4) holds, and $\Lambda^{[\bfN]}$ is a shift of finite type, then $\Lambda$ is also a shift of finite type.

\end{prop}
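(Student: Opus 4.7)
The plan is to treat the forward implication by constructing, for each of (C1)--(C4), a set of forbidden patterns for $\Lambda^{[\bfN]}$ whose combined support is finite, and then to handle the converse by extending $\Phi^{[\bfN]}$ to a sliding block code on $A^\NZ$ and invoking Corollary~\ref{cor:inverse_image_of_SFT}.

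First I would invoke Proposition~\ref{prop:higher_block_shift} to get that $\Lambda^{[\bfN]}$ is a shift space: (C3) is (B4), (C4) is (B5), (C1) falls under (B2), and (C2) reduces to (B3) since any submonoid of a group is cancellative, so Lemma~\ref{lem:SFTs_finite-tersect} supplies the nested-cylinder intersection property. Let $L := \bigcup_{M \in \bfM_\bfN} M$, which is finite by hypothesis, and let $F$ be a forbidden set for $\Lambda$ with $M_F$ finite. The two kinds of constraints to encode are the overlapping condition~\eqref{eq:overlapping_condition} and the translation of $F$. The overlap between coordinates $g$ and $h$ is only nontrivial when $g^{-1}h = \ell m^{-1}$ in $\mathbb{G}$ for some $\ell, m \in L$, and the set of such ``separations'' is finite. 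Under (C3) I shift by $n^{-1}$ to root every overlap-forbidden pattern at coordinate $n$; under (C4) the hypothesis $1 \in L$ puts every overlap-forbidden pattern on a support contained in $L$; under (C1) and (C2) the asymmetric condition $g^{-1}h \in \NZ$ or $h^{-1}g \in \NZ$ lets me normalize any pair $(g,h)$ so that one of them becomes $1 \in \NZ$ and the other lies in the finite set $\{\ell m^{-1}, m\ell^{-1} : \ell, m \in L\} \cap \NZ$. For the translation of $F$: under (C3)/(C4) the inverse of $\Phi^{[\bfN]}$ is an SBC with local rule $\{n^{-1}\}$ (resp.\ $\{1\}$), so each pattern of $F$ transports to a forbidden pattern of $\Lambda^{[\bfN]}$ with support in $M_F n^{-1}$ (resp.\ $M_F$), still finite; under (C1) we have $F = \emptyset$; under (C2) the asymmetric condition gives the same finite-support translation as for overlap. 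Taking the union, $\Lambda^{[\bfN]}$ is cut out by forbidden patterns with finite total support, so it is an SFT.

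For the converse under (C3) or (C4), I would extend $\Phi^{[\bfN]}$ to a sliding block code $\bar\Phi : A^\NZ \to \bar B^\NZ$ with $\bar B := A^{[\bfN]} \sqcup \{*\}$, by setting $\bar\Phi(\x)_g$ equal to the unique $b \in A^{[\bfN]}$ for which $\sigma^g(\x) \in [b]_{A^\NZ}$ with $[b]_\Lambda \in \bfN$ when such a $b$ is uniquely determined, and $\bar\Phi(\x)_g := *$ otherwise. Since $\bfM_\bfN$ is finite, this is an SBC with local rule on $L$. It agrees with $\Phi^{[\bfN]}$ on $\Lambda$, giving $\Lambda \subseteq \bar\Phi^{-1}(\Lambda^{[\bfN]})$. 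Conversely, if $\bar\Phi(\x) \in \Lambda^{[\bfN]}$ then no $*$-symbol appears, so $\bar\Phi(\x) \in (A^{[\bfN]})^\NZ$, and applying the SBC inverse of $\Phi^{[\bfN]}$ (which exists by Proposition~\ref{prop:higher_block_shift} under (C3)/(C4)) recovers an element of $\Lambda$ whose image under $\bar\Phi$ is $\bar\Phi(\x)$; injectivity of $\bar\Phi$ on the $*$-free sequences then forces $\x \in \Lambda$. Viewing $\Lambda^{[\bfN]}$ inside $\bar B^\NZ$ by adjoining the singleton forbidden pattern $\{*\}$ at coordinate $1$ preserves finiteness of the defining support, so $\Lambda^{[\bfN]}$ is an SFT in $\bar B^\NZ$; Corollary~\ref{cor:inverse_image_of_SFT} then yields that $\Lambda = \bar\Phi^{-1}(\Lambda^{[\bfN]})$ is an SFT.

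The main obstacle is making $\bar\Phi$ a single-valued SBC with the exact property $\bar\Phi^{-1}(\Lambda^{[\bfN]}) = \Lambda$: because $\bfN$ partitions only $\Lambda$ and not $A^\NZ$, two distinct $\bfN$-cylinders can be simultaneously compatible with the window $(x_{gi})_{i \in L}$ once $\x \notin \Lambda$, and the ``$*$ when ambiguous or missing'' convention is what both resolves this and prevents spurious preimages. In the forward direction, the analogous technical point is converting ``finitely many separations in the ambient group $\mathbb{G}$'' into ``finite $M_{P_O \cup P_F} \subset \NZ$,'' which is precisely what the asymmetric conditions of (C1) and (C2), and the identity elements in (C3)/(C4), are engineered to guarantee.
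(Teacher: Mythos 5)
Your proposal is correct and follows essentially the same route as the paper's proof: identify $\Lambda^{[\bfN]}$ with $X_{P_O\cup P_F}$ (or $X_{P_O\cup D_F}$ under (C2), via Lemma~\ref{lem:SFTs_finite-tersect} and Proposition~\ref{prop:higher_block_shift-NZ_countable}), replace the overlap constraints by a finite-support set by normalizing one coordinate of each violating pair to the identity, observe that the transported $F$-constraints have finite support, and prove the converse by adjoining a junk symbol to extend $\Phi^{[\bfN]}$ to an SBC on $A^\NZ$ and pulling back through Corollary~\ref{cor:inverse_image_of_SFT}. The only sketch-level omission is that under (C2) the forbidden set $F$ must be taken complete (in the sense of Definition~\ref{defn:complete_F}, which is always possible with $M_F$ still finite) in order to apply Proposition~\ref{prop:higher_block_shift-NZ_countable}.
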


\begin{proof}

Suppose (C1) holds. From Corollary \ref{cor:full-shift_higher_block_presentation}, we have that $(A^\NZ)^{[\bfN]}=X_{P_O}$, where $P_O$ is defined by \eqref{eq:P_O}. We shall show that there exists $F_O\subset\NN_{(A^{[\bfN]})^\NZ}$ with $M_{F_O}$ finite and such that $X_{P_O}=X_{F_O}$. Define
\begin{equation}\label{eq:F_O_1}F_O:=\{ b_1b_h \in \NN_{(A^{[\bfN]})^\NZ}: b_1=(\beta^1_i)_{i\in L}, \ b_h=(\beta^h_i)_{i\in M},\ \exists \ell\in L, m\in M\ s. t.\ \ell=hm\ and\ \beta^1_\ell \neq \beta^h_m\}.\end{equation}
Since $\mathbb{G}$ is a group, for each $\ell,m\in \bigcup_{M\in\bfM_\bfN} M$ there is exactly one $h\in\mathbb{G}$ such that $\ell=hm$. Hence, for each $\ell,m\in \bigcup_{M\in\bfM_\bfN} M$ there are at most one $h\in \NZ$ satisfying $\ell=hm$ (maybe there is none) and thus, since $\bfM_\bfN$ is finite, there are only a finite number of coordinates $h$ in $\NZ$ that can be used in the definition of a pattern in $F_O$, that is, $M_{F_O}$ is finite. Since $F_O\subset P_O$, it follows that $X_{P_O}\subset X_{F_O}$. To check the opposite inclusion let
$\mathbf{b}=(b_j)_{j\in\NZ}=\big((\beta^j_i)_{i\in L^j}\big)_{j\in\NZ}\in X_{F_O}$ and let 
$g,h\in\NZ$, $\ell\in L^g$ and $m\in L^h$ such that $g\ell=hm$. Without loss of generality we  suppose $g^{-1}h=:\hat h\in\NZ$, and define $\mathbf{d}\in X_{F_O}$ given by  
\begin{equation}\label{eq:d}\mathbf{d}=(d_j)_{j\in\NZ}=\big((\delta^j_i)_{i\in M^j}\big)_{j\in\NZ}:=\s^g(\mathbf{b})=(b_{gj})_{j\in\NZ}=\big((\beta^{gj}_i)_{i\in L^{gj}}\big)_{j\in\NZ}.\end{equation}
Therefore, $\ell\in M^1$, $m\in M^{\hat{h}}$ and $\ell=\hat{h}m$, and it follows that 
$$\beta^g_\ell=_{a}\delta^1_\ell=_{b}\delta^{\hat{h}}_m=_{c}\beta^{g\hat{h}}_m=\beta^h_m,$$
where $=_{a}$ and $=_{c}$ come from the definition of $\mathbf{d}$, $=_{b}$ is due $\mathbf{d}\in X_{F_O}$. Thus,
 $\mathbf{b}\in X_{P_O}$, and we conclude that $(A^\NZ)^{[\bfN]}=X_{P_O}=X_{F_O}$.\\

Now suppose (C2) or (C3) holds. We shall prove that $\Lambda^{[\bfN]}=X_P$ for some $P \subset \NN_{(A^{[\bfN]})^\NZ}$ with $M_P$ finite. Since $\Lambda$ is an SFT there exists $F\subset \NN_{A^\NZ}$ complete with $M_F$ is finite and such that $\Lambda=X_F$. 

Recall that if condition (C2) holds, and since $\Lambda$ is an SFT, then any family of nested nonempty cylinders has nonempty intersection (Lemma \ref{lem:SFTs_finite-tersect}). Therefore, from Proposition \ref{prop:higher_block_shift-NZ_countable} it follows that
$\Lambda^{[\bfN]}=X_{P_O\cup D_F}$. On the other hand,  if condition (C3) holds, then from 
Proposition \ref{prop:higher_block_shift} we have $\Lambda^{[\bfN]}=X_{P_O\cup P_F}$. Furthermore, from \eqref{eq:P_F} and \eqref{eq:D_F}, it follows that $M_{P_F}$ as well as $M_{D_F}$ will be finite provided that $M_F$ is finite.
On the other hand, since $\NZ$ is a group (under (C3)) or can be extended to a group  with the property that for all $g,h\in\NZ$ we have $g^{-1}h\in\NZ$ or $h^{-1}g\in\NZ$ (under (C2)), we have $X_{P_O}=X_{F_O}$ where $F_O$ is given by \eqref{eq:F_O_1} and has $M_{F_O}$ finite. Therefore, taking $P=F_O\cup D_F$ or $P=F_O\cup P_F$ (depending on if (C2) or (C3)) we have $M_P$ finite and $X_P=\Lambda^{[\bfN]}$.\\

Finally, suppose that (C4) holds, and let $\Lambda=X_F$ with $M_F$ finite. As in the case of (C3), we have from Proposition \ref{prop:higher_block_shift} and from \eqref{eq:P_F} that $\Lambda^{[\bfN]}=X_{P_O\cup P_F}$ where $M_{P_F}$ is finite due to $M_F$ to be finite. Thus, we just need to find $\hat F_O\subset\NN_{(A^{[\bfN]})^\NZ}$ with $M_{\hat F_O}$ finite and such that $X_{P_O}=X_{\hat F_O}$. Define 
\begin{equation}\label{eq:F_O_2}\hat F_O:=\{ b_1b_h \in \NN_{(A^{[\bfN]})^\NZ}: b_1=(\beta^1_i)_{i\in L},\ h\in L,\ \ b_h=(\beta^h_i)_{i\in M},\ \beta^1_h\neq\beta^h_1\}.\end{equation}
Observe that $M_{\hat F_O}=\bigcup_{M\in\bfM_\bfN} M$, and then, since $\bfM_\bfN$ is finite, 
it follows that $M_{\hat F_O}$ is finite. Let us check that $X_{\hat F_O}=X_{P_O}$. It is direct that $X_{P_O}\subset X_{\hat F_O}$, and to check the opposite inclusion let $\mathbf{b}=(b_j)_{j\in\NZ}=\big((\beta^j_i)_{i\in L^j}\big)_{j\in\NZ}\in X_{\hat F_O}$. Suppose $g,h\in\NZ$, $\ell\in L^g$ and $m\in L^h$ such that $g\ell=hm$. Let $\mathbf{d}:=\s^g(\mathbf{b})$ given by \eqref{eq:d},
and let
 \begin{equation}\label{eq:e}\mathbf{e}=(e_j)_{j\in\NZ}=\big((\varepsilon^j_i)_{i\in N^j}\big)_{j\in\NZ}:=\s^h(\mathbf{b})=(b_{hj})_{j\in\NZ}=\big((\beta^{hj}_i)_{i\in L^{hj}}\big)_{j\in\NZ}.\end{equation}
Note that $M^1=L^g$ and $N^1=L^h$, and so $\ell\in M^1$ and $m\in N^1$. Hence $$\beta^g_\ell=_{a}\delta^1_\ell=_{b}\delta^\ell_1=_{c}\beta^{g\ell}_1=_{d}\beta^{hm}_1=_{e}\varepsilon^m_1=_{f}\varepsilon^1_m=_{g}\beta^h_m,$$
where $=_{a}$ and $=_{c}$ come from the definition of $\mathbf{d}$, $=_{b}$ is due $\mathbf{d}\in X_{\hat F_O}$, $=_{d}$ is due $g\ell=hm$, $=_{e}$ and $=_{g}$ come from the definition of $\mathbf{e}$, and $=_{f}$  is due $\mathbf{e}\in X_{\hat F_O}$. Hence we have proved that $\mathbf{b}$ satisfies the restrictions imposed by $P_O$ and so $\mathbf{b}\in X_{P_O}$.\\

Conversely, suppose (C3) or (C4) holds, and $\Lambda^{[\bfN]}$ is an SFT. 
Since $\bfM_\bfN$ is finite, we can define $\bar \bfN$ as the partition of $A^\NZ$ by all the cylinders defined on the coordinates $\bar M:=\bigcup_{M\in\bfM_\bfN}M$. In order to prove that $\Lambda$ is an SFT, we shall construct $\Theta:A^\NZ\to B^\NZ$, an SBC with local rule defined on the set of coordinates $\bar M$, where $B^\NZ\supset (A^{[\bfN]})^\NZ$, and such that  $\Lambda=\Theta^{-1}(\Lambda^{[\bfN]})$, and then to apply Corollary \ref{cor:inverse_image_of_SFT}.

First, take $\wp\notin A^{[\bfN]}$ and define $B:= A^{[\bfN]}\cup\{\wp\}$.
So, given $\x=(x_i)_{i\in\NZ}$ and $g\in\NZ$ we will define $\big(\Theta(\x)\big)_g$ as follows:
 If there exists exactly one set $M^{g}\in\bfM_\bfN$ such that $[(x_{gi})_{i\in M^{g}}]_{\Lambda}\in\bfN$, then we define 
$\big(\Theta(\x)\big)_g:=(x_{gi})_{i\in M^{g}}$; otherwise, define  $\big(\Theta(\x)\big)_g:=\wp$.

It is direct from its definition, that to compute  $\big(\Theta(\x)\big)_g$ we need to know the entries $(x_\ell)_{\ell\in g\bar M}$, thus $\Theta$ is an SBC whose local rule is defined on the coordinates in $\bar M$. Furthermore, observe that, since $\bfN$ is a partition of $\Lambda$, if for some $g\in\NZ$ there is not a cylinder of $\bfN$ contained in $[(x_{gi})_{i\in \bar M}]_{A^\NZ}$, then $\x$ does not belongs to $\Lambda$. For the same reason, if there are more then one cylinder of $\bfN$ contained in $[(x_{gi})_{i\in \bar M}]_{A^\NZ}$ for some $g\in\NZ$, then $\x\notin \Lambda$ as well.  In both cases, we have $\big(\Theta(\x)\big)_g=\wp$ which implies that $\Theta(\x)\in B^\NZ\setminus (A^{[\bfN]})^\NZ$. On the other hand, if for all $g\in\NZ$ there exists exactly one cylinder of $\bfN$ contained in  $[(x_{gi})_{i\in \bar M}]_{A^\NZ}$, then $\Theta(\x)\in (A^{[\bfN]})^\NZ$. Hence, taking the shift space $X:=\Theta^{-1}\Big( (A^{[\bfN]})^\NZ\Big)$, it follows that $\Lambda\subset X$ and $\Theta$ restrict to $X$ has the same local rule than $\Phi^{[\bfN]}$, that is $\Theta$ is the $\bfN$-higher block code on $X$. Therefore, under either (C3) or (C4) we have that $\Theta$ restrict to $X$ is one-to-one, and then we conclude that $\Lambda=\Theta^{-1}(\Lambda^{[\bfN]})$.

\end{proof}

We notice that that under condition (C3) or (C4) we have $\Lambda$ and $\Lambda^{[\bfN]}$ being uniformly conjugated (Proposition \ref{prop:higher_block_shift}). Furthermore,  when (C3) holds we can construct other set $\tilde F_O$ to replace $P_O$, which gives a better characterization of the shift $X_{P_O}=X_{\tilde F_O}$ (see Equation \eqref{eq:F_O_3}).

Recall that, as we have discussed in the paragraph after \eqref{eq:D_F}, we cannot assure that $X_P\subset \Lambda^{[\bfN]}$ in general (where $P\subset\NN_{(A^{[\bfN]})^\NZ}$ is a set of forbidden patterns which is defined from the forbidden patterns of $\Lambda$). Observe that conditions (B1)-(B5) and (C1)-(C4) assumed in the results concerning on higher block presentations of shift spaces along this article (sections \ref{sec:SFTs}, \ref{sec:SBCs_and_GSBCs}, \ref{sec:sofic} and \ref{sec:FDSs_and_SVLs}) aim to ensure that inclusion. Conversely, if $\Lambda^{[\bfN]}=X_P$ for some forbidden set of patterns $P$ with $M_P$ finite, it is not direct that $\Lambda$ is an SFT (or even whether it is a shift space or not). In fact, although $\Lambda^{[\bfN]}=\Phi^{[\bfN]}(\Lambda)$, it is possible that $\Phi^{[\bfN]}$ is the restriction on $\Lambda$ of a map $\Theta:X\to \Lambda ^{[\bfN]}$ where $X$ is not an SFT. To contour this problem, in the last part of the proof of Proposition \ref{prop:higher_block_shift_SFT-2} we assume that (C3) or (C4) holds and then we get that $\Phi^{[\bfN]}(X\setminus\Lambda)\cap\Lambda^{[\bfN]}=\emptyset$. This leads us to a more general condition under which $\Lambda^{[\bfN]}$ being an SFT implies that $\Lambda$ is an SFT:

\begin{prop}\label{prop:higher_block_shift_SFT-1} Let $\Lambda\subset A^\NZ$ be a shift space and $ \bfN$ be a partition of $\Lambda$ by cylinders with $\bfM_\bfN$ finite, and such that $\Phi^{[\bfN]}$ can be extended for all $A^\NZ$ as a sliding block code $\Theta$ such that $\Theta(A^\NZ\setminus\Lambda)\cap\Lambda^{[\bfN]}=\emptyset$. If 
 $\Lambda^{[\bfN]}$ is a shift of finite type, then  $\Lambda$ is also a shift of finite type.
\end{prop}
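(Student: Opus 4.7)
The plan is to apply Corollary \ref{cor:inverse_image_of_SFT}, which states that the preimage of an SFT under an SBC is an SFT. Since the hypothesis already hands us an SBC $\Theta:A^\NZ\to B^\NZ$ extending $\Phi^{[\bfN]}$, the work reduces to two things: confirming that $\Lambda^{[\bfN]}$ is an SFT when viewed inside $B^\NZ$ (not only inside $(A^{[\bfN]})^\NZ$), and verifying that $\Lambda=\Theta^{-1}(\Lambda^{[\bfN]})$.

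For the first point, note that $B\supseteq A^{[\bfN]}$ (since $\Theta$ extends $\Phi^{[\bfN]}$, whose image lies in $(A^{[\bfN]})^\NZ$). If $\Lambda^{[\bfN]}=X_F$ in $(A^{[\bfN]})^\NZ$ with $M_F$ finite, then $\Lambda^{[\bfN]}=X_{F'}$ in $B^\NZ$, where $F':=F\cup\{(b)\in B^{\{1\}}:\ b\in B\setminus A^{[\bfN]}\}$; the added singleton patterns merely forbid the extra symbols at the identity coordinate, so $M_{F'}=M_F\cup\{1\}$ is still finite, and $\Lambda^{[\bfN]}$ remains an SFT as a subset of $B^\NZ$.

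For the second point, the inclusion $\Lambda\subseteq \Theta^{-1}(\Lambda^{[\bfN]})$ is immediate since $\Theta|_\Lambda=\Phi^{[\bfN]}$ and $\Phi^{[\bfN]}(\Lambda)=\Lambda^{[\bfN]}$ by definition. For the reverse inclusion, suppose $\x\in \Theta^{-1}(\Lambda^{[\bfN]})$ but $\x\notin\Lambda$; then $\Theta(\x)\in\Theta(A^\NZ\setminus\Lambda)\cap\Lambda^{[\bfN]}$, contradicting the hypothesis that this intersection is empty. Hence $\Lambda=\Theta^{-1}(\Lambda^{[\bfN]})$, and Corollary \ref{cor:inverse_image_of_SFT} yields that $\Lambda$ is an SFT.

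There is no serious obstacle in this proof: the hypothesis $\Theta(A^\NZ\setminus\Lambda)\cap\Lambda^{[\bfN]}=\emptyset$ is precisely tailored to supply the non-trivial reverse inclusion $\Theta^{-1}(\Lambda^{[\bfN]})\subseteq\Lambda$, which was the sticking point mentioned in the discussion preceding the statement. The only small technical nuisance is bookkeeping the alphabet extension from $A^{[\bfN]}$ to $B$, which is handled by adjoining finitely many singleton forbidden patterns as above.
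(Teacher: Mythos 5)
Your proof is correct and takes essentially the same route as the paper: the paper composes $\Theta$ with an SBC $\Xi$ witnessing that $\Lambda^{[\bfN]}$ is an SFT and invokes Theorem \ref{theo:SBC_and_SFT}, which is exactly the content of your appeal to Corollary \ref{cor:inverse_image_of_SFT} once $\Lambda=\Theta^{-1}(\Lambda^{[\bfN]})$ is established. Your write-up is in fact slightly more careful than the paper's, since you make explicit both the alphabet extension from $A^{[\bfN]}$ to $B$ and the verification of $\Lambda=\Theta^{-1}(\Lambda^{[\bfN]})$, which the paper leaves implicit.
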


\begin{proof}
If $\Lambda^{[\bfN]}$  is an SFT, then there exists $\Xi:(A^{[\bfN]})^\NZ\to\{0,1\}^\NZ$, an SBC such that  $\Lambda^{[\bfN]}=\Xi^{-1}(\mathbf{1})$. Therefore $\Psi:=\Xi\circ \Theta$ is an SBC from $A^\NZ$ to $\{0,1\}^\NZ$, such that $\Lambda=\Psi^{-1}(\mathbf{1})$, and so $\Lambda$ is an SFT.

\end{proof}

For shift spaces on the classical lattices, Proposition \ref{prop:higher_block_shift_SFT-2} gives that:

\begin{cor}\label{cor:higher_block_shift_SFT_on_Z^d-N} Let $\NZ$ be the lattice $\N$ or $\Z^d$ with the usual sum. Let $\Lambda\subset A^\NZ$ be a shift space and $\bfN$ be a partition of $\Lambda$ by cylinders with $\bfM_\bfN$ finite. If
$\Lambda$ is a shift of finite type, then $\Lambda^{[\bfN]}$ is also a shift of finite type. Conversely, if (C3) or (C4) holds and $\Lambda^{[\bfN]}$ is a shift of finite type, then 
$\Lambda$ is also a shift of finite type.

\end{cor}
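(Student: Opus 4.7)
The plan is to reduce the corollary to Proposition \ref{prop:higher_block_shift_SFT-2} by checking which of the sufficient conditions (C1)--(C4) applies to the lattices $\N$ and $\Z^d$ with the usual sum. For the forward implication, I would use condition (C2); for the converse, the hypothesis already supplies (C3) or (C4), so nothing extra is needed.

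To verify (C2) for $\NZ = \Z^d$ with the usual sum, note that $\Z^d$ is itself a countable group, so taking $\mathbb{G} := \Z^d$ one has $g^{-1}h = h - g \in \Z^d$ for all $g, h \in \Z^d$, and the property required in (C2) is trivially satisfied. To verify (C2) for $\NZ = \N$ with the usual sum, extend $\N$ to the countable group $\mathbb{G} := \Z$; given any $g, h \in \N$, since $\N$ is totally ordered by the usual order, either $g \le h$, in which case $g^{-1}h = h - g \in \N$, or $h \le g$, in which case $h^{-1}g = g - h \in \N$. (I would point out in passing that this is exactly the reason the statement restricts to $\N$ rather than $\N^d$ with $d\ge 2$: in $\N^2$ the vectors $g=(1,0)$ and $h=(0,1)$ show that neither $h-g$ nor $g-h$ lies in $\N^2$, so (C2) fails.) Since $\bfM_\bfN$ is finite by assumption, Proposition \ref{prop:higher_block_shift_SFT-2} applies in both cases and yields that $\Lambda^{[\bfN]}$ is a shift of finite type whenever $\Lambda$ is.

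For the converse, the hypothesis is that (C3) or (C4) holds together with $\Lambda^{[\bfN]}$ being a shift of finite type. This is exactly the hypothesis of the ``Conversely'' part of Proposition \ref{prop:higher_block_shift_SFT-2}, whose conclusion is that $\Lambda$ is a shift of finite type. Thus the converse is immediate.

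There is no real obstacle here; the corollary is essentially a packaging of Proposition \ref{prop:higher_block_shift_SFT-2} for the two standard one- and multi-dimensional lattices. The only point requiring mild care is the verification that $\N$ (though not a group) does satisfy (C2) via the extension $\N \hookrightarrow \Z$, while $\Z^d$ satisfies (C2) as its own extension.
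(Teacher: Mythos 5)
Your proposal is correct and follows exactly the paper's own argument: verify that $\N$ and $\Z^d$ with the usual sum satisfy condition (C2) and invoke Proposition \ref{prop:higher_block_shift_SFT-2} for the forward direction, then apply the ``Conversely'' part of that proposition under (C3) or (C4). The extra details you supply (the explicit check of (C2) via $\N\hookrightarrow\Z$ and the remark on why $\N^d$ with $d\geq 2$ fails) are accurate but do not change the route.
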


\begin{proof}
Note that if $\NZ$ is $\N$ or $\Z^d$ with the usual sum, then condition (C2) holds, and from Proposition \ref{prop:higher_block_shift_SFT-2} it follows that $\Lambda$ being an SFT implies that $\Lambda^{[\bfN]}$ is a shift of finite type. If (C3) or (C4) holds, then the converse also comes directly from  Proposition \ref{prop:higher_block_shift_SFT-2}.

\end{proof}

We remark that if $\bfM_\bfN$ is not finite, then it is possible that $\Lambda^{[\bfN]}$ is not an SFT though $\Lambda$ to be one (this is discussed in Section \ref{sec:FDSs_and_SVLs}, and it is related to Problem \ref{prob:higher_block_shift} in Section \ref{sec:final_discussion}).

\section{Sofic shifts}\label{sec:sofic}

 In \cite{LindMarcus} sofic shifts over finite alphabets and on the lattice $\N$ or $\Z$ with the usual sum are defined as the shifts that can be generated from some finite directed labeled graph. 
However, when the alphabet is infinite that is not a good definition for sofic shifts, as showed in the next theorem (see Section \ref{sec:graphs} for formal definitions of directed labeled graphs and their relationships with shift spaces on the lattices $\N$ and $\Z$).

\begin{theo}\label{theo:motivation} Any shift space $\Lambda\subset A^\N$, where the lattice $\N$ is equipped with the usual sum, can be generated by a directed labeled graph.
\end{theo}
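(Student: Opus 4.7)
The plan is to construct, given any $\Lambda \subset A^\N$, a directed labeled graph whose set of label sequences of one-sided infinite walks coincides exactly with $\Lambda$. The construction is essentially forced: since there is no finiteness restriction on the graph when $A$ is allowed to be arbitrary, one can simply take $\Lambda$ itself as the vertex set and use the classical shift map $\sigma$ on $A^\N$ to define the edges.

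Explicitly, I would set $V := \Lambda$ and, for each $\x = (x_i)_{i\in\N} \in \Lambda$, introduce a single directed edge $e_\x$ with source $\x$, target $\sigma(\x) \in \Lambda$, and label $\lambda(e_\x) := x_0$. This is well defined because $\sigma(\Lambda) \subset \Lambda$ by shift-invariance (Remark \ref{rmk:shift_space-closed_invariant}), and each vertex has out-degree exactly one. Consequently, for every vertex $\x$ there is a unique one-sided infinite walk $\x \to \sigma(\x) \to \sigma^2(\x) \to \cdots$, whose sequence of edge labels is precisely $(x_0, x_1, x_2, \ldots) = \x$.

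It then remains only to observe that the set of label sequences of such walks equals $\Lambda$. One inclusion is immediate: every $\x \in \Lambda$ arises as the label sequence of the unique walk starting at $\x$. For the other inclusion, any infinite walk in the graph begins at some vertex $\x \in V = \Lambda$ and is uniquely determined from there, so its label sequence is $\x \in \Lambda$.

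There is essentially no obstacle in this argument; it is a direct unwinding of the definitions, modulo checking that the graph-presentation notion used here matches the one introduced in Section \ref{sec:graphs}. The whole content of the statement lies in the observation that once the graph is permitted to be infinite, the classical ``presented by a labeled graph'' condition imposes no restriction whatsoever on the shift space. This is precisely the reason the author invokes this theorem to motivate rejecting the graph-presentation as a viable definition of sofic shifts in the infinite-alphabet setting.
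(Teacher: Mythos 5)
Your proof is correct, but it is genuinely different from the one in the paper. You take the vertex set to be $\Lambda$ itself and send each $\x$ along a single edge labeled $x_0$ to $\sigma(\x)$; since every vertex then has out-degree one, the unique infinite walk starting at $\x$ reads off exactly $\x$, and every walk starts at some vertex of $V=\Lambda$, so $X_{\mathcal{G}}=\Lambda$ with no further argument. The paper instead builds the \emph{follower set graph}: vertices are the sets $V(w):=\{u\in W(\Lambda):\ wu\in W(\Lambda)\}$ for $w\in W(\Lambda)\cup\{\epsilon\}$, with an edge labeled $a$ from $V(v)$ to $V(va)$. The trade-offs are worth noting. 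Your graph is maximally naive --- its vertex set has the cardinality of $\Lambda$ (typically uncountable even for a countable alphabet) and the verification is immediate, whereas checking $X_{\mathcal{G}_{\mathcal{F}}}=\Lambda$ for the follower set graph implicitly uses that $\Lambda$ is closed (a label sequence all of whose prefixes lie in $W(\Lambda)$ must be a limit of points of $\Lambda$). On the other hand, the follower set construction is not chosen here for economy of proof: it is reintroduced in Section~\ref{sec:graphs} as $\mathcal{G}_{\mathcal{F}}$, it is right-resolving, and its finiteness properties (the cardinalities $|T_{\mathcal{G}_{\mathcal{F}}}(w)|$) are what later distinguish weakly sofic shifts in theorems~\ref{theo:sofic-follower_set_graph-1} and~\ref{theo:sofic-follower_set_graph-2}; so the paper's proof doubles as the definition of an object it needs later. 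Both arguments equally make the intended point that graph presentability is vacuous over infinite alphabets. (As a marginal remark, your construction --- like the paper's --- silently assumes $\Lambda\neq\emptyset$, since the definition of a directed graph requires nonempty vertex and edge sets; this is harmless.)
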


\begin{proof}
This proof uses the same construction given in Proposition 3.2.9 in \cite{LindMarcus}. Let $\Lambda\subset A^\N$ and let $W(\Lambda):=\left\{w\in \bigcup_{n\geq 1}A^n:\ \text{$w$ appears in some sequence of $\Lambda$}\right\}$. For any $w\in W(\Lambda)$ define $V(w):=\{u\in W(\Lambda):\ wu\in W(\Lambda)\}$. Let $\epsilon$ denote the empty word (the identity element of the free group of $A$) and define $V(\epsilon):=W(\Lambda)$. Hence it is direct to check that  the labeled graph $\mathcal{G}$, whose set of vertices is $\mathcal{V}:=\{V(w):\ w\in W(\Lambda)\cup\{\epsilon\}\}$ and such that there is an edge labeled with the symbol $a\in A$ from $V(v)$ to $V(w)$ if and only if $V(va)=V(w)$, generates $\Lambda$. 

\end{proof}

Yet the proof of the previous theorem does not work directly for shift spaces on the lattice $\Z$ because we cannot assure that its follower set graph contains an essential subgraph that generates the shift space, it is possible to find examples of non-sofic shift spaces on $\Z$ that are generate from infinite graphs (see Example \ref{ex:free_context}.)\\

A key to understand why the definition of sofic shifts used in the finite-alphabet case does not work for infinite-alphabet shifts, lays on the fact that sofic shifts (in the framework of finite alphabet) correspond to the smallest class of shift spaces which contains SFTs and is invariant for factors (consequence of \cite[Theorem 3.2.1.]{LindMarcus}). However, while factors in the finite-alphabet case are always locally bounded finite-to-one sliding block codes, in the infinite-alphabet case we have the possibility of having factor maps that are not locally bounded finite-to-one sliding block codes and even factor maps that are not sliding block codes. Taking this observation into account, we propose here a definition for sofic shift spaces which recovers for the general case (for any alphabet and for any monoid) the result that a sofic shift is the smallest class of shift spaces which contains SFTs and is invariant under uniform locally bounded finite-to-one factors (Proposition \ref{prop:factor_of_sofic}).

\begin{defn}\label{defn:sofic} A shift space $\Lambda\subset A^\NZ$ is said to be a {\bf sofic shift} if and only if there exist a shift of finite type $\Gamma\subset B^\NZ$ and an onto locally bounded finite-to-one sliding block code $\Psi:\Gamma\to\Lambda$. In particular, if the map $\Psi:\Gamma\to\Lambda$ has order $k$ (that is, its local rule is at most $k$-to-1), then we will say that $\Lambda$ is a sofic with {\bf order $\mathbf{k}$}.
If the sliding block code $\Psi$ is just onto and locally finite-to-one, we say that $\Lambda$ is a {\bf weakly sofic shift}.
\end{defn}

Note that the class of weakly sofic shifts contains the class of sofic shifts, which contains the class of SFTs. The next proposition shows that both the classes of weakly sofic and sofic shifts are invariant through the correct class of sliding block codes.

\begin{prop}\label{prop:factor_of_sofic} A shift space which is factor of a (weakly) sofic shift through a locally bounded finite-to-one sliding block code is itself a (weakly) sofic shift.
A shift space which is factor of a weakly sofic shift through a locally finite-to-one sliding block code is itself a weakly sofic shift.

\end{prop}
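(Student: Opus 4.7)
The natural plan is to prove all three claims simultaneously by composition. Suppose $\Gamma\subset C^{\NZ}$ is a factor of $\Lambda\subset A^{\NZ}$ via an onto SBC $\Phi:\Lambda\to\Gamma$. By the hypothesis that $\Lambda$ is (weakly) sofic, Definition \ref{defn:sofic} provides an SFT $\Theta\subset B^{\NZ}$ and an onto SBC $\Psi:\Theta\to\Lambda$ that is locally bounded finite-to-one (respectively locally finite-to-one). I would then consider the composition $\Xi:=\Phi\circ\Psi:\Theta\to\Gamma$ and argue that it witnesses $\Gamma$ being (weakly) sofic. Three properties must be checked: $\Xi$ is onto (immediate, as a composition of surjections); $\Xi$ is an SBC (this is exactly the observation recorded in the remark following theorems \ref{theo:hedlund_classic} and \ref{theo:hedlund_general}, namely that SBCs are closed under composition); and, the only nontrivial step, $\Xi$ is locally (bounded) finite-to-one with an order that depends correctly on the orders of $\Phi$ and $\Psi$.

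For the finite-to-one step I would write out $\Xi^{-1}([c_1]_{C^{\NZ}})=\Psi^{-1}\bigl(\Phi^{-1}([c_1]_{C^{\NZ}})\bigr)$. Since $\Phi$ is an SBC with local rule on some finite $M_{\Phi}\subset\NZ$, the set $\Phi^{-1}([c_1]_{C^{\NZ}})$ is a union of cylinders of $\Lambda$ all defined on coordinates contained in $M_{\Phi}$; denote them $[w^{1}]_{\Lambda},\ldots,[w^{r}]_{\Lambda}$, with $r\le k_{\Phi}$ when $\Phi$ has order $k_{\Phi}$, and with $r$ merely finite in the locally finite-to-one case. For each such cylinder, write $[w^{i}]_{\Lambda}=\bigcap_{j\in M_{\Phi}}[w^{i}_{j}]_{\Lambda}$, so that
\[
\Psi^{-1}([w^{i}]_{\Lambda})=\bigcap_{j\in M_{\Phi}}\Psi^{-1}([w^{i}_{j}]_{\Lambda})=\bigcap_{j\in M_{\Phi}}\sigma^{j^{-1}}(C_{w^{i}_{j}}),
\]
using that $\Psi$ commutes with every $\sigma^{g}$ and that $C_{b}=\Psi^{-1}([b_{1}]_{A^{\NZ}})$. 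Because $\Psi$ is locally (bounded) finite-to-one, each $C_{w^{i}_{j}}$ is a finite union of cylinders (of cardinality at most $k_{\Psi}$ in the bounded case), and by \eqref{eq:g-translation} the same is true of each translate $\sigma^{j^{-1}}(C_{w^{i}_{j}})$. Distributing the intersection over the unions expresses $\Psi^{-1}([w^{i}]_{\Lambda})$ as a finite union of intersections of cylinders — hence of cylinders — bounded in the sofic case by $k_{\Psi}^{|M_{\Phi}|}$. Combining the $r$ pieces, $\Xi^{-1}([c_1]_{C^{\NZ}})$ is a finite union of cylinders of $\Theta$, of cardinality at most $k_{\Phi}\cdot k_{\Psi}^{|M_{\Phi}|}$ whenever both $\Phi$ and $\Psi$ are locally bounded finite-to-one; this handles the sofic conclusion and also both weakly sofic conclusions (where only finiteness, not a uniform bound, is claimed).

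I expect the first two properties to be entirely routine: surjectivity is formal and the SBC-composition fact is quoted. The substantive obstacle is the bookkeeping in the second paragraph, specifically making sure that when $\NZ$ is a general monoid the translates $\sigma^{j^{-1}}(C_{w^{i}_{j}})$ are genuinely unions of finitely many cylinders of $\Theta$ with a count that does not explode. This is exactly what \eqref{eq:g-translation} and Definition \ref{defn:locally-finite-to-one_SBC} give us; the finiteness of $|M_{\Phi}|$, which is built into $\Phi$ being an SBC (as opposed to a mere GSBC), is what keeps the final count finite and, in the bounded case, uniformly bounded — explaining why the proposition is stated for sliding block codes rather than for generalized sliding block codes.
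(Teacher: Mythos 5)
Your proposal is correct and follows essentially the same route as the paper: compose the given factor map $\Phi$ with the locally (bounded) finite-to-one sliding block code $\Psi$ coming from the shift of finite type, and observe that $\Phi\circ\Psi$ is again an onto locally (bounded) finite-to-one sliding block code witnessing the conclusion. The only difference is one of detail: the paper simply invokes its earlier remark that the classes of locally finite-to-one and locally bounded finite-to-one maps are closed under composition, whereas you verify that closure explicitly by the cylinder count $k_\Phi\cdot k_\Psi^{\left|M_\Phi\right|}$ obtained from \eqref{eq:g-translation} and distributing intersections over unions.
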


\begin{proof}

We shall just prove the first statement, since the second one follows the same reasoning.

Let $\Lambda$ be a weakly sofic shift, and $\Omega$ be a uniform factor of $\Lambda$ through the locally bounded finite-to-one sliding block code $\Phi:\Lambda\to\Omega$. If $\Lambda$ is a weakly sofic shift, then $\Lambda$ is a uniform factor of a shift of finite type $\Gamma$ through a locally finite-to-one sliding block code $\Psi:\Gamma\to\Lambda$. Thus, the map $\Phi\circ\Psi:\Gamma\to\Omega$ is a locally finite-to-one uniform factor map, which means that $\Omega$ is a weakly sofic shift. If $\Lambda$ is a sofic shift, then  $\Psi$ can be taken locally bounded finite-to-one, and then $\Phi\circ\Psi$ will also be a locally bounded finite-to-one uniform factor map, which implies that $\Omega$ is also a sofic shift.
\end{proof}

The above proposition is a general version of Corollary 3.2.2 in \cite{LindMarcus}. We enforce that in its statement we assumed that $\Phi(\Lambda)$ is a shift space, and then we proved that it is a (weakly) sofic shift  (recall from Section \ref{sec:SBCs_and_GSBCs} that in general $\Phi(\Lambda)$ is not a shift space). In the particular case of $\NZ$ being countable and cancellative, from Corollary \ref{cor:image_of_SFT_by_Phi}, it follows that $\Phi(\Lambda)$ is always a weakly sofic shift space.\\

The next three results concern on the higher block presentations of (weakly) sofic shifts.\\

\begin{prop}\label{prop:Lambda_W_Sofic=>Lambda^M_W_Sofic}  Let $\Lambda\subset A^\NZ$ be a (weakly) sofic shift. Suppose $\bfN$ is a partition of $\Lambda$ by cylinders, with $\bfM_\bfN$ finite, and that at least one of the conditions (B1)-(B5) given in Proposition \ref{prop:higher_block_shift} holds. Then, the $\bfN$-higher block presentation of $\Lambda$ is also a (weakly) sofic shift.
\end{prop}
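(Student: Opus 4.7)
The plan is to realize $\Lambda^{[\bfN]}$ as a (weakly) sofic shift by composing the factor map that witnesses the (weak) soficness of $\Lambda$ with the $\bfN$-higher block code itself. First, since one of conditions (B1)--(B5) is assumed, Proposition \ref{prop:higher_block_shift} guarantees that $\Lambda^{[\bfN]}$ is a shift space. By hypothesis, one can fix a shift of finite type $\Gamma \subset B^\NZ$ and a surjective locally (bounded) finite-to-one sliding block code $\Psi \colon \Gamma \to \Lambda$, with local rule $\phi_\Psi$ defined on a finite neighborhood $M_\Psi \subset \NZ$.

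The candidate factor map is the composition $\Xi := \Phi^{[\bfN]} \circ \Psi \colon \Gamma \to \Lambda^{[\bfN]}$, which is surjective because both factors are. The entire content of the proof reduces to showing that $\Xi$ is a locally (bounded) finite-to-one sliding block code, after which the conclusion is immediate from Definition \ref{defn:sofic}.

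For each symbol $b = (\beta_i)_{i \in M} \in A^{[\bfN]}$ with $M \in \bfM_\bfN$, Remark \ref{rmk:higher_block_presentation} gives $(\Phi^{[\bfN]})^{-1}([b_1]_{(A^{[\bfN]})^\NZ}) = [(\beta_i)_{i \in M}]_\Lambda$, so
\[
C_b^\Xi := \Xi^{-1}\!\left([b_1]_{(A^{[\bfN]})^\NZ}\right) = \bigcap_{i \in M}\Psi^{-1}\!\left([\beta_i]_\Lambda\right).
\]
Since $\Psi$ is an SBC with local rule on $M_\Psi$, each factor $\Psi^{-1}\!\left([\beta_i]_\Lambda\right)$ is the union, over the patterns $w \in \phi_\Psi^{-1}(\beta_i) \subset W_{M_\Psi}(\Gamma)$, of the cylinders of $\Gamma$ obtained by fixing $w$ on the coordinates $iM_\Psi$. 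Local (bounded) finite-to-oneness of $\Psi$ is exactly the statement that each preimage $\phi_\Psi^{-1}(\beta_i)$ is finite (resp.\ of size at most some uniform $k$). Distributing intersection over union, $C_b^\Xi$ becomes a union of finitely many cylinders of $\Gamma$, all supported on the fixed finite set $\bigcup_{M' \in \bfM_\bfN}\bigcup_{i \in M'} iM_\Psi$, which is finite because $\bfM_\bfN$ is finite and its members are finite. By Remark \ref{rmk:finite_coordinates} this shows $\Xi$ is an SBC; in the locally bounded case, setting $K := \max_{M' \in \bfM_\bfN} |M'|$, each $C_b^\Xi$ is a union of at most $k^K$ cylinders, yielding a uniform order for $\Xi$.

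The only delicate point, and the precise reason ``$\bfM_\bfN$ finite'' is needed in the hypothesis, is to keep both the coordinate support of $C_b^\Xi$ and the number of component cylinders bounded \emph{uniformly} in $b \in A^{[\bfN]}$. This is what separates an SBC from a mere GSBC and the locally bounded from the locally finite notion; the remainder of the argument is a routine distributivity computation.
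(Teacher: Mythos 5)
Your proof is correct and follows essentially the same route as the paper's: the paper likewise invokes Proposition \ref{prop:higher_block_shift} to get that $\Lambda^{[\bfN]}$ is a shift space, observes via Remark \ref{rmk:higher_block_presentation} that finiteness of $\bfM_\bfN$ makes $\Phi^{[\bfN]}$ a locally bounded finite-to-one SBC of order $1$, and then simply cites Proposition \ref{prop:factor_of_sofic}; your explicit computation of $C_b^\Xi$ for the composition $\Phi^{[\bfN]}\circ\Psi$ is just that proposition's proof unwound by hand. One imprecision worth correcting: local (bounded) finite-to-oneness of $\Psi$ is \emph{not} equivalent to finiteness (boundedness) of the fibres $\phi_\Psi^{-1}(\beta_i)$ of the local rule --- the paper gives an explicit counterexample right after Definition \ref{defn:locally-finite-to-one_SBC}, where $\phi(0,b)=(0,0)$ for all $b$ yet the map is locally bounded finite-to-one. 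The repair is harmless: work directly with the hypothesis that each $C_{\beta_i}=\Psi^{-1}([{\beta_i}_1])$ is a union of at most $k$ cylinders supported in $M_\Psi$; translating by $i$, intersecting over $i\in M$, and distributing still yields at most $k^{|M|}$ cylinders on $\bigcup_{i\in M}iM_\Psi$, so your bound and the SBC property survive unchanged.
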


\begin{proof} 

Any of the conditions given in  Proposition \ref{prop:higher_block_shift} assures that $\Lambda^{[\bfN]}$ is a shift space. Furthermore, from Remark \ref{rmk:higher_block_presentation} we have that $\bfM_\bfN$ finite implies that $\Phi^{[\bfN]}$ is a locally bounded finite-to-one SBC. Hence from Proposition \ref{prop:factor_of_sofic} we conclude that $\Lambda^{[\bfN]}$ is a (weakly) sofic shift.

\end{proof}

It is well known that classical sofic shifts (over finite alphabets and on the lattice $\N$ or $\Z$) can always be written as images of 1-step shifts by 1-block SBCs\footnote{In fact, in \cite{LindMarcus} sofic shifts where defined as shift spaces generated from a labeled graph, which is equivalent to be the image of 1-step shifts by an 1-block SBCs  \cite[Definition 3.1.3]{LindMarcus}, and then it was proved that the class of sofic shifts coincides with the class of shifts that are image of any SFT by any SBC \cite[Theorem 3.2.1]{LindMarcus}.}. The following theorem generalizes this result for any alphabet and any lattice:

\begin{prop}\label{prop:sofic and 1-block} Let $\Omega\subset A^\NZ$ be a weakly sofic shift, where $\Lambda\subset B^\NZ$ is the shift of finite type and $\Phi :\Lambda\to\Omega$ is the onto locally finite-to-one sliding block code such that $\Omega=\Phi(\Lambda)$. Let $\bfN$ be the partition of $\Lambda$ by the cylinders used in the finitely defined sets that define $\Phi$. If any of the conditions (C1)-(C4) of Proposition \ref{prop:higher_block_shift_SFT-2} holds, then there exist  a shift of finite type $\Gamma\subset E^\NZ$ and a locally finite-to-one sliding block code $\Psi:\Gamma\to\Omega$ with local rule $\psi:W_{\{1\}}(\Gamma)\to A$ such that $\Omega=\Psi(\Gamma)$. In particular, if $\Omega$ is a sofic of order $k$, then $\Psi$ is locally bounded finite-to-one of order $k$.
\end{prop}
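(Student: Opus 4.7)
The plan is to take $\Gamma$ to be the $\bfN$-higher block presentation $\Lambda^{[\bfN]}$ of $\Lambda$, and to define $\Psi$ by a 1-block rule that "reads off" which $C_a$ a given cylinder of $\bfN$ lies in. First I would observe that, since $\Phi$ is a sliding block code, Remark \ref{rmk:finite_coordinates} together with the construction of $\bfN$ guarantee that every cylinder of $\bfN$ can be written with coordinates in a common finite window $M\subset\NZ$, so $\bfM_\bfN$ is finite. Under any of the hypotheses (C1)--(C4), Proposition \ref{prop:higher_block_shift_SFT-2} then applies and yields that $\Gamma:=\Lambda^{[\bfN]}\subset E^\NZ$ with $E:=B^{[\bfN]}$ is a shift of finite type, which gives us the ambient SFT we need.

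The crucial structural observation is that $\bfN$ was built as the partition by the individual cylinders appearing in the finite decompositions of the sets $C_a:=\Phi^{-1}([a_1]_{A^\NZ})$. In particular, every cylinder $[(\beta_i)_{i\in M}]_\Lambda\in\bfN$ lies inside \emph{exactly one} $C_a$. This lets me define a 1-block local rule $\psi:W_{\{1\}}(\Gamma)\to A$ by $\psi(b):=a$ whenever $[b]_\Lambda\subset C_a$, and then let $\Psi:\Gamma\to A^\NZ$ be the corresponding SBC. Commutativity with $\Phi^{[\bfN]}$ is then essentially automatic: given $\x\in\Lambda$ and $g\in\NZ$, by definition $(\Phi^{[\bfN]}(\x))_g=b_g$ where $[b_g]_\Lambda$ is the unique cylinder of $\bfN$ containing $\s^g(\x)$, so $\psi(b_g)$ is the unique $a\in A$ with $\s^g(\x)\in C_a$, which is exactly $(\Phi(\x))_g$. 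Hence $\Psi\circ\Phi^{[\bfN]}=\Phi$.

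From $\Psi\circ\Phi^{[\bfN]}=\Phi$ and the surjectivity of $\Phi^{[\bfN]}:\Lambda\to\Gamma$ (tautological from the definition $\Gamma=\Phi^{[\bfN]}(\Lambda)$), ontoness of $\Phi:\Lambda\to\Omega$ transfers to $\Psi:\Gamma\to\Omega$. For the finite-to-one claim, note that $\Psi^{-1}([a_1]_{A^\NZ})=\bigcup_{b:\,\psi(b)=a}[b_1]_\Gamma$, and by construction the number of $b$'s with $\psi(b)=a$ equals the number of cylinders of $\bfN$ contained in $C_a$; this is finite since $\Phi$ is locally finite-to-one, and is bounded by $k$ whenever $\Omega$ is sofic of order $k$. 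The only subtle point, and the main place one has to be careful, is checking that the hypothesis that $\bfN$ is a partition of $\Lambda$ (not of $B^\NZ$) does not cause trouble when extracting the 1-block rule; this is handled precisely by the finiteness of $\bfM_\bfN$ guaranteed in the first step, which feeds into Proposition \ref{prop:higher_block_shift_SFT-2} to put us in the regime where $\Gamma$ is an SFT and $\Phi^{[\bfN]}$ is a well-behaved SBC onto $\Gamma$.
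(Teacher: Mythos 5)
Your proposal is correct and follows essentially the same route as the paper's own proof: take $\Gamma:=\Lambda^{[\bfN]}$ (an SFT by Proposition \ref{prop:higher_block_shift_SFT-2}, since $\bfM_\bfN$ is finite because $\Phi$ is a locally finite-to-one SBC), and define the $1$-block rule $\psi$ by sending each cylinder of $\bfN$ to the unique $a$ with $[b]_\Lambda\subset C_a$, with surjectivity and the order-$k$ bound obtained exactly as you describe. No gaps.
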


\begin{proof}

Let $\{C_\omega\}_{\omega\in A}$ be the collection of finitely defined sets used in the definition of $\Phi$. Since $\Phi$ is a locally finite-to-one SBC, then there exist a finite number of coordinates such that each $C_\omega$ is union of finitely many cylinders on those coordinates.
Thus, by defining $\bfN$ as the collection of such cylinders that compose the finitely defined sets  $\{C_\omega\}_{\omega\in A}$, we have that $\bfN$ is a partition of $\Lambda$ with $\bfM_\bfN$ finite. Thus, 
under any of the conditions (C1)-(C4) it follows from Proposition \ref{prop:higher_block_shift_SFT-2} that $\Gamma:=\Lambda^{[\bfN]}$ is an SFT. 

Observe that for any $\mathbf{b}=(b_h)_{h\in\NZ}=\big((\beta^h_i)_{i\in M^h}\big)_{h\in\NZ}\in\Gamma$ and for any $g\in\NZ$ it follows that $\big(\s^g(\mathbf{b})\big)_1=b_g=(\beta^g_i)_{i\in M^g}$, where $[(\beta^g_i)_{i\in M^g}]_\Lambda\in\bfN$. Hence we can define $\Psi:\Gamma\to A^\NZ$ with local rule $\psi:W_{\{1\}}(\Gamma)\to A$ which take $b= (\beta_i)_{i\in M}$ to $\omega\in A$ such that $[ (\beta_i)_{i\in M}]_\Lambda\subset C_\omega$. Thus, for all $\mathbf{b}=\big((\beta^h_i)_{i\in M^h}\big)_{h\in\NZ}\in \Gamma$ we have 

$$\Psi(\mathbf{b})=\Psi\Big(\big((\beta^h_i)_{i\in M^h}\big)_{h\in\NZ}\Big)=(\omega_h)_{h\in\NZ},$$
where $\omega_h$ is such that $[(\beta^h_i)_{i\in M^h}]_\Lambda\subset C_{\omega_h}$.
Since for all $\x\in\Lambda$ and $\mathbf{b}=\Phi^{[\bfN]}(\x)\in\Gamma$ we have $\Phi(\x)=\Psi(\mathbf{b})$, it follows that $\Psi(\Gamma)=\Omega$.\\

In the particular case of $\Omega$ being a sofic shift of order $k$, then each $C_\omega$ will be written as the union of at most $k$ cylinders, which implies that there at most $k$ distinct letters that are take to the same symbol by $\psi:W_{\{1\}}(\Gamma)\to A$.

\end{proof}

Hence,

\begin{cor}\label{cor:higher_block_shift_Sofic_on_Z^d-N} Let $\NZ$ be the lattice $\N$ or the lattice $\Z^d$ with the usual sum. It follows that $\Omega\subset A^\NZ$ is a weakly sofic shift if and only if there exist  a shift of finite type $\Gamma\subset E^\NZ$ and a locally finite-to-one sliding block code $\Psi:\Gamma\to\Omega$ with local rule $\psi:W_{\{0\}}(\Gamma)\to A$ such that $\Omega=\Psi(\Gamma)$. In particular, $\Omega$ is a sofic of order $k$ if and only if $\Psi$ is locally bounded finite-to-one of order $k$.
\end{cor}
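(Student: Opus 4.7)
The plan is to derive this corollary as a direct specialization of Proposition \ref{prop:sofic and 1-block}. First I would handle the trivial direction: if $\Gamma\subset E^\NZ$ is an SFT and $\Psi:\Gamma\to\Omega$ is an onto locally finite-to-one (resp.\ locally bounded finite-to-one of order $k$) SBC, then by Definition \ref{defn:sofic} $\Omega$ is weakly sofic (resp.\ sofic of order $k$) with no further work needed.

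For the nontrivial direction, suppose $\Omega\subset A^\NZ$ is weakly sofic, so that $\Omega=\Phi(\Lambda)$ for some SFT $\Lambda\subset B^\NZ$ and some onto locally finite-to-one SBC $\Phi:\Lambda\to\Omega$. Following the setup of Proposition \ref{prop:sofic and 1-block}, let $\{C_\omega\}_{\omega\in A}$ be the finitely defined sets associated with $\Phi$ via \eqref{eq:LR_block_code}; since $\Phi$ is a (generalized) sliding block code with all its information concentrated on a finite set of coordinates, each $C_\omega$ is a (locally) finite union of cylinders all defined on a common finite set $M\subset\NZ$. Let $\bfN$ be the partition of $\Lambda$ formed by these cylinders, so that $\bfM_\bfN$ is finite. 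The whole task reduces to verifying one of the conditions (C1)--(C4) of Proposition \ref{prop:higher_block_shift_SFT-2}, which will then give us $\Gamma:=\Lambda^{[\bfN]}$ as an SFT and produce the one-block map $\Psi$ with local rule on $W_{\{0\}}(\Gamma)$.

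The key verification is condition (C2): $\NZ$ can be extended to a countable group $\mathbb{G}$ such that for all $g,h\in\NZ$ either $g^{-1}h\in\NZ$ or $h^{-1}g\in\NZ$. If $\NZ=\Z^d$ with the usual sum, take $\mathbb{G}=\Z^d$ itself (it is already a countable group and the condition is trivial since $g^{-1}h=h-g\in\Z^d$ always). If $\NZ=\N$ with the usual sum, take $\mathbb{G}=\Z$: it is countable, and for any $g,h\in\N$ either $g\le h$ (so $h-g\in\N$) or $h\le g$ (so $g-h\in\N$). Either way, (C2) holds, and applying Proposition \ref{prop:sofic and 1-block} produces the SFT $\Gamma$ and the SBC $\Psi:\Gamma\to\Omega$ with local rule on the identity coordinate (here denoted $\{0\}$ because these lattices use additive notation). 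The order-$k$ statement is inherited verbatim from the corresponding clause of Proposition \ref{prop:sofic and 1-block}.

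I do not anticipate a real obstacle here, since the entire content of the corollary is absorbed into the two propositions already proved; the only care required is notational, namely translating the abstract identity ``$1$'' of a monoid into the additive identity ``$0$'' used for $\N$ and $\Z^d$ so that the local rule of $\Psi$ is recorded on $W_{\{0\}}(\Gamma)$ rather than on $W_{\{1\}}(\Gamma)$.
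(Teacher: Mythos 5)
Your argument for the converse direction is exactly the paper's: verify condition (C2) of Proposition \ref{prop:higher_block_shift_SFT-2} for $\N$ (embedded in $\Z$) and for $\Z^d$, then invoke Proposition \ref{prop:sofic and 1-block}; your verification of (C2) is correct, and the order-$k$ clause is indeed inherited from that proposition.

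The one genuine gap is in the direction you dismiss as trivial. Definition \ref{defn:sofic} only applies to objects that are already known to be shift spaces, and in this paper the image of a shift space under a (generalized) sliding block code is \emph{not} automatically a shift space --- this is precisely the issue illustrated by Example \ref{ex:Phi(Lambda)-nonshift} and stressed in the discussion following Proposition \ref{prop:factor_of_sofic}, where the author explicitly notes that closedness of the image is an assumption, not a consequence. So before you may conclude from $\Omega=\Psi(\Gamma)$ that ``$\Omega$ is weakly sofic by Definition \ref{defn:sofic},'' you must first check that $\Psi(\Gamma)$ is a shift space. The paper closes this gap by observing that the local rule of $\Psi$ lives on the single coordinate $\{0\}$ (the identity of the lattice), so each set $C_b=\Psi^{-1}([b_0]_{A^\NZ})$ is a union of cylinders defined on that coordinate and condition (A2) of Theorem \ref{theo: image_of_shifts} applies, giving that $\Psi$ is a closed map and $\Psi(\Gamma)$ is a shift space. (Alternatively, since $\N$ and $\Z^d$ are countable and cancellative and $\Gamma$ is an SFT, Corollary \ref{cor:image_of_SFT_by_Phi} gives the same conclusion.) With that one-line addition your proof coincides with the paper's.
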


\begin{proof}
Suppose $\Omega=\Psi(\Gamma)$ for some shift of finite type $\Gamma$ and for some locally finite-to-one sliding block code $\Psi:\Gamma\to\Omega$ with local rule $\psi:W_{\{0\}}(\Gamma)\to A$. Hence, condition (A2) of Theorem \ref{theo: image_of_shifts} holds, and then $\Omega$ is a shift space. Therefore, by definition, it is a weakly sofic shift. In particular, if $\Psi$ is locally bounded finite-to-one of order $k$, then $\Omega$ is a sofic of order $k$.\\

Conversely, if $\Omega$ is a weakly sofic shift, since $\NZ$ is $\N$ or $\Z^d$ with the usual sum, then condition (C2) of Proposition \ref{prop:higher_block_shift_SFT-2} holds, and therefore from Proposition \ref{prop:sofic and 1-block} we have $\Omega=\Psi(\Gamma)$ for some shift of finite type $\Gamma$ and for some locally finite-to-one sliding block code $\Psi:\Gamma\to\Omega$ with local rule $\psi:W_{\{0\}}(\Gamma)\to A$.
In particular, if $\Omega$ is a sofic of order $k$, then  $\Psi$ is locally bounded finite-to-one of order $k$.

\end{proof}

The following lemma gives a sufficient conditions for the union of sofic shifts on the lattice $\N$ or $\Z$ being a (weakly) sofic shift.

\begin{lem}\label{lem:union_of_Sofics} Let $\NZ$ be $\N$ or $\Z$ with the usual sum, and $I\subseteq\N$. Let $\{A_k\}_{k\in I}$ be a disjoint family of nonempty sets, and for each $k\in I$ let $\Lambda_k\subset A_k^{\NZ}$ be a sofic shift. Let $\Gamma_k$ and $\Psi_k:\Gamma_k\to\Lambda_k$ be the correspondent SFT and locally bounded finite-to-one SBC. Suppose that all SFTs $\Gamma_k$ have the same step. Then $\Lambda:=\bigcup_{k\in I}\Lambda_k$ is a weakly sofic. If additionally we have all $\Lambda_k$ being sofic shifts with the same order $m$, then $\Lambda$ also is a sofic with order $m$.
\end{lem}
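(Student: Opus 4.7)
The plan is to build a single SFT $\Gamma$ and a single locally (bounded) finite-to-one sliding block code $\Psi:\Gamma\to\Lambda$ by gluing the pairs $(\Gamma_k,\Psi_k)$ over disjoint alphabets, and then invoke Definition \ref{defn:sofic} directly.

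First, I would reduce to the case of 1-block SBCs: since each $\Lambda_k$ is sofic on $\N$ or $\Z$, Corollary \ref{cor:higher_block_shift_Sofic_on_Z^d-N} lets me replace each $\Psi_k$ by a 1-block SBC whose local rule is a map $\psi_k:B_k\to A_k$ on an SFT $\Gamma_k\subset B_k^\NZ$, with the same order $m$ in the sofic case. The hypothesis that the $\Gamma_k$ share a common step $\ell$ is then read as applying to these 1-block representatives. Since the $A_k$ are disjoint and each $B_k$ may be freely relabeled, I may further assume that $\{B_k\}_{k\in I}$ is itself pairwise disjoint.

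Next, I would set $B:=\bigcup_{k\in I}B_k$ and $\Gamma:=\bigcup_{k\in I}\Gamma_k\subset B^\NZ$. Applying Lemma \ref{lem:union_of_SFTs} to this disjoint union of $\ell$-step SFTs yields that $\Gamma$ is an $\max\{1,\ell\}$-step SFT. I then define $\Psi:\Gamma\to A^\NZ$ as the 1-block SBC whose local rule $\psi:B\to A$ is given by $\psi(b):=\psi_k(b)$ for $b\in B_k$, which is well-defined thanks to the disjointness of the $B_k$. The verification is routine: $\Psi$ is onto $\Lambda$ because each $\Psi_k$ is onto $\Lambda_k$; $\Psi(\Gamma)\subseteq\Lambda$ by construction; and $\Psi$ is locally finite-to-one (resp.\ of order $m$) because for $\omega\in A_k$ we have $\psi^{-1}(\omega)=\psi_k^{-1}(\omega)\subset B_k$, which is finite (resp.\ of cardinality at most $m$).

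The main delicate point is the initial reduction: passing through a higher block presentation to convert a general locally bounded finite-to-one SBC into a 1-block SBC can alter the step of the underlying SFT, so the ``same step'' hypothesis must really be taken to hold for the 1-block representatives produced by Corollary \ref{cor:higher_block_shift_Sofic_on_Z^d-N} (equivalently, one needs the $\Psi_k$ to admit a common memory window). Once this interpretation is adopted, the remainder of the argument is straightforward disjoint-union bookkeeping that propagates the local (bounded) finite-to-one property from each piece to the whole, giving that $\Lambda$ is weakly sofic (respectively, sofic of order $m$).
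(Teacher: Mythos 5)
Your proposal follows the same basic strategy as the paper's proof --- glue the SFTs $\Gamma_k$ into $\Gamma:=\bigcup_{k\in I}\Gamma_k$ via Lemma \ref{lem:union_of_SFTs} and define $\Psi$ piecewise by $\Psi|_{\Gamma_k}=\Psi_k$ --- but you insert a preliminary normalization that the paper does not: you first pass through Corollary \ref{cor:higher_block_shift_Sofic_on_Z^d-N} to replace each $\Psi_k$ by a $1$-block representative. This is a sensible move, and it actually addresses a point the paper's one-line proof glosses over: for the glued map $\Psi$ to be a sliding block code (as Definition \ref{defn:sofic} requires, via Theorem \ref{theo:hedlund_classic}) the local rules of the $\Psi_k$ must admit a common finite window $M\subset\NZ$, which is not guaranteed by the stated hypothesis that the $\Gamma_k$ share a common step when $I$ is infinite; without it one only obtains a locally finite-to-one GSBC. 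Your $1$-block reduction supplies that common window for free. The trade-off, which you correctly identify yourself, is that the reduction replaces each $\Gamma_k$ by a higher block presentation whose step need not equal the original one, so the ``same step'' hypothesis has to be reread as applying to the $1$-block representatives (equivalently, one needs the windows of the $\Psi_k$ to be uniformly bounded, which holds automatically in the paper's actual application, Lemma \ref{lem:SVLs-Sofic_and_SFT} and Lemma \ref{cor:union_of_finite_Sofics_order-m}, where the alphabets $A_k$ are finite). Apart from that interpretive caveat --- which is really an imprecision in the lemma's hypotheses rather than a flaw in your argument --- the disjoint-union bookkeeping (relabelling the $B_k$ to be disjoint, surjectivity onto $\Lambda$, and propagation of the order $m$) is correct and matches the paper.
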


\begin{proof}

From Lemma \ref{lem:union_of_SFTs}, we have that $\Gamma:=\bigcup_{k\in I}\Gamma_k$ is an SFT. Furthermore, the map $\Psi:\Gamma\to\Lambda$, given by $\Psi(\x)=\Psi_k(\x)$ if $\x\in\Gamma_k$, is an onto locally finite-to-one SBC. Therefore, $\Lambda$ is a weakly  sofic shift. Furthermore, if each $\Lambda_k$ has order $m$, then each $\Psi_k$ can be taken as a locally bounded finite-to-one SBC with order $m$, and then $\Psi$ will also be a locally bounded finite-to-one SBC with order $m$, and thus $\Lambda$ will have order $m$.

\end{proof}

\section{Graph presentations of weakly sofic shifts on the lattices $\N$ and $\Z$}\label{sec:graphs}

In this section we shall establish results that allow to characterize SFTs, sofic shifts and weakly sofic shifts on the classical lattices  $\N$ and $\Z$ through directed labeled graphs representing them, and thus recovering for infinite-alphabet shifts some classical results on finite-alphabet shift spaces. Recall that due to the algebraic properties of the lattices $\N$ and $\Z$ with the usual sum, the language can be defined without specify the coordinates where some word appears. Thus, for simplicity of notation we will consider along this section $W_m(\Lambda)=\{w_0...w_{m-1}\in A^m:\ \exists (x_i)_{i\in\NZ}\text{ s. t. } x_k...x_{k+m-1}=w_0...w_{m-1}\text{ for some }k\in\NZ\}$.\\

A {\bf directed graph} $G$ is a quadruple $G=(\mathcal{V}, \mathcal{E},s,r)$ where $\mathcal{V}$ and  $\mathcal{E}$ are nonempty sets (the set of vertices and set of edges, respectively), and  $s:\mathcal{E}\to\mathcal{V}$ and $r:\mathcal{E}\to\mathcal{V}$ are maps (the source map and the range map, respectively). We say that an edge $e\in\mathcal{E}$ goes from the vertex $u\in\mathcal{V}$ to the vertex $v\in\mathcal{V}$, whenever $s(e)=u$ and $r(e)=v$.

Given a directed graph $G$, we define the respective {\bf edge shift space} $X_G\subset \mathcal{E}^\NZ$ ($\NZ$ being $\N$ or $\Z$) given by  
$$X_G:=\{(e_i)_{i\in\NZ}\in \mathcal{E}^\NZ:\ s(e_{i+1})=r(e_i)\ \forall i\in\NZ\}.$$\\

Recall that, given a shift space $\Lambda$ on the lattice $\N$ or $\Z$, and $N\geq 1$,  we can define $\Lambda^{[N]}$ which is the $N^{th}$-higher block shift of $\Lambda$ given by \eqref{eq:Nth_higher_block_presentation}. Furthermore, $\Lambda$ and  $\Lambda^{[N]}$ are uniformly conjugate through the map $\Phi^{[N]}:\Lambda\to \Lambda^{[N]}$ given  by 
$\Phi^{[N]}\big((x_i)_{i\in\NZ}\big):=(x_i...x_{i+N-1})_{i\in\NZ}$,
whose inverse is given by 
$\left(\Phi^{[N]}\right)^{-1}\big((x_i...x_{i+N-1})_{i\in\NZ}\big)=(x_i)_{i\in\NZ}$ (Proposition \ref{prop:higher_block_shift} or it can also be proved using the same arguments used in \cite[Example 1.5.10]{LindMarcus}). 
In particular, if $\Lambda$ is an SFT, then  $\Lambda^{[N]}$ is also an SFT (Corollary \ref{cor:higher_block_shift_SFT_on_Z^d-N} or it can also be proved using the same arguments used in \cite[Theorem 2.1.10.]{LindMarcus}).
The following result about SFTs is a general version of Theorem 2.3.2 in \cite{LindMarcus}. Although its proof is exactly the same given in \cite{LindMarcus}, we will reproduce it here since it will be useful to understand further results in this section.

\begin{theo}\label{theo:SFT<->edge shift conjugacy} Let $\NZ$ be the lattice $\N$ or $\Z$ with usual sum. $\Lambda\subset A^\NZ$ is an SFT if and only if there exists $M\in\N$ such that $\Lambda^{[M+1]}$ is an edge shift space.
\end{theo}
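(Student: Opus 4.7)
The plan is to carry out the standard Lind--Marcus construction in the paper's notation, which, because here $\NZ$ is $\N$ or $\Z$ with identity $0$, goes through without change.

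For the forward direction ($\Rightarrow$), assume $\Lambda\subset A^\NZ$ is an SFT. By Definition \ref{defn:SFT} there exists $M\in\N$ such that $\Lambda$ is an $M$-step shift, i.e.\ $\Lambda=X_F$ for some $F\subset A^{\{0,1,\dots,M\}}$. I would build the directed graph $G=(\mathcal{V},\mathcal{E},s,r)$ with vertex set $\mathcal{V}:=W_{\{0,\dots,M-1\}}(\Lambda)$, edge set $\mathcal{E}:=W_{\{0,\dots,M\}}(\Lambda)$, and maps $s(a_0a_1\cdots a_M):=a_0a_1\cdots a_{M-1}$ and $r(a_0a_1\cdots a_M):=a_1\cdots a_M$. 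The claim to verify is $\Lambda^{[M+1]}=X_G$. The inclusion $\Lambda^{[M+1]}\subseteq X_G$ is immediate: for $\x\in\Lambda$, consecutive $(M+1)$-windows $\Phi^{[M+1]}(\x)_i$ and $\Phi^{[M+1]}(\x)_{i+1}$ share $M$ symbols, so ranges equal sources. For $X_G\subseteq\Lambda^{[M+1]}$, given a bi/one-sided path $(e_i)_{i\in\NZ}\in X_G$, the source-range compatibility lets me read off a unique $\x\in A^\NZ$ with $\Phi^{[M+1]}(\x)=(e_i)$; the $M$-step hypothesis then forces $\x\in\Lambda$ because every window $(x_i,\dots,x_{i+M})=e_i$ lies in $W_{\{0,\dots,M\}}(\Lambda)\cap A^{\{0,\dots,M\}}\setminus F$.

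For the backward direction ($\Leftarrow$), I would first observe that an edge shift $X_G$ is always a $1$-step SFT, since it is cut out by the forbidden set $F_G:=\{ef\in\mathcal{E}^{\{0,1\}}:\ r(e)\neq s(f)\}$, for which $M_{F_G}\subseteq\{0,1\}$ is finite. Hence, assuming $\Lambda^{[M+1]}$ is an edge shift, $\Lambda^{[M+1]}$ is an SFT. Now let $\bfN$ be the partition of $\Lambda$ by cylinders on the coordinates $\{0,1,\dots,M\}$; since the identity of the monoid $\NZ$ is $0$, condition (B5) of Proposition \ref{prop:higher_block_shift} is satisfied, so $\Lambda$ and $\Lambda^{[M+1]}$ are uniformly conjugate via $\Phi^{[M+1]}$. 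Theorem \ref{theo:uniform_conjugacy-SFT} then delivers that $\Lambda$ itself is an SFT.

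The argument is entirely routine, so rather than a real obstacle I would flag a single subtlety: in the $X_G\subseteq\Lambda^{[M+1]}$ step one must use the $M$-step property at precisely the moment of concluding $\x\in\Lambda$, because compatibility in $X_G$ only guarantees that each $(M+1)$-window of the reconstructed $\x$ is allowed, and without the finite-memory hypothesis this purely local information would not suffice to place $\x$ in $\Lambda$.
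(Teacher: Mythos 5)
Your proposal is correct and follows essentially the same route as the paper: the identical graph construction ($\mathcal{V}=W_M(\Lambda)$, $\mathcal{E}=W_{M+1}(\Lambda)$, with the overlap source/range maps) for the forward direction, and the uniform conjugacy $\Lambda\cong\Lambda^{[M+1]}$ combined with Theorem \ref{theo:uniform_conjugacy-SFT} for the converse. The only difference is that you verify $X_G=\Lambda^{[M+1]}$ by hand and spell out that an edge shift is a $1$-step SFT, where the paper cites \cite[Proposition 2.2.5]{DarjiGoncalvesSobottka2020} and leaves the latter implicit; both of your elaborations are accurate.
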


\begin{proof} 

Suppose $\Lambda$ is an $m$-step shift (that is, it can be obtained from a forbidden words with length $m+1$). Then for any $M\in\N$ we have the higher block presentation $\Lambda^{[M+1]}$ being an SFT. Thus take $M\geq m$ and
consider the graph $G$ where $\mathcal{V}:= W_{M}(\Lambda)$, $\mathcal{E}:= W_{M+1}(\Lambda)$ and $s$ and $r$ are defined for all $(e_0...e_M)\in \mathcal{E}$ by $s(e_0...e_{M})=(e_0...e_{M-1})$ and $r(e_0...e_M)=(e_1...e_M)$. Hence, using \cite[Proposition 2.2.5]{DarjiGoncalvesSobottka2020} one conclude that $X_G=\Lambda^{[M+1]}$.\\

Conversely, suppose $\Lambda^{[M+1]}$ is an edge shift for some $M\in\N$. Since $\Lambda$ is uniformly conjugate to $\Lambda^{[M+1]}$, from Theorem \ref{theo:uniform_conjugacy-SFT} we conclude that $\Lambda$ is an SFT.

\end{proof}

A {\bf directed labeled graph} is, roughly speaking, obtained by assigning for each edge, of a given directed graph, a label from a set $L$ of labels. More precisely, a directed labeled graph is a quintuple $\mathcal{G}:=(G,\mathcal{L}):=(\mathcal{V},\mathcal{E},s,r,\mathcal{L})$ where $G=(\mathcal{V},\mathcal{E},s,r)$ is a directed graph and $\mathcal{L}:\mathcal{E}\to L$ is the label map. 

We will say that a directed labeled graph $\G$ is {\bf left-resolving} if for each vertex $\mathfrak{v}\in \mathcal{V}$ there are not two distinct incoming edges with the same label (that is, $\mathcal{L}$ is one-to-one on $r^{-1}(\mathfrak{v})$). In the same way, we will say that $\G$ is {\bf right-resolving} if for each vertex $\mathfrak{v}\in \mathcal{V}$ there are not two distinct outgoing edges with the same label (that is, $\mathcal{L}$ is one-to-one on $s^{-1}(\mathfrak{v})$).

Given a directed labeled graph $\mathcal{G}$, we can define the {\bf graph shift} as the shift space of $L^\NZ$ given by
 $$X_\mathcal{G}:=\{\big(\mathcal{L}(e_i)\big)_{i\in\NZ}\in L^\NZ:\ s(e_{i+1})=r(e_i)\ \forall i\in\NZ\}.$$

We will say that a directed labeled graph $\mathcal{G}$ is {\bf a presentation of a shift space} $\Lambda\subset A^\NZ$ (or generates $\Lambda$) if $\Lambda=X_\mathcal{G}$. Note that, if $\Lambda$ is two-sided (that is, if $\NZ=\Z$) and $\mathcal{G}$ is a presentation of it, then $\mathcal{G}$ has a labeled subgraph which is an essential (that is, any of its vertices has incoming and outgoing edges) presentation of $\Lambda$. In spite of the fact that Theorem \ref{theo:motivation} concerns only to shift spaces on the lattice $\N$, it follows that non-sofic shift spaces on the lattice $\Z$ may also have graph presentations.

\begin{ex}\label{ex:free_context} Consider $\Z$, the lattice of the integers with the usual sum, and let $\Lambda\subset\{a,b,c\}^\Z$ be the {\bf context free shift}, which is defined from the forbidden set of words $F:=\{ab^mc^na:\ m\neq n\}$. It is well known that the context free shift is not a sofic shift, and then it has not a graph presentation by a finite directed labeled graph (see Example 3.2.8. and Theorem 3.210 in \cite{LindMarcus}). However, it has a graph presentation by an infinite directed labeled graph.

\begin{figure}[H]
\centering
\includegraphics[width=0.8\linewidth=1.0]{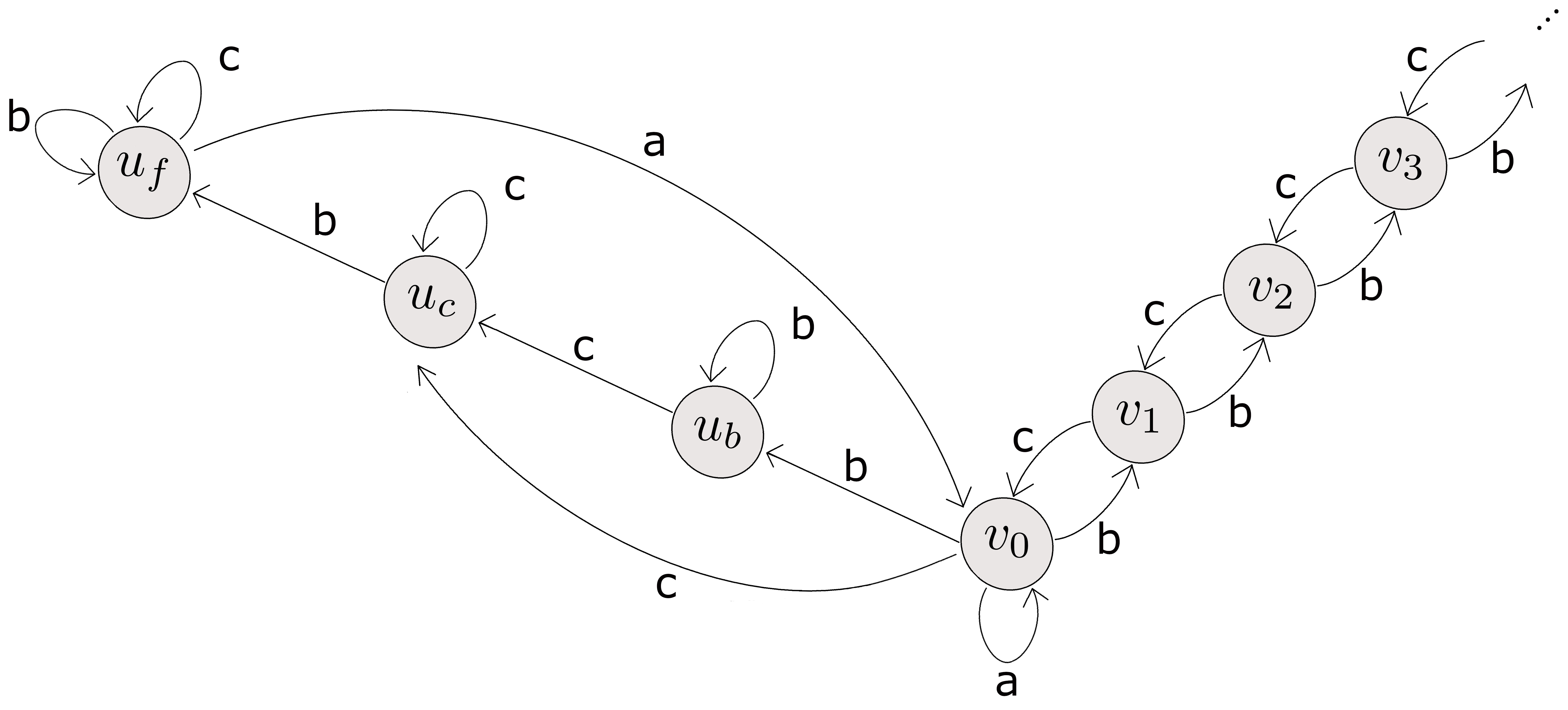}
\caption{A graph presentation for the context free shift.}\label{fig_context-free}
\end{figure}

\end{ex}

Given a shift space $\Lambda\subset A^\NZ$ and a graph presentation  $\mathcal{G}=(\mathcal{V},\mathcal{E},s,r,\mathcal{L})=(G,\mathcal{L})$ of it, we will say that a path $\pi=e_1...e_m$ in $G$ represents $w=w_1...w_m\in W(\Lambda)$ in $\mathcal{G}$ if $\mathcal{L}(e_i)=w_i$ for all $i=1,...,m$ (recall that it is possible that there are several different paths representing a same word $w$).  

Consider the extension of $s$ and $r$ to paths of edges in $\mathcal{G}$, that is, for a path of edges $\pi=e_1...e_m$ let $s(\pi):=s(e_1)$ and $r(\pi):=r(e_m)$. 
Now, for each $w=w_1...w_m\in W(\Lambda)$ define
$$\begin{array}{lrl}I_\mathcal{G}(w)&:=&\{\mathfrak{v}\in\mathcal{V}:\ \mathfrak{v}\text{ is the initial vertex of some path representing } w \text{ in the graph } \mathcal{G}\}\\\\ &=&\{s(\pi_w):\ \pi_w \text{ is any path representing } w \text{ in the graph } \mathcal{G}\}\end{array} $$
and
$$\begin{array}{lrl}T_\mathcal{G}(w)&:=&\{\mathfrak{v}\in\mathcal{V}:\ \mathfrak{v}\text{ is the terminal vertex of some path representing } w \text{ in the graph } \mathcal{G}\}\\\\&:=&\{r(\pi_w):\ \pi_w \text{ is any path representing } w \text{ in the graph } \mathcal{G}\}\end{array} $$\\

Recall that the class of edge shifts is strictly contained in the class of Markov shift spaces (that is, 1-step SFTs), which is strictly contained in the class of SFTs. While Theorem \ref{theo:SFT<->edge shift conjugacy} states that any SFT is uniformly conjugated to an edge shift, the next result gives a way to characterize SFTs through their graph presentations by labeled graphs.

\begin{theo}\label{theo:Markov SFT<=>ultragraph presentation}  Let $\NZ$ be the lattice $\N$ or $\Z$ with usual sum, and let $\Lambda\subset A^\NZ$ be a shift space. The following are equivalent:
\begin{enumerate}

\item $\Lambda$ is an $M$-step;

\item\label{theo:item:Markov I finite} $\Lambda=X_\mathcal{G}$ for some labeled directed graph $\mathcal{G}$ with $|I_\mathcal{G}(w)|=1$ for all $w\in W_m(\Lambda)$ and $m\geq M$;

\item\label{theo:item:Markov T finite} $\Lambda=X_\mathcal{H}$ for some labeled directed graph $\mathcal{H}$ with $|T_\mathcal{H}(w)|=1$ for all $w\in W_m(\Lambda)$ and $m\geq M$.

\end{enumerate}

In particular, when $i.$ holds, then the graphs in $ii.$ and $iii.$ can be taken left-resolving and right-resolving, respectively.

\end{theo}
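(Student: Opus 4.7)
The plan is to prove $i \Rightarrow ii$ by an explicit graph construction, then $ii \Rightarrow i$ by extracting a consistent path from any candidate label sequence; the remaining implications follow by mirror arguments.

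For $i \Rightarrow ii$, assume $\Lambda$ is $M$-step and define the graph $\mathcal{G}$ by taking $\mathcal{V} := W_M(\Lambda)$, $\mathcal{E} := W_{M+1}(\Lambda)$, with source $s(e_0 e_1 \cdots e_M) := e_0 \cdots e_{M-1}$, range $r(e_0 \cdots e_M) := e_1 \cdots e_M$, and label $\mathcal{L}(e_0 \cdots e_M) := e_0$. Unrolling the consistency condition $s(e^{(i+1)}) = r(e^{(i)})$ shows that every path $(e^{(i)})_{i\in\NZ}$ has the form $e^{(i)} = x_i x_{i+1} \cdots x_{i+M}$ for a sequence $(x_i)$ in which every $(M+1)$-block lies in $W_{M+1}(\Lambda)$, and the label sequence read off is exactly $(x_i)$. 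Since $\Lambda$ is $M$-step, this set coincides with $\Lambda$, so $X_\mathcal{G} = \Lambda$. The graph is left-resolving because any edge into $v = v_1 \cdots v_M$ has the form $b v_1 \cdots v_M$ and is determined by its label $b$. Moreover, for $w = a_1 \cdots a_n$ with $n \geq M$, any representing path starts with an edge of the form $a_1 \cdots a_{M+1}$, forcing the initial vertex to be $a_1 \cdots a_M$; hence $|I_\mathcal{G}(w)| = 1$.

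For $ii \Rightarrow i$, given $\mathcal{G}$ as in $ii$, set $F := A^{M+1} \setminus W_{M+1}(\Lambda)$ and claim $\Lambda = X_F$. The inclusion $\Lambda \subseteq X_F$ is immediate. Conversely, suppose $\x \in X_F$, so every $x_i \cdots x_{i+M}$ lies in $W_{M+1}(\Lambda)$ and in particular every $M$-subblock $x_i \cdots x_{i+M-1}$ lies in $W_M(\Lambda)$. Write $\{v_i\} := I_\mathcal{G}(x_i \cdots x_{i+M-1})$, using the hypothesis on $\mathcal{G}$. For each $i$, pick any path in $\mathcal{G}$ representing $x_i \cdots x_{i+M}$: its initial vertex equals the initial vertex of its length-$M$ prefix, hence $v_i$, while the source of its second edge equals the initial vertex of its length-$M$ suffix $x_{i+1} \cdots x_{i+M}$, hence $v_{i+1}$. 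So the first edge of this path goes from $v_i$ to $v_{i+1}$ with label $x_i$; call it $f_i$. The family $(f_i)_{i\in\NZ}$ satisfies $r(f_i) = v_{i+1} = s(f_{i+1})$, yielding a well-defined path in $\mathcal{G}$ whose label sequence is $\x$, so $\x \in X_\mathcal{G} = \Lambda$.

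For $i \Rightarrow iii$, use the same graph construction but with label $\mathcal{L}(e_0 \cdots e_M) := e_M$; the same verification yields right-resolving with $|T_\mathcal{G}(w)| = 1$ for every $w$ of length at least $M$. The implication $iii \Rightarrow i$ is the evident mirror of $ii \Rightarrow i$, tracking terminal rather than initial vertices. The only delicate step is the bookkeeping in $ii \Rightarrow i$: one must simultaneously invoke the singleton condition on $I_\mathcal{G}(\cdot)$ at both lengths $M$ and $M+1$ to force coherence between consecutive edges $f_i$ and $f_{i+1}$. Beyond this, no deeper obstacle is expected, as the argument is essentially a unique-vertex extraction driven by the hypothesis.
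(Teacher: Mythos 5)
Your constructions for $i.\Rightarrow ii.$ and your argument for $ii.\Rightarrow i.$ are correct, and the latter takes a genuinely different route from the paper: where the paper verifies the gluing property ($uw,wv\in W(X_\mathcal{G})$ with $|w|=M$ implies $uwv\in W(X_\mathcal{G})$) and then cites an external characterization of $M$-step shifts, you splice a concrete path out of the unique initial vertices $v_i=I_\mathcal{G}(x_i\cdots x_{i+M-1})$ and the first edges of paths representing the $(M+1)$-blocks, which is self-contained and only needs the singleton condition at length $M$ (the length-$(M+1)$ instance you invoke is not actually required, since both $s(f_{i+1})$ and $r(f_i)$ are pinned down by $I_\mathcal{G}$ applied to length-$M$ words).

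There is, however, a genuine gap in $i.\Rightarrow iii.$ when $\NZ=\N$ and $\s(\Lambda)\subsetneq\Lambda$. With the labeling $\mathcal{L}(e_0\cdots e_M):=e_M$ on the edge graph of $\Lambda^{[M+1]}$, a one-sided path $(e^{(i)})_{i\in\N}$ with $e^{(i)}=x_i\cdots x_{i+M}$ emits the label sequence $(x_{i+M})_{i\in\N}=\s^M(\x)$, so this graph presents $\s^M(\Lambda)$ rather than $\Lambda$; the first $M$ symbols of a point of $\Lambda$ are never emitted. Concretely, for $\Lambda=\{a000\cdots:a\in\{0,1\}\}\subset\{0,1\}^\N$ (a $1$-step SFT with $\s(\Lambda)\subsetneq\Lambda$) every edge of your graph carries the label $0$, so $X_\mathcal{G}=\{000\cdots\}\neq\Lambda$. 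The paper repairs this by enlarging the graph with vertices and edges padded by the empty symbol $\epsilon$ on the left, so that one-sided paths can begin with $M$ ``initialization'' edges emitting $x_0,\ldots,x_{M-1}$; your proof needs this (or an equivalent fix) to cover the one-sided non-shift-invariant case. Relatedly, the ``evident mirror'' of your splicing argument for $iii.\Rightarrow i.$ also needs a small boundary adjustment on $\N$: the last edges $g_i$ of paths representing $x_i\cdots x_{i+M}$ emit $x_{i+M}$ at position $i$, so one must additionally prepend the first $M$ edges of a path representing $x_0\cdots x_M$ to realize $x_0,\ldots,x_{M-1}$; this works but is not automatic from symmetry. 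Finally, the degenerate case $M=0$ (where $W_M(\Lambda)$ consists of the empty word) should be handled separately, as the paper does with a single-vertex graph.
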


\begin{proof}\phantom\\

\begin{description}

\item[$i.\Rightarrow ii.$]

Let $\Lambda\subset A^\NZ$ be an SFT. If $\Lambda$ is the full shift (a 0-step shift), then $\Lambda=X_\mathcal{G}$ where $\mathcal{G}$ is a graph with a single vertex. Therefore, any word (even the empty word which has length 0) starts and ends in the unique vertex. Suppose $\Lambda$ is an $M$-step shift with $M\geq 1$ (that is, the forbidden words have length $M+1$). 

Let $G=(\mathcal{V}_G,\mathcal{E}_G,s_G,r_G)$ be the directed labeled graph such that $X_G=\Lambda^{[M+1]}$ which is given by Theorem \ref{theo:SFT<->edge shift conjugacy}. Recall that  $\mathcal{E}_G:=W_{M+1}(\Lambda)$.\\

By defining the label map $\mathcal{L}_\mathcal{G}:\mathcal{E}_G\to A$ given by 
$\mathcal{L}_\mathcal{G}(v_0...v_{M}):=v_0$ we have that $\mathcal{G}=(G,\mathcal{L}_\mathcal{G})$ is such that $\Lambda=X_\mathcal{G}$.\\

To check that $\mathcal{G}$ is left-resolving just observe that given a vertex $\mathfrak{v}=v_1...v_M\in W_M(\Lambda)$ it follows that any of its incoming edges is in the form $e=v_0v_1...v_M\in W_{M+1}(\Lambda)$, which means that it is an outgoing edge from the vertex $\mathfrak{u}=v_0...v_{M-1}$ and it is labeled with  $v_0$. Hence, distinct edges incoming in $\mathfrak{v}$ are labeled with distinct labels.\\

 To prove that $|I_\mathcal{G}(w)|=1$ for all $w\in W_m(\Lambda)$ with $m\geq M$, just observe that any path in $\mathcal{G}$ presenting $w=w_0...w_{m-1}\in W_m(\Lambda)$ shall necessarily starts at the vertex $\mathfrak{w}=w_0...w_{M-1}$, that is, $I_\mathcal{G}(w)=\{\mathfrak{w}\}$.

\item[$i.\Rightarrow iii.$]

If $\s(\Lambda)=\Lambda$ (which always occurs when $\NZ=\Z$), then we can define $\mathcal{H}=(G,\mathcal{L}_\mathcal{H})$ as the directed labeled graph where $G$ is the same as in the proof $i.\Rightarrow ii.$, and $\mathcal{L}_\mathcal{H}$ given by $\mathcal{L}_\mathcal{H}(v_0...v_{M}):=v_{M}$.
On the other hand, if $\s(\Lambda)\subsetneq\Lambda$ (which could occur when $\NZ=\N$), then we define $\mathcal{H}=(H,\mathcal{L}_\mathcal{H})$, where $H=(\mathcal{V}_H,\mathcal{E}_H,s_H,r_H)$ is the directed graph with $\mathcal{V}_H:=W_M(\Lambda)\cup\{(v_0...v_{M-1}):\ \exists 0\leq k\leq M-1 \text{ s. t. } v_i=\epsilon\ \forall i\leq k\text{ and } v_{k+1}...v_{M-1}\in W_{M-k-1}(\Lambda)\}$, $\mathcal{E}_H:=W_{M+1}(\Lambda)\cup\{(v_0...v_{M}):\ \exists 0\leq k\leq M-1 \text{ s. t. } v_i=\epsilon\ \forall i\leq k\text{ and } v_{k+1}...v_{M}\in W_{M-k}(\Lambda)\}$, $s_H(v_0...v_{M})=(v_0...v_{M-1})$ and $r_H(v_0...v_{M})=(v_1...v_{M})$, and the label map $\mathcal{L}_\mathcal{H}(v_0...v_{M}):=v_{M}$ as before. 

In both cases above we have $\Lambda=X_\mathcal{H}$ and, using the same arguments given in the proof $i.\Rightarrow ii.$, we get $\mathcal{H}$ is a right-resolving graph such that $T_\mathcal{H}(w)$ is unitary for all $w\in W_m(\Lambda)$ with $m\geq M$.

\item[$ii.\Rightarrow i.$] 

Now, suppose $\mathcal{G}$ is a directed labeled graph with $|I_\mathcal{G}(w)|=1$  for all $w\in W_m(X_\mathcal{G})$ with $m\geq M$. Let $uw,wv\in W(X_\mathcal{G})$ with $w\in W_M(X_\mathcal{G})$. Since $I_\mathcal{G}(w)$ contains only one vertex, then there is a path in $\mathcal{G}$ presenting $u$ which ends at the unique vertex of $I_\mathcal{G}(w)$, and there is path in $\mathcal{G}$ presenting $wv$ which starts at the unique vertex of $I_\mathcal{G}(w)$. Then there exists in $\mathcal{G}$ a path labeled as $uwv$, and from Proposition 2.2.5. in \cite{DarjiGoncalvesSobottka2020} we conclude that $X_\mathcal{G}$ is an $M$-step shift.

\item[$iii.\Rightarrow i.$] 

For a graph $\mathcal{H}$ with $|T_\mathcal{H}(w)|=1$ for all $w\in W_m(X_\mathcal{H})$ with $m\geq M$, given $uw,wv\in W(X_\mathcal{H})$ with $w\in W_M(X_\mathcal{H})$ we get that $uwv\in W(X_\mathcal{H})$ by using the same reasoning made in the proof of $ii.\Rightarrow i.$ above.

\end{description}

\end{proof}

Note that we could write the statements made in item \ref{theo:item:Markov I finite} and item \ref{theo:item:Markov T finite} of Theorem \ref{theo:Markov SFT<=>ultragraph presentation} simply by saying that $|I_\mathcal{G}(w)|=1$ and  $|T_\mathcal{H}(w)|=1$  for all $w\in W_M(\Lambda)$, since it would imply the properties for all $w\in W_m(\Lambda)$ with $m\geq M$.

We remark that from the previous theorem we have $\Lambda$ being a Markov shift if and only if $\Lambda=X_\mathcal{G}=X_\mathcal{H}$ for some  graphs with $|I_\mathcal{G}(a)|=|T_\mathcal{H}(a)|=1$ for all $a\in W_1(X_\mathcal{G})$. Note that graphs $\mathcal{G}$ and $\mathcal{H}$ with these properties correspond exactly to {\bf ultragraphs}  as defined in \cite{Tomforde2003} (we might say $\mathcal{G}$ is an outgoing ultragraph while $\mathcal{H}$ is an incoming ultragraph).

The following results establish several connections between weakly sofic shift spaces and  labeled graphs with special features. In spite of which occurs in the finite-alphabet framework, where a sofic shift is always presented by a finite labeled graph, in the infinite-alphabet case we have not a complete characterization of (weakly) sofic shifts through labeled graphs.

\begin{theo}\label{theo:wsofic=>graph shift conjugacy} If $\Lambda\subset A^\NZ$ is a weakly sofic shift, then there exists $M\in\N$ such that
\begin{enumerate}

\item\label{theo:wsofic=>graph shift conjugacy_1} $\Lambda^{[M+1]}=X_\mathcal{\hat G}$ for some directed labeled graph $\mathcal{\hat G}$ where each label is used just finitely many times. In the particular case of $\Lambda$ being a sofic shift of order $k$, then each label is used at most $k^{M+1}$ times in $\mathcal{\hat G}$.

\item\label{theo:wsofic=>graph shift conjugacy_2} $\Lambda=X_\mathcal{G}=X_\mathcal{H}$ where $\mathcal{G}$ and $\mathcal{H}$ are directed labeled graphs such that for all $w\in W_m(\Lambda)$ with $m\geq M$ we have $|I_\mathcal{G}(w)|<\infty$ and $|T_\mathcal{H}(w)|<\infty$. In the particular case of $\Lambda$ being a sofic shift of order $k$, then for all $w\in W_m(\Lambda)$ with $m\geq M$ we have $|I_\mathcal{G}(w)|\leq k^m$ and $|T_\mathcal{H}(w)|\leq k^m$.

\end{enumerate}

\end{theo}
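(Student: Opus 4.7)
The plan is to reduce to the case where $\Lambda$ is the image of an edge-shift-like SFT under a sliding block code whose local rule depends on a single coordinate, and then to recycle one fixed edge graph with three different label maps to handle both items simultaneously. Since $\NZ \in \{\N,\Z\}$ with the usual sum, Corollary \ref{cor:higher_block_shift_Sofic_on_Z^d-N} supplies an SFT $\Gamma \subset E^\NZ$ together with an onto locally finite-to-one SBC $\Psi:\Gamma\to\Lambda$ whose local rule $\psi:W_{\{0\}}(\Gamma)\to A$ lives on the single coordinate $0$. In this form, $\Psi$ being locally finite-to-one is equivalent to $\psi$ being finite-to-one, and if $\Lambda$ is sofic of order $k$ one gets $|\psi^{-1}(a)|\leq k$ for every $a\in A$. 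I would take $M\in\N$ such that $\Gamma$ is $M$-step and use the edge graph $G_\Gamma$ from the proof of Theorem \ref{theo:SFT<->edge shift conjugacy}: its vertices are $W_M(\Gamma)$, its edges are $W_{M+1}(\Gamma)$, with $s(\gamma_0\cdots\gamma_M)=\gamma_0\cdots\gamma_{M-1}$ and $r(\gamma_0\cdots\gamma_M)=\gamma_1\cdots\gamma_M$, so that its edge shift equals $\Gamma^{[M+1]}$.

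For item \ref{theo:wsofic=>graph shift conjugacy_1}, I would define $\hat{\mathcal G}:=(G_\Gamma,\mathcal{L}_{\hat{\mathcal G}})$ with label map $\mathcal{L}_{\hat{\mathcal G}}(\gamma_0\cdots\gamma_M):=\psi(\gamma_0)\psi(\gamma_1)\cdots\psi(\gamma_M)\in A^{M+1}$. Any path in $G_\Gamma$ has the form $(\gamma_i\gamma_{i+1}\cdots\gamma_{i+M})_{i\in\NZ}$ for some $\gamma\in\Gamma$, so its label sequence is $(\psi(\gamma_i)\cdots\psi(\gamma_{i+M}))_{i\in\NZ}=\Phi^{[M+1]}(\Psi(\gamma))$; surjectivity of $\Psi$ then gives $X_{\hat{\mathcal G}}=\Lambda^{[M+1]}$. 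A given label $\alpha_0\cdots\alpha_M\in A^{M+1}$ is carried only by edges $\gamma_0\cdots\gamma_M$ with $\gamma_i\in\psi^{-1}(\alpha_i)$ for every $i$, hence by at most $\prod_{i=0}^M|\psi^{-1}(\alpha_i)|$ edges, which is finite in general and at most $k^{M+1}$ when $\Lambda$ is sofic of order $k$.

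For item \ref{theo:wsofic=>graph shift conjugacy_2}, I would keep $G_\Gamma$ and only change the label maps. Setting $\mathcal{L}_\mathcal{G}(\gamma_0\cdots\gamma_M):=\psi(\gamma_0)$ produces label sequences $(\psi(\gamma_i))_{i\in\NZ}=\Psi(\gamma)$, so $X_\mathcal{G}=\Lambda$. For $w=w_0\cdots w_{m-1}\in W_m(\Lambda)$ with $m\geq M$, every initial vertex of a representing path has the form $\gamma_0\cdots\gamma_{M-1}$ with $\psi(\gamma_i)=w_i$ for $0\leq i\leq M-1$, yielding $|I_\mathcal{G}(w)|\leq\prod_{i=0}^{M-1}|\psi^{-1}(w_i)|$, finite in general and bounded by $k^M\leq k^m$ in the sofic case. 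For $\mathcal{H}$ in the two-sided case $\NZ=\Z$, the dual choice $\mathcal{L}_\mathcal{H}(\gamma_0\cdots\gamma_M):=\psi(\gamma_M)$ produces $\s^M(\Psi(\gamma))$ as the label sequence, which covers $\s^M(\Lambda)=\Lambda$ because the shift is a bijection of $\Lambda$, and $|T_\mathcal{H}(w)|\leq k^M\leq k^m$ follows symmetrically by counting choices of $\gamma_m,\ldots,\gamma_{m+M-1}$.

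The main obstacle I expect is building $\mathcal{H}$ in the one-sided case $\NZ=\N$ when $\s(\Lambda)\subsetneq\Lambda$, where labeling by the last coordinate would only recover $\s^M(\Lambda)\subsetneq\Lambda$. I would handle this exactly as in case $iii$ of the proof of Theorem \ref{theo:Markov SFT<=>ultragraph presentation}, augmenting $G_\Gamma$ with auxiliary ``initial'' vertices of the form $(\epsilon\cdots\epsilon\gamma_{k+1}\cdots\gamma_{M-1})$ padded with empty symbols and the corresponding padded edges, chosen so that the first $M$ labels along any infinite path coming from $\Psi(\gamma)$ are $\psi(\gamma_0),\ldots,\psi(\gamma_{M-1})$. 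After any $m\geq M$ steps every path has left this padded region and sits at a vertex of $W_M(\Gamma)$, so the terminal-vertex bound is again controlled by $\gamma_{m-M}\in\psi^{-1}(w_{m-M}),\ldots,\gamma_{m-1}\in\psi^{-1}(w_{m-1})$, giving at most $k^M\leq k^m$. Once this one-sided bookkeeping is set up, the remainder of the argument is routine given that $\psi$ is finite-to-one (resp.\ at most $k$-to-$1$).
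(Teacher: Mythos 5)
Your proposal is correct and follows essentially the same route as the paper: reduce via Corollary \ref{cor:higher_block_shift_Sofic_on_Z^d-N} to a $1$-block finite-to-one factor $\Psi$ of an $M$-step SFT, pass to the edge graph of $\Gamma^{[M+1]}$ from Theorem \ref{theo:SFT<->edge shift conjugacy}, and relabel it three ways (by $\psi(\gamma_0)\cdots\psi(\gamma_M)$, by $\psi(\gamma_0)$, and by $\psi(\gamma_M)$). The only cosmetic differences are that the paper obtains the graphs for item \ref{theo:wsofic=>graph shift conjugacy_2} by composing $\psi$ with the resolving labelings of Theorem \ref{theo:Markov SFT<=>ultragraph presentation} rather than writing the label maps out directly, and your counting yields the marginally sharper bound $k^M\leq k^m$.
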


\begin{proof} From Corollary \ref{cor:higher_block_shift_Sofic_on_Z^d-N} we have $\Lambda=\Phi(\Gamma)$ for some shift of finite type $\Gamma\subset B^\NZ$ and an onto locally finite-to-one sliding block code $\Phi :\Gamma\to\Lambda$ with local rule $\phi:W_1(\Gamma)\to A$. Suppose $\Gamma$ is an $M$-step shift for some $M\geq 1$ (if $M=0$ then $\Lambda$ is a full shift and the result follows directly).  From Theorem \ref{theo:SFT<->edge shift conjugacy}, we have $\Gamma$ uniformly conjugated to an edge shift $X_G=\Gamma^{[M+1]}$ where $G$ is the graph whose edges are the words of $W_{M+1}(\Gamma)$.
\begin{enumerate}

\item  Let $\mathcal{\hat G}=(G,\mathcal{L})$ be the directed labeled graph with label map $\mathcal{L}:W_{M+1}(\Gamma)\to W_{M+1}(\Lambda)$ given by $\mathcal{L}(w_1...w_{M+1})=\phi(w_1)...\phi(w_{M+1})$. It follows that $X_\mathcal{\hat G}=\Lambda^{[M+1]}$ and, since $\phi$ is finite-to-one, then each label of $W_{M+1}(\Lambda)$ will appear only finitely many times in $\mathcal{\hat G}$.

In particular, if $\Lambda$ is a sofic shift of order $k$, then each $a\in A$ has at most $k$ reverse image by $\phi$, and then each label of $W_{M+1}(\Lambda)$ will appear at most $k^{M+1}$ times in the graph $\mathcal{\hat G}$.

\item We will prove only the existence of the directed labeled graph $\mathcal{G}$, since the proof of the existence of $\mathcal{H}$  is analogous.

 From Theorem \ref{theo:Markov SFT<=>ultragraph presentation} there exists a directed labeled graph $\mathcal{K}=(G,\mathcal{L}_\mathcal{K})$ such that $\Gamma=X_\mathcal{K}$ and for all $m\geq M$ and $u\in W_m(\Gamma)$ we have $|I_\mathcal{K}(u)|=1$. Thus, by defining $\mathcal{G}$ as the directed labeled graph $\mathcal{G}=(G,\phi\circ\mathcal{L}_\mathcal{K})$ we have $\Lambda=X_\mathcal{G}$. Since $\phi$ is finite to one, then for each $w\in W_m(\Lambda)$ there is only a finite number of words in $u^1,..,u^{n(w)}\in W_m(\Gamma)$ whose image by $\phi$ is $w$ (here we are denoting as $\phi$ the local rule of $\Phi$, which is a map from $W_1(\Gamma)$ to $W_1(\Lambda)$ as well as its extension to from  $W_m(\Gamma)$ to $W_m(\Lambda)$). Since for $m\geq M$ we have $|I_\mathcal{K}(u^i)|=1$ for each $i=1,...,n(w)$, it follows that $|I_\mathcal{G}(w)|\leq n(w)$.

Now, suppose that $\Lambda$ is a sofic shift of order $k$, that is, such that each $a\in A$ has at most $k$ reverse image by $\phi$. Thus, each word $w\in W_m(\Lambda)$ has at most $k^m$ reverse images by $\phi$, that is, $n(w)\leq k^m$.

\end{enumerate}

\end{proof}

Note that $\Lambda$, in the above theorem, is uniformly conjugated to $\Lambda^{[M+1]}=X_\mathcal{G}$ through the higher block code $\Phi^{[M+1]}:\Lambda\to X_\mathcal{G}$ given by $\Phi^{[M+1]}\big((x_i)_{i\in\NZ}\big)=(x_i....x_{i+M})_{i\in\NZ}$. However, since in general $(\Phi^{[M+1]})^{-1}$  is not a locally finite-to-one map, we cannot directly conclude the converse of Theorem \ref{theo:wsofic=>graph shift conjugacy}.\ref{theo:wsofic=>graph shift conjugacy_1} (Problem \ref{prob: Lambda^M sofic=>Lambda sofic} in the final section). Furthermore, we remark that we could write the statements made in Theorem \ref{theo:wsofic=>graph shift conjugacy}.\ref{theo:wsofic=>graph shift conjugacy_2} simply as $|I_\mathcal{G}(w)|< \infty$ and  $|T_\mathcal{H}(w)|<\infty$  for all $w\in W_M(\Lambda)$, since it would imply the properties for all $w\in W_m(\Lambda)$ with $m\geq M$.\\

Although a general converse for Theorem \ref{theo:wsofic=>graph shift conjugacy}.\ref{theo:wsofic=>graph shift conjugacy_2} remains still open (Problem \ref{Graph finite many paths=>Lambda sofic} in the final section),  the next result gives a partial converse for it.

\begin{theo}\label{theo:wsofic<=>graph presentation}   Let $\NZ$ be the lattice $\N$ or $\Z$ with usual sum, and let $\Lambda\subset A^\NZ$ be a shift space. The following are equivalent:
\begin{enumerate}

\item\label{theo:item:wsofic} $\Lambda$ is a weakly sofic shift such that $\Lambda=\Phi(\Gamma)$ where $\Gamma$ is  an $m$-step shift and $\Phi$ is a locally finite-to-one $m$-block code  (that is, $\Phi$ has local rule $\phi:W_m(\Gamma)\to W_1(\Lambda)$);

\item\label{theo:item:I finite} $\Lambda=X_\mathcal{G}$ for some labeled directed graph $\mathcal{G}$ with $|I_\mathcal{G}(a)|<\infty$ for all $a\in W_1(\Lambda)$;

\item\label{theo:item:T finite} $\Lambda=X_\mathcal{H}$ for some labeled directed graph $\mathcal{H}$ with $|T_\mathcal{H}(a)|<\infty$ for all $a\in W_1(\Lambda)$.

\end{enumerate}

In particular, the additional hypothesis of $\Lambda$ being a sofic shift of order $k$ is equivalent to $|I_\mathcal{G}(a)|\leq k$ and $|T_\mathcal{H}(a)|\leq k$ for all $a\in A$, while the additional hypothesis  of $\Lambda$ being a Markov shift is equivalent to $|I_\mathcal{G}(a)|=1$ and $|T_\mathcal{H}(a)|=1$ for all $a\in A$.

\end{theo}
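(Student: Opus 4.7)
The plan is to prove the equivalence by establishing the four implications $i \Rightarrow ii$, $i \Rightarrow iii$, $ii \Rightarrow i$ and $iii \Rightarrow i$, carrying the cardinality bounds through each step so that the sofic-of-order-$k$ and Markov refinements drop out automatically.

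For $i \Rightarrow ii$, I would take the labeled graph for $\Gamma$ given by Theorem \ref{theo:Markov SFT<=>ultragraph presentation} (vertices $W_m(\Gamma)$, edges $W_{m+1}(\Gamma)$, source and range by truncation), and replace the label of each edge $w_0 w_1 \cdots w_m$ by $\phi(w_0 w_1 \cdots w_{m-1})$, where $\phi$ is the given $m$-block local rule of $\Phi$. The conjugacy between $\Gamma$ and its higher block presentation will show that the resulting labeled graph $\mathcal{G}$ satisfies $X_\mathcal{G} = \Phi(\Gamma) = \Lambda$; and since the source of any $\mathcal{G}$-edge labeled $a$ lies in $\phi^{-1}(a)$, I will obtain $|I_\mathcal{G}(a)| \leq |\phi^{-1}(a)|$. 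This bound is finite by hypothesis, at most $k$ in the sofic-of-order-$k$ case, and exactly $1$ in the Markov case (taking $m=1$, $\Gamma = \Lambda$, $\Phi = \mathrm{id}$). The implication $i \Rightarrow iii$ will be dual: use the right-resolving graph of Theorem \ref{theo:Markov SFT<=>ultragraph presentation} and relabel each edge $w_0 \cdots w_m$ by $\phi(w_1 \cdots w_m)$; on $\Z$ this gives $X_\mathcal{H} = \Lambda$ immediately, and on $\N$ one augments the graph with new ``initial'' vertices as in the proof of Theorem \ref{theo:Markov SFT<=>ultragraph presentation}.

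For $ii \Rightarrow i$, I would introduce the source-tagged alphabet
$$B := \{(a, v) : a \in W_1(\Lambda),\ v \in I_\mathcal{G}(a)\},$$
and let $\Gamma \subseteq B^\NZ$ be the $1$-step shift in which the pair $((a, v), (a', v'))$ is allowed exactly when there is an edge $v \to v'$ in $\mathcal{G}$ labeled $a$; this is a $1$-step SFT by construction. The map $\Phi: \Gamma \to \Lambda$ defined by the $1$-block local rule $\phi(a, v) := a$ will be surjective, because every $(a_i) \in X_\mathcal{G}$ is the label-sequence of some path $(e_i)$ in $\mathcal{G}$ and then $((a_i, s(e_i)))_i$ lies in $\Gamma$ and maps to $(a_i)$ under $\Phi$. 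Local finite-to-oneness follows at once from $|\phi^{-1}(a)| = |I_\mathcal{G}(a)| < \infty$. The implication $iii \Rightarrow i$ will be mirror-symmetric, using $B' := \{(a, v) : v \in T_\mathcal{H}(a)\}$ and the pair constraint that $((a, w), (a', w'))$ is allowed iff there is an edge $w \to w'$ labeled $a'$ in $\mathcal{H}$.

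For the quantitative refinements, the bound $|I_\mathcal{G}(a)| \leq k$ (resp.\ $|T_\mathcal{H}(a)| \leq k$) forces $|\phi^{-1}(a)| \leq k$, making $\Phi$ locally bounded finite-to-one of order $k$ and thus $\Lambda$ sofic of order $k$; the condition $|I_\mathcal{G}(a)| = 1$ for all $a$ makes $\phi$ a bijection from $B$ onto $W_1(\Lambda)$, so $\Phi$ becomes a $1$-block conjugacy between the $1$-step SFT $\Gamma$ and $\Lambda$, forcing $\Lambda$ to be itself a $1$-step SFT (Markov). The main obstacle I expect is the surjectivity verification in the two reverse directions, which requires producing for every $\Lambda$-sequence a consistent choice of source (resp.\ target) tags; this is precisely where the hypothesis $\Lambda = X_\mathcal{G}$ (resp.\ $\Lambda = X_\mathcal{H}$) supplies the needed path in the graph and makes the tagged sequence land in $\Gamma$ (resp.\ $\Gamma'$).
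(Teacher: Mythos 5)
Your proposal is correct and follows essentially the same route as the paper: for $i.\Rightarrow ii.$ and $i.\Rightarrow iii.$ you relabel the higher-block graph of the $m$-step shift $\Gamma$ by $\phi$ applied to the initial (resp.\ terminal) $m$-subword of each edge, obtaining $I_\mathcal{G}(a)=\phi^{-1}(a)$; and for the reverse directions your source/target-tagged alphabet $(a,v)$ is exactly the paper's relabeling $a_v$ of the given graph, yielding a $1$-step SFT that factors onto $\Lambda$ by a $1$-block, locally finite-to-one code. The quantitative refinements are carried through identically.
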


\begin{proof} \phantom\\

\begin{description}
\item[(\ref{theo:item:I finite}$\Rightarrow$\ref{theo:item:wsofic}) and 
(\ref{theo:item:T finite}$\Rightarrow$\ref{theo:item:wsofic}):]  We will prove the result only for the case when  $|I_\mathcal{G}(a)|<\infty$ since the proof for the other case is analogous. Consider the directed labeled graph $\mathcal{K}$ with the same vertices and edges of $\mathcal{G}$ but labeled as follows: if $e_{vw}$ is an edge from $v$ to $w$ which is labeled as $a$ in $\mathcal{G}$, then it will be labeled as $a_v$ in $\mathcal{K}$. Thus, for any label $a_v$ we have $I_\mathcal{K}(a_v)=\{v\}$ and from Theorem \ref{theo:Markov SFT<=>ultragraph presentation} we get that $X_\mathcal{K}$ is a Markov shift. Now, consider the 1-block sliding block code $\Theta:X_\mathcal{K}\to X_\mathcal{G}$ whose local rule $\theta:W_1(X_\mathcal{K}) \to W_1(X_\mathcal{G})$ is give by $\theta(a_v)=a$. It is direct that $\Theta(X_\mathcal{K})=X_\mathcal{G}$. Furthermore, since for each label $a$ in $\mathcal{G}$ we have $I_\mathcal{G}(a)=\{v_1,...,v_m\}$ for some $m\in\N$, then there are just a finite number of labels $a_{v_1}$, ..., $a_{v_m}$ that are taken by $\theta$ to the label $a$. Thus, $\Theta$ is a locally finite-to-one SBC and $X_\mathcal{G}$ is a weakly sofic.
In particular, if there exists $k\in\N$ such that $|I_\mathcal{G}(a)|\leq k$ for all label $a$, then $\theta$ defined here, will take at most $k$ distinct labels of $\mathcal{K}$ to each label of $\mathcal{G}$, which means that $X_\mathcal{G}$ is a sofic of order $k$. Furthermore, if $|I_\mathcal{G}(a)|=1$, then from Theorem \ref{theo:Markov SFT<=>ultragraph presentation} we have that $X_\mathcal{G}$ is a Markov shift.\\

\item[(\ref{theo:item:wsofic}$\Rightarrow$\ref{theo:item:I finite}):] If $\Lambda$ is a Markov shift, then from  Theorem \ref{theo:Markov SFT<=>ultragraph presentation}.\ref{theo:item:Markov I finite} we get that $\Lambda=X_\mathcal{G}$ for some labeled graph $\mathcal{G}$ with $|I_\mathcal{G}(a)|=1$ for all $a\in W_1(\Lambda)$. For the general case of $\Lambda$ being a weakly sofic, we will consider the labeled graph $\mathcal{G}=(G,\mathcal{L}_\mathcal{G})$ where $G$ is the directed graph with vertices $\mathcal{V}_G:=W_m(\Gamma)$, edges $\mathcal{E}_G:=W_{m+1}(\Gamma)$; $s_G:\mathcal{E}_G\to \mathcal{V}_G$ and $r_G:\mathcal{E}_G\to \mathcal{V}_G$ given by $s_G(v_0...v_{m})=v_0...v_{m-1}$ and $s_G(v_0...v_{m})=v_1...v_{m}$; and the label map $\mathcal{L}_\mathcal{G}:\mathcal{E}_G\to A$ is given by 
$\mathcal{L}_\mathcal{G}(v_0...v_{m}):=\phi(v_0...v_{m-1})$. Note that, since $\Gamma$ is an $m$-step shift, we have that $\Gamma^{[m+1]}=X_G$ and, therefore, it follows that $X_\mathcal{G}=\Lambda$. 
Now, observe that any edge in $\mathcal{G}$ labeled with a symbol $a$ is in the form $v_0...v_{m}$ with $v_0...v_{m-1}\in\phi^{-1}(a)$, and from the construction of $G$ such edge shall be an outgoing edge of the vertex $v_0...v_{m-1}$. Thus, $I_\mathcal{G}(a)=\phi^{-1}(a)$. Hence, we finish by recalling that $\Phi$ is locally finite-to-one, which means $\phi^{-1}(a)$ is finite.
In particular, if $\Lambda$ is a sofic shift of order $k$, then for each $a$, $I_\mathcal{G}(a)=\phi^{-1}(a)$ has at most $k$ elements.

\item[(\ref{theo:item:wsofic}$\Rightarrow$\ref{theo:item:T finite}):]  If $\Lambda$ is a Markov shift, then from  Theorem \ref{theo:Markov SFT<=>ultragraph presentation}.\ref{theo:item:Markov T finite} we get that $\Lambda=X_\mathcal{H}$ for some labeled graph $\mathcal{H}$ with $|T_\mathcal{H}(a)|=1$ for all $a\in W_1(\Lambda)$. For the general case  we consider the labeled graph $\mathcal{H}=(H,\mathcal{L}_\mathcal{H})$. If  $\s(\Gamma)=\Gamma$, then we define $H$ as the same graph $G$ above. If $\s(\Gamma)\subsetneq\Gamma$,  then we define $H$ with set of vertices $\mathcal{V}_H=W_m(\Gamma)\cup\{\epsilon v_1...v_{m-1}:\ v_1...v_{m-1}\in W_{m-1}(\Gamma)\}$, set of edges $\mathcal{E}_H=W_{m+1}(\Gamma)\cup\{\epsilon v_1...v_{m}:\ v_1...v_{m}\in W_m(\Gamma)\}$, and $s_H=s_G$ and $r_H=r_G$.
 In any case, we define the label map $\mathcal{L}_\mathcal{H}$ given by  $\mathcal{L}_\mathcal{H}(v_0...v_{m}):=\phi(v_1...v_{m})$. Hence, we conclude the proof by using the same reasoning as before to check that $\Lambda=X_\mathcal{H}$ and $T_\mathcal{H}(a)=\phi^{-1}(a)$.

\end{description}
\end{proof}

Recall that if $\Lambda$ is a weakly sofic shift such that $\Lambda=\Phi(\Gamma)$ where $\Gamma$ is an $\ell$-step shift and $\Phi$ is a locally finite-to-one $m$-block code with $\ell<m$, then applying Corollary \ref{cor:higher_block_shift_Sofic_on_Z^d-N} we will find that there exists a 1-step shift $\Omega$ and a locally finite-to-one 1-block code $\Theta:\Omega\to\Lambda$ such that $\Lambda=\Theta(\Omega)$. Thus, Theorem \ref{theo:wsofic<=>graph presentation} still holds if we replace  item \ref{theo:item:wsofic} by the condition of $\Lambda$ being a weakly sofic shift such that $\Lambda=\Phi(\Gamma)$ where $\Gamma$ is  an $\ell$-step shift and $\Phi$ is a locally finite-to-one $m$-block code with $\ell\leq m$.
However, when we have that $\Gamma$ and $\Phi$ can be taken with $\ell$ strictly less than $m$ we can assure that $\Lambda$ has a graph presentation with a strong property:

\begin{theo}\label{theo:wsofic=>graph presentation-2}   Let $\NZ$ be the lattice $\N$ or $\Z$ with usual sum. Suppose  $\Lambda\subset A^\NZ$ is a weakly sofic shift such that $\Lambda=\Phi(\Gamma)$ where $\Gamma$ is an $\ell$-step shift and $\Phi$ is a locally finite-to-one $m$-block code with $\ell<m$. Then $\Lambda=X_\mathcal{G}$ for some labeled directed graph where each label is used just finitely many times. In the particular case of $\Lambda$ being a sofic shift of order $k$, we have each label being used at most $k$ times in $\mathcal{G}$. 
\end{theo}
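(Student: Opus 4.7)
The plan is to construct a labeled graph directly from $\Gamma$ and $\phi$ using blocks of length $m-1$ on the vertices and length $m$ on the edges, which is possible precisely because the strict inequality $\ell<m$ guarantees $m-1\geq\ell$. Concretely, let $G$ be the directed graph with vertex set $\mathcal{V}:=W_{m-1}(\Gamma)$, edge set $\mathcal{E}:=W_m(\Gamma)$, and source/range maps $s(v_0\cdots v_{m-1}):=v_0\cdots v_{m-2}$ and $r(v_0\cdots v_{m-1}):=v_1\cdots v_{m-1}$. I then label each edge by $\mathcal{L}(v_0\cdots v_{m-1}):=\phi(v_0\cdots v_{m-1})\in A$ to obtain a labeled graph $\mathcal{G}$ whose labels live in the target alphabet $A$ (rather than in $A^{m}$ or some block alphabet, as was the case in the proof of Theorem \ref{theo:wsofic=>graph shift conjugacy}).

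The first verification is that $X_\mathcal{G}=\Lambda$. Any (bi)infinite path $(e^i)_{i\in\NZ}$ in $G$ determines, via the overlap condition $r(e^i)=s(e^{i+1})$, a unique sequence $(x_i)_{i\in\NZ}\in A^\NZ$ with $e^i=x_i x_{i+1}\cdots x_{i+m-1}$; the requirement that each $e^i\in W_m(\Gamma)$ says exactly that every length-$m$ subword of $\x$ lies in the language of $\Gamma$, and since $\Gamma$ is $\ell$-step with $\ell+1\leq m$, this is equivalent to $\x\in\Gamma$ (cf.\ \cite[Proposition 2.2.5]{DarjiGoncalvesSobottka2020}). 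The label of this path is $(\phi(x_i x_{i+1}\cdots x_{i+m-1}))_{i\in\NZ}=\Phi(\x)$, so the set of label sequences of $\mathcal{G}$ is precisely $\Phi(\Gamma)=\Lambda$. This argument works uniformly for $\NZ=\N$ and $\NZ=\Z$, with no need for the ``initial edge'' adjustment used in the proofs of Theorems \ref{theo:Markov SFT<=>ultragraph presentation} and \ref{theo:wsofic<=>graph presentation}, since the label of an edge depends on the edge itself rather than on its source or range vertex.

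For the counting, the edges of $\mathcal{G}$ are in bijection with $W_m(\Gamma)$, and the set of edges carrying a fixed label $a\in A$ is exactly $\phi^{-1}(a)\subseteq W_m(\Gamma)$. Since $\Phi$ is locally finite-to-one, $|\phi^{-1}(a)|<\infty$ for every $a\in A$, yielding the general statement. If moreover $\Lambda$ is sofic of order $k$, then $\Phi$ can be taken locally bounded finite-to-one of order $k$, so $|\phi^{-1}(a)|\leq k$ and every label is used at most $k$ times in $\mathcal{G}$.

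The main subtle point I expect is the genuine use of the strict inequality $\ell<m$: under the weaker hypothesis $\ell\leq m$ one must work with vertices of length $m$ and edges of length $m+1$, as in Theorem \ref{theo:wsofic<=>graph presentation}, with labels reading only the first $m$ letters of each edge; that construction yields a bound on $|I_\mathcal{G}(a)|$ but not on the total number of edges labeled $a$, because many edges of the form $v_0\cdots v_{m-1}v_m$ share the same prefix $v_0\cdots v_{m-1}$ and therefore the same label. Having $\ell<m$ is precisely what allows us to shift down by one and make the labeling of edges itself a map from $W_m(\Gamma)$ to $A$, converting the bound on fibres of $\phi$ into a global bound on the multiplicity of each label in the graph.
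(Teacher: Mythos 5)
Your construction is exactly the one the paper uses: vertices $W_{m-1}(\Gamma)$, edges $W_m(\Gamma)$ with the standard overlap source/range maps, label map $\mathcal{L}(v_0\cdots v_{m-1})=\phi(v_0\cdots v_{m-1})$, and the count $|\phi^{-1}(a)|$ of edges per label. The proof is correct and essentially identical to the paper's, just with the verification that $X_\mathcal{G}=\Lambda$ and the role of the strict inequality $\ell<m$ spelled out in more detail.
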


\begin{proof} 

Let $\phi:W_m(\Gamma)\to A$ be the local rule of $\Phi$. We will proceed as in Theorem \ref{theo:wsofic<=>graph presentation}, but since $\ell$ is strictly less than $m$ we can define $\mathcal{G}$ with $\mathcal{V}_\mathcal{G}:=W_{m-1}(\Gamma)$,  $\mathcal{E}_\mathcal{G}:=W_{m}(\Gamma)$,  $s_\mathcal{G}:\mathcal{E}_\mathcal{G}\to \mathcal{V}_\mathcal{G}$ and $r_\mathcal{G}:\mathcal{E}_\mathcal{G}\to \mathcal{V}_\mathcal{G}$ given by $s_\mathcal{G}(v_0...v_{m-1})=v_0...v_{m-2}$ and $r_\mathcal{G}(v_0...v_{m-1})=v_1...v_{m-1}$, and $\mathcal{L}_\mathcal{G}:\mathcal{E}_\mathcal{G}\to A$ given by $\mathcal{L}_\mathcal{G}(v_0...v_{m-1})=\phi(v_0...v_{m-1})$. Thus $\Lambda=X_\mathcal{G}$ and a label $a\in A$ is used $|\phi^{-1}(a)|$ times in $\mathcal{G}$.

\end{proof}

We recall that Theorem \ref{theo:wsofic<=>graph presentation} and Theorem \ref{theo:wsofic=>graph presentation-2} cover the case of weakly sofic shifts which can be obtained as the image of an $\ell$-step shift through a locally finite-to-one $m$-block code, such that $\ell\leq m$. Under such a condition, those theorems state that we always can find graph presentations for the weakly sofics where the labels are assigned in a particular way. On the other hand, whenever $\ell>m$ we only can use the general result given by Theorem \ref{theo:wsofic=>graph shift conjugacy}.\ref{theo:wsofic=>graph shift conjugacy_2} which state that we always can find graph presentations for the weakly sofics where the paths are assigned in a particular way. 
In fact, there exist weakly sofic shifts that are not image of any $\ell$-step SFT through a locally finite-to-one $m$-block SBC with $\ell\leq m$ (see Example \ref{ex:2-step and graph}). Such weakly sofic shifts can never be presented for labeled graphs with  $I_\mathcal{G}(a)$  being finite for all $a\in A$ or with $T_\mathcal{G}(a)$ being finite for all $a\in A$, but one always can find $M\geq 0$ and a labeled graph $\mathcal{G}$ such that for $m\geq M$ we have $I_\mathcal{G}(w)$ being finite for all $w\in W_m(\Lambda)$ or with $I_\mathcal{G}(w)$  being finite for all $w\in W_m(\Lambda)$. \\

Given a shift space $\Lambda\subset A^\NZ$, its {\bf follower set graph} is the directed labeled graph $\mathcal{G}_\mathcal{F}=(G_\mathcal{F},\mathcal{L}_\mathcal{F})$ constructed as made in Theorem \ref{theo:motivation}:  Let $\mathcal{V}_\mathcal{F}:=\{V(w):\ w\in W(\Lambda)\cup\{\epsilon\}\}$ be the set of vertices, where $\epsilon$ stands for the empty word and $V(w):=\{u\in W(\Lambda):\ wu\in W(\Lambda)\}$; the set of edges is $\mathcal{E}_\mathcal{F}:=\{e_{V(v)V(va)}:\ v\in W(\Lambda)\cup\{\epsilon\},\ a\in A,\text{ and } va\in W(\Lambda)\}$; the source and range maps are defined by $s(e_{V(v)V(va)})=V(v)$ and $r(e_{V(v)V(va)})=V(va)$; finally, the label map is $\mathcal{L}_\mathcal{F}:\mathcal{E}\to A$ given by $\mathcal{L}_\mathcal{F}(e_{V(v)V(va)})=a$.

\begin{ex}\label{ex:follower_set_graph} If $A=\{a,\ b_i,\ c:\ i\in\N^*\}$, and $\Lambda:=X_\mathcal{G}\subset A^\N$ is the sofic shift space defined by the labeled graph $\mathcal{G}$ given in Figure \ref{Fig_ex_Sofic_follower_set_shift} below,

\begin{figure}[H]
\centering
\includegraphics[width=0.4\linewidth=1.0]{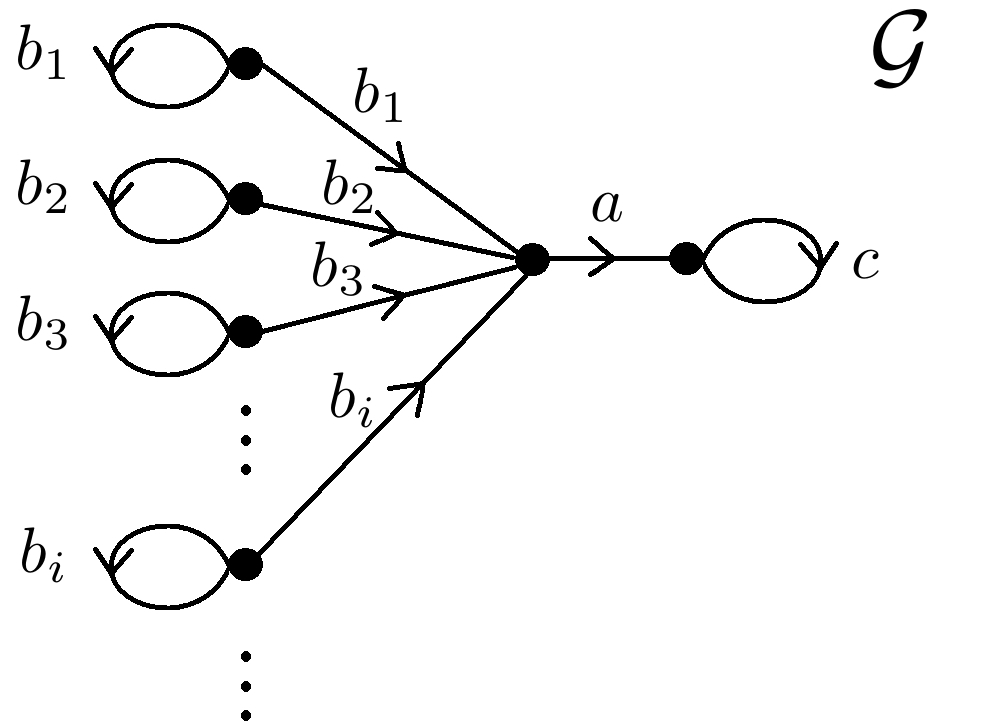}
\caption{The graph $\mathcal{G}$ that generates the Markov shift $\Lambda$ of Example \ref{ex:follower_set_graph}.}\label{Fig_ex_Sofic_follower_set_shift}
\end{figure}

then the follower that the follower set graph of $\Lambda$, is the graph $\mathcal{G}_\mathcal{F}$ given in Figure  \ref{Fig_ex_Sofic_follower_set_shift_2}.

\begin{figure}[H]
\centering
\includegraphics[width=0.75\linewidth=1.0]{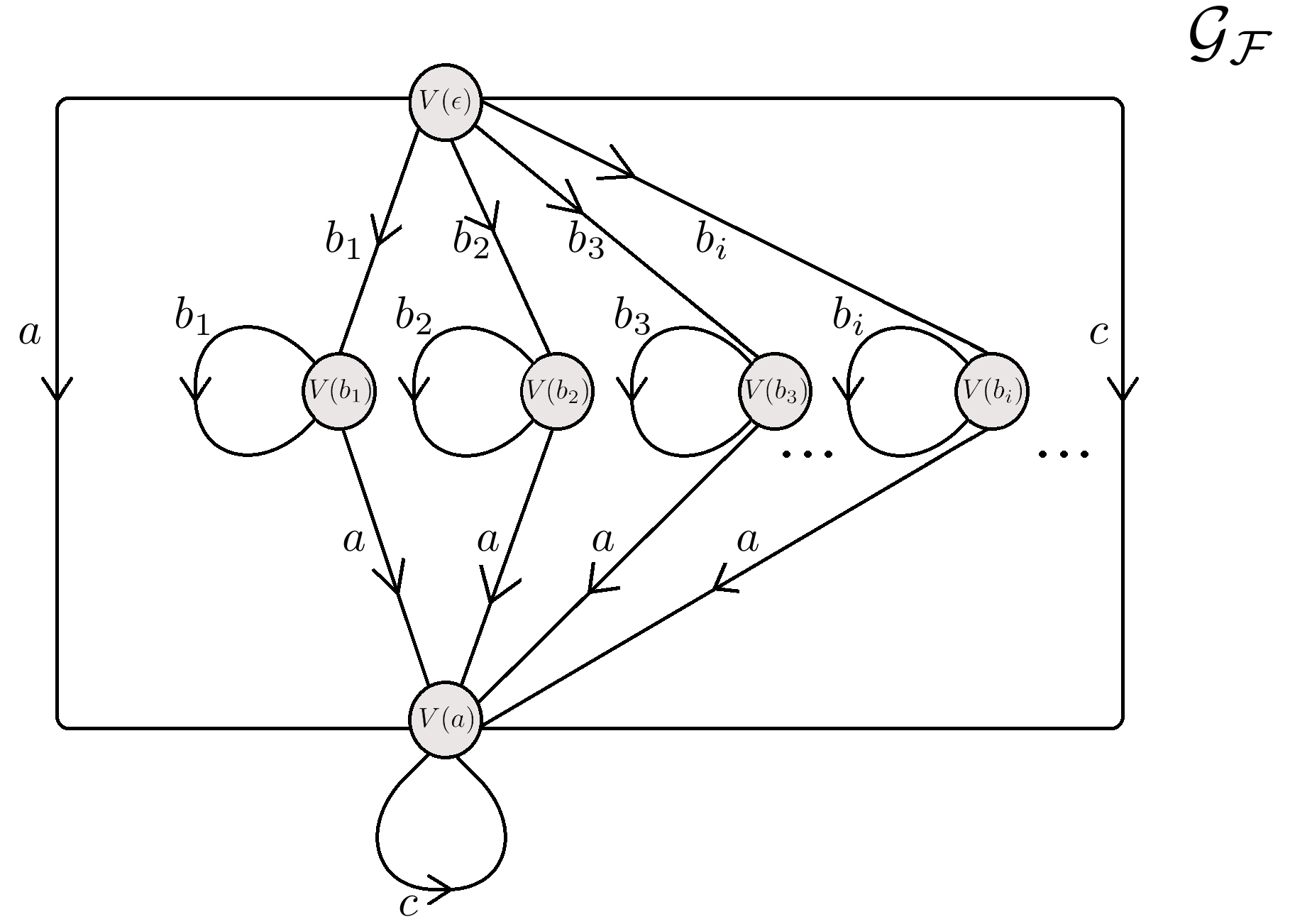}
\caption{The follower set graph $\mathcal{G}_\mathcal{F}$ for the shift $\Lambda$ of Example \ref{ex:follower_set_graph}.}\label{Fig_ex_Sofic_follower_set_shift_2}
\end{figure}

\end{ex}

Directly from the definition of  follower set graphs we get that:

\begin{prop} The follower set graph is right-resolving.
\end{prop}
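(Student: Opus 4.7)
The plan is to unfold the definition of right-resolving and read off the conclusion directly from the construction of the follower set graph $\mathcal{G}_\mathcal{F}$. Concretely, I would fix an arbitrary vertex $V(v)$ and an arbitrary label $a \in A$, and show that at most one outgoing edge from $V(v)$ carries the label $a$.

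By the definition of $\mathcal{E}_\mathcal{F}$, any outgoing edge from $V(v)$ labeled by $a$ has the form $e_{V(v')V(v'a)}$ for some $v' \in W(\Lambda) \cup \{\epsilon\}$ satisfying $V(v') = V(v)$ and $v'a \in W(\Lambda)$. Since an edge in $\mathcal{G}_\mathcal{F}$ is determined by its source, its label and its range, the task reduces to showing that the range $V(v'a)$ depends only on $V(v')$ and $a$, not on the particular representative $v'$.

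The key step is the elementary identity
\begin{equation*}
V(wa) \;=\; \bigl\{ u \in W(\Lambda) \cup \{\epsilon\} : au \in V(w) \bigr\},
\end{equation*}
valid for every $w \in W(\Lambda) \cup \{\epsilon\}$ and every $a \in A$ with $wa \in W(\Lambda)$; indeed, $u \in V(wa)$ iff $wau \in W(\Lambda)$ iff $au \in V(w)$. The right-hand side depends solely on the set $V(w)$ and on the symbol $a$. Applying this to two representatives $v', v''$ with $V(v') = V(v'') = V(v)$ immediately yields $V(v'a) = V(v''a)$, and hence the two candidate edges coincide as elements of $\mathcal{E}_\mathcal{F}$. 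I do not foresee any real obstacle here: the follower-set construction is engineered so that vertices encode the ``future'' of a word, so right-resolvingness is essentially built into the design.
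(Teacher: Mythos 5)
Your proof is correct and is exactly the direct-from-definition verification the paper intends (the paper omits the proof entirely, asserting the proposition follows ``directly from the definition''): the key observation that $V(wa)$ is determined by the set $V(w)$ and the symbol $a$ alone is precisely what makes the construction right-resolving, since an edge of $\mathcal{G}_\mathcal{F}$ is specified by its source, label and range. The only cosmetic slip is that, with the paper's definition $V(w)=\{u\in W(\Lambda):\ wu\in W(\Lambda)\}$, your identity should read $V(wa)=\{u\in W(\Lambda):\ au\in V(w)\}$ (the right-hand side as you wrote it also contains $\epsilon$, which never lies in $V(wa)$), but this does not affect the argument.
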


\qed

The following results are versions of Proposition 3.2.9 and Proposition 3.2.10 in \cite{LindMarcus} for weakly sofic shifts. In particular, they state that the follower set graph of a weakly sofic shift corresponds to a graph with the properties given in Theorem \ref{theo:wsofic=>graph shift conjugacy}.\ref{theo:wsofic=>graph shift conjugacy_2} and Theorem \ref{theo:wsofic<=>graph presentation}.\ref{theo:item:T finite}.

\begin{theo}\label{theo:sofic-follower_set_graph-1} Let $\Lambda\subset A^\NZ$ be a shift space, where $\NZ$ is $\N$ or $\Z$, and let $\mathcal{G}_\mathcal{F}$ be its follower set graph. If $\Lambda$ is a weakly sofic shift, then there exists $M\geq 0$ such that for all $m\geq M$ and $w\in W_m(\Lambda)$ we have $|T_{\mathcal{G}_\mathcal{F}}(w)|<\infty$. If $\Lambda$ is a sofic shift of order $k$, then for
 all $m\geq M$ and $w\in W_m(\Lambda)$ we have $|T_{\mathcal{G}_\mathcal{F}}(w)|\leq 2^{k^m}-1$. Moreover, if $\Lambda$ is an $M$-step SFT, then  for
 all $m\geq M$ and $w\in W_m(\Lambda)$ we have $|T_{\mathcal{G}_\mathcal{F}}(w)|= 1$.
\end{theo}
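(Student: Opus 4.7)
The plan is to leverage the factor structure of $\Lambda$ over an SFT to control follower sets. By Corollary \ref{cor:higher_block_shift_Sofic_on_Z^d-N}, write $\Lambda=\Psi(\Gamma)$ with $\Gamma\subset B^\NZ$ an $M$-step SFT and $\Psi:\Gamma\to\Lambda$ an onto locally finite-to-one SBC whose local rule $\psi:W_{\{0\}}(\Gamma)\to A$ acts on a single coordinate; if $\Lambda$ is sofic of order $k$, we may additionally take $\psi$ to be at most $k$-to-one on each symbol, and if $\Lambda$ is itself an $M$-step SFT we set $\Gamma=\Lambda$ with $\psi$ the identity. Paths in $\mathcal{G}_\mathcal{F}$ representing $w$ terminate precisely at vertices $V(vw)$ with $v\in W(\Lambda)\cup\{\epsilon\}$ and $vw\in W(\Lambda)$, so the goal is to bound the number of such distinct follower sets.

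For $u\in W(\Lambda)$ with $|u|\geq M$, define
$$\tau(u):=\{s\in W_M(\Gamma):\ s\text{ is the $M$-suffix of some }\tilde u\in\psi^{-1}(u)\cap W(\Gamma)\}.$$
The key claim is that $V(u)$ is a function of $\tau(u)$: namely, for $r\in W(\Lambda)$,
$$r\in V(u)\ \iff\ \exists\, s\in\tau(u),\ \tilde r\in\psi^{-1}(r)\cap W(\Gamma)\text{ with }s\tilde r\in W(\Gamma).$$
The nontrivial direction rests on the characterization $W(\Gamma)=\{b:\text{every }(M{+}1)\text{-window of }b\text{ lies in }W_{M+1}(\Gamma)\}$ valid for $M$-step SFTs on $\N$ or $\Z$: given $\tilde u\in W(\Gamma)$ with $M$-suffix $s$ and $s\tilde r\in W(\Gamma)$, pick $x,y\in\Gamma$ containing $\tilde u$ and $s\tilde r$ respectively, and splice them along the common overlap $s$. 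Because the overlap has length $M$, every $(M{+}1)$-window of the spliced configuration is a window of either $x$ or $y$, hence lies in $W_{M+1}(\Gamma)$; the result is an element of $\Gamma$ containing $\tilde u\tilde r$, so $\tilde u\tilde r\in W(\Gamma)$ and $ur=\psi(\tilde u\tilde r)\in W(\Lambda)$.

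Now fix $m\geq M$ and $w\in W_m(\Lambda)$. Since $|w|\geq M$, the $M$-suffix of any $\tilde v\tilde w\in\psi^{-1}(vw)\cap W(\Gamma)$ lies entirely within $\tilde w$, so $\tau(vw)\subseteq\tau(w)$; and $\tau(vw)\neq\emptyset$ because $vw\in W(\Lambda)$. As $V(vw)$ depends only on $\tau(vw)$, distinct terminal vertices correspond to distinct nonempty subsets of $\tau(w)$, giving
$$|T_{\mathcal{G}_\mathcal{F}}(w)|\leq 2^{|\tau(w)|}-1.$$
Finally, $|\tau(w)|\leq|\psi^{-1}(w)\cap W(\Gamma)|\leq\prod_{i=1}^{m}|\psi^{-1}(w_i)|$: local finite-to-oneness of $\psi$ makes each factor finite, proving the weakly sofic case; order-$k$ soficity gives $|\psi^{-1}(w_i)|\leq k$ and hence $|\tau(w)|\leq k^m$ and $|T_{\mathcal{G}_\mathcal{F}}(w)|\leq 2^{k^m}-1$; and in the $M$-step SFT case $\tau(w)$ is the singleton consisting of the $M$-suffix of $w$, so $|T_{\mathcal{G}_\mathcal{F}}(w)|\leq 1$, with the reverse inequality automatic from $w\in W(\Lambda)$.

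The main obstacle is the cut-and-paste step establishing that $V(u)$ is determined by $\tau(u)$ when $|u|\geq M$; once that factorization is in hand, the three regimes (weakly sofic, sofic of order $k$, $M$-step SFT) reduce uniformly to bounding $|\tau(w)|$ via the finite-to-one assumption on $\psi$.
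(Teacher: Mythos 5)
Your proof is correct, and its combinatorial core is the same as the paper's: both arguments attach to each word $u$ with $|u|\geq M$ a finite ``state set'' that determines the follower set $V(u)$, observe that the state set of $vw$ is a nonempty subset of the state set of $w$, and conclude that $|T_{\mathcal{G}_\mathcal{F}}(w)|\leq 2^{|\cdot|}-1$, with the three regimes handled by bounding the size of the state set of $w$. The difference is only in how that state set is produced. The paper routes through its graph-presentation machinery: it takes the labeled graph $\mathcal{H}$ of Theorem \ref{theo:wsofic=>graph shift conjugacy}.\ref{theo:wsofic=>graph shift conjugacy_2} with $|T_\mathcal{H}(w)|<\infty$ (resp. $\leq k^m$, $=1$) and proves the union formula $V(vw)=\bigcup_{\mathfrak{v}\in T_\mathcal{H}(vw)}F_\mathcal{H}(\mathfrak{v})$, so that distinct terminal vertices of $\mathcal{G}_\mathcal{F}$ correspond to distinct nonempty subsets of $T_\mathcal{H}(w)$. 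You instead work directly with the one-block factor $\Psi:\Gamma\to\Lambda$ from Corollary \ref{cor:higher_block_shift_Sofic_on_Z^d-N}, take $\tau(u)$ to be the set of $M$-suffixes of preimage words, and establish that $V(u)$ is a function of $\tau(u)$ by the standard $M$-step splicing argument (the gluing property the paper cites as Proposition 2.2.5 of \cite{DarjiGoncalvesSobottka2020}). Since $\tau(w)$ is precisely the set of terminal vertices of paths presenting $w$ in the paper's graph $\mathcal{H}$, the two proofs are two presentations of the same argument: yours is marginally more self-contained, bypassing Section \ref{sec:graphs}, at the cost of re-proving the cut-and-paste step that Theorem \ref{theo:wsofic=>graph shift conjugacy} encapsulates; both yield the identical bounds $2^{k^m}-1$ and $1$.
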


\begin{proof}
Let $\Lambda\subset A^\N$ be a weakly sofic shift. Let $\mathcal{H}=(\mathcal{V},\mathcal{E},s,r,\mathcal{L})$ be the graph which generate $\Lambda$ given by Theorem \ref{theo:wsofic=>graph shift conjugacy}.\ref{theo:wsofic=>graph shift conjugacy_2} for which there exists $M\geq 0$ such that $T_\mathcal{H}(w)$ is finite for all $w\in W_m(\Lambda)$ with $m\geq M$. Let $\mathcal{G}_\mathcal{F}=(\mathcal{V}_\mathcal{F},\mathcal{E}_\mathcal{F},s_\mathcal{F},r_\mathcal{F},\mathcal{L}_\mathcal{F})$ be the follower set graph of $\Lambda$. Recall that for each $w=w_0...w_{m-1}\in W_m(\Lambda)$ the paths in $\mathcal{G}_\mathcal{F}$ representing $w$ are in the form $\pi_w=e_{V(v)V(vw_0)}e_{V(vw_0)V(vw_0w_1)}e_{V(vw_0w_1)V(vw_0w_1w_2)}...e_{V(vw_0w_1...w_{m-2})V(vw_0w_1...w_{m-1})}$  with $v\in W(\Lambda)\cup\{\epsilon\}$ and $vw\in W(\Lambda)$, and thus 
\begin{equation}\label{eq:T_G_F}T_{\mathcal{G}_\mathcal{F}}(w)=\{V(vw):\ v\in W(\Lambda)\cup\{\epsilon\}\text{ and }vw\in W(\Lambda)\}.\end{equation}

Given a vertex $\mathfrak{v}$ of $\mathcal{H}$, define 
$$F_\mathcal{H}(\mathfrak{v}):=\{u\in W(\Lambda): \text{ exists a path }\pi_u \text{ in }\mathcal{H}\text{ starting at }\mathfrak{v}\text{, which represents }u\}.$$

Observe that for any word $vw\in W(\Lambda)$ we have that
\begin{equation}\label{eq:V(a)}V(vw)=\bigcup_{\mathfrak{v}\in T_\mathcal{H}(vw)}F_\mathcal{H}(\mathfrak{v})
.\end{equation}

Since we have $T_\mathcal{H}(vw)\subset T_\mathcal{H}(w)$, and for $w\in W_m(\Lambda)$ with $m\geq M$ we have $ T_\mathcal{H}(w)$ finite, it follows that by varying $v$ we can find at most $2^{|T_{\mathcal{H}}(w)|}-1$ number of distinct sets $V(vw)$ (at most one for each nonempty subset of $T_{\mathcal{H}}(w)$). Hence, $T_{\mathcal{G}_\mathcal{F}}(w)$ is finite.

If $\Lambda$ is a sofic shift of order $k$, then for all $w\in W_m(\Lambda)$ with $m\geq M$, the set $T_\mathcal{H}(w)$ has at most $k^m$ elements (Theorem \ref{theo:wsofic=>graph shift conjugacy}.\ref{theo:wsofic=>graph shift conjugacy_2}). Therefore, there are at most $2^{k^m}-1$ nonempty subsets of $T_\mathcal{H}(w)$, which, from \eqref{eq:V(a)}, implies at most $2^{k^m}-1$ possible distinct sets in the form $V(vw)$. Moreover, in the particular case of $\Lambda$ being an $M$-step shift we have from Theorem \ref{theo:Markov SFT<=>ultragraph presentation} that $T_\mathcal{H}(w)$ is unitary for all $w\in W_m(\Lambda)$ and, consequently, there is only one set in the form $V(vw)$.

\end{proof}

Note that, if we have a shift $\Lambda$ whose follower set graph $\mathcal{G}_\mathcal{F}$ is such that there exists $M\geq 0$ such that $|T_{\mathcal{G}_\mathcal{F}}(w)|=1$ for all $w\in W_m(\Lambda)$ with $m\geq M$, then from Theorem \ref{theo:Markov SFT<=>ultragraph presentation} we get that $\Lambda$ is an $M$-step shift. However, 
since we have not a general converse for Theorem \ref{theo:wsofic=>graph shift conjugacy}.\ref{theo:wsofic=>graph shift conjugacy_2} we also have not a general converse for Theorem \ref{theo:sofic-follower_set_graph-1}, that is,  in general we cannot conclude that a shift space whose follower set graph $\mathcal{G}_\mathcal{F}$ such that there exists $M\geq 0$ such that $|T_{\mathcal{G}_\mathcal{F}}(w)|<\infty$ for all $w\in W_m(\Lambda)$ with $m\geq M$ is a weakly sofic. However, we can get a complete characterization of weakly sofic shifts in terms of their following set graphs, for shift spaces that satisfy the properties of Theorem \ref{theo:wsofic<=>graph presentation}:

\begin{theo}\label{theo:sofic-follower_set_graph-2} Let $\Lambda\subset A^\NZ$ be a shift space, where $\NZ$ is $\N$ or $\Z$, and let $\mathcal{G}_\mathcal{F}$ be its follower set graph. Then,

\begin{enumerate}
\item $\Lambda$ is a weakly sofic shift such that $\Lambda=\Phi(\Gamma)$ where $\Gamma$ is  an $m$-step shift and $\Phi$ is a locally finite-to-one $m$-block code if and only if  for all $a\in W_1(\Lambda)$ we have $|T_{\mathcal{G}_\mathcal{F}}(a)|<\infty$;

\item If $\Lambda$ is a sofic shift of order $k$ such that $\Lambda=\Phi(\Gamma)$ where $\Gamma$ is  an $m$-step shift and $\Phi$ is a locally bounded finite-to-one $m$-block code of order $k$ then $|T_{\mathcal{G}_\mathcal{F}}(a)|\leq 2^k-1$ for all $a\in W_1(\Lambda)$. Conversely, if $|T_{\mathcal{G}_\mathcal{F}}(a)|\leq k$  for all $a\in W_1(\Lambda)$ then $\Lambda$ is a sofic shift of order $k$ such that $\Lambda=\Phi(\Gamma)$ where $\Gamma$ is  an $m$-step shift and $\Phi$ is a locally bounded finite-to-one $m$-block code of order $k$;

\item $\Lambda$ is a Markov shift if and only if $|T_{\mathcal{G}_\mathcal{F}}(a)|=1$  for all $a\in W_1(\Lambda)$.

 \end{enumerate}
\end{theo}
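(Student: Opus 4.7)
The three parts share a common template. The forward implications all proceed by using Theorem \ref{theo:wsofic<=>graph presentation} to extract, from the hypothesis on $\Lambda$, a labeled graph $\mathcal{H}$ presenting $\Lambda$ with an appropriate bound on $|T_\mathcal{H}(a)|$ for every $a\in W_1(\Lambda)$, and then replaying the counting argument from the proof of Theorem \ref{theo:sofic-follower_set_graph-1} at the level of single symbols. The reverse implications all reduce to establishing the identity $\Lambda=X_{\mathcal{G}_\mathcal{F}}$ under the hypothesis that $|T_{\mathcal{G}_\mathcal{F}}(a)|<\infty$ for every $a\in W_1(\Lambda)$, followed by an appeal to Theorem \ref{theo:wsofic<=>graph presentation} for parts 1 and 2, and to Theorem \ref{theo:Markov SFT<=>ultragraph presentation} for part 3.

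For the forward directions, I would invoke the implication (\ref{theo:item:wsofic})$\Rightarrow$(\ref{theo:item:T finite}) of Theorem \ref{theo:wsofic<=>graph presentation} to produce a labeled graph $\mathcal{H}$ presenting $\Lambda$ with $|T_\mathcal{H}(a)|<\infty$ (respectively $|T_\mathcal{H}(a)|\leq k$, or $|T_\mathcal{H}(a)|=1$) for every $a\in W_1(\Lambda)$. Mimicking the computation in the proof of Theorem \ref{theo:sofic-follower_set_graph-1}, the identity $V(va)=\bigcup_{\mathfrak{v}\in T_\mathcal{H}(va)}F_\mathcal{H}(\mathfrak{v})$ combined with the inclusion $T_\mathcal{H}(va)\subseteq T_\mathcal{H}(a)$ shows that, as $v$ ranges over $W(\Lambda)\cup\{\epsilon\}$ with $va\in W(\Lambda)$, the set $V(va)$ takes at most $2^{|T_\mathcal{H}(a)|}-1$ distinct values. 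Since $T_{\mathcal{G}_\mathcal{F}}(a)$ consists precisely of such sets $V(va)$, this yields $|T_{\mathcal{G}_\mathcal{F}}(a)|\leq 2^{|T_\mathcal{H}(a)|}-1$, which delivers all three forward bounds at once; in particular, for the Markov case $|T_\mathcal{H}(a)|=1$ forces $|T_{\mathcal{G}_\mathcal{F}}(a)|\leq 1$, and the reverse inequality is immediate.

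For the reverse directions, the crux is to show $\Lambda=X_{\mathcal{G}_\mathcal{F}}$ whenever $|T_{\mathcal{G}_\mathcal{F}}(a)|<\infty$ for every $a\in W_1(\Lambda)$; once this is in hand, the implication (\ref{theo:item:T finite})$\Rightarrow$(\ref{theo:item:wsofic}) of Theorem \ref{theo:wsofic<=>graph presentation} delivers parts 1 and 2 (with the order preserved in part 2), and Theorem \ref{theo:Markov SFT<=>ultragraph presentation}.\ref{theo:item:Markov T finite} (applied with $M=1$) handles part 3. The inclusion $X_{\mathcal{G}_\mathcal{F}}\subseteq\Lambda$ is immediate from the construction of $\mathcal{G}_\mathcal{F}$, and for $\NZ=\N$ the opposite inclusion is Theorem \ref{theo:motivation}. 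For $\NZ=\Z$ and $\x\in\Lambda$, I would build, for each $k\geq 0$, the one-sided path in $\mathcal{G}_\mathcal{F}$ given by $V^k_{-k}:=V(\epsilon)$ and $V^k_{i+1}:=V(x_{-k}\cdots x_i)$ for $i\geq -k$; each $V^k_i$ with $i>-k$ lies in the finite set $T_{\mathcal{G}_\mathcal{F}}(x_{i-1})$, so a diagonal extraction over $k$ (after enumerating $\Z$) produces a bi-infinite sequence $(V^\infty_i)_{i\in\Z}$ of vertices with consecutive edges labeled by $\x$, showing $\x\in X_{\mathcal{G}_\mathcal{F}}$.

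The main obstacle is this last step on the lattice $\Z$: the remark immediately following Theorem \ref{theo:motivation} warns that the follower set graph need not in general generate a shift space on $\Z$. The finiteness hypothesis on $T_{\mathcal{G}_\mathcal{F}}(a)$ is precisely what supplies the compactness needed for the diagonal extraction to succeed, and verifying this step carefully is where most of the work lies.
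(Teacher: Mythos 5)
Your proposal is correct and follows essentially the same route as the paper: the forward implications are obtained by feeding the graph $\mathcal{H}$ produced by Theorem \ref{theo:wsofic<=>graph presentation}.\ref{theo:item:T finite} into the counting argument of Theorem \ref{theo:sofic-follower_set_graph-1} (via $V(va)=\bigcup_{\mathfrak{v}\in T_\mathcal{H}(va)}F_\mathcal{H}(\mathfrak{v})$ and $T_\mathcal{H}(va)\subseteq T_\mathcal{H}(a)$), and the reverse implications apply Theorem \ref{theo:wsofic<=>graph presentation} and Theorem \ref{theo:Markov SFT<=>ultragraph presentation} to $\mathcal{G}_\mathcal{F}$ itself. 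The one genuine difference is that the paper opens the reverse direction with the bare assertion ``Since $X_{\mathcal{G}_\mathcal{F}}=\Lambda$,'' whereas you correctly flag that for $\NZ=\Z$ this identity is not automatic (the paper itself warns, right after Theorem \ref{theo:motivation}, that the follower set graph need not present a two-sided shift) and you supply the missing argument: the finiteness of each $T_{\mathcal{G}_\mathcal{F}}(a)$ gives the compactness needed for a diagonal extraction of a bi-infinite path over the one-sided paths $V(x_{-k}\cdots x_i)$, while $X_{\mathcal{G}_\mathcal{F}}\subseteq\Lambda$ follows since every finite subword of a label sequence of $\mathcal{G}_\mathcal{F}$ lies in $W(\Lambda)$. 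This extra step is sound and actually repairs a small gap in the paper's own proof; everything else in your write-up matches the paper's argument.
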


\begin{proof}

Since $X_{\mathcal{G}_\mathcal{F}}=\Lambda$, from Theorem \ref{theo:wsofic<=>graph presentation} follows that if for all $a\in W_1(\Lambda)$ we have $|T_{\mathcal{G}_\mathcal{F}}(a)|<\infty$, then $\Lambda$ is a weakly sofic, and if there exists $k\in\N$ such that for all $a\in W_1(\Lambda)$ we have $|T_{\mathcal{G}_\mathcal{F}}(a)|\leq k$, then $\Lambda$ is a sofic of order $k$. In any case, also from Theorem \ref{theo:wsofic<=>graph presentation}, $X_{\mathcal{G}_\mathcal{F}}$ is the image of a 1-step shift through a locally finite-to-one 1-block code. On the other hand, from Theorem \ref{theo:Markov SFT<=>ultragraph presentation} we get that if for all $a\in W_1(\Lambda)$ we have $|T_{\mathcal{G}_\mathcal{F}}(a)|=1$, then $\Lambda$ is an SFT.\\

Conversely, if $\Lambda$ is a Markov shift, then from Theorem \ref{theo:sofic-follower_set_graph-1} we have $|T_{\mathcal{G}_\mathcal{F}}(a)|=1$  for all $a\in W_1(\Lambda)$. For the case of $\Lambda$ being a weakly sofic shift (or sofic shift of order $k$) such that $\Lambda=\Phi(\Gamma)$ where $\Gamma$ is  an $m$-step shift and $\Phi$ is a locally finite-to-one $m$-block (of order $k$), let $\mathcal{H}$ be the graph given by Theorem \ref{theo:wsofic<=>graph presentation}.\ref{theo:item:T finite} which generate $\Lambda$ and has $T_\mathcal{H}(a)$ finite (with at most $k$ vertices) for all $a\in W_1(\Lambda)$. Then, by using the same reasoning as in the proof of Theorem \ref{theo:sofic-follower_set_graph-1} with $w=a\in W_1(\Lambda)$ we get that $|T_{\mathcal{G}_\mathcal{F}}(a)|<\infty$ (or $|T_{\mathcal{G}_\mathcal{F}}(a)|\leq 2^k-1$) for all $a\in W_1(\Lambda)$.

\end{proof}

We recall that while Theorem \ref{theo:sofic-follower_set_graph-1} uses Theorem \ref{theo:wsofic=>graph shift conjugacy}.\ref{theo:wsofic=>graph shift conjugacy_2} to prove that the follower set graph of a weakly sofic corresponds to that which the existence is stated in Theorem \ref{theo:wsofic=>graph shift conjugacy}.\ref{theo:wsofic=>graph shift conjugacy_2}, Theorem \ref{theo:sofic-follower_set_graph-2} uses Theorem \ref{theo:wsofic<=>graph presentation}.\ref{theo:item:T finite} to prove that, under the same conditions, the follower set graph of wekaly sofic shifts correspond to that stated in Theorem \ref{theo:wsofic<=>graph presentation}.\ref{theo:item:T finite}. Thus, one could suppose that under the same conditions of Theorem \ref{theo:wsofic=>graph presentation-2} one could use that theorem to prove that the follower set graph of a weakly sofic which is the image of $\ell$-step shift through a locally finite-to-one $m$-block code with $\ell<m$ would use just a finitely many times each label. However, it is not true, and Example \ref{ex:follower_set_graph} provides a counterexample where in spite of a shift having a graph presentation where each label is used only finitely many times, its follower set graph has not the same property.\\

Next, we give an example of weakly sofic shift $\Lambda$ which does accomplish the properties stated in Theorem \ref{theo:sofic-follower_set_graph-2} above, that is, which is not the image of an $m$-step shift through an $\ell$-block code with $\ell\geq m$, and thus whose follower set graph has not $T_{\mathcal{G}_\mathcal{F}}(a)$ finite for all $a\in W_1(\Lambda)$.

\begin{ex}\label{ex:2-step and graph}
Consider $\NZ$ be $\N$ or $\Z$ with the usual sum, and $A=\N$. Let $\Lambda \subset A^\NZ$ be the 2-step SFT given by the set of forbidden words $$F:=\{00\}\cup\{w_0w_1\in A^2:\ w_0,w_1\in \N^*\}\cup\{w_00w_2\in A^3:\ w_0\neq w_2\}.$$ 
Thus $\Lambda$ is composed only by periodic sequences in the form $$\x=(...a0a0a0a0a0...)$$
for $a\neq 0$.

Hence, we have that $\mathcal{G}_\mathcal{F}$, the follower set graph of $\Lambda$, has vertices
$$\begin{array}{l}V(\epsilon)=\{0,1,2,3,...,01,02,03,...,10,20,30,...,010,020,030,...,101,202,303,...\};\\\\
V(0)=\{1,2,3,...,10,20,30,...,101,202,303,...,1010,2020,2030,...\};\\\\
V(...a0a0a0a0...a0)=V(a0)=\{a,a0,a0a,a0a0,...\},\qquad\text{ for each } a\in\N^*;\\\\
V(...0a0a0a0a...0a)=V(0a)=\{0,0a,0a0,0a0a,...\},\qquad\text{ for each } a\in\N^*;
\end{array}$$
and is such that for each $a\in\N^*$ there is an edge labeled with $0$ from $V(0a)$ to $V(a0)$. Then $I_{\mathcal{G}_\mathcal{F}}(0)=\{V(0a):\ a\in \N^*\}$ and $T_{\mathcal{G}_\mathcal{F}}(0)=\{V(a0):\ a\in\N^*\}$.\\

We notice that $|T_{\mathcal{G}_\mathcal{F}}(0)|=\infty$ due to the fact that $\Lambda$ does not accomplish the hypothesis of to be a weakly sofic shift that can be obtained as the image of an $m$-step SFT through a locally finite-to-one SBC whose local rule is $\ell$-block for some $\ell\geq m$. 
Indeed, $\Lambda$ can be thought as the image of itself through the identity map (that is, the image of a 2-step shift through a localy finite-to-one 1-block SBC). In particular, from theorems \ref{theo:wsofic<=>graph presentation} and \ref{theo:sofic-follower_set_graph-2} together, we can conclude that it is neither possible to present $\Lambda$ by a graph $\mathcal{G}$ such that $|I_{\mathcal{G}}(a)|<\infty$ or $|T_{\mathcal{G}}(a)|<\infty$ for all $a\in A$, nor to find out an $\ell$-step shift $\Gamma$ and a locally finite-to-one $m$-block code $\Phi$ with $\ell\leq m$ such that $\Lambda=\Phi(\Gamma)$. However, from Theorem \ref{theo:sofic-follower_set_graph-1}, and since $\Lambda$ is a 2-step shift, we get that for all $m\geq 2$ and $w\in W_m(\Lambda)$ it follows that $|I_{\mathcal{G}_\mathcal{F}}(w)|=|T_{\mathcal{G}_\mathcal{F}}(w)|=1$.  

\end{ex}

\section{Finitely defined shifts and shifts of variable length}\label{sec:FDSs_and_SVLs}

In this section, we shall introduce a new class of shift spaces. Such a class contains the class of SFTs and can be viewed as a generalization of them.

Given a full shift $A^\NZ$ and a set of forbidden words $F\subset\NN_{A^\NZ}$, 
we will define 
\begin{equation}\label{eq:S(F)}\mathcal{S}_\NZ(F):=\bigcup_{(w_i)_{i\in D_w}\in F}\big[(w_i)_{i\in D_w}\big]_{A^\NZ},\end{equation}
where each $[(w_i)_{i\in D_w}]_{A^\NZ}$ is the cylinder defined by a pattern $(w_i)_{i\in D_w}\in F$.

Recall that $\Lambda\subset A^\NZ$ is an SFT if and only if there exists $F\subset\NN_{A^\NZ}$ such that $M_F$ is finite. Note that $M_F$ be finite implies that $\mathcal{S}_\NZ(F)$ is a finitely defined set  (see Remark \ref{rmk:finitely_defined_set}). This observation lead us to consider a more general class of shift spaces that are generate from forbidden sets of words that define a finitely defined set. However, as we shall see below, to consider the class shift spaces $X_F$ for which  $\mathcal{S}_\NZ(F)$ is a finitely defined set could excludes many shift spaces that, in spite of not having this property, have languages that are basically identical to the language of shift spaces that hold this property. 

Consider two monoids $\NZ$ and $\mathbb{G}$ such that $\NZ\subset\mathbb{G}$, $F\subset \NN_{A^\NZ}$ and the shift spaces $\Lambda_\NZ:=X_F\subset A^\NZ$ and $\Lambda_\mathbb{G}:=X_F\subset A^\mathbb{G}$. Both, $\Lambda_\NZ$ and  $\Lambda_\mathbb{G}$ are defined using the same forbidden words, however $\Lambda_\mathbb{G}$ is define on a larger lattice. Hence, it is direct that $\Lambda_\NZ$ is an SFT (or a sofic shift) if and only if $\Lambda_\mathbb{G}$ is an SFT (or a sofic shift). Furthermore, if  
$\mathcal{S}_\NZ(F)$ is a finitely defined set of $A^\NZ$, then $\mathcal{S}_\mathbb{G}(F)$ is a finitely defined set of $A^\mathbb{G}$. However, it is possible that $\Lambda_\mathbb{G}$ could be defined from some set of forbidden words $H\subset\NN_{A^\mathbb{G}}$ such that $\mathcal{S}_\mathbb{G}(H)$ is a finitely defined set of $A^\mathbb{G}$, but  $\Lambda_\NZ$ could not be defined using any set of forbidden words $K\subset\NN_{A^\NZ}$ such that $\mathcal{S}_\NZ(K)$ is a finitely defined set of $A^\NZ$. This fact is captured by example below:

\begin{ex}\label{ex:SVL1} Consider the monoids $\N$ and $\Z$ with the usual sum. Let $A:=\Z$ and define $F\subset\NN_{A^\N}$ given by $F:=\{(x_0x_{k}):\ x_0=0\text{ and } x_{k}=k,\ k\in\N^*\}$. Let $\Lambda_\N=X_F\subset A^\N$ and $\Lambda_\Z=X_F\subset A^\Z$. Hence, sequences belonging to the shift $\Lambda_\N$ or to the shift $\Lambda_\Z$ are exactly those where there is not a symbol `0' appearing $k$ sites before than a symbol `$k$'. 

It is direct that neither $\Lambda_\N$ nor $\Lambda_\Z$ is an SFT (it sufficient to observe that $M_F=\N$ and it is not possible to find another set of forbidden words that generates the same restrictions and uses only a finite number of coordinates).  

However, while $\Lambda_\Z$ can be obtained from an alternative set $H\subset\NN_{A^\Z}$ of forbidden words such that  $\mathcal{S}_\Z(H)$ is a finitely defined set,  $\Lambda_\N$ cannot be obtained from any set of forbidden words with such property.

In fact, taking $H:=\{(x_{-k}x_0)\in \NN_{A^\Z}:\ x_0=k\text{ and } x_{-k}=0,\ k\in\N^*\}$, then $\Lambda_\Z=X_{H}$, and $\mathcal{S}_\Z(H)$ is a finitely defined set. On the other hand, any set of forbidden words $K\subset\NN_{A^\N}$ such that $\Lambda_\N=X_{K}$ shall be such that sequences of $A^\N$ having $x_0=0$ and $x_k=k$ belong to $\mathcal{S}_\N(K)$ while the sequence $\z\in A^\N$ with $z_i=0$ for all $i\in\N$ does not belong to $\mathcal{S}_\N(K)$. But it is not possible to decide that $\z$ does not belong to $\mathcal{S}_\N(K)$ only checking a finite number of coordinates of $\z$, which means that  $\mathcal{S}_\N(K)$ is not a finitely defined set.

\end{ex}

The shifts in the above example have the same languages (modulo translations) and both can be constructed in the same way: Given a past $(x_i)_{i\leq n-1}$ without forbidden words of $F$, to decide whether $x_{n}$ can be a symbol `$k$' or not , one just needs to look whether $x_{n-k}$ is the symbol `0' or not. This lead us to propose the following definition for shift spaces based finitely defined sets:

\begin{defn}\label{defn:FDS} Let $\NZ$ be a monoid, $F\subset\NN_{A^\NZ}$, and $\Lambda:=X_F\subset A^\NZ$. We say that $\Lambda$ is a 
{\bf finitely defined shift (FDS)} if and only if there exist a monoid $\mathbb{G}\supset\NZ$ and $F'\subset \NN_{A^\mathbb{G}}$ such that $\mathcal{S}_\mathbb{G}(F')$ is a finitely defined set of $A^\mathbb{G}$ and $X_F=X_{F'}\subset A^{\mathbb{G}}$. We will say that that a finitely defined shift $\Lambda:=X_F\subset A^\NZ$ is {\bf proper} if $\mathcal{S}_\NZ(F)$ is a finitely defined set of $A^\NZ$.
\end{defn}

Clearly, the class of FDSs includes the class of SFTs. Furthermore, whenever $A$ is finite, due to compactness of $A^\NZ$, we have that $\mathcal{S}_\NZ(F)$ is a finitely defined set if and only if it is the union a finite number of cylinders, and then any FDS is actually an SFT. On the other hand, when $A$ is infinite there are $FDS$ which are not $SFT$ (for instance, the shifts of Example \ref{ex:SVL1}). Thus, the class of FDSs includes a subclass of shift spaces which can only exist in the infinite-alphabet framework:

\begin{defn}\label{defn:SVL} An FDS that is not an SFT is said to be a {\bf shift of variable length (SVL)}.
\end{defn}

An SVL on the lattices $\N$ or $\Z$ with the usual sum can be interpreted as the topological version of variable length Markov chains \cite{BuhlmannWyner1999}. Indeed, while SFTs of order $m$ on the lattices $\N$ or $\Z$ are topological versions of $m^{th}$-order Markov chains, that is, for any given $(x_i)_{i\leq k-1}$ we need to look at most the coordinates $x_{k-m}...x_{k-1}$ to decide what letters could follow, an SVL is such that for each given $(x_i)_{i\leq k-1}$ we start looking along the sequence until find a finite but arbitrary number of coordinates  $x_{k-L}...x_{k-1}$ that allows to decide which letters could follow (the number $L$ always exists but depends on the given sequence). This procedure of looking for forbidden patterns in sequences of an FDS is captured in the next theorem, which gives an alternative definition for FDSs.

\begin{theo}\label{theo:GSBC_and_FDS} A shift space $\Lambda_\NZ:=X_F\subset A^\NZ$ is  a finitely defined shift if and only if 
there exist a monoid $\mathbb{G}\supset\NZ$ and $\Phi:A^\mathbb{G}\to \{0,1\}^\mathbb{G}$, a GSBC, such that $X_F=\Phi^{-1}(\mathbf{1})$.
\end{theo}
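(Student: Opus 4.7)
The plan is to mirror the proof of Theorem \ref{theo:SBC_and_SFT} (the corresponding characterization of SFTs via SBCs), replacing the SBC/finite‑step machinery by the GSBC/clopen (finitely defined) machinery. Recall from Remark \ref{rmk:finitely_defined_set} that finitely defined sets in the prodiscrete topology are exactly the clopen sets, and that each clopen set is a (possibly infinite) union of cylinders.

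For the forward direction, suppose $\Lambda_\NZ = X_F$ is a finitely defined shift, so there exist a monoid $\mathbb{G}\supset\NZ$ and $F'\subset\NN_{A^\mathbb{G}}$ with $X_F = X_{F'}\subset A^\mathbb{G}$ and $\mathcal{S}_\mathbb{G}(F')$ a finitely defined (clopen) set of $A^\mathbb{G}$. I would set $C_0 := \mathcal{S}_\mathbb{G}(F')$ and $C_1 := A^\mathbb{G}\setminus C_0$; these form a clopen partition of $A^\mathbb{G}$, each member being a union of cylinders. The map $\Phi:A^\mathbb{G}\to\{0,1\}^\mathbb{G}$ defined coordinate‑wise by
$$\big(\Phi(\x)\big)_g \;=\; \sum_{b\in\{0,1\}} b\,\mathbf{1}_{C_b}\circ\sigma^g(\x)$$
then satisfies \eqref{eq:LR_block_code}, so by Definition \ref{GSBC} it is a GSBC. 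By construction $\Phi(\x)=\mathbf{1}$ iff $\sigma^g(\x)\notin\mathcal{S}_\mathbb{G}(F')$ for every $g\in\mathbb{G}$, which by \eqref{eq:X_F} is exactly $\x\in X_{F'}=X_F$; hence $X_F=\Phi^{-1}(\mathbf{1})$.

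For the converse, suppose $\mathbb{G}\supset\NZ$ and $\Phi:A^\mathbb{G}\to\{0,1\}^\mathbb{G}$ is a GSBC with $X_F=\Phi^{-1}(\mathbf{1})$. By Theorem \ref{theo:hedlund_general} (or directly by Definition \ref{GSBC} together with Remark \ref{rmk:form_for_shift_Commuting_maps}), the set $C_0 := \Phi^{-1}\big([0_1]_{\{0,1\}^\mathbb{G}}\big)$ is a finitely defined, hence clopen, subset of $A^\mathbb{G}$. Write $C_0$ as a union of cylinders and let $F'\subset\NN_{A^\mathbb{G}}$ be the collection of the patterns that define these cylinders, so $\mathcal{S}_\mathbb{G}(F')=C_0$, which is a finitely defined set by construction. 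Then $\x\in\Phi^{-1}(\mathbf{1})$ iff $\sigma^g(\x)\notin C_0=\mathcal{S}_\mathbb{G}(F')$ for all $g\in\mathbb{G}$, iff $\x\in X_{F'}$; therefore $X_F=X_{F'}$ in $A^\mathbb{G}$, and Definition \ref{defn:FDS} applies to show that $\Lambda_\NZ$ is a finitely defined shift.

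I do not expect any serious obstacle: both directions are essentially a translation between the language of forbidden patterns (with $\mathcal{S}_\mathbb{G}(F')$ clopen) and the language of GSBCs (with cells $C_b$ clopen), and the arguments are entirely parallel to those used for Theorem \ref{theo:SBC_and_SFT}. The only point that deserves a line of care is the observation that in Definition \ref{defn:FDS} the equality $X_F=X_{F'}$ is meant in the larger space $A^\mathbb{G}$, so that the identification $X_F=\Phi^{-1}(\mathbf{1})\subset A^\mathbb{G}$ in the statement of the theorem is consistent; once this is acknowledged the equivalence falls out directly from the bijection ``clopen partition of $A^\mathbb{G}$ into two pieces $\;\leftrightarrow\;$ finitely defined set of forbidden patterns.''
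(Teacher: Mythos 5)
Your proposal is correct and follows essentially the same route as the paper's proof: in the forward direction the paper also sets $C_0:=\mathcal{S}_\mathbb{G}(F')$ and $C_1:=A^\mathbb{G}\setminus C_0$ and reads off the GSBC from \eqref{eq:LR_block_code}, and in the converse it likewise extracts $F'$ from the cylinders composing $C_0=\Phi^{-1}\bigl([0_1]_{\{0,1\}^\mathbb{G}}\bigr)$ so that $\mathcal{S}_\mathbb{G}(F')=C_0$. The extra remarks you add (clopen = finitely defined, and the equality $X_F=X_{F'}$ being taken in $A^\mathbb{G}$) are consistent with the paper and only make explicit what it leaves implicit.
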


\begin{proof}
 Suppose $\Lambda:=X_F\subset A^\NZ$ is an FDS. Let $\mathbb{G}\supset\NZ$ and $\Lambda_\mathbb{G}:=X_F\subset A^\mathbb{G}$ be such that there exist $F'\subset\NN_{A^\mathbb{G}}$ such that $X_{F'}=\Lambda_\mathbb{G}$ and $\mathcal{S}_\mathbb{G}(F')$ a finitely defined set. Then, setting $C_0:=\mathcal{S}_\mathbb{G}(F')$ and $C_1:=A^\mathbb{G}\setminus C_0$, we have that $\Phi:A^\mathbb{G}\to\{0,1\}^\mathbb{G}$ given by $\bigl(\Phi(\x)\bigr)_g=\sum_{b=0}^1b\mathbf{1}_{C_b}\circ\sigma^g(\x)$ is a GSBC and $\Lambda_\mathbb{G}:=X_F=\Phi^{-1}(\mathbf{1})$.
 
 Conversely, suppose there exist $\mathbb{G}\supset\NZ$ and $\Phi:A^\mathbb{G}\to \{0,1\}^\mathbb{G}$, a GSBC, such that $\Lambda_\mathbb{G}:=X_F=\Phi^{-1}(\mathbf{1})$. Then,  the set $C_0$ in the definition of $\Phi$ is a finitely defined set of $A^\mathbb{G}$, which implies that it can be written as a union of cylinders of $A^\mathbb{G}$. Thus, defining $F'\subset\NN_{A^\mathbb{G}}$ as the set of forbidden words that contains exactly the patterns used to define the cylinders that compose $C_0$, we get that $\Lambda_\mathbb{G}=X_{F'}$ and $\mathcal{S}_\mathbb{G}(F')=C_0$ is a finitely defined set.

\end{proof}

\begin{ex}\label{ex:SVL2} Consider the alphabet $A:=\N$ and the monoid $\N$ with the usual sum. Let $\Lambda$ be the shift space where $\x\in\Lambda$ if and only if $x_{i+x_i}\geq x_i$ for $i\in\N$. Note that, in terms of Theorem \ref{theo:GSBC_and_FDS}, $\Lambda=\Phi^{-1}(\mathbf{1})$ where $\Phi:A^\N\to \{0,1\}^\N$
is the generalized sliding block code whose the local rule is defined for all $g\in\N$ and $\x\in A^\N$ by
$\bigl(\Phi(\x)\bigr)_g=\sum_{b=0}^1b\mathbf{1}_{C_b}\circ\sigma^g(\x),$ where  $C_0:=\bigcup_{k\in\N}\bigcup_{j<k}\{\x\in A^\N:\ x_0=k,\ x_{x_0}=j\}$
and
$C_1:=\bigcup_{k\in\N}\bigcup_{j\geq k}\{\x\in A^\N:\ x_0=k,\ x_{x_0}=j\}$.

Hence, $\Lambda$ is an FDS, but since there is not any $F\subset\NN_{A^\N}$ with $M_F$ finite, such that $\Lambda=X_F$, it follows that $\Lambda$ is not an SFT. Therefore we get that $\Lambda$ is an SVL.

\end{ex}

In what follows we shall apply the characterization of FDSs given in Theorem \ref{theo:GSBC_and_FDS} to prove several results.
 
\begin{cor}\label{cor:inverse_image_of_FDS}
If $\Phi:A^\NZ\to B^\NZ$ is a generalized sliding block code and $\Gamma\subset B^\NZ$ is a proper FDS, then $\Lambda:=\Phi^{-1}(\Gamma)$ is also a proper FDS.
\end{cor}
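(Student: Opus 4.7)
The plan is to mirror the chain used in Corollary \ref{cor:inverse_image_of_SFT}, replacing Theorem \ref{theo:SBC_and_SFT} by Theorem \ref{theo:GSBC_and_FDS} and SBCs by GSBCs. Since $\Gamma$ is a proper FDS, we may write $\Gamma=X_{F'}$ with $F'\subset\NN_{B^\NZ}$ such that $\mathcal{S}_\NZ(F')$ is a finitely defined subset of $B^\NZ$. Setting $C_0:=\mathcal{S}_\NZ(F')$ and $C_1:=B^\NZ\setminus C_0$, both sets are unions of cylinders of $B^\NZ$, so formula \eqref{eq:LR_block_code} defines a GSBC $\Psi:B^\NZ\to \{0,1\}^\NZ$ for which $\Gamma=\Psi^{-1}(\mathbf{1})$. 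This is precisely the \emph{proper} instance of one direction of Theorem \ref{theo:GSBC_and_FDS}, realized by taking $\mathbb{G}=\NZ$.

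Next I consider the composition $\Psi\circ\Phi:A^\NZ\to\{0,1\}^\NZ$. By Theorem \ref{theo:hedlund_general} both $\Phi$ and $\Psi$ are continuous maps that commute with every $g$-shift map, and, as noted in the remark immediately after Theorem \ref{theo:hedlund_general}, both properties are preserved under composition. Hence $\Psi\circ\Phi$ is itself a GSBC. Since $\Lambda=\Phi^{-1}(\Gamma)=\Phi^{-1}\bigl(\Psi^{-1}(\mathbf{1})\bigr)=(\Psi\circ\Phi)^{-1}(\mathbf{1})$, the converse direction of Theorem \ref{theo:GSBC_and_FDS}, applied again with $\mathbb{G}=\NZ$, produces a forbidden set $F\subset\NN_{A^\NZ}$ with $\mathcal{S}_\NZ(F)$ finitely defined in $A^\NZ$ and $X_F=\Lambda$. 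This is exactly the definition of a proper FDS.

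I do not anticipate any genuine obstacle: the argument is a routine two-step construction strictly parallel to the SFT case, and properness of $\Gamma$ is what allows us to keep the ambient monoid fixed (no need to enlarge to some $\mathbb{G}\supsetneq\NZ$), ensuring that the preimage comes out as a proper FDS rather than merely a general one. The only points to double-check are that the cylinder decomposition of $C_0=\mathcal{S}_\NZ(F')$ inherited from the finite patterns of $F'$ genuinely witnesses $C_0$ as a clopen/finitely defined set, and that composition of GSBCs is a GSBC — both of which are direct from the definitions and from Theorem \ref{theo:hedlund_general}.
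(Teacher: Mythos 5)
Your proof is correct and follows exactly the route the paper takes: the paper's own proof of Corollary \ref{cor:inverse_image_of_FDS} simply says to repeat the argument of Corollary \ref{cor:inverse_image_of_SFT} with $\Phi$ a GSBC, which is precisely your composition $\Psi\circ\Phi$ with $\Gamma=\Psi^{-1}(\mathbf{1})$ obtained from the proper instance of Theorem \ref{theo:GSBC_and_FDS}. You merely spell out the details (the construction of $\Psi$ from $\mathcal{S}_\NZ(F')$ and the closure of GSBCs under composition) that the paper leaves implicit.
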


\begin{proof} It follows by using the same arguments used in the proof of  Corollary \ref{cor:inverse_image_of_SFT} but with $\Phi$ being a GSBC.

\end{proof}

\begin{prop}\label{prop:higher_block_shift_FDS-1} Let $\Lambda\subset A^\NZ$ be a shift space and $ \bfN$ be a partition of $\Lambda$ by cylinders such that $\Phi^{[\bfN]}$ can be extended for all $A^\NZ$ as a generalized sliding block code $\Theta$ such that $\Theta(A^\NZ\setminus\Lambda)\cap\Lambda^{[\bfN]}=\emptyset$. If 
 $\Lambda^{[\bfN]}$ is a proper finitely defined shift, then  $\Lambda$ is also a finitely defined shift.
\end{prop}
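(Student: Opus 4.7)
The plan is to mirror the proof of Proposition \ref{prop:higher_block_shift_SFT-1}, replacing the SBC characterization of SFTs (Theorem \ref{theo:SBC_and_SFT}) with the GSBC characterization of FDSs (Theorem \ref{theo:GSBC_and_FDS}). Since $\Lambda^{[\bfN]}$ is a \emph{proper} FDS, Theorem \ref{theo:GSBC_and_FDS} applied with $\mathbb{G}=\NZ$ yields a GSBC $\Xi:(A^{[\bfN]})^\NZ\to\{0,1\}^\NZ$ such that $\Lambda^{[\bfN]}=\Xi^{-1}(\mathbf{1})$. The candidate witness for $\Lambda$ being an FDS is the composition $\Psi:=\Xi\circ\Theta:A^\NZ\to\{0,1\}^\NZ$.

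First I would verify that $\Psi$ is a GSBC. This is immediate because $\Theta$ is a GSBC by hypothesis, $\Xi$ is a GSBC by the previous step, and (by the remark following Theorem \ref{theo:hedlund_general}) the class of GSBCs is closed under composition. Next I would show $\Lambda=\Psi^{-1}(\mathbf{1})$. For the inclusion $\Lambda\subset\Psi^{-1}(\mathbf{1})$, note that $\Theta$ restricted to $\Lambda$ agrees with $\Phi^{[\bfN]}$, so for any $\x\in\Lambda$ we have $\Theta(\x)\in\Lambda^{[\bfN]}=\Xi^{-1}(\mathbf{1})$, giving $\Psi(\x)=\mathbf{1}$. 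For the reverse inclusion, if $\x\in A^\NZ\setminus\Lambda$, then the hypothesis $\Theta(A^\NZ\setminus\Lambda)\cap\Lambda^{[\bfN]}=\emptyset$ gives $\Theta(\x)\notin\Lambda^{[\bfN]}$, hence $\Xi(\Theta(\x))\neq\mathbf{1}$, so $\x\notin\Psi^{-1}(\mathbf{1})$.

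Together these two inclusions yield $\Lambda=\Psi^{-1}(\mathbf{1})$, and the converse direction of Theorem \ref{theo:GSBC_and_FDS} then concludes that $\Lambda$ is an FDS (in fact a proper one, since $\Psi$ is defined on the monoid $\NZ$ itself, without any extension). There is no serious obstacle here; the argument is essentially a direct transcription of the SFT case. The only subtlety worth noting is precisely why the hypothesis requires $\Lambda^{[\bfN]}$ to be a \emph{proper} FDS rather than an arbitrary one: if one only knows $\Lambda^{[\bfN]}=\Xi^{-1}(\mathbf{1})$ for some GSBC $\Xi$ defined on a strictly larger monoid $\mathbb{G}\supsetneq\NZ$, then the composition with $\Theta$ (whose domain is $A^\NZ$) is not well-defined, and one would need additional hypotheses to extend $\Theta$ coherently to $A^\mathbb{G}$.
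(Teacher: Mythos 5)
Your proof is correct and follows exactly the route the paper intends: the paper's own proof simply says ``analogous to the proof of Proposition~\ref{prop:higher_block_shift_SFT-1}'', i.e.\ compose the GSBC $\Xi$ witnessing that $\Lambda^{[\bfN]}$ is a proper FDS (via Theorem~\ref{theo:GSBC_and_FDS}) with the extension $\Theta$, and use the hypothesis $\Theta(A^\NZ\setminus\Lambda)\cap\Lambda^{[\bfN]}=\emptyset$ to get $\Lambda=(\Xi\circ\Theta)^{-1}(\mathbf{1})$. Your remark on why properness is needed (so that $\Xi$ lives on $(A^{[\bfN]})^\NZ$ itself and the composition with $\Theta$ is well defined) is exactly the right observation.
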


\begin{proof}Analogous to the proof of Proposition \ref{prop:higher_block_shift_SFT-1}.

\end{proof}

As discussed in the paragraph before Lemma \ref{lem:union_of_SFTs}, it is direct that the intersection of any family of SFTs with step $L$, will be also an SFT with step $L$. For FDSs we have the following result:

\begin{cor}\label{cor:intersct_of_FDSs} Let $\{\Lambda_i\}_{i\in I}$ be a finite family of finitely defined shifts of  $A^\NZ$. Then $\bigcap_{i\in I}\Lambda_i$ is also a finitely defined shift.
\end{cor}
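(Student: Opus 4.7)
Plan: The cleanest route is a direct argument via finitely defined (clopen) sets of forbidden words, rather than the GSBC characterization of Theorem \ref{theo:GSBC_and_FDS}, though the latter also works. The key observation is that a finite union of clopen sets is clopen, so the ``finite detectability'' of the forbidden set is preserved under finite union.

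Write $\Lambda_i = X_{F_i}$ for each $i \in I$. By Definition \ref{defn:FDS}, for every $i$ there exist a monoid $\mathbb{G}_i \supset \NZ$ and a set $F_i' \subset \NN_{A^{\mathbb{G}_i}}$ with $\mathcal{S}_{\mathbb{G}_i}(F_i')$ a finitely defined (clopen) subset of $A^{\mathbb{G}_i}$ and $X_{F_i} = X_{F_i'}$ in $A^{\mathbb{G}_i}$. Since $I$ is finite, I would fix a single monoid $\mathbb{G} \supset \NZ$ containing each $\mathbb{G}_i$ as a submonoid; viewed inside $\NN_{A^\mathbb{G}}$, every $F_i'$ still has $\mathcal{S}_\mathbb{G}(F_i')$ clopen in $A^\mathbb{G}$ (it is the pullback of a clopen set under the continuous restriction map $A^\mathbb{G} \to A^{\mathbb{G}_i}$), and the equality $X_{F_i} = X_{F_i'}$ persists in $A^\mathbb{G}$.

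Setting $F := \bigcup_{i \in I} F_i \subset \NN_{A^\NZ}$ and $F'' := \bigcup_{i \in I} F_i' \subset \NN_{A^\mathbb{G}}$, one has $\bigcap_{i \in I} \Lambda_i = X_F$ in $A^\NZ$ and, inside $A^\mathbb{G}$, the chain of equalities $X_F = \bigcap_i X_{F_i} = \bigcap_i X_{F_i'} = X_{F''}$. Furthermore $\mathcal{S}_\mathbb{G}(F'') = \bigcup_{i \in I} \mathcal{S}_\mathbb{G}(F_i')$ is a finite union of clopen sets, hence clopen, hence finitely defined in $A^\mathbb{G}$. Definition \ref{defn:FDS} then applies with witnesses $(\mathbb{G}, F'')$ to show that $\bigcap_{i \in I} \Lambda_i$ is itself an FDS. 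In particular, if every $\Lambda_i$ is already proper with $\mathbb{G}_i = \NZ$, the whole argument stays in $A^\NZ$ and $\bigcap_{i \in I} \Lambda_i$ is itself proper.

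The one step that requires care is the construction of the common extension monoid $\mathbb{G} \supset \NZ$ containing each $\mathbb{G}_i$; this is the main obstacle and uses the finiteness of $I$ in an essential way. The finiteness of $I$ is equally crucial at the end, since a countably infinite union of clopen sets need not be clopen, so the naive extension to infinite intersections of FDSs would fail and one could at best expect to land in a closure of the FDS class, not in it.
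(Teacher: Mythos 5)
Your argument is correct and lands on essentially the same mechanism as the paper's, but it is packaged differently. The paper proves this corollary via Theorem \ref{theo:GSBC_and_FDS}: each $\bar\Lambda_i\subset A^\mathbb{G}$ is written as $\Phi_i^{-1}(\mathbf{1})$ for a GSBC $\Phi_i:A^\mathbb{G}\to\{0,1\}^\mathbb{G}$, and the coordinatewise product $\Theta:=\prod_{i\in I}\Phi_i$ is a GSBC with $\Theta^{-1}(\mathbf{1})=\bigcap_i\bar\Lambda_i$. You instead work directly with Definition \ref{defn:FDS}: take the union $F'':=\bigcup_i F_i'$ of forbidden sets and observe that $\mathcal{S}_\mathbb{G}(F'')=\bigcup_i\mathcal{S}_\mathbb{G}(F_i')$ is a finite union of clopen sets, hence clopen, i.e.\ finitely defined (Remark \ref{rmk:finitely_defined_set}). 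These are two sides of the same coin --- the proof of Theorem \ref{theo:GSBC_and_FDS} sets $C_0:=\mathcal{S}_\mathbb{G}(F')$, so the product of the $\Phi_i$ is exactly the indicator construction for the union of the $C_0$'s --- but your version is more elementary, makes the role of finiteness of $I$ transparent (an infinite union of clopens need not be clopen, which matches the paper's use of the product only over a finite index set), and yields the extra observation about properness for free.

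One caveat: the step you rightly flag as the main obstacle, namely producing a single monoid $\mathbb{G}\supset\NZ$ containing every $\mathbb{G}_i$ as a submonoid compatibly over $\NZ$, is genuinely nontrivial --- amalgams of monoids over a common submonoid need not embed into any common overmonoid, unlike the group case --- and you do not construct it. However, the paper's own proof silently makes the identical assumption (it begins ``Let $\mathbb{G}\supset\NZ$'' and extends \emph{all} the $\Lambda_i$ to that one $\mathbb{G}$), so this is a gap in the statement/definition as given rather than a defect specific to your argument. If you wanted to close it, the cleanest fix is to note that in every case of interest one may take all $\mathbb{G}_i$ equal (e.g.\ $\mathbb{G}_i=\NZ$ for proper FDSs, or a single ambient group), or to build the common-extension requirement into Definition \ref{defn:FDS}. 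You should also spell out, rather than assert, that the equality $X_{F_i}=X_{F_i'}$ ``persists'' when passing from $A^{\mathbb{G}_i}$ to $A^{\mathbb{G}}$, since enlarging the monoid enlarges both the configuration space and the family of shifts over which one quantifies in \eqref{eq:X_F}; this is exactly the sense in which Definition \ref{defn:FDS} must be read, but it deserves a sentence.
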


\begin{proof} Let $\mathbb{G}\supset\NZ$ and denote as $\bar\Lambda_i$ the shift space in $A^\mathbb{G}$ which extends  $\Lambda_i$ according to the definition of FDSs. Only note that for each $i\in I$ there exists $\Phi_i:A^\mathbb{G}\to \{0,1\}^\mathbb{G}$, a GSBC, such that $\bar\Lambda_i=\Phi_i^{-1}(\mathbf{1})$. Therefore, $\Theta:A^\mathbb{G}\to \{0,1\}^\mathbb{G}$ defined as $\Theta:=\prod_{i\in I}\Phi_i$ is a GSBC such that $\bigcap_{i\in I}\bar\Lambda_i=\Theta^{-1}(\mathbf{1})$.

\end{proof}

The next results use conditions (C3) and (C4) stated in Proposition \ref{prop:higher_block_shift_SFT-2}.

\begin{prop}\label{prop:higher_block_shift_FDS} Let $\Lambda\subset A^\NZ$ be a shift space and $\bfN$ be a partition of $\Lambda$ by cylinders. Suppose that condition (C3) or condition (C4) holds. If $\Lambda$ is a proper finitely defined shift, then $\Lambda^{[\bfN]}$ is also a proper finitely defined shift. Conversely, if $\bfM_\bfN$ is finite and $\Lambda^{[\bfN]}$ is a proper finitely defined shift, then $\Lambda$ is also a proper finitely defined shift.

\end{prop}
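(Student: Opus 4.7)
The plan is to apply Theorem \ref{theo:GSBC_and_FDS} in both directions, rewriting ``proper FDS'' as ``a fiber $\Psi^{-1}(\mathbf{1})$ of a GSBC $\Psi$ on the same lattice $\NZ$''. For the forward direction, assume $\Lambda=\Psi^{-1}(\mathbf{1})$ for a GSBC $\Psi:A^\NZ\to\{0,1\}^\NZ$. Under either (C3) or (C4), fix $n\in\bigcap_{M\in\bfM_\bfN}M$ (with $n=1$ if (C4) holds). The local rule ``read the entry at coordinate $n$ of the symbol at position $n^{-1}$'' makes sense on every symbol of $A^{[\bfN]}$, so it extends the inverse of $\Phi^{[\bfN]}$ provided by Proposition \ref{prop:higher_block_shift} to a globally defined SBC $\Theta:(A^{[\bfN]})^\NZ\to A^\NZ$. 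My goal is then to build a single GSBC $\Xi:(A^{[\bfN]})^\NZ\to\{0,1\}^\NZ$ satisfying $\Xi^{-1}(\mathbf{1})=\Lambda^{[\bfN]}$.

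I would construct $\Xi$ as the coordinate-wise product of two auxiliary GSBCs: first, $\Xi_\Psi:=\Psi\circ\Theta$, which at each site detects whether $\Theta(\mathbf{b})\in\Lambda$; and second, $\Xi_O$, defined by setting $(\Xi_O(\mathbf{b}))_g=1$ exactly when $\beta^g_i=\beta^{gin^{-1}}_n$ for every $i\in M^g$, where $b_h=(\beta^h_i)_{i\in M^h}$. Since $M^g$ is a finite set read off from $b_g$, the defining partition of $\Xi_O$ is a union of cylinders at finitely many coordinates, hence clopen, so $\Xi_O$ is a genuine GSBC by Theorem \ref{theo:hedlund_general}. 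The main step is the identity $\Xi^{-1}(\mathbf{1})=\Lambda^{[\bfN]}$: one inclusion follows from Lemma \ref{lem:overlapping_condition} together with $\Theta\circ\Phi^{[\bfN]}=\mathrm{id}_\Lambda$, and the reverse inclusion is where I expect the main obstacle. For it, I would argue that the \emph{local} constraint captured by $\Xi_O$ forces the \emph{full} overlapping condition, since any equality $gi=hr$ factors as $gi=(gin^{-1})\cdot n=(hrn^{-1})\cdot n=hr$, so $\beta^g_i=\beta^{gin^{-1}}_n=\beta^{hrn^{-1}}_n=\beta^h_r$; this in turn forces $\Phi^{[\bfN]}(\Theta(\mathbf{b}))=\mathbf{b}$ exactly as in the proof of Proposition \ref{prop:higher_block_shift}, and combined with $\Xi_\Psi(\mathbf{b})=\mathbf{1}$ gives $\mathbf{b}\in\Phi^{[\bfN]}(\Lambda)=\Lambda^{[\bfN]}$. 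By Theorem \ref{theo:GSBC_and_FDS}, $\Lambda^{[\bfN]}$ is then a proper FDS.

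For the converse, adding the hypothesis that $\bfM_\bfN$ is finite, the plan is to reduce to Proposition \ref{prop:higher_block_shift_FDS-1}. Following the construction in the final part of the proof of Proposition \ref{prop:higher_block_shift_SFT-2}, I would adjoin a sentinel $\wp\notin A^{[\bfN]}$, set $B:=A^{[\bfN]}\cup\{\wp\}$, and define an SBC $\Theta:A^\NZ\to B^\NZ$ whose local rule is defined on the finite set $\bar M:=\bigcup_{M\in\bfM_\bfN}M$: output $(x_{gi})_{i\in M^g}$ whenever there is a unique $M^g\in\bfM_\bfN$ with $[(x_{gi})_{i\in M^g}]_\Lambda\in\bfN$, and $\wp$ otherwise. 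Under (C3) or (C4), this SBC agrees with $\Phi^{[\bfN]}$ on $\Lambda$ and satisfies $\Theta(A^\NZ\setminus\Lambda)\cap\Lambda^{[\bfN]}=\emptyset$, so Proposition \ref{prop:higher_block_shift_FDS-1} yields $\Lambda=\Theta^{-1}(\Lambda^{[\bfN]})$ as an FDS. Writing $\Lambda^{[\bfN]}=\Xi^{-1}(\mathbf{1})$ for a GSBC $\Xi$ on $(A^{[\bfN]})^\NZ\subset B^\NZ$, one gets $\Lambda=(\Xi\circ\Theta)^{-1}(\mathbf{1})$ with both $\Xi$ and $\Theta$ living over the same lattice $\NZ$; Theorem \ref{theo:GSBC_and_FDS} then upgrades the conclusion to \emph{proper} FDS, completing the converse.
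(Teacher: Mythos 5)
Your proposal is correct and follows essentially the same route as the paper: the forward direction builds the same product GSBC (your $\Xi_O$ is the paper's $\Delta$, your $\Theta$ is the paper's 1-block decoding map $\Psi$, and your $\Xi_\Psi\cdot\Xi_O$ is the paper's $\Omega$), with the only cosmetic difference that you verify $\Xi^{-1}(\mathbf{1})=\Lambda^{[\bfN]}$ directly rather than routing through the auxiliary forbidden-pattern sets $\tilde F_O$ and $P_F$. The converse is likewise the paper's argument: the sentinel extension $\Theta$ from Proposition \ref{prop:higher_block_shift_SFT-2} followed by the composition $\Xi\circ\Theta$, which is exactly what Corollary \ref{cor:inverse_image_of_FDS} packages.
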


\begin{proof}

Recall that under (C3) we have $\Lambda^{[\bfN]}=X_{P_O\cup P_F}=X_{F_O\cup P_F}$, and under  (C4) we have $\Lambda^{[\bfN]}=X_{P_O\cup P_F}=X_{\hat F_O\cup P_F}$, where $P_O$ is given in \eqref{eq:P_O}, $P_F$ is given in \eqref{eq:P_F}, $F_O$ is given in \eqref{eq:F_O_1}, and $\hat F_O$ is given in \eqref{eq:F_O_2}.\\

Suppose (C3) holds, that is, $\NZ$ is a group and $\bigcap_{M\in\bfM_\bfN}M\neq\emptyset$. Take $n\in \bigcap_{M\in\bfM_\bfN}M$ and define the set of forbidden patterns 
\begin{equation}\label{eq:F_O_3}\tilde F_O:=\{ b_1b_h \in \NN_{(A^{[\bfN]})^\NZ}: b_1=(\beta^1_i)_{i\in L}, h=\ell n^{-1}\ for\  \ell\in L,\ b_h=(\beta^h_i)_{i\in M}, \beta^1_\ell \neq \beta^h_n\}.\end{equation}
Since $\tilde F_O\subset P_O$, it follows that $X_{P_O}\subset X_{\tilde F_O}$. For the opposite inclusion, suppose that $\mathbf{b}=(b_j)_{j\in\NZ}=\big((\beta^j_i)_{i\in L^j}\big)_{j\in\NZ}\in X_{\tilde F_O}$ is such that there exists $g,h\in\NZ$, $\ell\in L^g$ and $m\in L^h$ such that $g\ell=hm$. Define $\tilde h:=g^{-1}hmn^{-1}$, $\mathbf{d}:=\s^g(\mathbf{b})$ and $\mathbf{e}:=\s^h(\mathbf{b})$ as given in \eqref{eq:d} and \eqref{eq:e}, respectively. Therefore
$$\beta^g_\ell=\delta^1_\ell=\delta^{\tilde{h}}_n=\beta^{g\tilde{h}}_n=\beta^{hmn^{-1}}_n=\varepsilon^{mn^{-1}}_n=\varepsilon^1_m=\beta^h_m.$$
Thus,
 $\mathbf{b}\in X_{P_O}$, and we conclude that $X_{\tilde F_O}=X_{P_O}=X_{F_O}$.\\
 
 Observe that $\tilde F_O$ becomes $\hat F_O$ if we take $n=n^{-1}=1$. Thus, under the convention that $n=n^{-1}=1$ if (C4) holds, we can write $X_{P_O}=X_{\tilde F_O}$ whenever (C3) or (C4) holds. Now, under any of these conditions, we define $\Delta:(A^{[\bfN]})^\NZ\to \{0,1\}^\NZ$ given for all $\mathbf{b}=\big((\beta^g_i)_{i\in M^g}\big)_{g\in\NZ}\in (A^{[\bfN]})^\NZ$ and $g\in \NZ$  by 
$$\big(\Delta(\mathbf{b})\big)_g=\left\{\begin{array}{lll} 1 &,\ if& \beta^g_\ell=\beta^{g\ell n^{-1}}_n\ \forall \ell\in L^g\\\\ 0 &,& otherwise.\end{array}\right.$$ 
Hence $\Delta$ is a GSBC such that $X_{\tilde F_O}=\Delta^{-1}(\mathbf{1})$ and then $X_{\tilde F_O}$ is an FDS. 

Now, define the 1-block code $\Psi:(A^{[\bfN]})^\NZ\to A^\NZ$, which takes each $\mathbf{b}=(b_g)_{g\in\NZ}=\big((\beta^g_i)_{i\in M^g}\big)_{g\in \NZ}\in 
(A^{[\bfN]})^\NZ$ to the sequence $(\beta^{gn^{-1}}_n)_{g\in \NZ}\in A^\NZ$. Recall that $\Psi$ restricted to $\Lambda^{[\bfN]}$ coincides with the inverse of $\Phi^{[\bfN]}$. 

Since $\Lambda\subset A^\NZ$ is a proper FDS, there exists $\Xi:A^\NZ\to \{0,1\}^\NZ$, GSBC such that $\Xi^{-1}(\mathbf{1})=\Lambda$.

Hence, we can define the map $\Omega:(A^{[\bfN]})^\NZ\to\{0,1\}^\NZ$ given for all $\mathbf{b}\in (A^{[\bfN]})^\NZ$ and $g\in\NZ$ by
$$\Big(\Omega(\mathbf{b})\Big)_g:=\Big(\Delta(\mathbf{b})\Big)_g\cdot\Big(\Xi\circ \Psi(\mathbf{b})\Big)_g.$$
From its definition, it is direct that $\Omega$ is a GSBC. Furthermore, $\Big(\Delta(\mathbf{b})\Big)_g=1$ for all $g\in\NZ$ if and only if $\mathbf{b}$ belongs to $X_{\tilde F_O}$, while $\Big(\Xi\circ \Psi(\mathbf{b})\Big)_g=1$ for all $g$ if and only if $\Psi(\mathbf{b})$ belongs to $\Lambda$. Thus, we have that $\Big(\Omega(\mathbf{b})\Big)_g=1$ for all $g$ if and only if $\mathbf{b}\in X_{\tilde F_O}\cap \Psi^{-1}(\Lambda)=X_{\tilde F_O}\cap X_{P_F}=X_{\tilde F_O\cup P_F}=\Lambda^{[\bfN]}$. That is, $\Omega^{-1}(\mathbf{1})=\Lambda^{[\bfN]}$, and from Theorem \ref{theo:GSBC_and_FDS} we conclude that 
$\Lambda^{[\bfN]}$ is a proper FDS.\\

To prove the converse under the additional assumption of $\bfM_\bfN$ being finite, we just need to consider the map $\Theta:A^\NZ\to B^\NZ$ constructed in the proof of Proposition \ref{prop:higher_block_shift_SFT-2}, and then, since $\Lambda=\Theta^{-1}(\Lambda^{[\bfN]})$, from Corollary \ref{cor:inverse_image_of_FDS} we conclude that $\Lambda$ is a proper FDS.

\end{proof}

Note that we could prove Proposition \ref{prop:higher_block_shift_SFT-2} under conditions (C3) or (C4), using $\Omega$ constructed in the proof of Proposition \ref{prop:higher_block_shift_FDS} above. In such case, $\Lambda$ is assumed to be an SFT and $\bfM_\bfN$ is assumed finite, and so $\Delta$ and $\Xi$ can be taken as SBCs, which implies $\Omega$ is also an SBC.

We remark that, in general, higher block codes with $\bfM_\bfN$ infinite do not preserve the classes of SFTs and SVLs. In fact, from \eqref{eq:F_O_3}, we get that if $\bfM_\bfN$ infinite and (C3) or (C4) holds, then $(A^\NZ)^{[\bfN]}=X_{\tilde F_O}$ is an SVL though $A^\NZ$ is an SFT. On the other hand,  Lemma \ref{lem:SVLs-Sofic_and_SFT} gives an example with $\bfM_\bfN$ infinite, where $\Lambda$ is an SVL and $\Lambda^{[\bfN]}$ is an SFT.

\section{Relationship between shift spaces}\label{sec:relationships}

In this section we shall examine the relationship between SFTs, sofic shifts, and FDSs. Although it is direct that (weakly) sofic shifts and FDSs contain the class of SFTs, the relationship between those classes (and even the relationship between them and with the class of SFTs) is not so direct.

In what follows, we shall see through examples several facts about how these shift spaces are related one to the other.

\begin{claim}\label{claim:sofic-non fds} There exist sofic shifts that are not FDSs (that is, they are neither SFTs nor SVLs).
\end{claim}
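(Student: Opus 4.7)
The plan is to produce an explicit infinite-alphabet analog of the classical even shift and show it is sofic but not an FDS. Take $A=\N$ (with $0$ distinguished), $\NZ=\Z$ with the usual sum, and let $\Lambda\subset A^{\Z}$ consist of configurations in which between any two consecutive non-zero entries the number of $0$'s is even. To see $\Lambda$ is sofic of order $2$ in the sense of Definition \ref{defn:sofic}, set $B:=\{(e,0,o),(o,0,e)\}\cup\{(e,n,e):n\geq 1\}$, with $e,o$ the parity states, and let $\Gamma\subset B^{\Z}$ be the $1$-step edge SFT where $(s_1,a_1,t_1)$ precedes $(s_2,a_2,t_2)$ iff $t_1=s_2$. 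The $1$-block label code $\psi(s,a,t):=a$ is $2$-to-one over $0$ and $1$-to-one over each $n\geq 1$, so the induced SBC $\Psi:\Gamma\to A^{\Z}$ is an onto locally bounded finite-to-one SBC of order $2$ with $\Psi(\Gamma)=\Lambda$.

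For the non-FDS claim I will argue by contradiction using Theorem \ref{theo:GSBC_and_FDS}. Suppose there exist $\mathbb{G}\supset\Z$ and a GSBC $\Phi:A^{\mathbb{G}}\to\{0,1\}^{\mathbb{G}}$ with $\Lambda=\Phi^{-1}(\e)$ (interpreting $\Lambda$ inside $A^{\mathbb{G}}$ as $X_F$ for the natural forbidden set of patterns $(1,0^{2j-1},1)$), and let $C_0:=\Phi^{-1}([0_1]_{\{0,1\}^{\mathbb{G}}})$, which is clopen since $\Phi$ is continuous. For each $k\geq 1$ define $\y^k\in A^{\mathbb{G}}$ by $y^k_0=y^k_{2k}=1$ (with $0,2k\in\Z\subset\mathbb{G}$) and $y^k_i=0$ elsewhere; since its $\Z$-row contains the forbidden pattern $(1,0^{2k-1},1)$, we have $\y^k\notin\Lambda$, so $\Phi(\y^k)\neq\e$ and there exists $g_k\in\mathbb{G}$ with $(\Phi(\y^k))_{g_k}=0$. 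By shift-equivariance $\s^{g_k}(\y^k)\in C_0$ for every $k$. Now extract a subnet so that $\s^{g_k}(\y^k)$ converges pointwise in the compact space $\{0,1\}^{\mathbb{G}}$ to some $\z$. The two $1$-positions of $\s^{g_k}(\y^k)$, namely those $i\in\mathbb{G}$ with $g_k i\in\{0,2k\}$, cannot both stabilize at fixed coordinates along a subnet (since then $g_k i_2=2k$ would force $2k$ to be eventually constant), so at least one drifts to infinity and $\z$ has at most one non-zero entry; in particular $\z\in\Lambda\subset C_1=\{0,1\}^{\mathbb{G}}\setminus C_0$. This contradicts $\z\in C_0$, which is forced by closedness of $C_0$, and hence $\Lambda$ is not an FDS.

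The main obstacle is handling the generality of $\mathbb{G}$ in Definition \ref{defn:FDS}: for $\mathbb{G}=\Z$ the two $1$-positions of $\s^{g_k}(\y^k)$ are simply $-g_k$ and $2k-g_k$, and the drift argument is immediate, but for a general monoid $\mathbb{G}\supset\Z$ one must deal with possible non-cancellation and with coordinates outside the $\Z$-subrow of $\mathbb{G}$. The saving point is that $\y^k$ is supported on only two coordinates inside $\Z\subset\mathbb{G}$, so the relevant pointwise convergence analysis localizes to those coordinates and the parity obstruction persists regardless of how $\mathbb{G}$ is built on top of $\Z$.
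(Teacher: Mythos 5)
Your construction takes a genuinely different and much harder route than the paper, and the hard part has a real gap. The paper proves this claim in one line: take any \emph{finite}-alphabet sofic shift that is not an SFT (e.g.\ the classical even shift over $\{0,1\}$). Since for a finite alphabet $A$ the space $A^{\mathbb{G}}$ is compact for every monoid $\mathbb{G}$, a finitely defined (clopen) set $\mathcal{S}_{\mathbb{G}}(F')$ is automatically a finite union of cylinders, so every finite-alphabet FDS is an SFT; hence a finite-alphabet sofic non-SFT is already a sofic shift that is not an FDS. Your infinite-alphabet even shift is a nice example and your soficity argument is essentially correct (modulo a slip: the forbidden set must be $\{(a,0^{2j-1},b):a,b\neq 0\}$, not only $(1,0^{2j-1},1)$, or else $X_F\neq\Lambda$), but it is not needed for this claim.

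The gap is in the non-FDS half. Definition \ref{defn:FDS} quantifies existentially over \emph{all} monoids $\mathbb{G}\supset\Z$ (and all admissible $F'$), and Example \ref{ex:SVL1} shows this quantifier has teeth: enlarging the monoid can turn a non-finitely-defined presentation into a finitely defined one. Your contradiction is airtight only for $\mathbb{G}=\Z$: there, cancellativity forces the two $1$-positions $-g_k$ and $2k-g_k$ of $\s^{g_k}(\y^k)$ to be $2k$ apart, so at most one coordinate can stabilize and the limit $\z$ has at most one nonzero entry, whence $\z\in\Lambda$ and $\z\notin C_0$. For a general monoid $\mathbb{G}$ the step ``at least one drifts to infinity'' fails: without left cancellation one can have $g_k i_1=g_k i_2=0$ for distinct $i_1,i_2$ (indeed for infinitely many $i$), so $\z$ may acquire arbitrarily many nonzero entries, and you can no longer conclude $\z\in X_F\subset A^{\mathbb{G}}$, which is exactly what the contradiction needs. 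Your closing paragraph acknowledges this obstacle but only asserts that ``the parity obstruction persists''; that assertion is the whole content of the missing case, and no argument is given. (There is also the secondary issue that the extension $X_F\subset A^{\mathbb{G}}$ depends on which forbidden set $F$ you choose to represent $\Lambda$, so one must either fix this choice as part of the definition or rule out all of them.) As written, your argument establishes only that $\Lambda$ is not a \emph{proper} FDS, which does not exclude it from the class of FDSs.
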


\begin{claim}\label{claim:wsofic-non sofic non fds} There exist weakly sofic shifts that are neither sofic shifts nor FDSs.
\end{claim}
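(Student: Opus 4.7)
The plan is to exhibit a disjoint-alphabet union of classical strictly sofic shifts whose orders grow without bound. For each $n \geq 2$ I would take pairwise disjoint two-letter alphabets $A_n = \{0_n, 1_n\}$, set $A = \bigcup_{n \geq 2} A_n$, and let $\Lambda_n \subset A_n^\Z$ be the classical strictly sofic shift in which every maximal block of $1_n$'s has length a multiple of $n$. Each $\Lambda_n$ is the image of the edge shift $\Gamma_n$ of the standard $n$-cycle graph (a $1$-step SFT) under a locally bounded finite-to-one SBC. Setting $\Lambda := \bigcup_{n \geq 2} \Lambda_n \subset A^\Z$ and using that the alphabets are pairwise disjoint while every $\Gamma_n$ has step one, Lemma \ref{lem:union_of_Sofics} would yield that $\Lambda$ is weakly sofic.

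To show $\Lambda$ is not sofic I would argue by contradiction: if $\Lambda$ were sofic of some order $k$, Theorem \ref{theo:wsofic<=>graph presentation} would produce a labeled graph $\mathcal{G}$ with $\Lambda = X_\mathcal{G}$ and $|I_\mathcal{G}(a)| \leq k$ for every label. Because the $A_n$'s are pairwise disjoint and no element of $\Lambda$ mixes letters from two of them, the connected components of $\mathcal{G}$ would decompose by alphabet, so the subgraph $\mathcal{G}_n$ built on $A_n$-labels would satisfy $X_{\mathcal{G}_n} = \Lambda_n$ with $|I_{\mathcal{G}_n}(a)| \leq k$; hence each $\Lambda_n$ would be sofic of order at most $k$. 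I would then inspect the follower set graph of $\Lambda_n$ to show $|T_{\mathcal{G}_{n,\mathcal{F}}}(1_n)| \geq n$ --- the residue modulo $n$ of the length of a trailing $1_n$-block distinguishes $n$ pairwise different follower sets --- and Theorem \ref{theo:sofic-follower_set_graph-2} would then force $n \leq 2^k - 1$ for every $n$, a contradiction.

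For the FDS failure, suppose $\Lambda$ were an FDS. The sub-full-shift $A_n^\Z \subset A^\Z$ is a $1$-step SFT (forbidding the single letters outside $A_n$), hence an FDS, so Corollary \ref{cor:intersct_of_FDSs} applied to $\{\Lambda, A_n^\Z\}$ would give that $\Lambda_n = \Lambda \cap A_n^\Z$ is an FDS. Since $W_{\{1\}}(\Lambda_n) = A_n$ is finite, I would invoke the finite-alphabet reduction of FDS to SFT: restrict the witnessing clopen subset of $A^\mathbb{G}$ to the compact sub-full-shift $A_n^\mathbb{G}$, where the restriction is again clopen and therefore a finite union of cylinders, forcing the associated forbidden set to have finite coordinate support. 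This would make $\Lambda_n$ an SFT, contradicting the classical fact that $\Lambda_n$ is strictly sofic over the finite alphabet $A_n$. The main obstacle is precisely this last step: carefully transporting the FDS structure from the infinite-alphabet ambient $A^\mathbb{G}$ down to the finite-alphabet ambient $A_n^\mathbb{G}$ and then back to $A_n^\Z$, which relies on the compactness of $A_n^\mathbb{G}$ and on the fact that the intersection of a clopen set with a closed sub-full-shift remains clopen.
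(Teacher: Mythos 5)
Your overall strategy is exactly the paper's: Claim \ref{claim:wsofic-non sofic non fds} is proved there via Lemma \ref{cor:union_of_finite_Sofics_order-m}(ii), i.e.\ a disjoint-alphabet union of finite-alphabet strictly sofic shifts whose minimal orders are unbounded, with weak soficity coming from Lemma \ref{lem:union_of_Sofics} and the failure of the FDS property coming from restricting the witnessing GSBC of Theorem \ref{theo:GSBC_and_FDS} to each compact sub-full-shift $A_n^{\mathbb G}$ (where a GSBC is an SBC by Remark \ref{rmk:finite_coordinates}, so Theorem \ref{theo:SBC_and_SFT} would force each $\Lambda_n$ to be an SFT). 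Those two parts of your argument are sound and essentially identical to the paper's. Your added value is the attempt to exhibit a \emph{concrete} family with unbounded orders, something the paper only postulates as a hypothesis of Lemma \ref{cor:union_of_finite_Sofics_order-m}(ii); also, for the reduction ``$\Lambda$ sofic of order $k$ $\Rightarrow$ each $\Lambda_n$ sofic of order $k$'' the paper's route (decomposing the SFT preimage $\Omega$ into $\Omega_n=\Phi^{-1}(\Lambda_n)$ and restricting $\Phi$) is cleaner than passing through graph presentations, and you should use it.

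The genuine gap is in the last step of your non-soficity argument: the claim that $\Lambda_n$ sofic of order $k$ forces $n\le 2^k-1$. Both Theorem \ref{theo:wsofic<=>graph presentation} (giving $|I_{\mathcal G}(a)|\le k$) and Theorem \ref{theo:sofic-follower_set_graph-2} (giving $|T_{\mathcal G_{\mathcal F}}(a)|\le 2^k-1$) require more than ``sofic of order $k$'': they require a presentation $\Lambda_n=\Phi(\Gamma)$ in which $\Gamma$ is $m$-step \emph{and} $\Phi$ is an $m$-block code of order $k$ for the \emph{same} $m$. The bare definition of order does not supply such a presentation: recoding an $\ell$-step SFT with a $1$-block code into a matched $m$-step/$m$-block presentation splits each of the $\le k$ cylinders of $C_a$ into many cylinders on the enlarged coordinate set, so the order is not preserved; Example \ref{ex:2-step and graph} is precisely a shift admitting an order-bounded presentation whose follower-set graph violates the finite-$T$ condition, so the implication you want cannot hold unconditionally. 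The only unconditional estimate available is Theorem \ref{theo:sofic-follower_set_graph-1}, which bounds $|T_{\mathcal G_{\mathcal F}}(w)|$ by $2^{k^m}-1$ only for words of length $m\ge M$ with $M$ depending on the (unknown) presentation; since $|T_{\mathcal G_{\mathcal F}}(1_n^m)|=n$ is fixed while $2^{k^m}-1$ grows with $m$, no contradiction results. So the quantitative heart of the argument --- that the minimal orders of the $\Lambda_n$ are unbounded --- is not established; you would need either a direct lower bound on the order of $\Lambda_n$ (e.g.\ an argument that any SFT preimage with $C_{1_n}$ a union of $\le k$ cylinders cannot track the residue mod $n$ for $n$ large) or a different invariant that is monotone under arbitrary locally bounded finite-to-one factor maps of order $k$.
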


The first claim can be easily checked by recalling that there does not exist SVLs over finite alphabet, and then any sofic shift  over a finite alphabet which is not an SFT cannot be an FDS. The following result gives a general way to construct shift spaces that hold the property stated in Claim \ref{claim:sofic-non fds} and shift spaces that hold the property stated in Claim \ref{claim:wsofic-non sofic non fds}.

\begin{lem}\label{cor:union_of_finite_Sofics_order-m} Let $\NZ$ be $\N$ or $\Z$ with the usual sum, and $I\subseteq \N$. Let $\{A_k\}_{k\in I}$ be a disjoint family of nonempty finite sets, and for each $k\in I$ let $\Lambda_k\subset A_k^{\NZ}$ be a sofic shift which is not an SFT. Let $\Gamma_k$ and $\Psi_k:\Gamma_k\to\Lambda_k$ be the correspondent SFT and locally bounded finite-to-one SBC. Supose that all SFTs $\Gamma_k$ have the same step. Therefore,
\begin{enumerate}
\item If all $\Lambda_k$ are sofic shifts with the same order $m$, then $\Lambda:=\bigcup_{k\in I}\Lambda_k$ is a sofic with order $m$, but not an FDS.

\item If for each $m\in\N^*$ there exists $k_m\in I$ such that $\Lambda_{k_m}$ is not a sofic with order $m$, then $\Lambda:=\bigcup_{k\in I}\Lambda_k$ is a weakly sofic which is neither a sofic nor an FDS.
\end{enumerate}
\end{lem}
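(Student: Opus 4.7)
The (weakly) sofic conclusions in both items are immediate from Lemma~\ref{lem:union_of_Sofics}, so the substantive task is to verify the two negative statements: in both items $\Lambda$ fails to be an FDS, and in item~2 additionally $\Lambda$ fails to be a sofic of any finite order. I plan to handle both by the same idea: any hypothetical presentation of $\Lambda$ can be restricted to produce a presentation of one of the components $\Lambda_k$, forcing $\Lambda_k$ to inherit a structure that the hypothesis denies.

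For the non-FDS conclusion, suppose for contradiction that $\Lambda$ is an FDS. By Theorem~\ref{theo:GSBC_and_FDS} there exist a monoid $\mathbb{G}\supset\NZ$ and a GSBC $\Phi:A^\mathbb{G}\to\{0,1\}^\mathbb{G}$ whose preimage of $\mathbf{1}$ is the extension of $\Lambda$ to $A^\mathbb{G}$. Fix any $k\in I$; the restriction $\Phi|_{A_k^\mathbb{G}}$ is continuous and shift-commuting, hence a GSBC by Theorem~\ref{theo:hedlund_general}. Since the family $\{A_j\}_{j\in I}$ is pairwise disjoint, every inter-alphabet restriction imposed on $\Lambda$ is automatic on $A_k^\mathbb{G}$, and thus $\Phi^{-1}(\mathbf{1})\cap A_k^\mathbb{G}$ coincides with the extension of $\Lambda_k$ to $A_k^\mathbb{G}$. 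Theorem~\ref{theo:GSBC_and_FDS} now makes $\Lambda_k$ an FDS over the finite alphabet $A_k$, and by the observation that immediately follows Definition~\ref{defn:SVL} every such FDS is an SFT, contradicting the assumption that $\Lambda_k$ is sofic but not SFT.

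For the non-sofic claim of item~2, fix $m\in\N^*$ and suppose $\Lambda$ is sofic of order $m$ via an SFT $\Gamma\subset B^\NZ$ and an onto locally bounded finite-to-one SBC $\Psi:\Gamma\to\Lambda$ of order $m$ with local rule $\psi:W_M(\Gamma)\to A$ on some finite $M\subset\NZ$. Set $F':=\{u\in W_M(\Gamma):\psi(u)\notin A_{k_m}\}$ and $\Gamma':=\Gamma\cap X_{F'}$. Since $F'$ has support $M$, $X_{F'}$ is an $M$-step SFT, so $\Gamma'$ is an SFT as the intersection of two SFTs. By the construction of $F'$ and the disjointness of the $A_j$, $\gamma\in\Gamma'$ if and only if $\Psi(\gamma)\in A_{k_m}^\NZ$, whence $\Psi(\Gamma')=\Psi(\Gamma)\cap A_{k_m}^\NZ=\Lambda_{k_m}$. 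The restriction $\Psi|_{\Gamma'}$ is an onto SBC whose local rule $\psi|_{W_M(\Gamma')}$ is at most $m$-to-one, so $\Lambda_{k_m}$ is sofic of order $m$, contradicting the hypothesis. The main obstacle in the whole argument is to verify that both restriction constructions really do preserve the structural properties (GSBC status in the FDS argument, order-$m$ SBC in the sofic argument) and correctly recover the single component on the nose; the disjointness of the alphabets $\{A_j\}_{j\in I}$ is used essentially at both places.
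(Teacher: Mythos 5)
Your proposal is correct and follows essentially the same route as the paper: restrict the hypothetical GSBC witnessing the FDS property to a single component's full shift to force $\Lambda_k$ to be an FDS over a finite alphabet (hence an SFT, a contradiction), and restrict a hypothetical order-$m$ factor map to the preimage of a single component to force $\Lambda_{k_m}$ to be sofic of order $m$. Your version is only slightly more explicit than the paper's (in tracking the ambient monoid $\mathbb{G}$ and in exhibiting $\Psi^{-1}(\Lambda_{k_m})$ as an intersection of SFTs), but the argument is the same.
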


\begin{proof}\phantom\\

\begin{enumerate}
\item From Lemma \ref{lem:union_of_Sofics}, we have that $\Lambda$ is a sofic with order $m$. Let $A:=\bigcup_{k\in I}A_k$, and let us check that $\Lambda$ is not an FDS. Indeed, if by contradiction we suppose $\Lambda$ is an FDS, then from Theorem \ref{theo:GSBC_and_FDS} there exists a generalized sliding block $\Phi:A^\NZ\to\{0,1\}^\NZ$ such that $\Lambda=\Phi^{-1}(\mathbf{1})$. Since $\Lambda$ is composed by the disjoint union of the sets $\Lambda_k\subset A_k^\NZ$, it follows that for each $k\in\N^*$ we have $\Phi_{|_{A_k^\NZ}}^{-1}(\mathbf{1})=\Lambda_k$. However, since each $\Lambda_k$ is a shift over a finite alphabet, this would imply that it is an SFT, contradicting the hypothesis that it is not one.
 
\item By using the same reasoning as in item {\em i.} above, we get that $\Lambda$ is a weakly sofic which is not an FDS. Now, suppose by contradiction that $\Lambda$ is a sofic shift, say with order $L$, that is, there exist $\Omega$, an SFT,  and $\Phi:\Omega\to\Lambda$ an onto locally bounded finite-to-one SBC with order $L$. 
Since $\{\Lambda_k\}_{k\in I}$ is a disjoint family of shift spaces, then $\Omega$ shall be a disjoint union of SFTs  $\Omega_k:=\Phi^{-1}(\Lambda_k)$. Thus, for all $k\in I$ we have $\Phi_{|_{\Omega_k}}:\Omega_k\to\Lambda_k$ is an  onto locally bounded finite-to-one SBC with order $L$, which means that $\Lambda_k$ is a sofic of order $L$ for all $k\in I$, contradicting that there exists $k_L\in I$ such $\Lambda_{k_L}$ is not a sofic of order $L$.
\end{enumerate}
\end{proof}

\begin{claim} There exist FDSs that are not weakly sofic shifts. 
\end{claim}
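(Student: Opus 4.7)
The plan is to revisit Example \ref{ex:SVL2}, where $\Lambda := \{\x \in \N^\N : x_{i+x_i} \geq x_i \text{ for all } i \in \N\}$ has already been shown to be an FDS (via the GSBC presentation constructed there), and to argue that $\Lambda$ is not weakly sofic. The tool will be a pigeonhole/splicing argument applied to a concrete family of sequences in $\Lambda$.

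First, I would assume for contradiction that $\Lambda$ is weakly sofic. By Corollary \ref{cor:higher_block_shift_Sofic_on_Z^d-N} there then exist a shift of finite type $\Gamma \subset B^\N$, say $m$-step, and an onto locally finite-to-one sliding block code $\Psi : \Gamma \to \Lambda$ whose local rule $\psi : B \to \N$ is $1$-block. Set $c_0 := |\psi^{-1}(0)|$, which is finite by assumption. I would then introduce the family $\y^{(n)} \in \N^\N$, $n \geq 1$, defined by $\y^{(n)}_i = n$ if $n \mid i$ and $\y^{(n)}_i = 0$ otherwise; the defining inequality of $\Lambda$ is trivial at the zero coordinates and follows from $\y^{(n)}_{i+n} = n$ at the nonzero ones, so $\y^{(n)} \in \Lambda$.

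Next, fix $n > m + c_0^m$ and pick a lift $\tilde\y^{(n)} \in \Gamma$ of $\y^{(n)}$. The entries $\tilde\y^{(n)}_p$ at $1 \leq p \leq n-1$ all lie in $\psi^{-1}(0)$, so the $m$-windows $(\tilde\y^{(n)}_p, \dots, \tilde\y^{(n)}_{p+m-1})$ with $1 \leq p \leq n-m$ take values in a set of cardinality at most $c_0^m$; pigeonhole yields $1 \leq p_1 < p_2 \leq n-m$ at which these two $m$-windows coincide. Using that $\Gamma$ is $m$-step, the spliced sequence $\tilde\y' := (\tilde\y^{(n)}_0, \dots, \tilde\y^{(n)}_{p_1+m-1}, \tilde\y^{(n)}_{p_2+m}, \tilde\y^{(n)}_{p_2+m+1}, \dots)$ still belongs to $\Gamma$, since the $(m+1)$-window straddling the cut coincides, via the matching, with an $(m+1)$-window actually appearing in $\tilde\y^{(n)}$.

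Evaluating $\Psi$ on $\tilde\y'$ I would find $\Psi(\tilde\y')_0 = n$, while $\Psi(\tilde\y')_n = \y^{(n)}_{n + (p_2 - p_1)} = 0$, because $0 < p_2 - p_1 < n$ prevents $n + (p_2 - p_1)$ from being a multiple of $n$. But $\Psi(\tilde\y') \in \Psi(\Gamma) = \Lambda$ demands $\Psi(\tilde\y')_n \geq \Psi(\tilde\y')_0 = n$, contradicting the value $0$. The main obstacle I anticipate is the bookkeeping at the splice: because $\Gamma$ need only be $m$-step rather than $1$-step, the pigeonhole must be performed on $m$-windows and one must verify admissibility of every $(m+1)$-window crossing the junction; once that is done, the contradiction reduces to the elementary divisibility fact about $\y^{(n)}$.
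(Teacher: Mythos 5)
Your argument is correct, but it proves the claim by a genuinely different route and with a different witness than the paper. The paper takes the two-sided shift of Example \ref{ex:SVL1} (alphabet $\Z$, forbidding $x_0=0,\ x_k=k$) and shows that for every $M$, any labeled-graph presentation of $\Lambda^{[M+1]}$ must reuse the label $0\cdots0$ infinitely often (the words $\mathbf{w}^n_0$ are forced to terminate at pairwise distinct vertices), which contradicts Theorem \ref{theo:wsofic=>graph shift conjugacy}.\ref{theo:wsofic=>graph shift conjugacy_1}; you instead take the one-sided SVL of Example \ref{ex:SVL2}, reduce via Corollary \ref{cor:higher_block_shift_Sofic_on_Z^d-N} to a $1$-block locally finite-to-one factor $\Psi$ of an $m$-step SFT, and run a classical pigeonhole/pumping argument on the fiber over the symbol $0$, splicing two occurrences of a repeated $m$-window inside a long run of zeros. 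Your splicing bookkeeping is sound (the matched windows at $p_1,p_2$ give $\tilde\y^{(n)}_i=\tilde\y^{(n)}_{i+(p_2-p_1)}$ for $p_1\le i\le p_1+m-1$, so every straddling $(m+1)$-window of $\tilde\y'$ occurs in $\tilde\y^{(n)}$), and the divisibility endgame is correct since $0<p_2-p_1<n$. What your route buys is an argument that bypasses the graph machinery of Section \ref{sec:graphs} entirely and mirrors the standard finite-alphabet ``pumping'' proofs of non-soficity; what the paper's route buys is a demonstration of its graph-presentation criteria in action. One point you should make explicit: the finiteness of $c_0=|\psi^{-1}(0)|$ is not a formal consequence of ``locally finite-to-one $+$ $1$-block'' in full generality (a constant $1$-block code on a full shift is locally finite-to-one of order $1$ with an infinite-to-one local rule), but it does hold here because the construction in Proposition \ref{prop:sofic and 1-block} produces $\psi$ with $|\psi^{-1}(\omega)|$ equal to the (finite) number of cylinders composing $C_\omega$ -- the same implication the paper itself invokes in the proof of Theorem \ref{theo:wsofic=>graph shift conjugacy} -- so a one-line citation of that construction closes the gap.
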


Clearly an FDS which is not a weakly sofic shall be an SVL. To prove the above claim, consider $\Lambda:=X_F\subset A^\Z$ the SVL given in Example \ref{ex:SVL1}, where $A:=\Z$ and $F:=\{(x_0x_{k}):\ x_0=0\text{ and } x_{k}=k,\ k\in\N^*\}$. Let us show that there is not any $M\in\N$ for which $\Lambda^{[M+1]}=X_\mathcal{G}$ for some directed labeled graph $\mathcal{G}$ where each label is used just finitely many times. Thus, from Theorem \ref{theo:wsofic=>graph shift conjugacy}.\ref{theo:wsofic=>graph shift conjugacy_1} we can conclude that $\Lambda$ is not a weakly sofic shift. Indeed, take any $M\in\N$ and let $\mathcal{G}$ any directed labeled graph such that  $\Lambda^{[M+1]}=X_\mathcal{G}$ (which always exists due to Theorem \ref{theo:motivation}). Note that the constant sequence $(...0000000...)$ belongs to $\Lambda$, while the sequence
$$(...111.\underbrace{000000000000000...0}_{0\ appears\ k-1\ times}\ell000...)$$
belongs to $\Lambda$ if and only if $\ell\geq k$. For each $\ell\in\N$ we have $w_\ell:=\underbrace{00000000...0\ell}_{length\ M+1}$ belonging to $ W_1(\Lambda^{[M+1]})$ if and only if either $\ell=0$ or $\ell\geq M+1$. It follows that for all $n\geq 1$ the word $\mathbf{w}^n_\ell:=\underbrace{w_0w_0...w_0w_\ell}_{length\ n}$ lies in $W_{n}(\Lambda^{[M+1]})$ if and only if either $\ell=0$ or $\ell\geq M+n$. Then, we get that for $m\neq n$ the path in $\mathcal{G}$ that represents $\mathbf{w}^n_0$ cannot end at the same vertex than the path that represents $\mathbf{w}^m_0$, which implies that the symbol $w_0$ shall be used infinitely many times as label in $\mathcal{G}$.

\begin{claim}\label{claim:w-sofic_SVL} There exist shifts spaces that are simultaneously weakly sofic shifts and SVLs.
\end{claim}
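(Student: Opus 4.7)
The plan is to exhibit a concrete shift on the lattice $\Z$ with usual sum that is both weakly sofic and an SVL, obtained as the $1$-block projection of an SFT over an infinite alphabet to a smaller (still infinite) alphabet. Let $B:=\{a,b\}\cup\{(n,k):n\geq 1,\ 1\leq k\leq n\}$ and let $\Gamma\subset B^\Z$ be the $1$-step SFT whose permitted transitions are: $(n,k)\to(n,k+1)$ for $k<n$; $(n,n)\to a$ or $b$; and $a,b\to a,b$ or $(m,1)$ for any $m$. Let $A:=\{a,b\}\cup\{c_n:n\geq 1\}$ and let $\Phi:\Gamma\to A^\Z$ be the $1$-block SBC with local rule $\phi(a)=a$, $\phi(b)=b$, $\phi((n,k))=c_n$. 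Because $\phi^{-1}(c_n)=\{(n,1),\ldots,(n,n)\}$ is finite for every $n\geq 1$ and $\phi^{-1}(a),\phi^{-1}(b)$ are singletons, $\Phi$ is locally finite-to-one; by Corollary \ref{cor:image_of_SFT_by_Phi} the image $\Lambda:=\Phi(\Gamma)$ is a shift space, which is then weakly sofic by Definition \ref{defn:sofic}. Unwinding the transitions shows that $\Lambda$ is exactly the set of configurations in $A^\Z$ in which every maximal run of $c_n$-symbols has length exactly $n$ and is bordered on both sides by symbols from $\{a,b\}$.

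Next, I would verify that $\Lambda$ is an FDS via Theorem \ref{theo:GSBC_and_FDS} by constructing a GSBC $\Psi:A^\Z\to\{0,1\}^\Z$ with $\Lambda=\Psi^{-1}(\mathbf{1})$. The idea is to output $0$ at position $g$ whenever $\sigma^g(\x)$ lies in a ``violation set'' $C_0\subset A^\Z$ comprising three finitely defined families: (E1) $x_0=c_n$ and $x_1=c_m$ with $n\neq m$ (wrong adjacency); (E2) $x_{-1}\neq c_n$, $x_0=c_n$, and either some $x_k\neq c_n$ for $1\leq k\leq n-1$ or $x_n\notin\{a,b\}$ (wrong length or wrong right border at the start of a run); and (E2$'$) $x_0=c_n$ with $x_{-n}=\cdots=x_n=c_n$ (captures one-sided backward and two-sided infinite $c_n$-runs that have no detectable ``start''). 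Each family uses only a window whose size depends on $x_0$, so $C_0$ is a finitely defined subset of $A^\Z$. A case analysis on the value of $x_g$ and the length of the $c_n$-run containing $g$ shows $\Psi^{-1}(\mathbf{1})=\Lambda$.

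Finally, I would show $\Lambda$ is not an SFT, so it is an SVL. Assume by contradiction $\Lambda=X_{F'}$ with $M_{F'}\subset[-K,K]$ finite, and pick $n>2K+1$. The constant configuration $\x$ with $x_i=c_n$ is not in $\Lambda$ (it has an infinite $c_n$-run), so some $w\in F'$ with $D_w\subset M_{F'}$ satisfies $\sigma^g(\x)_{D_w}=w$; since $\x$ is constant, $w$ must be the constant $c_n$-pattern on $D_w$. But the legal configuration $\y\in\Lambda$ obtained by placing a single block $c_n^n$ at positions $[0,n-1]$ and $a$ elsewhere contains a translate of this constant pattern entirely inside $[0,n-1]$ (because the diameter of $D_w$ is at most $2K<n$), forcing $\y\notin X_{F'}$ and contradicting $\y\in\Lambda$. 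Hence $\Lambda$ is simultaneously weakly sofic and an SVL.

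The main obstacle is the design of the violation set $C_0$ in the FDS step: one must ensure that every bad configuration is flagged at some position while preserving the finitely defined (clopen) nature of $C_0$. The auxiliary family (E2$'$) is essential here, as flags of type (E1) and (E2) alone fail to detect bi-infinite $c_n$-runs (which have no first element of a run and no wrong adjacency), so the naive choice of forbidden patterns $F$ would yield a $\mathcal{S}_\Z(F)$ that is open but not closed.
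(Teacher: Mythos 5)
Your proposal is correct, and it proves the claim by a genuinely different route from the paper. The paper's own proof is Lemma \ref{lem:SVLs-Sofic_and_SFT}: it takes a disjoint union $\Lambda=\bigcup_k\Lambda_k$ of \emph{finite-alphabet} SFTs over pairwise disjoint alphabets whose minimal steps $p_k$ are unbounded; non-SFT-ness follows because a single finite $M_F$ would uniformly bound every $p_k$, the FDS property follows because reading $x_0$ tells you which alphabet (hence which window size $p_k$) to inspect, and weak soficity is obtained by gluing the edge-shift presentations $\Lambda_k^{[p_k+1]}$ into one labeled graph whose inverse higher block code is a locally finite-to-one $1$-block factor map. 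You instead exhibit a single ``connected'' example on $\Z$ (runs of $c_n$ of length exactly $n$), get weak soficity directly from Definition \ref{defn:sofic} together with Corollary \ref{cor:image_of_SFT_by_Phi}, get the FDS property from the characterization in Theorem \ref{theo:GSBC_and_FDS} by an explicit violation set, and rule out SFT by a pumping argument. I checked the delicate points: your $C_0$ is indeed clopen (membership is decided by the window $[-n,n]$ once $x_0=c_n$ is read, and by $\{0\}$ otherwise), your family (E2$'$) correctly catches the left-infinite and bi-infinite $c_n$-runs that (E1) and (E2) miss on the two-sided lattice, and the case analysis of maximal runs is exhaustive. What each approach buys: the paper's union construction is modular, works verbatim on $\N$ or $\Z$, and is reused immediately afterwards to prove Claims \ref{claim:w-sofic_SFT_conjug} and \ref{claim:SFT_SVL_conjug} about conjugacies; your example yields the slightly stronger conclusion that the shift is a \emph{proper} FDS on $\Z$ itself (no monoid extension needed), avoids the disjoint-alphabet artifice, and makes visible exactly where two-sidedness forces extra forbidden patterns, which echoes the proper/non-proper distinction of Example \ref{ex:SVL1}.
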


Recall that if $\NZ$ is $\N$ or $\Z$, we say an SFT $\Lambda\subset A^\NZ$ has step $m$, if there exists a set of forbidden words $F\subset W(A^\NZ)$ such that $M_F\subset\{0,....,m\}$ and $\Lambda=X_F$. Thus, the previous claim can be proved by the following lemma.

\begin{lem}\label{lem:SVLs-Sofic_and_SFT} Let $\NZ$ be the monoid $\N$ or $\Z$ with the usual sum. Let $\{A_k\}_{k\in\N^*}$ be a disjoint family of nonempty finite sets, and for each $k\in\N^*$ let $\Lambda_k\subset A_k^{\NZ}$ be an SFT. Suppose that for each $m\in\N$ there exists $k_m\in\N^*$ such that $\Lambda_{k_m}$ is not an $m$-step shift. Then $\Lambda:=\bigcup_{k\in\N^*}\Lambda_k$ is an SVL and a weakly sofic shift.
\end{lem}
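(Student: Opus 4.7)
The plan is to verify three facts: $\Lambda$ is a weakly sofic shift, $\Lambda$ is not an SFT, and $\Lambda$ is a finitely defined shift. The first will use a uniform recoding of each $\Lambda_k$ to a common step; the second will use the disjointness of the $A_k$'s to restrict any putative uniform memory to the individual components; and the third will construct an explicit generalized sliding block code pinning down $\Lambda$ via Theorem \ref{theo:GSBC_and_FDS}.

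For the weakly sofic presentation I would let $d_k\ge 1$ be such that $\Lambda_k$ is a $d_k$-step SFT and apply the higher block code $\Phi^{[d_k+1]}:\Lambda_k\to\Gamma_k$ to recode $\Lambda_k$ as a $1$-step SFT $\Gamma_k\subset B_k^\NZ$ with $B_k\subset A_k^{d_k+1}$, using Corollary \ref{cor:higher_block_shift_SFT_on_Z^d-N} to ensure $\Gamma_k$ is an SFT. Since the $A_k$'s are pairwise disjoint so are the $B_k$'s, and Lemma \ref{lem:union_of_SFTs} then makes $\Gamma:=\bigcup_k\Gamma_k$ a $1$-step SFT over $\bigcup_k B_k$. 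The map $\Psi:\Gamma\to\Lambda$ defined by $\Psi|_{\Gamma_k}:=(\Phi^{[d_k+1]})^{-1}$ is an onto $1$-block SBC whose local rule sends each block to its first letter; the preimage of any $a\in A_k$ lies in the finite set $\{w\in B_k:w_0=a\}$, so $\Psi$ is locally finite-to-one and $\Lambda$ is weakly sofic.

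For the non-SFT claim I would argue by contradiction: if $\Lambda=X_F$ with $M_F\subset\{0,\ldots,m\}$, pick the corresponding $k_m$ from the hypothesis and set $F':=\{w\in F:\text{every letter of }w\text{ lies in }A_{k_m}\}$. Any $w\in F\setminus F'$ contains a symbol outside $A_{k_m}$ and so cannot appear in any sequence of $A_{k_m}^\NZ$; combined with the identity $\Lambda\cap A_{k_m}^\NZ=\Lambda_{k_m}$ (a direct consequence of the disjointness of the $A_k$'s and of $\Lambda=\bigcup_k \Lambda_k$), this gives $\Lambda_{k_m}=X_{F'}$ with $M_{F'}\subset\{0,\ldots,m\}$, contradicting the choice of $k_m$.

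For the FDS part I would put $A:=\bigcup_k A_k$, choose $m_k:=\{0,1,\ldots,\max(1,d_k)\}\supset M_{F_k}$ where $F_k\subset A_k^{m_k}$ is a complete defining set for $\Lambda_k$, and set $L_k:=A_k^{m_k}\setminus F_k$ and $E_k:=\{\x\in A^\NZ:(x_i)_{i\in m_k}\in L_k\}$. Each $E_k$ is a finite union of cylinders (since $A_k$ and $m_k$ are finite), so $C_1:=\bigcup_k E_k$ is open; it is also closed, because if $\x^n\to\x$ with $\x^n\in E_{k_n}$ then discreteness at coordinate $0$ forces $k_n$ to stabilize at the unique $k^*$ with $x_0\in A_{k^*}$, and then $\x\in E_{k^*}\subset C_1$. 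Letting $\Phi:A^\NZ\to\{0,1\}^\NZ$ be the GSBC determined by $C_0:=A^\NZ\setminus C_1$ and $C_1$, one checks that $\x\in\Lambda_{k^*}$ yields $\sigma^g(\x)\in E_{k^*}$ for every $g$, so $\Phi(\x)=\mathbf{1}$; conversely, if $\Phi(\x)=\mathbf{1}$ then each $\sigma^g(\x)$ lies in some $E_{k_g}$, and since $\{0,1\}\subset m_{k_g}$ we obtain $x_{g+1}\in A_{k_g}\cap A_{k_{g+1}}$, forcing $k_g=k_{g+1}$ by disjointness; induction then makes $k_g$ constantly equal to some $k^*$, whence $\x\in\Lambda_{k^*}\subset\Lambda$. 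Theorem \ref{theo:GSBC_and_FDS} now yields that $\Lambda$ is an FDS, and together with the previous paragraph $\Lambda$ is an SVL. The main subtle point in the whole argument is the closedness of $C_1$, which rests essentially on the finiteness of each $A_k$: this is what allows a single coordinate to determine the component $k^*$ into which any limit must fall and what prevents the disjoint union of clopen sets $E_k$ from accumulating.
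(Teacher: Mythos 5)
Your proposal is correct and follows essentially the same route as the paper: recode each $\Lambda_k$ by a higher block code to a $1$-step SFT over pairwise disjoint alphabets, take the union via Lemma \ref{lem:union_of_SFTs} and the first-letter $1$-block factor to get weak soficity; kill the SFT possibility by restricting a putative finite-memory $F$ to a single component; and exhibit the clopen set $\bigcup_k E_k$ (equivalently, the paper's observation that $\mathcal{S}_\NZ(F)$ is finitely defined because $x_0$ determines the component and hence the window length) to invoke Theorem \ref{theo:GSBC_and_FDS}. The only nitpick is a citation: the fact that $\Gamma_k=\Lambda_k^{[d_k+1]}$ is $1$-step (not merely an SFT) is what Lemma \ref{lem:union_of_SFTs} needs, and this comes from Theorem \ref{theo:SFT<->edge shift conjugacy} (it is an edge shift) rather than from Corollary \ref{cor:higher_block_shift_SFT_on_Z^d-N}.
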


\begin{proof}

Let $A:=\bigcup_{k\in\N^*}A_k$ and $\Lambda:=\bigcup_{k\in\N^*} \Lambda_k\ \subset\ A^\NZ$. First, observe that $\Lambda$ is not an SFT, since if we have $\Lambda=X_F$ for some $F$ such that $M_F$ is finite, then for all $k\in\N^*$ we could take $G_k:=\{x_0\in A:\ x_0\in A\setminus A_k\}\cup F$ and we will have that $\Lambda_k=X_{G_k}$ where $M_{G_k}=M_F\cup\{0\}$, contradicting that  for each $m\in\N$ there exists $k_m\in\N^*$ such that $\Lambda_{k_m}$ is not an $m$-step shift.\\

Now, for each $k\in\N^*$, let $p_k\in\N$ be minimum value such that $\Lambda_k$ is a shift of step $p_k$. Let $F_k\subset W(A_k^\NZ)$ be such that $X_{F_k}=\Lambda_k$ and $M_{F_k}\subset\{0,...,p_k\}$  (from the definition of $p_k$ there is necessarily at least one word of $F_k$ that uses the coordinate $p_k$). Note that $\Lambda=X_F$ with $F:=\{(x_0x_1):\ x_0\in A_i\text{ and } x_1\in A_j \text{ for all }i\neq j\}\cup\bigcup_{k\in\N^*} F_k$. Observe that, $\mathcal{S}_\Z(F)$ is a finitely defined set of $A^\NZ$. In fact, to decide if given $\x\in A^\NZ$ belongs or not to $\mathcal{S}_\Z(F)$, we need to check in which $A_k$ the entry $x_0$ lies, and then to check whether $x_0... x_{p_k}$ belongs or not to $F_k$.\\ 

Now, let us show that $\Lambda$ is a weakly sofic shift. More specifically we will show that there exists a directed labeled graph $\mathcal{G}$ where each symbol of $A$ is used as label only finitely many times and such that $\Lambda=X_\mathcal{G}$.

For each $k\in\N$ let $\beta_k:\Lambda_k\to\Lambda_k^{[p_k+1]}$ be the higher block code and $\Lambda_k^{[p_k+1]}:=\beta_k(\Lambda_k)$ be the respective $(p_k+1)^{th}$ higher block shift of $\Lambda_k$ (see \cite[Section 1.4]{LindMarcus}). From \cite[Theorem 2.3.2]{LindMarcus} it follows that $\Lambda_k^{[p_k+1]}$ is an edge shift, that is, there exists a finite directed graph $G_k$ such that $\Lambda_k^{[p_k+1]}=X_{G_k}$. Furthermore, $\beta_k$ is a uniform conjugacy between $\Lambda_k$ and 
$\Lambda_k^{[p_k+1]}$, such that $\beta_k^{-1}:\Lambda_k^{[p_k+1]}\to\Lambda_k$ is a locally bounded finite-to-one sliding block code whose local rule is 1-block. In particular, setting $\mathcal{L}_k$ as the local rule of $\beta_k^{-1}$, we have that $\Lambda_k=X_{\mathcal{G}_k}$ where $\mathcal{G}_k$ is the finite directed labeled graph $\mathcal{G}_k=(G_k,\mathcal{L}_k)$. 

Now, define $\beta:\Lambda\to \bigcup_{k\in\N^*}\Lambda_k^{[p_k+1]}$, given by $\beta(\x):=\beta_k(\x)$ if $\x\in\Lambda_k$. It follows that $\beta$ is a conjugacy (not uniform) between $\Lambda$ and $\bigcup_{k\in\N^*}\Lambda_k^{[p_k+1]}$, such that $\beta^{-1}$ is a locally finite-to-one sliding block code whose local rule is 1-block. Hence, taking $\mathcal{G}$ as the directed labeled graph which is the union of all $\mathcal{G}_k$ (and then whose label map $\mathcal{L}$ is $\beta^{-1}$), we have that $\Lambda=X_\mathcal{G}$.

\end{proof}

\begin{figure}[h]
\centering
\includegraphics[width=0.6\linewidth=1.0]{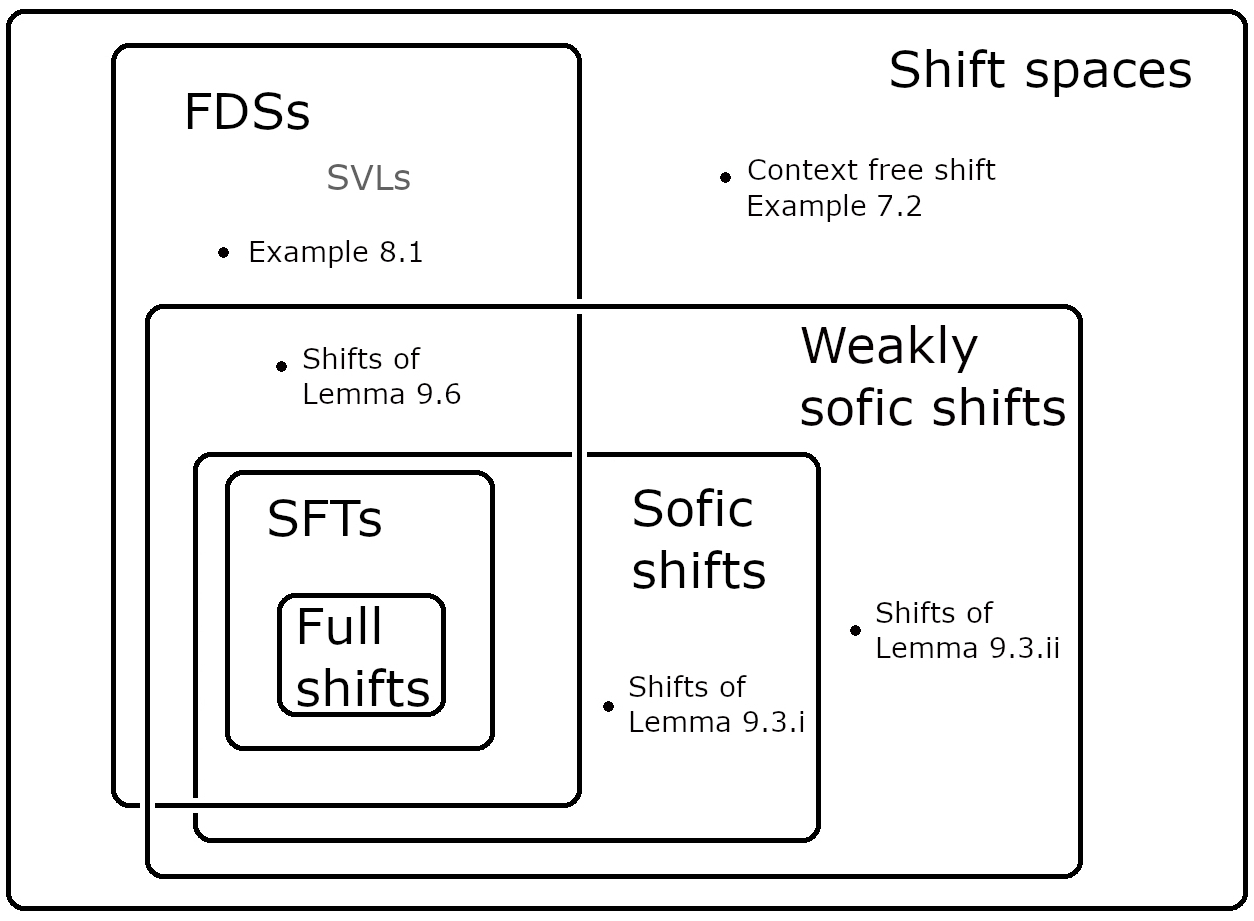}
\caption{Relationship between classes of shift spaces.}\label{Fig_shift_spaces_relationship}
\end{figure}

Other important relationship between shift spaces is given by conjugacies and uniform conjugacies. Theorem \ref{theo:uniform_conjugacy-SFT} proved that the class of SFTs is invariant under uniform conjugacies whenever the monoid is $\N$ or $\Z$. On the other hand, the class of weakly sofic shifts (and the class of sofic shifts) was proved to be always invariant under locally (bounded) finite-to-one uniform factors (Proposition \ref{prop:factor_of_sofic}).  However, the arguments used in Theorem \ref{theo:uniform_conjugacy-SFT} cannot be extended for shift spaces over other monoids. Furthermore, in the context of finite-alphabet shifts, any conjugacy is actually a locally bounded finite-to-one uniform conjugacy. However, in the general context, there are several different types of conjugacies, and thus it is possible for two shift spaces of distinct classes to be conjugated. For instance, besides to be simultaneously an SVL and a weakly sofic shift, the shift $\Lambda$ obtained in Lemma \ref{lem:SVLs-Sofic_and_SFT}  is conjugated to the SFT $\bigcup_{k\in\N^*}\Lambda_k^{[p_k+1]}$ (note that the map $\beta$, which conjugate them, is a nonuniform GSBC, while its inverse is a locally (non-bounded) finite-to-one SBC). Thus, Lemma \ref{lem:SVLs-Sofic_and_SFT} provides a proof for the following claims:

\begin{claim}\label{claim:w-sofic_SFT_conjug} A weakly sofic shift which is not an SFT can be conjugate to an SFT (through a locally finite-to-one GSBC).
\end{claim}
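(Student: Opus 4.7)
The plan is to invoke directly the construction already carried out in Lemma \ref{lem:SVLs-Sofic_and_SFT} and verify that the recoding map $\beta$ built there is a conjugacy through a locally finite-to-one GSBC. Take the weakly sofic shift $\Lambda = \bigcup_{k\in\N^*}\Lambda_k$ produced by that lemma (so $\Lambda$ is not an SFT because the steps $p_k$ are unbounded), and set $\Gamma := \bigcup_{k\in\N^*}\Lambda_k^{[p_k+1]}$. Since each $\Lambda_k^{[p_k+1]}$ is an edge shift (Theorem \ref{theo:SFT<->edge shift conjugacy}) and the component alphabets $A_k^{p_k+1}$ are pairwise disjoint (inherited from disjointness of the $A_k$), Lemma \ref{lem:union_of_SFTs} gives that $\Gamma$ is a $1$-step SFT.

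Next, I would write $\beta:\Lambda\to\Gamma$, defined piecewise by $\beta|_{\Lambda_k}=\beta_k$, in the form \eqref{eq:LR_block_code} in order to apply Theorem \ref{theo:hedlund_general}. Because the $A_k$ are disjoint, each $\Lambda_k$ is clopen in $\Lambda$ (any $\x\in\Lambda$ is identified with its component by looking at a single coordinate). For each target letter $b=(a_0,a_1,\ldots,a_{p_k})\in A_k^{p_k+1}\subset B$ (the alphabet of $\Gamma$), define
\[
C_b := [a_0]_\Lambda\cap[a_1]_\Lambda\cap\cdots\cap[a_{p_k}]_\Lambda,
\]
which is a single cylinder of $\Lambda$. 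The family $\{C_b\}_{b\in B}$ partitions $\Lambda$, so $\beta$ is a GSBC; moreover, each $C_b$ is the union of exactly one cylinder, hence $\beta$ is locally bounded finite-to-one of order $1$, a fortiori locally finite-to-one. However, $\beta$ fails to be an SBC (Remark \ref{rmk:finite_coordinates}), because the set $\bfM_\bfN=\{\{0,1,\ldots,p_k\}:k\in\N^*\}$ is not contained in any common finite $M\subset\NZ$.

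Then I would verify that $\beta^{-1}:\Gamma\to\Lambda$ is a sliding block code with $1$-block local rule $\psi:W_{\{0\}}(\Gamma)\to A$ given by $\psi(a_0,a_1,\ldots,a_{p_k})=a_0$, so that $\beta$ is indeed invertible and both it and its inverse commute with every $g$-shift. For each $a\in A$ we have $a\in A_k$ for a unique $k$, and $\psi^{-1}(a)\subseteq \{a\}\times A_k^{p_k}$ is finite; thus $\beta^{-1}$ is a locally finite-to-one SBC (although not locally bounded finite-to-one, because $|A_k|^{p_k}$ is unbounded in $k$). Combining all of this: $\beta$ is a conjugacy between the weakly sofic non-SFT $\Lambda$ and the SFT $\Gamma$, realized by a locally finite-to-one GSBC whose inverse is even a locally finite-to-one SBC.

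The only mildly subtle point, and the one step I would spell out carefully, is the identification of $\beta$ with the piecewise-defined map via the partition $\{C_b\}_{b\in B}$: one must check that for $\x\in\Lambda_k$ and every $g\in\NZ$ we have $\sigma^g(\x)\in C_b$ precisely when $b=(x_g,x_{g+1},\ldots,x_{g+p_k})$, which is immediate once one notices that $\sigma^g(\x)\in\Lambda_k$ forces the relevant symbol to come from $A_k^{p_k+1}$, so that no cross-component confusion arises. Everything else is a routine application of Theorem \ref{theo:hedlund_general} and Definition \ref{defn:locally-finite-to-one_SBC}.
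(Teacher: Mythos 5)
Your proof is correct and takes essentially the same route as the paper: the paper derives this claim directly from Lemma \ref{lem:SVLs-Sofic_and_SFT}, observing that the map $\beta$ constructed there is a nonuniform, locally finite-to-one GSBC conjugating the weakly sofic non-SFT $\Lambda=\bigcup_{k}\Lambda_k$ to the SFT $\bigcup_{k}\Lambda_k^{[p_k+1]}$, with $\beta^{-1}$ a locally (non-bounded) finite-to-one $1$-block SBC. You merely make explicit some details the paper leaves implicit (the single-cylinder sets $C_b$ showing $\beta$ has order $1$, and the appeal to Lemma \ref{lem:union_of_SFTs} for the target being a $1$-step SFT), all of which check out.
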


\begin{claim}\label{claim:SFT_SVL_conjug} An SFT can be conjugated to an SVL (through a locally finite-to-one GSBC).
\end{claim}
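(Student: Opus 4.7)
The plan is to observe that Claim \ref{claim:SFT_SVL_conjug} is essentially already established inside the proof of Lemma \ref{lem:SVLs-Sofic_and_SFT}; what remains is to identify the SFT, the SVL, and the conjugacy and to check that the conjugacy has the desired regularity. Concretely, I would take $\Lambda := \bigcup_{k \in \N^*} \Lambda_k$, the SVL produced by Lemma \ref{lem:SVLs-Sofic_and_SFT} (with $\{A_k\}_{k \in \N^*}$ a disjoint family of finite alphabets, $\Lambda_k \subset A_k^\NZ$ an SFT of minimal step $p_k$, and $p_k \to \infty$), and as candidate SFT take $\Gamma := \bigcup_{k \in \N^*} \Lambda_k^{[p_k+1]}$.

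The first step is to verify that $\Gamma$ is actually an SFT. By Theorem \ref{theo:SFT<->edge shift conjugacy}, each $\Lambda_k^{[p_k+1]}$ is a 1-step edge shift over the finite edge alphabet $W_{p_k+1}(\Lambda_k)$. Because the alphabets $A_k$ are pairwise disjoint, the edge alphabets $W_{p_k+1}(\Lambda_k)$ are also pairwise disjoint, and Lemma \ref{lem:union_of_SFTs} yields that $\Gamma$ is a 1-step SFT.

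The second step is to check that the map $\beta^{-1} : \Gamma \to \Lambda$ defined in Lemma \ref{lem:SVLs-Sofic_and_SFT} is the desired locally finite-to-one GSBC conjugacy. On each piece $\Lambda_k^{[p_k+1]}$ of $\Gamma$, $\beta^{-1}$ acts as the 1-block code $\beta_k^{-1}$ whose fibers are finite since $A_k$ is finite. Because these pieces partition $\Gamma$ and use pairwise disjoint edge alphabets, the rules $\beta_k^{-1}$ glue into a single 1-block local rule with finite fibers, so $\beta^{-1}$ is a locally finite-to-one SBC on $\Gamma$ (in particular a GSBC). Its inverse $\beta$ coincides with the uniform conjugacy $\beta_k$ on each set $\Lambda_k$, which is clopen in $\Lambda$ (it is cut out by the open condition $x_0 \in A_k$, whose complement is also open since $A = \bigcup_j A_j$); hence $\beta$ is continuous on each clopen piece and therefore continuous on $\Lambda$, and it commutes with all shifts, so by Theorem \ref{theo:hedlund_general} it is a GSBC. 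Thus $\beta^{-1} : \Gamma \to \Lambda$ is a conjugacy from the SFT $\Gamma$ onto the SVL $\Lambda$ via a locally finite-to-one GSBC.

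The main obstacle — and the reason the conclusion does not collapse to the SFT–SFT case — is precisely that $\beta$ cannot itself be an SBC when the steps $p_k$ are unbounded, since no single finite window can simultaneously encode arbitrarily long blocks. The conjugacy genuinely lives in the non-uniform category of GSBCs, which is why Theorem \ref{theo:uniform_conjugacy-SFT} (requiring a uniform conjugacy) does not apply to rule out an SVL image, and ensuring this non-uniform continuity via the clopen-piece argument is the only subtle point in the proof.
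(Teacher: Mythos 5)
Your proposal is correct and follows essentially the same route as the paper: the paper proves this claim by pointing at Lemma \ref{lem:SVLs-Sofic_and_SFT}, whose proof exhibits exactly the conjugacy $\beta$ between the SVL $\Lambda=\bigcup_k\Lambda_k$ and the SFT $\bigcup_k\Lambda_k^{[p_k+1]}$, with $\beta$ a nonuniform GSBC and $\beta^{-1}$ a locally finite-to-one $1$-block SBC. Your additional verifications (that the union of the edge shifts is a $1$-step SFT via Lemma \ref{lem:union_of_SFTs}, and that $\beta$ is continuous on the clopen pieces) are sound and only make explicit what the paper leaves implicit.
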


\section{Open problems and conjectures}\label{sec:final_discussion}

In this final section we shall present some open problems and conjectures regarding the classification scheme presented in this article.\\

The first problem we will state concern to Theorem \ref{theo: image_of_shifts}:

\begin{prob}\label{prob:image_GSBC} To find a set of sufficient and necessary conditions under which a generalized sliding block code $\Phi:\Lambda\subset A^\NZ\to B^\NZ$ is: 
\begin{enumerate}
\item such that $\Phi(\Lambda)$ is a shift space;
\item a closed map.  
\end{enumerate}
\end{prob}

For the specific case of higher block codes and their images we have the following problems related to find generalizations of Proposition \ref{prop:higher_block_shift} and of several results about higher block presentations of FDSs and weakly sofic shift given in sections \ref{sec:SFTs}, \ref{sec:sofic}, and \ref{sec:FDSs_and_SVLs}.

\begin{prob}\label{prob:higher_block_shift} To find sufficient and necessary conditions for:
\begin{enumerate}
\item  the higher block presentation of a shift space to be a shift space too;

\item the higher block shift of an FDS (or specifically an SFT or an SVL) to be an FDS (or specifically an SFT or an SVL) too;

\item the higher block shift of a (weakly) sofic to be a (weakly) sofic too.

\end{enumerate}

\end{prob}

Still about higher block presentations, it would be useful to find a converse for Proposition \ref{prop:Lambda_W_Sofic=>Lambda^M_W_Sofic}, that is, to answer the following question:

\begin{prob}\label{prob: Lambda^M sofic=>Lambda sofic} If $\Lambda^{[\bfN]}$ is a weakly sofic shift, then is $\Lambda$ a weakly sofic shift? (If it is not true in general, what are the sufficient and necessary conditions for?)
\end{prob}

Note that the answer to the last problem above, if stated for the lattices $\N$ and $\Z$ with usual sum, would provide a converse of Theorem \ref{theo:wsofic=>graph shift conjugacy}.\ref{theo:wsofic=>graph shift conjugacy_1} about graph presentation of weakly sofic shifts.\\

Other open question about graph presentations is stated in the next problem, which concerns to the converse of Theorem \ref{theo:wsofic=>graph shift conjugacy}.\ref{theo:wsofic=>graph shift conjugacy_2}:

\begin{prob}\label{Graph finite many paths=>Lambda sofic} Let $\NZ$ be the lattice $\N$ or $\Z$ with usual sum and $\Lambda\subset A^\NZ$ be a shift space. If there exist a graph $\mathcal{G}$ and  $M\in\N$ such that $\Lambda=X_\mathcal{G}$ and $|T_\mathcal{G}(w)|<\infty$ for all $w\in W_M(\Lambda)$ (or if $|I_\mathcal{G}(w)|<\infty$ for all $w\in W_M(\Lambda)$), then is $\Lambda$ a weakly sofic shift?
\end{prob}

Note that an affirmative answer to Problem \ref{Graph finite many paths=>Lambda sofic} would also allow to find a converse to Theorem \ref{theo:sofic-follower_set_graph-1}, and so we will have a complete characterization of weakly sofic shifts in terms of the follower set graphs.

The next problem, motivated by Claim \ref{claim:w-sofic_SVL}, concerns to the relationship between sofic shifts and FDSs:

\begin{prob}\label{prob:FDSs-sofic} Is there exist some shift space which is simultaneously SVL and sofic shift?
\end{prob}

It is direct that the class of SFTs is contained in the intersection of the classes of sofics and FDSs. However, it is not clear if the intersections of those classes is greater than the class of SFTs or not. Furthermore, motivated by claims \ref{claim:w-sofic_SFT_conjug} and \ref{claim:SFT_SVL_conjug}, we could consider the following questions about shift space classes and conjugacies:

\begin{prob}\label{prob:conjugation_between_classes}\phantom{}
\begin{enumerate}
\item Can a sofic shift which is not an SFT be conjugate to an SFT?
\item\label{sofic-FDS} Can a (weakly) sofic shift which is not an FDS be conjugate to an FDS? 
\item Can an SVL which is not a weakly sofic be conjugate to an SFT?
\end{enumerate}
\end{prob}

On the other hand, inspired by the definition of sofic shifts, we could also ask for other classes of shift spaces that could be defined in some analogous way, but using other type of factors:

\begin{prob}\label{prob:SFTs-GSBC} How is the class of shift spaces that are factors of FDSs through locally finite-to-one SBCs or through locally bounded finite-to-one SBCs?
\end{prob}

Note that the two classes in Problem \ref{prob:SFTs-GSBC} contain the class of sofic shifts, and each of them could correspond to some kind of generalization of sofic shifts. Other generalizations of sofic shifts could also be obtained considering the following problem: 

\begin{prob}\label{prob:characterize_image_GSBC} How is the class of shift spaces that are factors of SFTs (or FDS) through locally (bounded) finite-to-one GSBCs?
\end{prob}

It would also be useful to find good definitions  in the general context for other known shift spaces, as well as their relationships with the classes defined in this work. For instance, we could consider the following question:

\begin{prob}\label{prob:Dyck-Motzkin_shifts} How could we define Dyck shifts (see \cite{BealBlockeletDima2013,BealBlockeletDima2014}) and Motzkin shift (see \cite{Hamachi_et_Al2009,Inoue2006}) in a general context?
\end{prob}

Finally, it would be interesting, and a good test for the definition given here for SFTs on general lattices, to establish a relationship between SFTs and shift spaces with the shadowing property as it was made in \cite[Prop. 2.3.5]{DarjiGoncalvesSobottka2020} for shift spaces on the lattice $\N$.

\begin{prob}\label{prob:shadowing} Does the class of shift spaces with the shadowing property coincide to the class of SFTs?
\end{prob}

Now, to finish, we state two conjectures. Both conjectures are inspired by the alternative definitions given for SFTs and FDSs by theorems \ref{theo:SBC_and_SFT} and \ref{theo:GSBC_and_FDS}, respectively.

\begin{conj}\label{conj:SFT-SFT}
Any shift which is a uniformly conjugate to an SFT is itself an SFT.
\end{conj}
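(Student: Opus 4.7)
The plan is to combine the characterization of SFTs from Theorem \ref{theo:SBC_and_SFT} with Corollary \ref{cor:inverse_image_of_SFT} and the fact that finite intersections of SFTs are SFTs. Let $\Gamma \subset B^\NZ$ be the SFT, let $\Psi : \Lambda \to \Gamma$ be the uniform conjugacy, and let $\psi : W_M(\Lambda) \to B$ and $\psi^{-1} : W_L(\Gamma) \to A$ be the local rules of $\Psi$ and $\Psi^{-1}$ on finite neighborhoods $M, L \subset \NZ$ (the case $\Gamma = \emptyset$ is trivial, so assume $B \neq \emptyset$). First I would extend these local rules arbitrarily to $\tilde\psi : A^M \to B$ and $\widetilde{\psi^{-1}} : B^L \to A$, producing full-shift SBCs $\tilde\Psi : A^\NZ \to B^\NZ$ and $\widetilde{\Psi^{-1}} : B^\NZ \to A^\NZ$. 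Shift-invariance of $\Lambda$ forces every window $(x_{gi})_{i \in M}$ with $\x \in \Lambda$ to lie in $W_M(\Lambda)$, so $\tilde\psi$ coincides with $\psi$ there; hence $\tilde\Psi|_\Lambda = \Psi$, and symmetrically $\widetilde{\Psi^{-1}}|_\Gamma = \Psi^{-1}$.

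Next I would introduce two auxiliary subshifts of $A^\NZ$:
\begin{equation*}
\Lambda' := \tilde\Psi^{-1}(\Gamma), \qquad E := \bigl\{\x \in A^\NZ : \widetilde{\Psi^{-1}} \circ \tilde\Psi(\x) = \x\bigr\}.
\end{equation*}
By Corollary \ref{cor:inverse_image_of_SFT}, $\Lambda'$ is an SFT. The composition $\widetilde{\Psi^{-1}} \circ \tilde\Psi$ is an SBC whose local rule lives on the finite set $L \cdot M := \{\ell m : \ell \in L,\, m \in M\}$, so $E$ is cut out by finitely many forbidden patterns supported on $(L \cdot M) \cup \{1\}$ and is therefore an SFT as well. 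The central claim is $\Lambda = \Lambda' \cap E$: the inclusion $\subseteq$ is immediate from the restriction identities above, and for $\supseteq$ any $\x \in \Lambda' \cap E$ has $\tilde\Psi(\x) \in \Gamma$, so $\widetilde{\Psi^{-1}}(\tilde\Psi(\x)) = \Psi^{-1}(\tilde\Psi(\x)) \in \Lambda$, and this equals $\x$ by the defining property of $E$. As an intersection of two SFTs over the same alphabet, $\Lambda$ is an SFT.

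The step I would verify most carefully is the agreement of the extended SBCs with the original maps on $\Lambda$ and on $\Gamma$, since the rest of the argument is essentially formal. That verification depends only on shift-invariance of the shift spaces involved (Remark \ref{rmk:shift_space-closed_invariant}), so the argument would be monoid-agnostic and appeal to no structural property of $\NZ$ beyond its being a monoid; if the sketch is correct, Conjecture \ref{conj:SFT-SFT} should resolve affirmatively in full generality.
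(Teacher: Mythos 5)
The paper does not actually prove this statement: it is posed as an open conjecture, and only the special case $\NZ=\N$ or $\Z$ is established (Theorem \ref{theo:uniform_conjugacy-SFT}), via the Lind--Marcus overlap/recoding argument that the author explicitly says does not transfer to general monoids. Your argument is instead the standard ``equalizer'' proof used for shifts of finite type over groups with finite alphabets, and as far as I can verify it goes through in the paper's general monoid, infinite-alphabet setting because every ingredient it needs is already available there: the local rules extend from $W_M(\Lambda)$ and $W_L(\Gamma)$ to $A^M$ and $B^L$, and the extensions agree with $\Psi$ and $\Psi^{-1}$ on $\Lambda$ and $\Gamma$ by shift-invariance; full-shift SBCs compose to a full-shift SBC with window $L\cdot M$ by associativity of $\NZ$; $\tilde\Psi^{-1}(\Gamma)$ is an SFT by Corollary \ref{cor:inverse_image_of_SFT}; the equalizer $E$ equals $X_F$ for some $F\subset A^{(L\cdot M)\cup\{1\}}$, so $M_F$ is finite; and a finite intersection of SFTs is an SFT since $M_{F_1\cup F_2}=M_{F_1}\cup M_{F_2}$. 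The identity $\Lambda=\tilde\Psi^{-1}(\Gamma)\cap E$ checks out in both directions. One imprecision to fix: when $A$ is infinite, $E$ is generally cut out by \emph{infinitely} many forbidden patterns; what matters, and what holds, is that they are all supported on the single finite coordinate set $(L\cdot M)\cup\{1\}$, which is exactly what Definition \ref{defn:SFT} requires (equivalently, apply Theorem \ref{theo:SBC_and_SFT} to the SBC $\x\mapsto\bigl(\mathbf{1}_{\{(\widetilde{\Psi^{-1}}\circ\tilde\Psi(\x))_g=x_g\}}\bigr)_{g\in\NZ}$). Since this would settle Conjecture \ref{conj:SFT-SFT} affirmatively in full generality, and make Theorem \ref{theo:uniform_conjugacy-SFT} a special case, the argument deserves a fully detailed independent write-up and check, but I do not see a gap.
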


Note that the conjecture above holds for the particular case of shift spaces on the monoid $\N$ or $\Z$ (Theorem \ref{theo:uniform_conjugacy-SFT}), however the machinery used to prove it for that particular case cannot be used in the general context.
Note that if the Problem \ref{prob:shadowing} has a positive answer, then from \cite[Prop. 2.1.6]{DarjiGoncalvesSobottka2020} we get that Conjecture \ref{conj:SFT-SFT} is true.

The second conjecture, below, gives a negative answer to Problem \ref{prob:conjugation_between_classes}.\ref{sofic-FDS}.

\begin{conj}
A shift space which is conjugated to an FDS is itself an FDS.
\end{conj}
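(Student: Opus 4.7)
Let $\Phi:\Lambda\to\Gamma$ be the given conjugacy and let $\Gamma\subset B^\NZ$ be an FDS. By Theorem~\ref{theo:GSBC_and_FDS} there exist a monoid $\mathbb{G}\supset\NZ$ and a GSBC $\Psi:B^\mathbb{G}\to\{0,1\}^\mathbb{G}$ such that $\Gamma$, viewed as a subset of $B^\mathbb{G}$ via the same forbidden patterns, coincides with $\Psi^{-1}(\mathbf{1})$. The same theorem tells me that to conclude $\Lambda$ is an FDS it is enough to construct a GSBC $\Omega:A^\mathbb{G}\to\{0,1\}^\mathbb{G}$ with $\Lambda=\Omega^{-1}(\mathbf{1})$, so I will take $\mathbb{G}$ as above and aim to build $\Omega$ out of $\Phi$, $\Phi^{-1}$ and $\Psi$.

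The plan is to bootstrap the GSBC data of $\Phi$ and $\Phi^{-1}$ out to the full shifts and then use both maps jointly to detect membership in $\Lambda$. Write $\Phi$ via its partition $\{C_b\}_{b\in B}$ of $\Lambda$ and choose open sets $\tilde C_b\subset A^\mathbb{G}$, each a union of cylinders, with $\tilde C_b\cap\Lambda=C_b$; dually write $\Phi^{-1}$ via its partition $\{D_a\}_{a\in A}$ of $\Gamma$ and pick $\tilde D_a\subset B^\mathbb{G}$ with $\tilde D_a\cap\Gamma=D_a$. Define tentative extensions $\tilde\Phi:A^\mathbb{G}\to B^\mathbb{G}$ and $\tilde\Phi^{-1}:B^\mathbb{G}\to A^\mathbb{G}$ by declaring $\tilde\Phi(\x)_g:=b$ whenever $\sigma^g(\x)$ lies in a unique $\tilde C_b$ (fixing some default otherwise), and likewise for $\tilde\Phi^{-1}$. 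Now set
\[
\Omega(\x)_g:=1\iff(\mathrm{i})\ \sigma^g(\x)\in\tilde C_b\text{ for exactly one }b,\quad(\mathrm{ii})\ \Psi\bigl(\tilde\Phi(\x)\bigr)_g=1,\quad(\mathrm{iii})\ \tilde\Phi^{-1}\bigl(\tilde\Phi(\x)\bigr)_g=x_g,
\]
and $\Omega(\x)_g:=0$ otherwise. Granting that each of (i)--(iii) is a finitely defined condition on $\x$ around $g$, $\Omega$ is continuous and shift-commuting, hence a GSBC by Theorem~\ref{theo:hedlund_general}. Every $\x\in\Lambda$ satisfies (i)--(iii) at every $g$ because $\Phi(\x)\in\Gamma$ and $\Phi^{-1}(\Phi(\x))=\x$; conversely, if $\Omega(\x)=\mathbf{1}$, clause (ii) gives $\tilde\Phi(\x)\in\Gamma$ and clause (iii) together with the bijectivity of $\Phi^{-1}$ as a map $\Gamma\to\Lambda$ forces $\x=\Phi^{-1}(\tilde\Phi(\x))\in\Lambda$. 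Thus $\Lambda=\Omega^{-1}(\mathbf{1})$ and the conjecture follows.

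The main obstacle is precisely to justify that clauses (i)--(iii) carve out finitely defined subsets of $A^\mathbb{G}$. Each $\tilde C_b$ is open by construction, but for a generic GSBC between infinite-alphabet shifts over an arbitrary monoid there is no reason for $\tilde C_b$ to also be closed, so the ``exactly one'' requirement in (i)---an intersection of $\tilde C_b$ with the complement of a potentially non-closed union of open sets---need not be locally decidable, and the same issue re-enters through (iii). This is the structural difficulty that forces hypotheses of the form (A1)--(A3), (B1)--(B5) and (C1)--(C4) in the earlier results on images of GSBCs and on higher block presentations, and it is why the statement is left as a conjecture. A plausible route to a full proof is to further enlarge $\mathbb{G}$---adding coordinates along which each $\tilde C_b$ and $\tilde D_a$ can be encoded by a single finite pattern, in the spirit of a higher-block recoding---so that (i)--(iii) become transparently finitely defined; whether such an enlargement always exists, or whether instead a counterexample can be built from shift spaces like those in sections~\ref{sec:sofic} and~\ref{sec:FDSs_and_SVLs}, is the genuine mathematical content that remains open.
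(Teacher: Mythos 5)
The statement you are proving is one of the paper's two closing \emph{conjectures}: the author supplies no proof, and explicitly leaves it open. So there is no argument of the paper to compare yours against; the only question is whether your proposal closes the conjecture, and by your own admission it does not. What you have written is a proof strategy with the decisive step missing, and the missing step is exactly where you locate it. The partition sets $C_b$ of the conjugacy $\Phi$ are finitely defined (clopen) only as subsets of $\Lambda$; their extensions $\tilde C_b\subset A^{\mathbb{G}}$ are unions of cylinders, hence open, but in general not closed in the full shift. Consequently the locus of clause (i), namely $\tilde C_b\setminus\bigcup_{b'\neq b}\tilde C_{b'}$, is an open set minus a union of open sets and need not be finitely defined; $\tilde\Phi$ with a ``default'' value on the bad set need not be continuous; and then $\Psi\circ\tilde\Phi$ in clause (ii) is not guaranteed to be a GSBC, since Theorem \ref{theo:hedlund_general} characterizes GSBCs precisely by continuity plus shift-commutation. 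Clause (iii) inherits the same defect through $\tilde\Phi^{-1}$. This is not a technicality one can wave away: it is the same obstruction that forces the hypotheses (A1)--(A3), (B1)--(B5), (C1)--(C4) throughout sections \ref{sec:SBCs_and_GSBCs}--\ref{sec:FDSs_and_SVLs}, and Example \ref{ex:Phi(Lambda)-nonshift} shows how badly images of GSBCs can behave without them.

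There is also a second gap you do not flag. The conjugacy $\Phi$ and its partitions $\{C_b\}$, $\{D_a\}$ are data attached to the $\NZ$-shift action on $\Lambda\subset A^{\NZ}$ and $\Gamma\subset B^{\NZ}$, whereas the FDS property of $\Gamma$ hands you a larger monoid $\mathbb{G}$ and the shift $X_{F'}\subset B^{\mathbb{G}}$ carrying the $\mathbb{G}$-shift action. To even write down $\Omega:A^{\mathbb{G}}\to\{0,1\}^{\mathbb{G}}$ commuting with all $g$-shifts for $g\in\mathbb{G}$, you must first promote $\Phi$ to a $\mathbb{G}$-shift-commuting map between the $\mathbb{G}$-versions of $\Lambda$ and $\Gamma$, and nothing in Definition \ref{defn:conjugacy} or Definition \ref{defn:FDS} provides that. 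In short: your proposal correctly identifies the shape a proof would have and correctly diagnoses why it does not yet exist, but as it stands it is a restatement of the open problem, not a resolution of it.
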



\section*{Acknowledgments}

\noindent This work was supported by the CNPq-Brasil grant 301445/2018-4, and developed while the author was a visiting professor at CAPES-Brasil at the Pacific Institute for the Mathematical Sciences, University of British Columbia. The author thanks Professor Brian Marcus and his research group for their hospitality. The author also thanks Charleen Stroud for her hospitality and for her help in materializing a topological example.

\newpage

\appendix

\section{Appendix: Local rule's radius of an SBC}\label{sec:local rule radius}

The next theorems give some features of SBCs, recovering, under some conditions, the description of an SBC as a map $\Phi$ where $\left(\Phi(\x)\right)_g$ is function of a window around $x_g$ whose width is the same for all $g\in\NZ$. Such fixed width of the window is given in terms of the metric of the monoid instead in terms of the quantity of sites inside the window (although both will coincide if we have a conservative metric monoid as $\N$ and $\Z$ with the usual sum).

\begin{defn}\label{defn:GSBC_radius} Let $\NZ$ be a metric monoid and $\Phi:\Lambda\subset A^\NZ\to B^\NZ$ be a GSBC. For each $b\in B$ let $C_b$ be the set which define $\Phi$ according to Equation \eqref{eq:LR_block_code}, and let $M_b\subset \NZ$ be the set of  coordinates needed to define $C_b$ which satisfies: i. $C_b$ can be written as a union of cylinders that use only coordinates of $M_b$; ii. if $C_b$ can be written as a union of cylinders that use only coordinates of a set $N_b\subset \NZ$, then  $\sup_{ i\in M_b}{d}(i,1)\leq \sup_{ j\in N_b}{d}(j,1)$.
We define the $\Phi$ {\bf local rule's radius} as 
$$r(\Phi):=\sup_{\tiny\begin{array}{c}b\in B\\ i\in M_b\end{array}}{d}(i,1).$$
\end{defn}

\begin{theo}\label{theo:SBC=>preserves_distance} Let $\NZ$ be a metric monoid. If $\Phi:\Lambda\subset A^\NZ\to B^\NZ$ is an SBC, then $r(\Phi)<\infty$.
\end{theo}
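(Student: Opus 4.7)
The plan is to directly invoke the alternative characterization of SBCs given in Remark \ref{rmk:finite_coordinates} and then use property (ii) in the definition of $M_b$ to transfer finiteness to $r(\Phi)$.

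First, since $\Phi$ is an SBC, Remark \ref{rmk:finite_coordinates} provides a \emph{single} finite set $M\subset\NZ$ with the property that, for every $b\in B$, the set $C_b=\Phi^{-1}([b_1]_{B^\NZ})$ can be written as a union of cylinders whose coordinates all lie in $M$. The key point is that this $M$ is independent of $b$, so the finiteness is uniform in $b\in B$.

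Next, I would apply property (ii) of Definition \ref{defn:GSBC_radius}. For each $b\in B$, the set $N_b:=M$ is an admissible witness (that is, $C_b$ is a union of cylinders on coordinates contained in $M$), so by the minimality property of $M_b$ we obtain
\[
\sup_{i\in M_b} d(i,1) \;\leq\; \sup_{j\in M} d(j,1).
\]
This bound is the same for every $b$, so taking the supremum over $b\in B$ yields
\[
r(\Phi)=\sup_{\substack{b\in B\\ i\in M_b}} d(i,1) \;\leq\; \sup_{j\in M} d(j,1).
\]

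Finally, since $M$ is a finite subset of the metric monoid $\NZ$, the right-hand side is the maximum of finitely many nonnegative real numbers and is therefore finite, giving $r(\Phi)<\infty$. There is no real obstacle here: the substantive content was already packaged into Remark \ref{rmk:finite_coordinates} (which extracts a finite common window for the defining cylinders of all $C_b$) and into the minimality clause (ii) of Definition \ref{defn:GSBC_radius}; the proof just chains these two facts together.
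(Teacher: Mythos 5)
Your proof is correct and follows essentially the same route as the paper: both extract a single finite window $M$ from the SBC definition and bound $r(\Phi)$ by $\sup_{j\in M}d(j,1)$. If anything, your invocation of clause (ii) of Definition \ref{defn:GSBC_radius} is slightly more careful than the paper's proof, which asserts $M_b\subset M$ directly even though the definition of $M_b$ only guarantees the comparison of suprema rather than set containment.
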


\begin{proof} 
If $\Phi$ is an SBC then there exists a finite $M\subset\NZ$ such that for all $b\in B$, we have $M_b\subset M$. Thus, we have $r(\Phi)\leq\sup_{i\in M}{d}(i,1)<\infty$.

\end{proof}

We notice that the converse of Theorem \ref{theo:SBC=>preserves_distance} is not valid in general, that is, $\Phi$ being a GSBC with $r(\Phi)<\infty$ does not imply that $\Phi$ is an SBC. In fact, if we consider $d$ as the $0-1$ metric on a left-cancellative monoid $\NZ$ (that is, for all $a,b,c\in\NZ$ it follows that $ab=ac$ implies $b=c$), then any generalized sliding block code $\Phi$ will have $r(\Phi)\leq1$, even if it is not an SBC. Example \ref{Ex:finite_radius} below shows other case where the converse of Theorem \ref{theo:SBC=>preserves_distance} fails.

\begin{ex}\label{Ex:finite_radius} Let $\mathbb{Q}^*_+$ be the set of all positive rational number with the usual product. On $\mathbb{Q}^*_+$ consider the metric $d$ such that $d(g,h):=|\ln(g)-\ln(h)|$ for all $g,h\in \mathbb{Q}^*_+$, which is invariant under translations.

Let $A:=\N^*$ and define $\Phi:A^{ \mathbb{Q}^*_+}\to A^{ \mathbb{Q}^*_+}$ as the GSBC defined on the sets $$C_b:=\bigcup_{k\in A}\left\{\x\in A^{ \mathbb{Q}^*_+}:\ x_1=k\text{ and } x_{(k+1)/k}=b\right\},\quad \forall b\in A.$$
Note that any $b\in A$ we have $M_b=\{1\}\cup\{(k+1)/k:\ k\ge 1\}$ and therefore $\Phi$ cannot be an SBC. However, $$r(\Phi)=\sup_{\tiny\begin{array}{c}b\in A\\ i\in M_b\end{array}}{d}(i,1)=d(2,1)=\ln(2).$$

\end{ex}

The next theorem gives a converse for Theorem \ref{theo:SBC=>preserves_distance} for some particular class of monoids $\NZ$:

\begin{theo}\label{theo:preserving_distance=>SBC}  Let $\NZ$ be a metric monoid and suppose that its metric $d$ is such that there exists $R\ge 0$ such that the closed ball $B_d[1,R]:=\{h\in\NZ:\ d(h,1)\le R\}$ is a finite set. If $\Phi:\Lambda\subset A^\NZ\to B^\NZ$ is a GSBC with $r(\Phi)\le R$, then $\Phi$ is an SBC.
\end{theo}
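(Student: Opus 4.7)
The plan is to derive the conclusion almost directly from the definitions of $r(\Phi)$ and of an SBC (as given by Remark \ref{rmk:finite_coordinates}). The whole point of the hypothesis is that, under a conservative-type finiteness assumption on closed balls, a bound on the local rule's radius forces the finitely many cylinders defining each $C_b$ to live inside a single finite window, which is exactly what distinguishes an SBC from a general GSBC.

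First, unfold the definition of the local rule's radius. Since $r(\Phi) = \sup_{b \in B,\ i \in M_b} d(i,1) \leq R$, every coordinate $i \in M_b$ satisfies $d(i,1) \leq R$, that is, $M_b \subset B_d[1,R]$ for every $b \in B$. Set $M := B_d[1,R]$, which by hypothesis is a finite subset of $\NZ$. Then for each $b \in B$ we have $M_b \subset M$, and by the definition of $M_b$ (property (i) in Definition \ref{defn:GSBC_radius}), the set $C_b$ can be written as a union of cylinders whose coordinates all lie in $M_b$, and hence in the common finite set $M$.

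Second, invoke Remark \ref{rmk:finite_coordinates}, which characterizes SBCs among GSBCs as precisely those maps $\Phi$ satisfying \eqref{eq:LR_block_code} for which each nonempty $C_b$ can be written as a union of cylinders whose coordinates belong to a common finite set $M \subset \NZ$. Since this is exactly what we have just established, $\Phi$ is an SBC.

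There is essentially no obstacle here beyond unwinding the definitions; the content of the theorem is entirely in the hypothesis that closed balls of radius $R$ in $(\NZ,d)$ are finite. The only conceptual point worth emphasizing is that even if $B$ is infinite (so that there may be infinitely many sets $C_b$, each potentially defined on a different $M_b$), the constraint $r(\Phi) \leq R$ forces all these $M_b$ to be squeezed into the finite ball $B_d[1,R]$, producing the single finite neighborhood required by the definition of an SBC. This explains why the converse fails in general (as in Example \ref{Ex:finite_radius}): when arbitrarily large finite sets can have small diameter, a finite radius does not yield a finite shared neighborhood.
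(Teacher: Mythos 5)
Your proof is correct and follows essentially the same route as the paper's: both arguments take the finite ball $B_d[1,R]$ (the paper uses $B_d[1,r(\Phi)]\subset B_d[1,R]$) as the common finite coordinate set $M$ containing every $M_b$, and then conclude via the characterization of SBCs among GSBCs in Remark \ref{rmk:finite_coordinates}. Nothing is missing.
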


\begin{proof} 
We just need to check that there exists a finite $M\subset\NZ$ such that $M_b\subset M$ for all $b\in B$. In fact, if
 $\Phi$ a GSBC such that $r(\Phi)\le R$ for some $R\ge 0$ with $\#B_d[1,R]<\infty$, then $M:=B_d[1,r(\Phi)]$ is finite and for all $b\in B$ and $g\in M_b$ it follows that $d(g,1)\le r(\Phi)\leq R$ which means that $g\in M$.

\end{proof}

\section{Appendix: Shifts with metric memory}\label{sec:shift_d-memory}

In this section we propose an alternative definition for the concept of memory for shift spaces, which is based on the metric of the monoid.

\begin{defn}\label{defn:shift_memory_m} Let $\NZ$ be a metric monoid for some metric $d$. Given $m\geq 0$, a shift space $\Lambda\subset A^\NZ$ will be said to be a {\bf shift with $d$-memory $m$} (or {\bf shift with metric memory $m$})  if and only if $\Lambda=X_F$ for some $F\subset\NN_{A^\NZ}$ such that $M_F\subset B_d[1,m]$.
\end{defn}

Note that since the metric on $\NZ$ is invariant by left products, $\Lambda=X_F$ to be a shift with $d$-memory $m$ could be interpreted as that one could check if $(x_g)_{g\in\NZ}$ belongs to $\Lambda$ or not, by checking for each $g$ if there is a forbidden pattern in $(x_h)_{h\in B_d[g,m]}$. This idea recovers the classical notion of SFTs where one moves a fixed-length window along the configuration $(x_g)_{g\in\NZ}$ checking if there is some forbidden pattern in that window.
In fact, since any SFT can be defined from some forbidden set of patterns $F$ with $M_F$ finite, it follows that 

\begin{theo}\label{theo:SFT->d-memory} If $\NZ$ is a metric monoid and $\Lambda\subset A^\NZ$ is an SFT, then $\Lambda$ has metric memory $m$ for some $m\in[0,\infty)$.
\end{theo}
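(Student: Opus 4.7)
The plan is to unwind the two definitions and observe that the result follows immediately from the finiteness of $M_F$ together with the fact that $\NZ$ is equipped with a (real-valued) metric.

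First I would invoke the definition of an SFT (Definition \ref{defn:SFT}): since $\Lambda \subset A^\NZ$ is an SFT, there exists $F \subset \NN_{A^\NZ}$ with $M_F$ finite and such that $\Lambda = X_F$. Next, using that $(\NZ, d)$ is a metric monoid, the map $g \mapsto d(g,1)$ is a well-defined real-valued function on $\NZ$. Because $M_F$ is a finite subset of $\NZ$, the quantity
\[
m := \max_{g \in M_F} d(g,1)
\]
is attained and lies in $[0, \infty)$ (with the convention $m = 0$ if $M_F = \emptyset$, in which case $\Lambda = A^\NZ$ trivially has $d$-memory $0$).

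Finally, I would verify that this choice of $m$ works: for every $g \in M_F$ one has $d(g,1) \leq m$, so $g \in B_d[1,m] = \{h \in \NZ : d(h,1) \leq m\}$, hence $M_F \subset B_d[1,m]$. By Definition \ref{defn:shift_memory_m}, this is exactly the statement that $\Lambda$ is a shift with $d$-memory $m$, concluding the proof.

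There is no real obstacle here: the argument is a one-line consequence of the definitions, with the only substantive ingredient being that a finite set of points in a metric space has bounded distance from any fixed point. The theorem's content is essentially that the metric-memory notion introduced in Definition \ref{defn:shift_memory_m} is a genuine generalization of the SFT condition (any SFT admits some finite metric memory), which justifies its use as an alternative framework for describing shifts of finite type in the metric-monoid setting.
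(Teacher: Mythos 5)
Your proof is correct and matches the paper's approach exactly: the paper treats this as an immediate consequence of the definitions (prefacing the theorem with "since any SFT can be defined from some forbidden set of patterns $F$ with $M_F$ finite, it follows that...") and gives no further argument. Your choice $m := \max_{g\in M_F} d(g,1)$, with the vacuous case $M_F=\emptyset$ handled separately, is precisely the intended one-line verification.
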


\qed

However, as we can see in the examples below, to be a shift with metric memory is not a sufficient condition for being an SFT, neither it is a necessary condition to be an FDS. 

\begin{ex}\label{Ex:shift-with-memory_trivial} If $\NZ$ is left cancellative (that is, whenever $fg=fh$ it follows that $f=g$) and $d$ is the $0-1$ metric on $\NZ$, then any shift space $\Lambda\subseteq A^\NZ$ is a shift with $d$-memory $1$.
\end{ex}

\begin{ex}\label{Ex:shift-with-memory_non-FDS_1} Let $\mathbb{Q}^*_+$ with the usual product be the metric monoid with the metric $d$ given by $d(g,h):=|\ln(g)-\ln(h)|$ for all $g,h\in \mathbb{Q}^*_+$ (see Example \ref{Ex:finite_radius}). Let $A:=\{0,1\}$ and define $F\subset \NN_{A^{\mathbb{Q}^*_+}}$ as follows:
$$F:=\left\{(x_1x_{\frac{1}{2^k}}):\ x_1=x_{\frac{1}{2^k}}=1,\ k\geq 1\right\}.$$
Although any forbidden word is written using the index $1$ and exactly one more index in the form $1/2^k$ with $k\in\N^*$, the shift $X_F$ is not an SFT since $M_F=\{1/2^k: k\in\N\}$, and it is not possible to find any finite set of forbidden patterns that generate the same shift. In fact, $X_F$ is not even an FDS. In fact, any map $\Phi:A^{\mathbb{Q}^*_+}\to\{0,1\}^{\mathbb{Q}^*_+}$ such that $\Phi^{-1}(\mathbf{1})=X_F$ should necessarily be such that for the configuration $\x=(x_i)_{i\in \mathbb{Q}^*_+}$ with $x_i=1$ for all $i\in \mathbb{Q}^*_+$ one could not decide the outcome of $\big(\Phi(\x)\big)_1$ without knowing $\x_{M_F}$. 
On the other hand, $M_F\subset B_d(1,|\ln(\frac{1}{2})|)$ which means that $X_F$ has $d$-memory $m= |\ln(\frac{1}{2})|$.
\end{ex}


\begin{ex}\label{Ex:non-shift-with-memory_non-FDS} Let $\N^*$ be the metric monoid given in Example \ref{ex:shift_spaces_SFT} with the respective metric $d$. Let $A:=\{0,1\}$ and define the $F\subset \NN_{A^{\N^*}}$ as follows:
$$F:=\left\{(x_1x_{2^k}):\ x_1=x_{2^k}=1,\ k\geq 1\right\}.$$
Note that $X_F$ is not an FDS since $M_F=\{2^k:\ k\in\N\}$. Furthermore, $X_F$ is not a shift with $d$-memory, since $M_F$ is not contained in any closed $d$-ball.

\end{ex}

\begin{ex}\label{Ex:shift-with-memory_FDS} Consider the metric monoid $\mathbb{Q}^+$ with the usual sum and with the Euclidean metric $d$. Let $A=\Z$ and $F:=\{(x_0x_{\frac{1}{k}}):\ x_0=0\text{ and } x_{\frac{1}{k}}=k,\ k\in\N^*\}$. Then, $X_F\subset A^{\mathbb{Q}^+}$ has  $d$-memory 1, and by using the same strategy of Example \ref{ex:SVL1}, we get that it is is an FDS. 
\end{ex}

The next theorem gives a sufficient condition for the converse of Theorem \ref{theo:SFT->d-memory}. Its proof is direct.

\begin{theo}\label{theo:d-memory->SFT} Let $\NZ$ be a metric monoid with metric $d$, and is such that there exists $R\ge 0$ such that the closed ball $B_d[1,R]$ is finite. If $\Lambda\subset A^\NZ$ is a shift with $d$-memory $m$ for some $m\leq R$, then $\Lambda$ is an SFT.
\end{theo}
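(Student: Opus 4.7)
The plan is to unwind the two definitions involved and observe that they line up immediately thanks to the finiteness hypothesis on the closed ball. Since the conclusion is about $\Lambda$ being an SFT, I would work from Definition \ref{defn:SFT}, which only requires exhibiting a forbidden set $F$ with $M_F$ finite such that $\Lambda=X_F$. The hypothesis that $\Lambda$ has $d$-memory $m$ gives me such an $F$ for free, together with the containment $M_F\subset B_d[1,m]$.

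First, I would invoke Definition \ref{defn:shift_memory_m} to choose a set of forbidden patterns $F\subset\NN_{A^\NZ}$ such that $\Lambda=X_F$ and $M_F\subset B_d[1,m]$. Next, I would use the monotonicity of closed balls together with the assumption $m\leq R$ to conclude that
\[
M_F\ \subset\ B_d[1,m]\ \subset\ B_d[1,R].
\]
By hypothesis $B_d[1,R]$ is a finite subset of $\NZ$, hence $M_F$ is finite. Finally, applying Definition \ref{defn:SFT} to the pair $(F,\Lambda)$ just constructed yields that $\Lambda$ is a shift of finite type.

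There is essentially no obstacle to this argument — the statement is, as the author himself notes, a direct consequence of matching definitions — but a small point worth being careful about is the monotonicity step: one should make sure that the metric structure is being used correctly, namely that $m\leq R$ implies $B_d[1,m]\subseteq B_d[1,R]$, which is immediate from the definition of a closed ball but worth writing explicitly to make the role of the hypothesis transparent. No invariance properties of $d$ under left translation are needed for this particular result, so the proof does not rely on $\NZ$ being a \emph{metric} monoid in the strong sense; only the existence of one finite ball of sufficiently large radius matters.
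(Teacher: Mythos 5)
Your argument is exactly the direct one the paper has in mind (the paper states only that ``its proof is direct''): take $F$ with $\Lambda=X_F$ and $M_F\subset B_d[1,m]\subseteq B_d[1,R]$, note the latter is finite by hypothesis, and conclude via Definition \ref{defn:SFT}. The proof is correct and complete.
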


\qed

\newpage


\end{document}